\documentclass[11pt,letterpaper]{amsart}
\usepackage[margin=1.25in]{geometry}

\usepackage{mathrsfs}
\usepackage{amsfonts}
\usepackage{latexsym,epsfig}
\usepackage{amsmath, amscd, amsthm,amssymb}
\usepackage{mathabx, stmaryrd}

\usepackage{multicol}

\usepackage[pdftex]{color}
\usepackage{comment}
\usepackage{marginnote}
\usepackage[colorlinks,citecolor=cyan,linkcolor=magenta]{hyperref}
\usepackage[nameinlink]{cleveref}
\usepackage{enumitem}
\usepackage[mathscr]{euscript}
\usepackage[normalem]{ulem}
\usepackage{hyperref}

\newcommand{\DD}{\mathbb{D}}
\newcommand{\HM}{{\scriptscriptstyle{\mathrm{HM}}}}

\newcommand{\ZZ}[0]{\mathbb{Z}}

\newcommand{\R}{\mathbb{R}}

\newcommand{\wt}{\widetilde}
\newcommand{\mc}{\mathcal}
\newcommand{\ms}{\mathscr}

\newcommand{\ol}{\overline}
\newcommand{\wh}{\widehat}

\newcommand{\del}{\partial}

\newcommand{\hbs}{\tau^{(2)}}

\newcommand{\cW}{\mathcal{W}}
\newcommand{\db}{\partial^2}

\newcommand{\FF}{\mathcal{F}}

\newcommand{\C}{\mathcal{C}}

\newcommand{\orb}{\mathcal{O}}
\newcommand{\define}[1]{\textbf{#1}}

\newcommand{\union}{\cup}
\newcommand{\intersect}{\cap}
\newcommand{\boundary}{\partial}

\newcommand\tsim{\kern-.4em\sim}

\newcommand\ssm{\smallsetminus}

\newcommand{\cM}{\mathring{M}}

\newcommand{\ent}{\mathrm{ent}}
\newcommand{\gr}{\mathrm{gr}}
\newcommand{\cut}{\! \bbslash \!}

\renewcommand{\d}{\downarrow}
\renewcommand{\phi}{\varphi}
\renewcommand{\epsilon}{\varepsilon}

\DeclareMathOperator{\intr}{int}

\DeclareMathOperator{\lcm}{lcm}


\newtheorem{thm}{Theorem}

\newtheorem{theorem}{Theorem}[section]
\newtheorem{lemma}[theorem]{Lemma}
\newtheorem{proposition}[theorem]{Proposition}
\newtheorem{corollary}[theorem]{Corollary}

\newtheorem{claim}[theorem]{Claim}
\newtheorem{definition}[theorem]{Definition}

\theoremstyle{definition}

\newtheorem{remark}[theorem]{Remark}

\begin{document}

\title{Endperiodic maps via pseudo-Anosov flows}

\author[M.P. Landry]{Michael P. Landry}
\address{Department of Mathematics\\
Washington University in Saint Louis }
\email{\href{mailto:mlandry@wustl.edu}{mlandry@wustl.edu}}
\author[Y.N. Minsky]{Yair N. Minsky}
\address{Department of Mathematics\\ 
Yale University}
\email{\href{mailto:yair.minsky@yale.edu}{yair.minsky@yale.edu}}
\author[S.J. Taylor]{Samuel J. Taylor}
\address{Department of Mathematics\\ 
Temple University}
\email{\href{mailto:samuel.taylor@temple.edu}{samuel.taylor@temple.edu}}
\thanks{This work was partially supported by the NSF postdoctoral fellowship DMS-2013073, NSF grants DMS-2005328,
DMS-2102018, and the Sloan Foundation.}
\maketitle

\begin{abstract}
We show that every atoroidal endperiodic map of an infinite-type surface can be obtained from a depth one foliation in a fibered hyperbolic 3-manifold, reversing a well-known construction of Thurston. This can be done almost-transversely to the canonical suspension flow, and as a consequence we recover the Handel--Miller laminations of such a map directly from the fibered structure. We also generalize from the finite-genus case the relation between topological entropy, growth rates of periodic points, and growth rates of intersection numbers of curves. Fixing the manifold and varying the depth one foliations, we obtain a description of the Cantwell--Conlon foliation cones and a proof that the entropy function on these cones is continuous and convex. 
\end{abstract}

\setcounter{tocdepth}{1}
\tableofcontents

\section{Introduction}
\label{sec:intro}

Let $L$ be an infinite-type surface with finitely many ends and without boundary.
A homeomorphism $g\colon L \to L$ is \define{endperiodic} if each end of $L$ is 
either attracting or repelling under a power of $g$, and
\define{atoroidal} if it fixes no finite essential
multicurve up to isotopy (see \Cref{sec:endperiodic} for precise definitions).

Such maps appear naturally in Thurston's work on fibered compact 3-manifolds: 
A fiber $S$ can be ``spun'' 
around a sufficiently nice surface $\Sigma$
yielding a foliation in which $\Sigma$ is a
compact leaf and its complement is fibered by parallel copies of a noncompact surface $L$, so that the
monodromy of this fibering is an endperiodic map of $L$, which must be atoroidal when $M$ is hyperbolic.

In this paper we reverse this process, obtaining any atoroidal endperiodic map 
from some fibration by a spinning operation in a suitable hyperbolic fibered 3-manifold.
More importantly, the resulting foliation
is transverse to a pseudo-Anosov flow (as in Fried \cite{Fri79}),
and the stable and unstable foliations of this flow induce a similar structure on $L$ (see \Cref{th:intro_spA} and \Cref{th:intro_spA+}). We call
the return map of such a construction a {\em spun pseudo-Anosov (spA) map}. 
\smallskip

This construction has several consequences:

\begin{itemize}
\item We recover, directly from the pseudo-Anosov structure, the dynamical laminations of
  Handel--Miller. See \Cref{th:intro_HM} and 
\cite{fenley1992asymptotic, Fen97,cantwell1999foliation,CCF19}.

 \item We identify dynamical growth rates of the spun pseudo-Anosov  map: the spA map minimizes the exponential growth rate of periodic points among all homotopic endperiodic maps. Further, this rate is equal to the exponential growth rate of intersection numbers of curves under iteration and its log is the topological entropy (suitably defined) of the spA map. See \Cref{th:intro_stretch}.
  
\item 
  The compactified mapping torus of the endperiodic map is a manifold $N$ with boundary, which
  can admit a variety of depth one foliations whose compact leaves are $\boundary N$. These
  foliations are parameterized by the {\em foliation cones} of Cantwell--Conlon,
  which are analogous to the cones on fibered faces of Thurston's norm. This analogy can
  be made explicit by the spinning construction, and we show that the foliation cones are
  exactly the pullbacks of Thurston fibered cones by the inclusion of
  $N$ into a certain fibered manifold $M$ (\Cref{th:intro_spun}). 
 From this we can show that
  topological entropy defines a continuous, convex function on each foliation cone
  (\Cref{th:intro_entropy}).   
  \end{itemize}

\subsection{Motivation from fibered face theory}
\label{sec:motivation}
Let $M$ be a closed hyperbolic manifold that fibers over the circle.
Each fiber $S$ in a fibration 
of $M$ over the circle determines (and is determined by)
its homology class, which is Poincar\'e dual to an integral element of $H^1(M;\R)$
called a \emph{fibered class}.
The various fibrations of $M$ are organized by the \emph{Thurston norm}. This is a norm $x$ on the vector space $H^1(M;\R)$ whose unit ball $B_x = B_x(M)$ is a finite sided polyhedron having the following property:
there is a set of open cones over top-dimensional faces of $B_x$, called \emph{fibered cones}, so that each primitive integral point in the fibered cone $\C$ 
is a fibered class, and hence corresponds to a fiber in some fibration of $M$ over the circle. Conversely, each fibered class lies interior to a fibered cone \cite{thurston1986norm}.

Fried later reinterpreted Thurston's theory in terms of the \define{pseudo-Anosov suspension flow}
 $\phi = \phi_{\mc C}$ that is canonically associated to the fibered cone $\mc C$, up to isotopy and reparametrization \cite{Fri79}. Each fiber surface $S$ dual to a class in $\mc C$ is isotopic to a \emph{cross section} of the flow $\phi$, i.e. it is transverse and meets each orbit infinitely often,  and the associated first return map to $S$ is the pseudo-Anosov representative of its monodromy. 

Since any fibered class $\alpha$ in $\mc C$ determines a pseudo-Anosov first return map $f \colon S \to S$  on the associated cross section $S$ of $\phi$, we can assign to $\alpha$ the logarithm of the stretch factor of $f$, i.e. its \emph{entropy}, and 
denote this quantity by $\ent(\alpha)$. Fried proves that this assignment extends to a function $\ent \colon \mc C \to (0, \infty)$ that is continuous, convex, and blows up at the boundary of $\mc C$ \cite[Theorem E]{fried1982flow}. 
McMullen extends Fried's result by showing that $\ent$ is additionally real analytic and strictly convex \cite[Corollary 5.4]{mcmullen2000polynomial}. Understanding properties of the entropy function has since figured prominently into both the study of fibered cones as well as pseudo-Anosov stretch factors \cite{farb-leininger-margalit, kin2013minimal, hironaka2010small, leininger2013number, mcmullen2015entropy}.

\subsubsection*{At the boundary}
The structure discussed above applies only to fiber surfaces, i.e. those representing classes in the interior of the fibered cone. However, in the article where his norm is introduced, Thurston explains the geometric significance of a 
taut surface $\Sigma$ representing a class in the \emph{boundary} of the fibered cone $\C$. Recall that $\Sigma$ is \define{taut} if it has no nullhomologous components and it is norm minimizing, that is $x([\Sigma]) = -\chi(S)$.
According to \cite[Remark, p. 121]{thurston1986norm}, there are foliations $\mc F$ of $M$ having $\Sigma$ as the collection of compact leaves such that the restriction of the foliation to $M \ssm \Sigma$ is a fibration over $S^1$. In particular, $\mc F$ is a taut, cooriented, depth one foliation of $M$.

\smallskip
Our work here is partially motivated by understanding the interaction between Thurston's foliations and the canonical pseudo-Anosov suspension flow $\varphi_\C$. First, after replacing $\phi_\C$ with a dynamic blowup $\phi$ (see \Cref{sec:blowup}), which modifies $\phi_\C$ only at its singular orbits, we may isotope $\Sigma$ so that it is positively transverse to $\phi$. This is a consequence of our Strong Transverse Surface Theorem (\Cref{th:stst}), which strengthens a result of Mosher that established the existence of such a transverse surface in the homology class $[\Sigma]$.

Next, for any cross section $S$ of $\phi$ (i.e. any fiber surface in the associated fibered cone, up to isotopy) 
we can spin $S$ about $\Sigma$ to obtain a infinite-type surface $L$ that is transverse to $\varphi$, meets every flow line infinitely often in the forwards and backwards direction, and accumulates only on $\Sigma$; see \Cref{sec:spinning}. Hence, there is a well-defined first return map $f \colon L \to L$ along the flow $\phi$; we call any such map obtained by this construction \define{spun pseudo-Anosov} (see \Cref{def:spA} for a more formal definition). It follows immediately that $M\ssm \Sigma$ can be identified with the mapping torus of $f$ and hence inherits a foliation by $L$--fibers. The foliation $\mc F$ of $M$ is then obtained by adjoining the components of $\Sigma$ as leaves. By construction, the foliation $\mc F$ is transverse to $\phi$.

\smallskip
The spun pseudo-Anosov map $f \colon L \to L$ inherits additional structure from the flow $\phi$. 
Since $\phi$ has invariant unstable/stable singular foliations $W^{u/s}$, their intersections $\cW^{u/s} = L \cap W^{u/s}$ are a pair of $f$--invariant singular foliations of $L$. Moreover, the expanding/contracting dynamics of $\phi$ along its periodic orbits impose corresponding dynamics at the periodic points of $f$ (see \Cref{sec:sink-source} for details).

This structure is the basis for the properties of spA maps listed above (and more formally detailed below).

\subsubsection*{Motivation from big mapping class groups}
There has been recent interest in the study of big mapping class groups, i.e. mapping class groups of infinite-type surfaces.  A significant component of this is understanding the extent to which there is a Nielsen--Thurston---like classification of elements of big mapping class groups (see e.g. \cite[Problem 1.1]{MCG_questions}). From this point of view, spun pseudo-Anosov maps of infinite-type surfaces, at least for surfaces with finitely many ends, offer such a normal-form representative for atoroidal endperiodic maps. For such maps, the second item of \Cref{th:intro_stretch} answers \cite[Problem 1.5]{MCG_questions}. 

We also remark that atoroidal, endperiodic maps are central to recent work that relates the hyperbolic geometry of their mapping tori to combinatorial invariants \cite{field2021end, FKLL}.

\subsection{Main results}
\label{sec:intro_results}
Inspired by Nielsen--Thurston theory for finite-type mapping classes, one could ask which homeomorphisms $L \to L$ are isotopic to spA maps. Our answer is that the obvious necessary conditions are also sufficient:

\begin{thm}
\label{th:intro_spA}
An endperiodic homeomorphism $g \colon L \to L$ is isotopic to an spA representative $f \colon L \to L$ if and only if it is atoroidal.
\end{thm}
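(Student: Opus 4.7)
The plan is to handle the two directions separately, with necessity being the easy half and sufficiency carrying the bulk of the work.

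\textbf{Necessity.} Suppose $g$ is isotopic to an spA representative $f\colon L \to L$. By the definition previewed in \Cref{sec:motivation}, $f$ is the first-return map, along a dynamic blowup $\phi$ of a pseudo-Anosov suspension flow $\phi_{\mc C}$, on a spun transverse surface inside a hyperbolic fibered 3-manifold $M$ containing a taut surface $\Sigma$ as the set of compact leaves of a depth one foliation. If $f$ preserved a finite essential multicurve $\gamma$ up to isotopy, I would trace out the suspension of $\gamma$ as an essential torus (or Klein bottle) in the mapping torus of $f$, which is $M \smallsetminus \Sigma$. After straightening, this would give an essential torus in $M$ not isotopic into $\boundary$-parallel position---contradicting hyperbolicity of $M$. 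Hence $g$ must be atoroidal.

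\textbf{Sufficiency.} Given an atoroidal endperiodic $g$, the plan is a four-step construction. First, compactify the open mapping torus of $g$ to a compact 3-manifold $N$ with boundary $\Sigma$ coming from the attracting/repelling end dynamics; the interior $N \smallsetminus \Sigma$ is foliated by copies of $L$ as a depth one foliation whose only compact leaves form $\Sigma$. Atoroidality of $g$ should translate directly to atoroidality (and irreducibility) of $N$. Second, enlarge $N$ to a closed or cusped hyperbolic 3-manifold $M$ that fibers over $S^1$ in such a way that $\Sigma$ is a taut, norm-minimizing surface representing a class on the boundary of a fibered cone $\mc C$, and so that $M \smallsetminus \Sigma$ is identified (as a foliated manifold) with the interior of $N$. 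This is the classical Cantwell--Conlon style embedding: double $N$ across $\Sigma$ (or perform an appropriate capping/Dehn filling along the components of $\boundary N$) and show the result is atoroidal, hence hyperbolic by geometrization, and that the two copies of the depth one foliation assemble into a genuine fibration over $S^1$ after an adjustment near $\Sigma$. Third, invoking the Strong Transverse Surface Theorem (\Cref{th:stst}), choose a dynamic blowup $\phi$ of $\phi_{\mc C}$ to which $\Sigma$ is positively transverse, pick any cross section $S$ in $\mc C$, and spin $S$ about $\Sigma$ as in \Cref{sec:spinning} to produce an infinite-type surface $L'$ transverse to $\phi$ with first return map $f\colon L' \to L'$; by construction $f$ is spA. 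Fourth, observe that both $g$ and $f$ realize the monodromy of the same depth one foliation of $M \smallsetminus \Sigma$, so after identifying $L$ with $L'$ (an identification supplied by matching the two foliated product structures on $M \smallsetminus \Sigma$) the maps $g$ and $f$ are isotopic.

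\textbf{Main obstacle.} I expect the serious work to be Step~2: producing an atoroidal, hyperbolic fibered manifold $M$ containing $N$ so that $\Sigma = \boundary N$ lies on the boundary of a fibered cone, with the Thurston fibration and the Cantwell--Conlon depth one foliation compatible. One must (a) argue that the doubling/capping yields an atoroidal closed 3-manifold---where the atoroidal hypothesis on $g$ is used essentially, since any essential torus in the double restricts to annuli in $N$ whose boundaries track fixed multicurves under iteration of $g$; (b) verify that the resulting manifold genuinely fibers in a direction for which $[\Sigma]$ lies on the boundary of the relevant fibered cone, rather than in its interior; and (c) ensure that the spun surface produced in Step~3 recovers the original foliation of $N$ by copies of $L$ and not some refinement. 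Once Step~2 is in hand, the remaining steps should follow from Fried's theory together with the transversality and spinning machinery developed earlier in the paper.
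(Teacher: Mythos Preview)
Your outline matches the paper's approach closely, and you correctly locate the hard work in Step~2. But there is a genuine gap in that step. You propose to ``double $N$ across $\Sigma$'' and then argue the two copies of the depth one foliation ``assemble into a genuine fibration over $S^1$ after an adjustment near $\Sigma$.'' The ordinary double does \emph{not} work: the depth one leaves spiral onto $\partial_\pm N$ from each side in a direction recorded by the \emph{juncture class} $j \in H^1(\partial_\pm N)$, and in the untwisted double the spiraling from the two sides is incompatible, so no closed cross section can be produced. The paper's key idea is to glue $N$ to a copy of itself by a componentwise homeomorphism $h\colon \partial_\pm N \to \partial_\pm N$ satisfying $h^*j = -j$ (e.g.\ a hyperelliptic involution composed with a pseudo-Anosov acting trivially on $H_1$). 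Under this condition the ``prefiber'' surfaces $L_U$ obtained by truncating the spiraling on each side have matching boundary curves and glue to a closed surface $S$ transverse to the extended one-dimensional foliation $\mc L$ and meeting every orbit---so $M$ genuinely fibers with $[\partial_+ N]$ on the boundary of the fibered cone. Your sub-obstacle (b) is exactly this point, and it is not addressed by a naive double.

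Your Step~4 also skips a real argument. After applying the Strong Transverse Surface Theorem you must isotope $\phi$ (equivalently, carry $S$ to an isotopic $S'$) so that $\partial_\pm N$ is transverse to the blowup; spinning $S'$ about $\partial_\pm N$ produces a new depth one foliation $\mc F'$ transverse to the flow, and there is no a priori reason $\mc F'$ equals the original $\mc F$. The paper closes this gap by invoking the Cantwell--Conlon uniqueness theorem (\Cref{th:class_determines}): since $\mc F$ and $\mc F'$ induce the same class in $H^1$ of each component of $M\cut \Sigma$, they are isotopic rel $\Sigma$, and this isotopy lets you realize the original $\mc F_L$ as transverse to the blown-up flow. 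Without this, ``matching the two foliated product structures'' is circular.
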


See \Cref{th:exists} and \Cref{th:spA_construction}.

The proof shows how to construct, given the compactified mapping torus $N$ of $g$, a closed hyperbolic manifold $M$ with a pseudo-Anosov suspension flow $\phi$ and an embedding $N \to M$ that maps $L$ to a leaf of a depth one foliation $\mc F_L$ of $M$ that is transverse to a dynamic blowup of $\phi$. The required spA map $f \colon L \to L$ is then the first return map to the leaf $L$ of $\mc F_L$.

The construction of $M$ and $\phi$ are fairly flexible and this leads to various
strengthenings of \Cref{th:intro_spA}. In particular (from \Cref{th:no_blow}), 

\begin{thm}\label{th:intro_spA+}
  An atoroidal endperiodic homeomorphism $g \colon L \to L$ is isotopic to an spA
  representative $f \colon L \to L$ which is honestly transverse to the pseudo-Anosov suspension flow.
\end{thm}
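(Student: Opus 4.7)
The plan is to build on \Cref{th:intro_spA}, whose proof produces a closed hyperbolic fibered $3$-manifold $M$, the canonical pseudo-Anosov suspension flow $\phi_{\mc C}$ on $M$, a dynamic blowup $\phi$ of $\phi_{\mc C}$, an embedding of the compactified mapping torus $N\hookrightarrow M$ with boundary $\Sigma$, and a spun surface $L\subset M\ssm\Sigma$ transverse to $\phi$ whose first return map is the required spA representative of $g$. What remains is to modify $L$ by an ambient isotopy in $M$ so that the new $L$ is transverse to the unblown-up flow $\phi_{\mc C}$, and to check that its first return map is still isotopic to $g$.

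By construction $\phi$ differs from $\phi_{\mc C}$ only inside a disjoint union of solid torus neighborhoods $U_\gamma$ of the finitely many singular orbits $\gamma$ of $\phi_{\mc C}$: inside $U_\gamma$ the singular orbit has been replaced by a family of flow-invariant annuli joining regular periodic orbits, while outside $\bigcup U_\gamma$ the two flows agree. Thus outside $\bigcup U_\gamma$, $L$ is already transverse to $\phi_{\mc C}$, and the modification to be made is purely local near each $\gamma$.

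I would then split into two cases according to whether $\gamma$ lies in $\Sigma$. If $\gamma\subset M\ssm\Sigma$, then a small neighborhood of $\gamma$ is disjoint from $\Sigma$, and there $L$ is a finite union of $\phi$-translates of pieces of the fiber $S$; since $S$ is itself a cross section of $\phi_{\mc C}$, it is transverse to $\phi_{\mc C}$ in the standard $p$-pronged singular sense, so no further modification is needed. If instead $\gamma\subset\Sigma$, then the sheets of $L$ accumulate on $\gamma$, spiraling in along the stable/unstable prongs of $\phi_{\mc C}$ at $\gamma$. Using $f$-equivariance, it suffices to carry out a compactly supported isotopy inside a single fundamental block between two consecutive $f$-translates of a sheet, and then propagate by $f$. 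In this local model each sheet can be straightened to match the local prong picture of $S$, and because $L$ never contains $\gamma$ but only accumulates on it, transversality of the result with $\phi_{\mc C}$ at every point of $L$ is automatic.

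The hard part is this last case: coordinating the local isotopies on the infinitely many sheets accumulating on $\gamma\subset\Sigma$ so that the resulting surface is still embedded and the first return dynamics are unchanged. Embeddedness should follow from the fact that the modification is supported in disjoint $f$-translates of a fundamental block, whose transverse size decays exponentially as one approaches $\Sigma$ due to the contraction/expansion of $\phi$; this geometric decay is what makes the isotopy proper and extends it continuously across the ends of $L$. That the first return map of the new $L$ is still isotopic to $f$, and hence to $g$, reduces to the observation that the modification is compactly supported in a fundamental domain for the spinning action and commutes with $f$ up to isotopy.
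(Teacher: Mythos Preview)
Your approach has a genuine gap: you focus on isotoping the noncompact leaf $L$, but the definition of spA$^+$ requires the \emph{entire} depth one foliation $\mc F$---in particular its compact leaves $\Sigma = \partial_\pm N$---to be transverse to an honest pseudo-Anosov flow. The dynamic blowup in the spA construction is there precisely because $\Sigma$ may fail to be isotopic to a surface transverse to $\phi_{\mc C}$: since $[\Sigma]$ lies on the boundary of the fibered cone, it can have zero algebraic intersection with a singular orbit $\gamma$, and in that situation no isotopy of $\Sigma$ will achieve honest transversality near $\gamma$ (this is exactly what almost-transversality was invented for). The minimality hypothesis guarantees that every blown annulus meets $\Sigma$ in core curves, so your Case~1 is vacuous and the only blown-up singular orbits are those at which $\Sigma$ itself is the obstruction. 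Isotoping $L$ alone cannot fix this, and your ``$\gamma\subset\Sigma$'' case never confronts the fact that $\Sigma$ remains a leaf of the foliation and must itself be transverse.

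The paper's proof is accordingly quite different and does not attempt any local isotopy in a fixed $M$. Instead it \emph{changes the closed manifold}. Using an auxiliary spA representative $f_a$, one records a finite collection $C$ of isotopy classes of curves on $\partial_\pm N$: the closed leaves of $\partial_\pm N \cap W_a^{s/u}$ together with boundaries of essential annuli in $N$. One then rechooses the doubling homeomorphism $h$ so that $h$ maps no curve of $C$ back into $C$, and forms a new $h$-double $M'$. The key point (relying on \Cref{lem:same_dyn}, which matches periodic half-leaves of different spA representatives via their endpoints at infinity) is that any blown annulus in $M'$ would meet $\partial_\pm N$ in curves of $C$, and the gluing would force $h$ to carry such a curve back into $C$, contradicting the choice of $h$. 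Hence no blowup is needed in $M'$ and the resulting representative is spA$^+$. The argument is global and combinatorial rather than a local surgery on $L$.
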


That is, one can vary the construction so that no dynamic blowup is needed. We call such a map ``spA$^+$'' -- see \Cref{def:spA+}.

\medskip

For any homeomorphism $g \colon L \to L$, we define its \define{growth rate}  
$\lambda(g)$ as the exponential growth rate of its periodic points:
\[
\lambda(g) = \limsup_{n \to \infty} \sqrt[n]{\# \mathrm{Fix}(g^n)},
\]
where $\mathrm{Fix}(g)$ denotes the set of fixed points of $g$. The following theorem gives various characterizations of the growth rate of an spA map, generalizing what is known about pseudo-Anosov homeomorphisms of finite-type surfaces. In its statement, $i(\alpha,\beta)$ denotes the geometric intersection number of curves $\alpha$ and $\beta$.

\begin{thm}[Characterizing stretch factors]
\label{th:intro_stretch}
Let $f \colon L \to L$ be spA. Then

\begin{enumerate}
\item $\lambda(f) = \inf_{g \simeq f} \lambda(g)$, over all homotopic endperiodic maps $g$.

\item $\lambda(f)  = \max_{\alpha,\beta} \limsup_{n\to\infty} \sqrt[n]{i(\alpha,f^n(\beta))}$, 
where $\alpha,\beta$ are essential simple closed curves on $L$.

\item $\log \lambda(f)$ is the topological entropy of the restriction of $f$ to the (unique) largest invariant compact set.
\end{enumerate}
\end{thm}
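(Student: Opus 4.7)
The plan is to exploit the structure of $f$ as a first return map of a pseudo-Anosov flow $\phi$ on the closed hyperbolic 3-manifold $M$ constructed earlier in the paper. The stable and unstable singular foliations $W^{s/u}$ of $\phi$ restrict to $f$-invariant singular measured foliations $(\cW^{s/u}, \mu^{s/u})$ on $L$, with a common scaling factor $\lambda > 1$ inherited from the pseudo-Anosov dynamics of $\phi$: $f_*\mu^s = \lambda^{-1}\mu^s$ and $f_*\mu^u = \lambda\mu^u$. This $\lambda$ agrees with the stretch factor of the pseudo-Anosov first return map on the original finite-type cross-section $S$ of $\phi$. The goal is to show that each of the three asymptotic quantities in the statement equals $\lambda$; in particular $\lambda(f) = \lambda$.

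For part (3), I would first identify the maximal compact invariant set $K \subset L$. Since each end of $L$ is attracting or repelling for some power of $f$, one may choose end neighborhoods $U_i$ strictly invariant under $f$ (for attracting ends) or $f^{-1}$ (for repelling ends); set $L_0 := L \smallsetminus \bigsqcup_i U_i$, a compact core, and define $K := \bigcap_{n \in \ZZ} f^n(L_0)$. Then $K$ is compact, $f$-invariant, and contains every compact invariant subset. The topological entropy $h_{\mathrm{top}}(f|_K) = \log \lambda$ follows by the standard variational / volume-growth argument applied to the measured-foliation structure on $K$, exactly as in the closed pseudo-Anosov case. For part (2), the scaling of $\mu^{s/u}$ yields the asymptotic estimate
\[
  i(\alpha, f^n\beta) \asymp \lambda^n \cdot \mu^s(\alpha) \cdot \mu^u(\beta)
\]
for essential simple closed curves $\alpha, \beta$ in $L$ (up to bounded error), by the standard pseudo-Anosov argument: $f^n\beta$ asymptotically aligns with $\cW^u$ carrying $\mu^u$-weight $\mu^u(\beta)$, and its intersections with $\alpha$ are proportional to $\mu^s(\alpha)$. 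Taking limsups of $n$-th roots and maximizing over pairs yields $\lambda$. A parallel counting argument, pairing periodic points of $f^n$ with closed $\phi$-orbits meeting $L$ the appropriate number of times, gives $\lambda(f) = \lambda$ directly.

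For part (1), geometric intersection number is a homotopy invariant, so $i(\alpha, g^n\beta) = i(\alpha, f^n\beta)$ for any endperiodic $g \simeq f$. Combined with part (2), this gives $\limsup_n \sqrt[n]{i(\alpha, g^n\beta)} = \lambda$ for suitable $\alpha, \beta$. It remains to establish the general inequality
\[
  \lambda(g) \ge \limsup_{n \to \infty} \sqrt[n]{i(\alpha, g^n\beta)}
\]
for an endperiodic self-homeomorphism $g$ of $L$; this is the main obstacle. In finite type the inequality follows from a classical Bonahon-style argument where each intersection of $\alpha$ with $g^n\beta$ is ``charged'' to a Nielsen class of $\mathrm{Fix}(g^{kn})$ for some bounded $k$. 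Here $\alpha$ is compact, so $\alpha \cap g^n\beta$ is contained in a compact neighborhood of $\alpha$, and endperiodicity of $g$ allows one to localize the argument to a compact subsurface containing this neighborhood and controlled forward/backward images. Carrying out this localization carefully, and verifying that the Nielsen-theoretic accounting is preserved in the presence of attracting and repelling ends, is the key technical hurdle.
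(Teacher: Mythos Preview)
Your proposal rests on a premise that fails in this setting: you assume the invariant singular foliations $\cW^{s/u}$ on $L$ carry projectively $f$-invariant transverse measures $\mu^{s/u}$ with $f_*\mu^u = \lambda \mu^u$, inherited from the pseudo-Anosov flow $\phi$. This is false for two independent reasons. First, even though the two-dimensional foliations $W^{s/u}$ of $\phi$ carry transverse measures coming from the cross-section $S$, the first return map $f$ to the depth one leaf $L$ has non-constant return time---indeed the return time tends to zero near the ends of $L$---so $f$ does not scale the restricted measures by any constant. Second, your identification of $\lambda(f)$ with the stretch factor of the first return map on $S$ is wrong: periodic points of $f$ correspond only to closed $\phi$-orbits that miss $\partial_\pm N$, whereas the stretch factor on $S$ counts all closed orbits; in general $\lambda(f)$ is strictly smaller and depends on which depth one leaf one chooses (this is precisely what the entropy function on foliation cones in Section~7 records). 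The paper explicitly remarks, citing an example of Fenley, that the invariant laminations $\Lambda^\pm$ need not admit projectively invariant transverse measures of full support.

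Because of this, your measured-foliation arguments for parts (2) and (3) do not go through, and your approach to part (1)---bounding $\lambda(g)$ below by intersection growth---is left resting on an unverified ``Bonahon-style'' inequality whose validity for arbitrary endperiodic $g$ in infinite type you yourself flag as the key technical hurdle. The paper's routes are quite different: part (1) is proved by direct Nielsen theory, producing for each $f$-periodic point a Nielsen-equivalent $g$-periodic point via the multi sink-source dynamics of lifts on $\partial\wt L$ together with a Lefschetz index argument; part (2) is proved combinatorially using the veering triangulation associated to $\phi$, an $f$-invariant train track on $L$, and the flow graph, with no appeal to transverse measures; and part (3) is obtained by showing the core $C_f$ is conjugate to an isolated invariant subset of a pseudo-Anosov on a closed surface, then invoking symbolic dynamics via Markov partitions.
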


See \Cref{th:ent_spA}, \Cref{cor:spA_growth}, and \Cref{th:intersect}. 

Because of the parallels drawn by \Cref{th:intro_stretch} between $\lambda(f)$ and the stretch factor of a pseudo-Anosov surface homeomorphism, we call $\lambda(f)$ the \define{stretch factor} of the spA map $f$. 

\smallskip

We note here that the first item in \Cref{th:intro_stretch} follows from a stronger result: if $g\colon L\to L$ is endperiodic and isotopic to the spA map $f$, then for any $f$--periodic point of period $n$ there is a Nielsen equivalent $g$--periodic point also of period $n$.
See \Cref{sec:growth} for details. In this sense, spA maps have the tightest dynamics among isotopic endperiodic maps.

\subsubsection*{Foliation cones and entropy}
Just as a closed manifold $M$ might fiber in various ways, so too might a sutured manifold $N$---in an appropriate sense. Representing these foliations in $H^1(N)$ leads to the foliation cones of Cantwell--Conlon.  In more detail, a class in $H^1(N)$ is \define{foliated} if it is dual to a fibration $N \ssm \partial N \to S^1$ whose foliation by fibers extends to $N$ by adjoining $\partial N$. Such a foliation is called a \emph{depth one foliation suited to $N$}; see \Cref{sec:endperiodic}.

\smallskip 

Cantwell and Conlon prove that the foliated classes in $H^1(N)$ comprise the integer points of
a union of finitely many open rational polyhedral cones, each of which is called a \define{foliation cone} of $N$ \cite{cantwell1999foliation, CCF19}. 
This can also be recovered as a consequence of our next result:

\begin{thm}[All foliations are spun]
\label{th:intro_spun}
There is closed hyperbolic $3$--manifold $M$ and an embedding $i \colon N \to M$ such that the morphism $i^* \colon H^1(M) \to H^1(N)$ maps each fibered cone of $M$ whose boundary contains $[\partial_+N]$ onto a foliation cone of $N$, and each foliation cone of $N$ is obtained in this manner.

Consequently, each depth one foliation suited to $N$ is obtained by spinning fibers of $M$ around $\partial_\pm N$.  

Moreover, for a fixed foliation cone $\mc C$ of $N$ there is a semiflow $\phi_\C$, obtained by restricting a pseudo-Anosov suspension flow on $M$, so that every depth one foliation $\mc F$ suited to $N$ with $[\mc F] \in \mc C$ is transverse to $\phi_\C$.
\end{thm}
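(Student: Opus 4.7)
My plan is to start from a single representative of the chosen foliation cone $\mc C$, apply the spA construction to produce $M$, and verify the pullback relation between fibered cones of $M$ and foliation cones of $N$ in both directions.

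\textbf{Setup.} Fix a foliation cone $\mc C$ of $N$ and an integral class $\alpha_0$ interior to $\mc C$, represented by a depth one foliation $\mc F_0$ with atoroidal endperiodic monodromy $g_0\colon L \to L$. Applying \Cref{th:intro_spA} yields a closed hyperbolic $3$--manifold $M$, an embedding $i\colon N \hookrightarrow M$ sending $\partial N$ to the compact leaves of a depth one foliation $\mc F_L$ of $M$, and a pseudo-Anosov suspension flow $\phi$ on $M$ (or a dynamic blowup) transverse to $\mc F_L$. Set $\phi_\mc C := \phi|_{i(N)}$, a semiflow on $N$, and let $\mc C_M \subset H^1(M)$ be the fibered cone associated to $\phi$ via Fried. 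By construction, $\mc F_0$ is obtained by spinning some fiber $S_0$ of $M$ around $i(\partial N)$, so $i^*[S_0] = \alpha_0$; in particular $[\partial_+ N]$ lies in $\partial \mc C_M$, since $\partial_+ N$ is norm-minimizing but, as a closed leaf of $\mc F_L$, fails to be a cross section of $\phi$.

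\textbf{Forward inclusion $i^*(\mc C_M) \subseteq \mc C$.} For any primitive integral $\beta \in \mc C_M$, the fiber $S_\beta$ is a cross section of $\phi$. Because $[\partial_+ N] \in \partial \mc C_M$, the spinning construction applies to $S_\beta$ around $i(\partial_\pm N)$ in the direction compatible with the attracting/repelling dynamics along $\mc F_L$, producing a depth one foliation $\mc F_\beta$ suited to $N$. A direct cohomology computation gives $[\mc F_\beta] = i^*\beta$, so $i^*\beta$ is a foliated class. Since $\mc C_M$ is connected and $i^*[S_0] = \alpha_0 \in \mc C$, the image $i^*(\mc C_M)$ must lie entirely in the single foliation cone $\mc C$.

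\textbf{Reverse inclusion $\mc C \subseteq i^*(\mc C_M)$.} I expect this to be the main obstacle. Given integral $\alpha \in \mc C$ with depth one representative $\mc F_\alpha$, I need a fibered class $\beta \in \mc C_M$ pulling back to $\alpha$. The plan is to invoke the Strong Transverse Surface Theorem (\Cref{th:stst}) to realize a compact surface $\Sigma_\alpha$ inside $i(N)$ representing $\alpha$ and positively transverse to $\phi_\mc C$, with $\partial \Sigma_\alpha$ properly embedded in $i(\partial N)$. I then extend $\Sigma_\alpha$ to a closed surface $\Sigma \subset M$ by gluing in pieces inside $M \setminus \intr i(N)$, using the prescribed flow structure of $\phi$ near its compact $\mc F_L$-leaves to maintain transversality. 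The resulting $\Sigma$ is a cross section of $\phi$, so by Fried its class $\beta$ lies in $\mc C_M$, and by construction $i^*\beta = \alpha$. The delicate step---the one requiring the most care---is arranging the extension across $i(\partial N)$ while simultaneously preserving transversality to $\phi$ and the correct cohomological restriction; this will use the precise spiraling behavior of $\phi$ at the compact leaves supplied by \Cref{th:intro_spA}.

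\textbf{All cones and the \emph{moreover} part.} The above argument gives a single fibered cone $\mc C_M$ of a particular $M$ surjecting onto $\mc C$. To realize every foliation cone of $N$ as the pullback of some fibered cone of the same $M$, I would argue that the manifold produced by \Cref{th:intro_spA} already carries the necessary fibered structure: the polyhedral structure of the Thurston norm on $M$ supplies additional fibered cones whose closures contain $[\partial_+ N]$, each one playing the role of $\mc C_M$ above for a distinct foliation cone of $N$, and the two inclusions force these to exhaust all foliation cones. Finally, the moreover statement is immediate from the setup: $\phi_\mc C = \phi|_{i(N)}$, and by the bijection established above, every depth one foliation with class in $\mc C$ is obtained by spinning a fiber of $M$ transverse to $\phi$, hence is transverse to $\phi_\mc C$.
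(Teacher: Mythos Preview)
Your forward inclusion is fine, but the reverse inclusion has a genuine gap, and it is the crux of the theorem.

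The Strong Transverse Surface Theorem (\Cref{th:stst}) is stated for \emph{closed} oriented surfaces in \emph{closed} manifolds; it does not give you a properly embedded surface $\Sigma_\alpha \subset i(N)$ with boundary on $i(\partial N)$ transverse to the semiflow. Even granting such a relative statement, the extension step---gluing pieces in $M\setminus \intr i(N)$ while preserving transversality to $\phi$---is exactly where the content lies, and ``the precise spiraling behavior of $\phi$ at the compact leaves'' is not enough on its own to control what boundary curves $\partial\Sigma_\alpha$ you produce versus what you can cap off on the other side.

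The paper avoids this entirely by choosing $M$ with care: it is built as the $h$--double $M(N,h)$ where $h$ acts by $-I$ on $H^1(\partial_\pm N)$ (so $h^*j = -j$ for \emph{every} possible juncture class $j$). This single cohomological condition is what makes Proposition~3.1 apply to \emph{every} depth one foliation $\mc F_\alpha$ suited to $N$, not just to $\mc F_0$: one de-spins $\mc F_\alpha$ to a prefiber $L_U$ in $N$, mirrors to get $L^\d_V$ in the second copy $N^\d$, and the hypothesis on $h$ forces the boundary curves to match across $\partial_\pm N$, yielding a closed cross section $S$ of some flow on $M$. Since $M = M(N,h)$ depends only on $N$ and $h$, the same $M$ works uniformly for all foliation cones at once---which also repairs your ``all cones'' paragraph, where you never explain why the $M$ built from one $\alpha_0$ in one cone $\mc C$ should see the other foliation cones.

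Finally, the \emph{moreover} clause asserts a semiflow coming from an honest pseudo-Anosov suspension flow. Your setup via \Cref{th:intro_spA} only supplies an \emph{almost} pseudo-Anosov flow (a dynamic blowup); obtaining genuine transversality with no blowup requires the additional argument of \Cref{th:intro_spA+} / Corollary~7.2, where $h$ is further constrained so that blown annuli are ruled out.
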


See \Cref{th:foliation_cones} and \Cref{cor:dual_semiflow}.

Having organized the various ways that $N$ can fiber into a finite collection of polyhedral cones and having defined stretch factors associated to each primitive integral point in these cones, we turn to explain how the stretch factors vary as the foliation is deformed (see \Cref{th:entropy_cone}):

\begin{thm}[Entropy]
\label{th:intro_entropy}
For each foliation cone $\mc C$, there is a continuous, convex function $\ent \colon \mc C \to [0,\infty)$, 
such that for any foliated class $[L] \in \mc C$, 
\[
\ent([L]) = \log \left (\text{stretch factor of the spun pseudo-Anosov } f_L \right ). 
\]
\end{thm}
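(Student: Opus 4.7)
The plan is to transport Fried's entropy function from the Thurston fibered cone of an ambient hyperbolic manifold down to the foliation cone $\mathcal{C}$, using the structural result \Cref{th:intro_spun}. First I would apply \Cref{th:intro_spun} to fix a closed hyperbolic $3$--manifold $M$, an embedding $i \colon N \hookrightarrow M$, a fibered cone $\mathcal{C}_M \subset H^1(M;\R)$ with $i^*(\mathcal{C}_M)=\mathcal{C}$ and $[\partial_+ N] \in \partial \mathcal{C}_M$, and a pseudo-Anosov suspension flow $\phi$ on $M$ whose restriction to $N$ is the semiflow $\phi_\mathcal{C}$ transverse to every depth-one foliation representing a class in $\mathcal{C}$.

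By Fried's theorem, there is a continuous, convex, degree $-1$ homogeneous function $h_M \colon \mathcal{C}_M \to (0,\infty)$ whose value on a primitive integral class $[S] \in \mathcal{C}_M$ equals $\log \lambda(f_S)$, where $f_S$ is the pseudo-Anosov monodromy of the associated fibration. The crux of the proof is then the identification $\log \lambda(f_L) = h_M([S])$ for any primitive integral $[L] \in \mathcal{C}$ and any $[S] \in \mathcal{C}_M$ with $i^*[S] = [L]$. Since $L$ is obtained by spinning $S$ about $\partial_\pm N$ and $f_L$ is the first-return of $\phi$ to $L$, I would interpret $\log \lambda(f_L)$ via \Cref{th:intro_stretch}(3) as the topological entropy of $f_L$ on its largest invariant compact set, which is carried by $\cW^{u/s} = L \cap W^{u/s}$. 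Periodic points of $f_L^n$ and of $f_S^n$ both arise from the same closed $\phi$--orbits, so the counts $\#\mathrm{Fix}(f_L^n)$ and $\#\mathrm{Fix}(f_S^n)$ should agree via the identity $\langle [L], \gamma \rangle_N = \langle i^*[S], \gamma \rangle_N = \langle [S], i_*\gamma \rangle_M$ for each closed orbit $\gamma$.

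Granting the identification, $h_M$ is constant on the fibers of $i^* \colon \mathcal{C}_M \onto \mathcal{C}$ over primitive integral classes, and hence (by degree $-1$ homogeneity and continuity) on all of $\mathcal{C}_M$. It therefore descends to a well-defined function $\ent \colon \mathcal{C} \to [0,\infty)$ with $\ent([L]) = h_M(\alpha)$ for any lift $\alpha$. Continuity and convexity of $\ent$ are inherited from the corresponding properties of $h_M$ via the linear open surjection $i^*$, and $\ent([L]) = \log \lambda(f_L)$ on primitive integral classes by construction, extending to all of $\mathcal{C}$ by homogeneity and continuity.

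The main obstacle is the identification $\lambda(f_L) = \lambda(f_S)$. The heuristic matching of periodic orbits via the common flow $\phi$ is compelling, but the rigorous argument requires careful bookkeeping of how closed $\phi$--orbits produce fixed points of each first-return map, taking into account that $L$ is infinite type while $S$ is compact and that $f_L$ may have many wandering orbits in the non-compact ends of $L$. The transversality of $\partial N$ to $\phi$ furnished by \Cref{th:intro_spA+}, together with the control on the invariant compact set from the Handel--Miller-type structure of \Cref{th:intro_HM}, should reduce this to a flow-box counting argument. One could alternatively invoke \Cref{th:intro_stretch}(2) and compare intersection-number growth rates of curves directly, but this appears no easier.
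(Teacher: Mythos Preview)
Your approach has a genuine gap: the proposed identification $\lambda(f_L) = \lambda(f_S)$ is false. Periodic points of the pseudo-Anosov $f_S$ correspond to \emph{all} closed orbits of $\phi$ in $M$, whereas periodic points of $f_L$ correspond only to closed orbits of the restricted semiflow $\phi_N$---those orbits of $\phi$ that remain in $N$ and never cross $\partial_\pm N$. Your pairing identity $\langle [L],\gamma\rangle_N = \langle [S], i_*\gamma\rangle_M$ is correct for such $\gamma$, but it says nothing about the many closed $\phi$--orbits that do cross $\partial_\pm N$; these contribute to $\#\mathrm{Fix}(f_S^n)$ but not to $\#\mathrm{Fix}(f_L^n)$. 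Hence typically $\lambda(f_S) > \lambda(f_L)$, Fried's function $h_M$ is \emph{not} constant on the fibers of $i^*$, and nothing descends. Indeed, \Cref{sec:Lein} shows that along the fiber $\alpha + k[\partial_+ N]$ the stretch factors $\lambda(f_{S_k})$ only \emph{converge} to $\lambda(f_L)$ as $k\to\infty$; they do not equal it for any finite $k$. The contrast is also visible in the conclusion: Fried's $h_M$ is strictly positive, real analytic, and blows up at $\partial\mathcal{C}_M$, while the paper notes that $\ent$ on a foliation cone can vanish and need not blow up at the boundary.

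The paper's argument works directly on $N$ instead. The stretch factor $\lambda(f_L)$ is identified with $\gr_{\phi_N}(\xi)$, the growth rate of closed orbits of the semiflow $\phi_N$ with respect to $\xi = [L] \in H^1(N)$. The ambient $M$ enters only to certify that $\xi$ is \emph{strongly positive} in the sense of \cite{LMT21} (being the pullback of a fibered class via \Cref{th:foliation_cones}). Continuity and convexity of $\xi \mapsto \log \gr_{\phi_N}(\xi)$ then come from \cite[Theorems~8.1, 9.1 and Remark~9.2]{LMT21}, which establish these properties for orbit-growth functions of such semiflows directly and are not consequences of Fried's closed-manifold theorem.
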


As previously remarked, in the classical setting of a closed fibered manifold, the associated entropy function has the additional features that it is real analytic and blows up at the boundary of the cone. Interestingly, neither of these stronger properties need to hold here since essential annuli in $N$ (which arise from invariant curves/lines of the monodromy $L \to L$)  can create obstructions.

By combining \Cref{th:intro_entropy} with previous work of the authors \cite{LMT21}, one also sees additional connections between the entropy functions on a fibered cone and the entropy function on the foliation cone obtained by cutting along a transverse surface. 
For details, see \Cref{sec:Lein}. 

\subsubsection*{Invariant laminations and Handel--Miller theory}

We conclude by explaining the connection to a previous approach to the study of endperiodic maps. In the early 1990s, Handel and Miller developed a theory to understand endperiodic maps using representatives that fix a canonical  pair of invariant geodesic laminations $\Lambda_\HM^\pm$ with respect to a fixed hyperbolic metric, analogous to the Casson--Bleiler \cite{casson-bleiler} approach to pseudo-Anosov theory in the finite-type setting. Although this was not written down by the authors, various expositions can be found in work of Fenley \cite{fenley1992asymptotic, Fen97} and Cantwell--Conlon \cite{cantwell1999foliation}, with the definitive treatment appearing in \cite{CCF19}. See \Cref{sec:HM} for more background.

The last theorem states that for any spA map $f \colon L \to L$, singular versions of the Handel--Miller laminations $\Lambda^\pm_\HM$ appear as sublaminations of its invariant foliations $\cW^{u/s}$ and in this sense every spA map is a `singular' Handel--Miller representative (see \Cref{th:lam_relation}).

\begin{thm}[spAs are HM]
\label{th:intro_HM}
The invariant singular foliations $\cW^{u/s}$ of an spA map $f \colon L \to L$ contain invariant singular sublaminations $\Lambda^\pm$ which determine the same endpoints in the hyperbolic boundary of $\wt L$ as the Handel--Miller laminations $\Lambda_\HM^\pm$. 
\end{thm}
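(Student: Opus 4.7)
The plan is to extract $\Lambda^\pm$ from $\cW^{u/s}$ as the sublaminations supported on the forward/backward recurrent leaves, and then to compare their endpoint sets in $\partial \wt L$ with those of the Handel--Miller laminations, using the dynamical characterization of $\Lambda_\HM^\pm$ developed in the CCF treatment.

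To construct $\Lambda^+$ (the case $\Lambda^-$ is symmetric, via $f^{-1}$ and $\cW^s$), I fix a compact core $K \subset L$ so that $L \ssm K$ decomposes into end neighborhoods, each attracted to an attracting end or repelled from a repelling end under iterates of $f$. Call a leaf $\ell$ of $\cW^u$ \emph{forward recurrent} if $f^n(\ell) \cap K \neq \emptyset$ for infinitely many $n > 0$, and define $\Lambda^+$ to be the closure in $L$ of the union of forward recurrent leaves. I would verify that $\Lambda^+$ is an $f$--invariant singular sublamination of $\cW^u$: invariance is immediate from the definition, and the singular structure is inherited from the singular orbits of $\phi$ meeting $L$. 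Closedness follows from the hyperbolicity of the return map to $K$ coming from the ambient pseudo-Anosov flow, together with the finiteness of singular orbits.

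Next, I analyze the endpoints of leaves of $\Lambda^+$ on $\partial \wt L$, where $\wt L$ is endowed with a complete hyperbolic metric (of finite area per simply-connected end, crowned at the other ends). The key step is to show that each leaf of $\Lambda^+$ lifts to a uniform quasi-geodesic in $\wt L$. This should follow from the fact that $\cW^u = L \cap W^u$ inherits uniform geometric control from the pseudo-Anosov flow $\phi$ on the closed hyperbolic manifold $M$, combined with the spinning description of $L$ near $\partial N$ (which gives product-like geometry in end neighborhoods). Quasi-geodesic stability in the hyperbolic surface $\wt L$ then equips each leaf of $\Lambda^+$ with two well-defined endpoints on $\partial \wt L$; let $E^+ \subset \partial \wt L \times \partial \wt L$ denote the resulting endpoint set.

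To identify $E^+$ with the endpoint set $E_\HM^+$ of $\Lambda_\HM^+$, I invoke the dynamical characterization from \cite{CCF19}: $\Lambda_\HM^+$ is determined by the attracting fixed points on $\partial \wt L$ of lifts of periodic $g$--action, taken under closure by $\pi_1 L$ and $f$. On the spA side, each periodic point of $f$ sits at the intersection of a leaf of $\cW^u$ and a leaf of $\cW^s$, and its $\cW^u$--leaf is automatically forward recurrent. Lifting, the two endpoints of this $\cW^u$--leaf are the attracting fixed points on $\partial \wt L$ of the corresponding lift of a power of $f$. Since these attracting fixed points depend only on the homotopy class $g \simeq f$ and not on the representative, they coincide with the attracting fixed points generating $E_\HM^+$. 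Closing up under $\pi_1 L$ and $f$ on both sides, together with density of periodic leaves in both $\Lambda^+$ and $\Lambda_\HM^+$, yields $E^+ = E_\HM^+$.

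The main obstacle is the quasi-geodesic estimate for leaves of $\Lambda^\pm$ in $\wt L$ and the verification that the resulting endpoints match the Nielsen-type attracting fixed points appearing in the CCF construction. The quasi-geodesic property requires controlling $L \cap W^{u/s}$ near $\partial N$, where leaves can wind many times before escaping; this should reduce to uniform estimates inside $\wt M$, transferred to $\wt L$ via the inclusion $L \hookrightarrow M$ and the induced map on universal covers. Once this control is in hand, the endpoint comparison is essentially bookkeeping: both $E^+$ and $E_\HM^+$ are generated by the same periodic-point data and closed under the same actions, so they must coincide.
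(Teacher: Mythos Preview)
Your high-level strategy matches the paper's: extract sublaminations of $\cW^{u/s}$, establish that leaves are quasigeodesic, and compare endpoint sets by matching periodic data and taking closures. However, there is a genuine gap in your definition of $\Lambda^\pm$. You take $\Lambda^+$ to be the closure of \emph{forward} recurrent unstable leaves, but the correct sublamination (the one the paper uses and the one that actually matches $\Lambda_\HM^+$) consists of the points that are not \emph{negative escaping}, i.e.\ those whose \emph{backward} $f$--orbit does not exit through the negative ends. These are different sets: an unstable leaf passing through a point of $\Lambda^-$ deep in a negative end will be forward recurrent in your sense (that point has bounded forward orbit) yet its backward orbit escapes the negative end, so it does not lie in the paper's $\Lambda^+$. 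Your closure would then be strictly larger and would fail to be nowhere dense, let alone agree with $\Lambda_\HM^+$ at infinity. The clean formulation is $\Lambda^+ = W^+_N \cap L$ where $W^+_N$ is the set of $\phi_N$--orbits missing $\partial_-N$; this makes closedness and the sublamination structure transparent.

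A second, more substantive gap is the passage from ``attracting fixed points on $\partial\wt L$'' to the endpoint comparison. The paper does not simply invoke a dynamical characterization from CCF; it first proves (Section~4) that a lift $\wt f$ fixes a point in $\wt L$ if and only if a power acts on $\partial\wt L$ with \emph{multi sink-source} dynamics, with sinks and sources exactly the endpoints of expanding and contracting half-leaves. This requires analyzing how periodic half-leaves of $\cW^{u/s}$ sit inside the two-dimensional leaves of $W^{u/s}_N$ and how they accumulate on $\partial_\pm N$, and it is where almost-transversality (blown annuli) creates real complications. Only with this in hand can one run the Lefschetz-type argument to produce a Handel--Miller fixed point from an spA fixed point and vice versa, and then match boundary leaves. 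Finally, two smaller points: the metric on $L$ is not finite-area or crowned---the paper uses Candel's leafwise hyperbolic metric, which is standard (no embedded half-planes)---and the quasigeodesic estimate is not a soft consequence of pseudo-Anosov control but a theorem of Fenley requiring that the leaf spaces of $\wt W^{u/s}$ be Hausdorff.
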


The sublaminations $\Lambda^\pm$ of $\cW^{u/s}$ are topological invariants of the isotopy class of $f$ and are important for computing the topological entropy of $f$, as in \Cref{sec:lam_entropy}. For a more precise statement of \Cref{th:intro_HM} and further details, see \Cref{sec:HM}. In this sense, spA maps give an alternative, independent structure theory of atoroidal endperiodic maps that relies on the study of pseudo-Anosov flows rather than Handel--Miller theory.

\subsection{Acknowledgments}
We thank Chris Leininger for asking a question that led us to consider limits of pseudo-Anosov stretch factors and their topological significance. We also thank John Cantwell and Larry Conlon for helpful discussions on foliation cones and endperiodic maps, and Chi Cheuk Tsang and Marissa Loving for comments on an earlier draft.
M.L. thanks the Mathematisches Forschungsinstitut Oberwolfach for providing an ideal working environment during the final stages of this project.

\section{Background}
Here we briefly collect some background needed for working with endperiodic maps and pseudo-Anosov flows.

\subsection{Endperiodic maps, depth one foliations, and sutured manifolds}
\label{sec:endperiodic}

Let $L$ be an orientable, connected, infinite-type surface with finitely many ends and no boundary. If $g \colon L \to L$ is a homeomorphism, an end $e$ of $L$ is \define{attracting} (or positive) if it has a neighborhood $U\subset L$ such that for some $n\ge1$, $g^n(U) \subset U$ and $\bigcap_{i\ge1}g^{in}(U) = \emptyset$. The end $e$ is \define{repelling} (or negative) if it is attracting for $g^{-1}$. We say that $g$ is \define{endperiodic} if each of its ends is either attracting or repelling. Throughout this paper, we also require surfaces to have no planar ends.

As in \Cref{sec:intro}, if $g$ has no invariant essential finite multicurve up to isotopy,
then it is said to be \define{atoroidal}. 
The following lemma implies a simple property of atoroidal maps that will be useful later.

\begin{lemma}
\label{lem:uncontainable}
Let $g\colon L\to L$ be an endperiodic map, and let $\alpha$ be an essential closed curve
in $L$. Suppose that each element of $\{\alpha, g(\alpha), g^2(\alpha), \dots\}$ is
homotopic into a fixed compact subset of $L$. Then $g$ is not atoroidal.
\end{lemma}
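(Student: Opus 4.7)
The plan is to produce a $g$-invariant essential finite multicurve in $L$ directly, which contradicts atoroidality. The idea is to extract, from the compactly-trapped forward orbit, a $g$-invariant finite-type subsurface of $L$ whose boundary is the desired multicurve.

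First, I would fix a complete hyperbolic metric on $L$, and for each $n\geq 0$ let $\gamma_n \subset L$ denote the geodesic representative of $[g^n(\alpha)]$, which exists because $\alpha$ is essential. Since each $[g^n(\alpha)]$ has a representative in the compact set $K$, a standard convex-hull argument in the universal cover shows that every $\gamma_n$ lies in a fixed compact $K' \subset L$ --- one can take $K'$ to be the projection to $L$ of the convex hull in $\widetilde L$ of a lift of $K$. Let $X = \overline{\bigcup_{n \geq 0} \gamma_n} \subseteq K'$, a compact geodesic lamination in $L$.

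Second, let $F = F(X)$ be the essential carrier subsurface of $X$: the smallest (up to isotopy in $L$) $\pi_1$-injective compact subsurface of $L$ whose interior contains $X$ and whose boundary components are essential in $L$. Since $X$ lies in the compact set $K'$ and is a geodesic lamination, $F$ is well-defined and finite-type. The key claim is that $[F]$ is $g$-invariant. By naturality of the carrier under homeomorphisms of $L$, $g(F) \simeq F(g(X))$ in $L$; and because $g(\gamma_n)$ is freely homotopic to $\gamma_{n+1} \subseteq X$ for each $n$, after geodesic straightening $g(X)$ becomes a sublamination of $X$, so $F(g(X)) \subseteq F(X)$ up to isotopy. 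Since $g$ is a homeomorphism, $g(F)$ has the same Euler characteristic as $F$, and containment of $\pi_1$-injective subsurfaces with the same Euler characteristic forces equality up to isotopy. Hence $[g(F)] = [F]$.

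Third, because $L$ has infinite type while $F$ is finite-type, $F$ is a proper subsurface of $L$, so $\partial F$ is non-empty. By construction $\partial F$ is an essential multicurve, and its isotopy class is $g$-invariant. This provides the desired $g$-invariant essential finite multicurve and hence $g$ is not atoroidal. The main obstacle I anticipate is verifying that the geodesic representatives $\gamma_n$ lie in a fixed compact $K'$ and that the essential carrier subsurface is well-defined for the compact lamination $X$. Both follow from standard arguments in negative curvature and surface topology, but require care because $g$ is not an isometry: one must pass between set-theoretic and isotopy-theoretic notions (e.g., via geodesic straightening) to control how $g$ interacts with $X$ and with its carrier.
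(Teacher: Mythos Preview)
Your approach is essentially the same as the paper's: find a compact $g$--invariant subsurface and take its boundary as the invariant multicurve, using an Euler-characteristic argument to upgrade containment to equality. The paper phrases this via ``$g$-sinks'' (compact subsurfaces with geodesic boundary containing a tail of the $\alpha_i$) and proves a minimal one exists by showing the class is closed under intersection; you instead take the carrier of the full union and argue $g(F)\subseteq F$ rather than $K\subseteq g(K)$, but the logic is the same.

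Two points deserve tightening. First, $X=\overline{\bigcup_{n\ge 0}\gamma_n}$ is \emph{not} a geodesic lamination: the closed geodesics $\gamma_n$ can intersect one another transversely (indeed $\alpha$ need not even be simple), so the closure is just a compact subset, not a lamination. Fortunately you never actually use the lamination structure---your definition of $F(X)$ makes sense for any compact set---so this is a terminological slip rather than a fatal error. Second, the existence of a \emph{smallest} carrier subsurface $F(X)$ is exactly the step the paper takes care to justify (via intersection of $g$-sinks after tightening), and you should not leave it as an assertion. Your convex-hull description for $K'$ is also slightly off: rather than ``the convex hull of a lift of $K$,'' take any compact subsurface of $L$ with geodesic boundary containing $K$; such a subsurface is convex, and any curve homotopic into it has geodesic representative inside it.
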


\begin{proof}
Fix a hyperbolic metric on $L$ and let $\alpha_i$ be the geodesic tightening of $g^i(\alpha)$. 
Say that a compact subsurface $S$ of $L$ with geodesic boundary is a ``$g$-sink for
$\alpha$" if all $\alpha_i$ beyond some point lie in $S$. The hypothesis guarantees such a
surface exists, and it is evident that if $S$ is a $g$-sink then so is $g(S)$ after tightening its boundary to geodesics.

The intersection of two $g$-sinks, after tightening the boundary, is a $g$-sink.
It follows that a minimal $g$-sink exists, which we call $K$. Since $K$ is minimal,
$K\subset g(K)$ up to isotopy. But since they are homeomorphic they are isotopic. The
boundary of $K$ then provides the invariant multicurve that shows $g$ is not atoroidal. 
\end{proof}

\subsubsection*{Compactified mapping tori of endperiodic maps}

For our purposes in this article, a \textbf{sutured manifold} 
is a compact, oriented 3-manifold $N$ whose boundary components each have a coorientation 
(this would be known elsewhere as a sutured manifold $(M,\gamma)$ for which $\gamma =\emptyset$). 
We write $\partial N = \partial_+ N \sqcup \partial_-N$, where $\partial_+N$ denotes the boundary components that are cooriented out of $N$ and $\partial_-N$ denotes the boundary components that are cooriented in to $N$.
The terminology of sutured manifolds will be convenient for us, but we will not use much from the theory.

Let $N$ be a sutured manifold, let $\mc F$ be a cooriented codimension one foliation of $N$, and let $\FF_0$ 
denote its set of compact leaves. The foliation $\mc F$ has \textbf{depth one} if $\mc
F|_{N \ssm \FF_0}$ defines a fibration of $N \ssm \FF_0$ over $S^1$. When $\partial N \neq
\emptyset$, all foliations of $N$ in this article will be \define{suited to} $N$ in the
sense that $\partial N = \FF_0$, as cooriented surfaces. In general, every depth one
foliation we will consider is \define{taut} in the sense that each leaf is met by a
compatibly oriented transverse curve or properly embedded arc.

The following theorem appears as \cite[Proposition 6.21]{cantwell2017cones}, see also \cite[Theorem 1.1]{cantwell1993isotopy}. For the statement, note that a depth one foliation $\mc F$ suited to a sutured manifold $N$ induces a \emph{dual class} in $H^1(N)$, i.e. the class determined by the fibration $N \ssm \partial N \to S^1$.
\begin{theorem}[Cantwell--Conlon]
\label{th:class_determines}
Let $\mc F$ and $\mc F'$ be taut, depth one foliations suited to the sutured manifold $N$ inducing the same class on $H^1(N)$. Then $\mc F$ and $\mc F'$ are isotopic via a continuous isotopy that is smooth on $N \ssm \partial_\pm N$ and constant on $\partial_\pm N$. 
\end{theorem}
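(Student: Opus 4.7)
The plan is to construct the isotopy in two stages: first a smooth ambient isotopy on the interior $\mr N := N \ssm \partial N$, where both foliations are genuine fibrations over $S^1$, and then verify that it extends continuously to all of $N$ as the identity on $\partial N$. On $\mr N$, pick smooth submersions $p, p' \colon \mr N \to S^1$ defining the fibrations $\mc F|_{\mr N}$ and $\mc F'|_{\mr N}$, and set $\omega = dp$, $\omega' = dp'$. Since $[\mc F] = [\mc F']$ in $H^1(N;\R) \cong H^1(\mr N;\R)$, we have $\omega - \omega' = df$ for some $f \in C^\infty(\mr N)$. The straight-line interpolation $\omega_t := \omega - t\,df$ is then a family of closed $1$-forms joining $\omega$ to $\omega'$ within a fixed cohomology class. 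Provided each $\omega_t$ is nowhere vanishing, the Moser trick produces a time-dependent vector field $X_t$ determined (on each component) by $\iota_{X_t}\omega_t = -f + c_t$, chosen in $\ker \omega_t$, whose flow $\Phi_t$ is a smooth ambient isotopy of $\mr N$ carrying $\ker \omega$ to $\ker \omega'$, i.e.\ carrying $\mc F|_{\mr N}$ to $\mc F'|_{\mr N}$.

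To get the extension to $\partial N$ and the property that the isotopy is constant there, I would normalize the two foliations near the boundary before running Moser. Near a component $T \subset \partial_\pm N$, the depth one structure and tautness force each leaf of $\mc F$ and $\mc F'$ to spiral into $T$ with asymptotics governed by a germ of the endperiodic monodromy at the corresponding end of $L$. Since $[\mc F] = [\mc F']$, these collar germs agree up to smooth conjugacy, so after a preliminary smooth isotopy supported in $\mr N$ one may assume $\omega = \omega'$ on an open collar neighborhood of each boundary component; equivalently, $f$ is locally constant there, and choosing the integration constants $c_t$ component-wise kills $X_t$ on that collar. The Moser isotopy is then stationary near $\partial N$, hence extends by the identity to a continuous isotopy of $N$ that is smooth on $\mr N$ and constant on $\partial_\pm N$, as required. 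Uniqueness of the depth one structure along these collars (which is where \Cref{th:class_determines} is really doing work) follows from the fact that the monodromy germ of a depth one foliation at $\partial_\pm N$ is determined by the dual class together with the coorientation recorded in the sutured data.

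The main obstacle is ensuring that every $\omega_t = \omega - t\,df$ remains nowhere vanishing on $\mr N$: a priori $t\,df$ could cancel $\omega$ at some interior point and spoil the foliation structure midway through the homotopy. The remedy is to exploit a vector field $V$ that is positively transverse to both $\mc F$ and $\mc F'$, which exists by tautness of the two depth one foliations (and can be arranged to be common by a further preliminary isotopy, using that $\mc F$ and $\mc F'$ carry the same coorientation since they share $[\mc F] \in H^1(N)$). Along $V$ both $\omega$ and $\omega'$ pair positively, so $\omega_t(V) = (1-t)\omega(V) + t\omega'(V) > 0$ everywhere, giving $\omega_t \neq 0$ for all $t \in [0,1]$. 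Packaging this positivity argument together with the collar normalization above is the technical heart of the proof; once both are in place, the Moser trick and the continuous extension to $\partial N$ proceed without further difficulty.
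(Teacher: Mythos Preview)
The paper does not give a proof of this theorem; it is quoted from Cantwell--Conlon \cite[Proposition 6.21]{cantwell2017cones}, with a pointer also to \cite[Theorem 1.1]{cantwell1993isotopy}. So there is no in-paper argument to compare against.

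Your Moser outline is a reasonable skeleton, and is in the spirit of how such results are proved. Two steps, however, are where the actual content lives and are not justified in your sketch.

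First, the collar normalization. You assert that since $[\mc F]=[\mc F']$ the spiraling germs at $\partial_\pm N$ agree up to smooth conjugacy, so after a preliminary isotopy $\omega=\omega'$ on a collar. But the cohomology class only records the juncture class on each boundary component, not the full contraction germ of the holonomy along curves in the compact leaf; two contraction germs of an interval are always $C^0$-conjugate but need not be $C^\infty$-conjugate. This is exactly why the theorem only promises a \emph{continuous} isotopy overall, smooth off $\partial N$. Making the collar match precise---producing an isotopy that aligns the foliations near $\partial_\pm N$, is smooth on $\mr N$, and extends continuously to the identity on $\partial N$---is the heart of Cantwell--Conlon's argument and cannot be waved away. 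You acknowledge parenthetically that this is ``where the theorem is really doing work,'' but then treat it as an input rather than something to prove.

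Second, the common transverse vector field. Tautness gives a transversal for each foliation separately, not a common one, and ``arranging a common one by a further preliminary isotopy'' is close to assuming the conclusion. Once the collar normalization is genuinely in hand the two $1$-forms agree near $\partial N$ and differ only on a compact core, where a direct fibration argument (two homotopic submersions to $S^1$ on a compact manifold with boundary are isotopic through submersions) finishes without needing the convex-combination trick; alternatively, on that compact core one can indeed find a common transversal, but this still requires an argument you have not supplied.
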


There is an important correspondence between depth one foliations suited to sutured manifolds and \emph{compactified} mapping tori of endperiodic maps that we now describe.
For more see \cite[Section 3]{Fen97}, \cite[Lemma 12.5]{CCF19}, or for an even more detailed treatment \cite[Section 3]{field2021end}.

Let $g\colon L\to L$ be endperiodic. The mapping torus of $g$ is $L\times[0,1]/(x,1)\sim(f(x),0)$. This mapping torus comes equipped with an oriented 1-dimensional foliation, called the \emph{suspension flow}, induced by the $\{p\}\times[0,1]$ foliation on $L\times[0,1]$; we refer to the leaves of this foliation as orbits of the suspension flow. 

The mapping torus is noncompact, but it has a natural compactification obtained by appending an ideal point to each end of an orbit of the suspension flow that escapes compact sets. It follows from endperiodicity that $g$ acts properly discontinuously on the sets of points in $L$ that escape compact sets under iteration of $g$, and similarly for iterations of $g^{-1}$. From this one can see that the union of the ideal points is naturally a disconnected surface with one component for each $g$-orbit of an end of $L$.
After gluing on this surface we obtain a compact manifold with boundary called the \textbf{compactified mapping torus} of $g$, which we denote by $N_g$.  

The components of $\del N_g$ come with natural coorientations: components corresponding to negative ends of orbits are cooriented inward, and those corresponding to positive ends of orbits are cooriented outward. The unions of outward and inward boundary components are denoted $\del_+N_g$ and $\del_-N_g$. 

An important point for us will be that $g$ is atoroidal if and only if $N_g$ is atoroidal 
\cite[Lemma 3.4]{field2021end}. Since no end of $L$ is planar, each component of $\partial_\pm N_g$ has genus at least $2$.

\smallskip
There is a natural cooriented depth one foliation $\mc F_L$ on $N_g$ whose leaves are the noncompact (i.e. depth one) surfaces $L\times\{t\}$ as well as the components of $\del_\pm N_g$. Informally, the positive ends of $L$ spiral around $\partial_+ N_g$ and the negative ends spiral around $\partial_-N_g$.
This gives $N_g$ the structure of a sutured manifold. 
This foliation is taut \cite[Lemma 3.3]{field2021end}, so in particular $\del_+N_g$ and $\del_- N_g$ are taut in the sense that they are 
Thurston norm-minimizing in $H_2(N_g)$ \cite{thurston1986norm}.

\subsection{Pseudo-Anosov flows and dynamic blowups}
\label{sec:blowup}
In this article, we consider only pseudo-Anosov flows $\phi$ on a $3$-manifold $M$ that are \define{circular}, i.e. equal to the suspension flow of a pseudo-Anosov homeomorphism up to reparametrization. 
One advantage here is that all of the structure we need concerning $\phi$ follows from the well-known structure of pseudo-Anosov homeomorphisms. For example, the unstable/stable invariant foliations $W^{u/s}$ of $\phi$ are simply the suspensions of the unstable/stable foliations of the pseudo-Anosov monodromy.

\smallskip

When dealing with a pseudo-Anosov flow $\phi$ it can be useful to slightly weaken the notion of what it means for an oriented surface $\Sigma$ to be positively transverse to $\phi$, obtaining the concept called ``almost transversality." 
For this we briefly discuss dynamically blowing up singular orbits. Dynamic blowups and almost transversality were introduced by Mosher in \cite{Mosher90correction}. 

A flow $\phi^\sharp$ is a \textbf{dynamic blowup} of $\phi$ if it is obtained by
perturbing $\phi$ in a neighborhood of some of the singular orbits of $\phi$, in the following way. We replace a singular orbit $\gamma$ by the suspension $A$ of a homeomorphism $g$ of a finite tree $T$, i.e. $A=T\times I/g$. We can identify $T$ with the intersection of $A$ with a cross-sectional disk $D$. For an edge $e$ with period $n$, $g^n$ should fix the endpoints of $e$ and act without fixed points on $\intr(e)$. The edges of $T$, together with the intersection with $D$ of the singular leaves containing $\gamma$, give a larger tree $\ol T$ in $D$. We orient each edge $e$ in $\ol T$ according to the direction in which points are moved upon first return to $e$. We require that around each vertex of $\ol T$, these orientations alternate between outward and inward. See \Cref{fig:dynamicblowup}.

The suspension $A$ is a $\phi^\sharp$-invariant annulus complex, and $\phi^\sharp$ is semiconjugate to $\phi$ by a map that collapses $A$ to $\gamma$ and is otherwise one-to-one. 
The vector fields generating $\phi$ and $\phi^\sharp$ differ only inside a small neighborhood of the orbits that were blown up. Each annulus in the complex $A$ is called a \textbf{blown annulus}. The two boundary components of a blown annulus are periodic orbits of $\phi^\sharp$. Inside the annulus, orbits of $\phi^\sharp$ spiral from one boundary component to the other.

The \textbf{stable foliation} of $\phi^\sharp$ is a singular foliation whose leaves are exactly the preimages of leaves of the stable foliation of $\phi$ under the semiconjugacy collapsing the various annulus complexes. Similarly for the \textbf{unstable foliation} of $\phi^\sharp$. Thus if $H$ is a stable leaf of $\phi$ containing a singular orbit $\gamma$ of $\phi$ which is blown up to obtain $\phi^\sharp$, then the stable leaf of $\phi^\sharp$ corresponding to $H$ will contain every blown annulus collapsing to $\gamma$. As such the stable and unstable leaves of $\phi^\sharp$ affected by the dynamic blowup will be tangent along shared blown annuli (see \Cref{fig:dynamicblowup}).

\begin{figure}[h]
\centering
\includegraphics{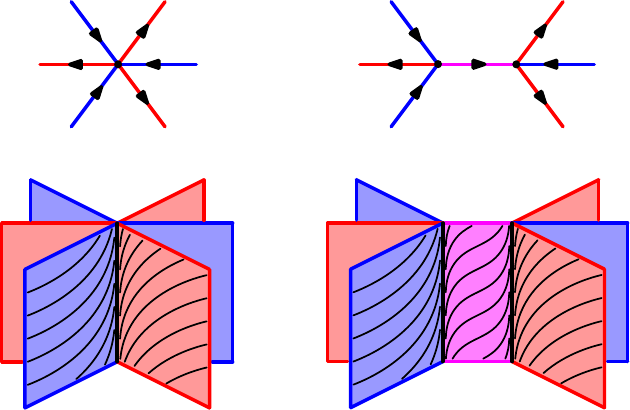}
\caption{A simple dynamic blowup of a 3-pronged singular orbit. On the top we see the intersection with a transverse disk before and after the blowup. On the bottom we see the intersection with a 3-ball before and after the blowup. In this case the associated annulus complex is a single annulus, shown in pink, which belongs to both the stable and the unstable leaf shown. Dynamic blowups can be more combinatorially complicated than this; see for example \Cref{fig:almosttransverse}.}
\label{fig:dynamicblowup}
\end{figure}

\smallskip

We say that $\Sigma$ is \textbf{almost transverse} to $\phi$ if there exists a dynamic blowup $\phi^\sharp$ of $\phi$ such that $\Sigma$ is positively transverse to $\phi^\sharp$. 
We also say that $\phi^\sharp$ is an \textbf{almost pseudo-Anosov flow}. Hence if $\Sigma$ is transverse to $\phi^\sharp$, then $\Sigma$ is almost transverse to $\phi$.

Suppose that $\Sigma$ is a compact surface positively transverse to an almost
pseudo-Anosov flow $\phi^\sharp$, obtained by dynamically blowing up a pseudo-Anosov flow
$\phi$. Let $a$ be a blown annulus of $\phi^\sharp$. Since $\Sigma \cap a$ is compact it
consists of either a collection of arcs between components of $\del a$ or a collection of
circles in $\intr(a)$ (see \Cref{fig:almosttransverse}). If $\Sigma \cap a$ consists of
arcs or is empty, then $a$ can be collapsed to obtain a ``less blown up" flow to which
$\Sigma $ is also transverse. Hence we say that $\phi^\sharp$ is \textbf{minimally blown
  up} with respect to $\Sigma$ if for every blown annulus $a$, the intersection of
$\Sigma$ with $a$ is a non-empty union of circles.

Similarly, if $\mc F$ is a depth one foliation transverse to $\phi^\sharp$, 
we say $\phi^\sharp$ is \textbf{minimally blown up} with respect to $\mc F$ if there is no
blown annulus $a$ of $\phi^\sharp$ such that $\mc F|_a$ is a foliation by properly
embedded arcs. We note that $\phi^\sharp$ is minimally blown up for $\FF$ if and only if it
is minimally blown up for $\FF_0$. Indeed, if $a$ is a blown annulus and $\FF\cap a$ is not a
foliation by arcs, then it 
 contains a closed leaf, which must be a component of $\FF_0 \cap a$, since $\FF$ is depth one.

\begin{figure}[h]
    \centering
    \includegraphics{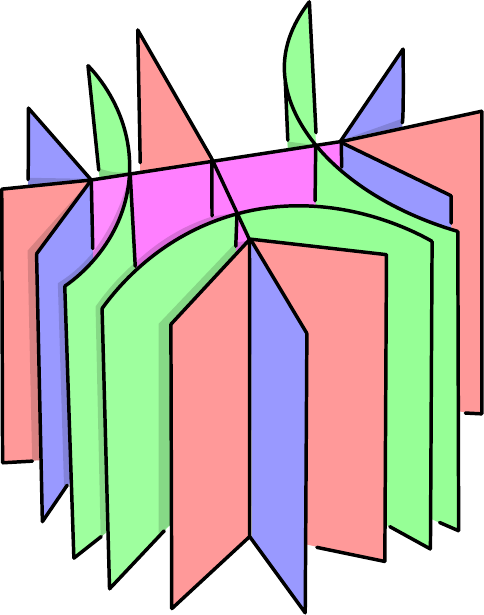}
    \caption{A local picture of a dynamically blown up 5-pronged singular orbit, together with part of a surface (green) transverse to the blown up flow. In this case the intersection of the surface with the blown annuli will be 3 circles.}
    \label{fig:almosttransverse}
\end{figure}

A key step in our construction of spA maps uses the following theorem, which is a special case of the main result in \cite{LMT_stst}.

\begin{theorem}[Strong transverse surface theorem]
\label{th:stst}
Let $M$ be a closed oriented $3$--manifold with a circular pseudo-Anosov flow $\varphi$.
For an oriented surface $\Sigma \subset M$, $\Sigma$ is 
almost transverse to $\varphi$, up to isotopy, if and only if $\Sigma$ is taut
and has nonnegative intersection number with every closed orbit of $\varphi$.
\end{theorem}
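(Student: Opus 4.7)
The plan is to establish the two directions separately. The forward implication is essentially formal; the converse is the substantive content and follows the strategy of Mosher's transverse surface theorem, suitably strengthened.

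\textbf{Forward direction.} Suppose $\Sigma$ has been isotoped to be positively transverse to a dynamic blowup $\varphi^\sharp$. Every closed orbit $\gamma$ of $\varphi$ either persists as a closed orbit of $\varphi^\sharp$ or is replaced by boundary orbits of a blown annulus complex; in either case these preimages represent $[\gamma]$ in homology, so positive transversality of $\Sigma$ with $\varphi^\sharp$ forces $[\Sigma]\cdot[\gamma]\ge 0$. For tautness, positive transversality together with compactness and recurrence of the pseudo-Anosov flow rules out nullhomologous components of $\Sigma$ (such a component would separate $M$ and be crossed by every orbit that meets it in a single direction, contradicting recurrence). Spinning $\Sigma$ near itself yields a taut cooriented foliation transverse to $\varphi^\sharp$ with $\Sigma$ as a leaf, and by Thurston, leaves of taut foliations are Thurston norm-minimizing; hence $\Sigma$ is taut.

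\textbf{Converse.} Assume $\Sigma$ is taut with $[\Sigma]\cdot[\gamma]\ge 0$ for every closed orbit $\gamma$. The plan is to directly isotope $\Sigma$ to a surface positively transverse to some dynamic blowup $\varphi^\sharp$. The main auxiliary object is a branched surface $B\subset M$ built from the combinatorial structure of $\varphi$ (for instance from a Markov partition of the pseudo-Anosov monodromy, or from a veering-type triangulation) such that $B$ is positively transverse to a dynamic blowup $\varphi^\sharp$. By a Fried-type duality, the cone of homology classes carried by $B$ with nonnegative weights coincides with the cone dual to the closed orbits of $\varphi$, so the intersection hypothesis places $[\Sigma]$ in this carrying cone. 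The local combinatorics of the blowup $\varphi^\sharp$ at each singular orbit are dictated by the intersection pattern of $\Sigma$ with the invariant foliations $W^{u/s}$ near that orbit, and the nonnegative-intersection hypothesis applied to the core orbit $\gamma$ is exactly what guarantees that a compatible tree $\ol T$ can be chosen.

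\textbf{Main obstacle.} The technical heart is that $\Sigma$ \emph{itself}, not merely some taut representative of $[\Sigma]$, can be isotoped onto $B$. In general two taut surfaces in a common homology class need not be isotopic, so norm-minimization alone is insufficient. One must instead carry out a direct isotopy argument on $\Sigma$, exploiting the structure of $B$ together with the sutured manifold decomposition of $M$ along $\Sigma$ to eliminate saddle tangencies and non-transverse intersections of $\Sigma$ with $\varphi$ in a controlled way; the tautness hypothesis prevents complexity from dropping during such moves. Once $\Sigma$ lies on $B$ with positive weights, it is automatically positively transverse to $\varphi^\sharp$, establishing almost transversality to $\varphi$ and completing the proof.
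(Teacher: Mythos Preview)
Your converse sketch follows essentially the same strategy as the paper's: use a branched surface coming from the combinatorics of $\varphi$, isotope $\Sigma$ onto it, and read off the blowup. The paper is specific that this branched surface is the 2-skeleton of the veering triangulation, regarded as a \emph{partial} branched surface in $M\ssm U$ where $U$ is a tubular neighborhood of the singular orbits; it is positively transverse to $\varphi$ itself there, with no blowup needed away from the singularities. One point worth correcting in your version: the dynamic blowup is not chosen in advance. Your phrasing ``$B$ is positively transverse to a dynamic blowup $\varphi^\sharp$'' makes it sound as though $\varphi^\sharp$ is fixed before $\Sigma$ is carried, but the blowup one needs depends on $\Sigma$ --- it is determined only \emph{after} $\Sigma$ has been isotoped into carried position, from the data of $\Sigma\cap \partial U$. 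You do say this later, but the earlier sentence muddles the logic.

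For the forward direction, ``spinning $\Sigma$ near itself'' is not a construction; one spins a cross section of $\varphi^\sharp$ around $\Sigma$ to obtain a taut foliation with $\Sigma$ as a compact leaf, and then invokes Thurston/Gabai. The paper treats this direction as routine and does not elaborate.
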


The ``if" statement of \Cref{th:stst} is the more difficult one and is proved as follows. There is a \emph{veering triangulation} associated to $\phi$, whose 2-skeleton is a cooriented branched surface in $M-U$, where $U$ is a small regular neighborhood of the singular orbits of $\phi$.
This is a \emph{partial branched surface in $M$}, in the sense of \cite{Landry_norm} (see \Cref{sec:veering}). If $\Sigma$ is a taut surface pairing nonnegatively with the closed orbits of $\phi$, then one can apply the techniques of that paper to isotope $\Sigma$ to be carried by the partial branched surface; this means it is carried  in $M-U$ (in the normal sense of branched surfaces), and intersects $U$ in a controlled way. The partial branched surface is positively transverse to $\phi$ (\cite[Theorem 5.1]{LMT21}) and its intersection with $\del U$ is easily understandable. This then allows us, given the data of $\Sigma\cap U$, to find a dynamic blowup of $\phi$ transverse to $\Sigma$.

\subsection{The spinning construction}
\label{sec:spinning}

We describe a special case of a standard construction in foliations which appears as \cite[Example 4.8]{CalegariFoliationBook}. We will refer to this operation as \textbf{spinning} (note that Calegari refers to it as ``spiraling").

Let $M$ be a closed, oriented 3-manifold equipped with a flow $\phi$. Suppose that
$S\subset M$ is a cross section to $\phi$, meaning that it is a closed oriented surface positively transverse to $\phi$ and intersecting each orbit. A consequence of this is that $M\cut S$ is homeomorphic to $S\times I$, where we can take the $I$-fibers to be the foliation on $M\cut S$ induced by the orbits of $\phi$. Further suppose that $\Sigma\subset M$ is another closed oriented surface which is transverse to both $S$ and $\phi$ such that no component of $\Sigma$ is isotopic to a component of $S$.

Let $\Sigma\times [-1,1]$ be a small tubular neighborhood of $\Sigma$ in $M$, where each $\{p\}\times [-1,1]$ fiber is an orbit segment of $\phi$. Let
\[
\ol \Sigma=\bigcup_{|n|>0}\Sigma\times \left\{\frac{1}{n}\right\}.
\]
Let $L$ be the oriented cut and paste sum of $S\ssm\Sigma$ with $\ol \Sigma$, smoothed so as to be transverse to $\phi$. 

Then $L$ is a noncompact surface. Moreover, $(M\ssm \Sigma)\cut L$ is homeomorphic to $L\times[0,1]$ where each $\{p\}\times[0,1]$ is an orbit segment of $\phi$. Hence we can fill in $M-(\Sigma \cup L)$ with a product foliation $L\times (0,1)$ to obtain a foliation $\mc F$ of $M$. The fact that $S$ is a cross section with no component isotopic to a component of $\Sigma$ implies that $S\cap \Sigma$ is homologically nontrivial in each component of $\Sigma$. This in turn implies that $L$ has ends that ``spiral" around the components of $\Sigma$,
the components of $\Sigma$ are the only compact leaves of $\mc F$, and the noncompact leaves of $\mc F$ define a fibration of $M\ssm \Sigma$ over $S^1$. Hence $\mc F$ is a taut depth one foliation of $M$.

Note that $N = M \cut \Sigma$ is naturally a sutured manifold, with the components of $\partial_\pm N$ cooriented by $\phi$, and $\mc F$ induces a taut, depth one foliation suited to $N$. We often continue to refer to this foliation of $N$ by $\mc F$ and also say that it is the result of spinning $N \cap S$ about $\partial_\pm N$.

\section{Existence of spA representatives}
\label{sec:spA_exist}

We begin by formally defining the main object of the paper.
\begin{definition}[spun pseudo-Anosov]
\label{def:spA}
An endperiodic map $f\colon L\to L$ is \textbf{spun pseudo-Anosov} (or \define{spA}) if
there exists a depth one foliation $\FF$ of a hyperbolic $3$-manifold $M$
and a transverse, circular almost pseudo-Anosov flow $\phi$ that is 
minimally blown up with respect to $\mc F$
such that $L$ is a 
leaf of $\mc F$ and $f$ is a power of the first return map induced by $\phi$.
\end{definition}

\Cref{def:spA} includes the maps described in \Cref{sec:motivation} that were
obtained by spinning a pseudo-Anosov homeomorphism of a
finite-type surface, and the spA maps we produce in this section will come from a generalization of this construction. See, e.g.,
\Cref{rmk:spin_in_proof}.

\smallskip

The main theorem of this section  gives the ``if'' direction of \Cref{th:intro_spA}:

\begin{theorem}[spA representatives exist] \label{th:exists}
Each atoroidal, endperiodic map is isotopic to a spun pseudo-Anosov map. 
\end{theorem}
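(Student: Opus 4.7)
The plan is to embed the compactified mapping torus $N=N_g$ into a closed hyperbolic $3$--manifold $M$ that fibers over the circle with pseudo-Anosov suspension flow $\phi$, and then use the spinning construction of \Cref{sec:spinning} relative to $\phi$ to produce a depth one foliation on $N$ whose spun leaves have the required spA first-return map. The desired representative $f$ of $g$ will then be (an iterate of) this return map, obtained as the monodromy of the spun foliation.

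First I would construct an embedding $i\colon N\hookrightarrow M$ into a closed hyperbolic fibered $3$--manifold admitting a pseudo-Anosov suspension flow $\phi$, arranged so that the image $\Sigma = i(\partial N)$ is a taut surface in $M$ whose dual class $[\Sigma]\in H^1(M)$ lies on the boundary of some fibered cone $\mc C$ of $M$, and so that for some primitive fiber class $[S]\in \mc C$ with fiber surface $S\subset M$ we have $i^{*}[S] = [\mc F_L]$, where $\mc F_L$ is the canonical depth one foliation on $N$. The idea is that the required manifold $M$ can be obtained by filling $N$ along its boundary with an auxiliary fibered piece --- for instance by attaching the mapping cylinder of a sufficiently complicated pseudo-Anosov map on surfaces identified with components of $\partial N$ --- and then appealing to atoroidality of $N$ (which follows from atoroidality of $g$, noted just after the definition of $N_g$) together with Thurston's hyperbolization to ensure $M$ is hyperbolic. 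A small twist of the gluing along $\partial N$ can be used to arrange the homological condition $i^{*}[S]=[\mc F_L]$ exactly.

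Next, with $M$, $\phi$ and the fiber $S$ in hand, I would apply the Strong Transverse Surface Theorem \Cref{th:stst} to the taut surface $\Sigma \subset M$: since $[\Sigma]$ lies on the boundary of the fibered cone $\mc C$, $\Sigma$ pairs nonnegatively with every closed orbit of $\phi$, so after isotopy $\Sigma$ is almost transverse to $\phi$. Let $\phi^\sharp$ be a dynamic blowup of $\phi$ to which $\Sigma$ is positively transverse; by collapsing unused blown annuli we may further assume $\phi^\sharp$ is minimally blown up with respect to $\Sigma$. Since the blowups happen in small neighborhoods of singular orbits, a small isotopy keeps $S$ transverse to $\phi^\sharp$ as well. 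Now spin $S\cap N$ about $\partial_\pm N$ as in \Cref{sec:spinning} (using the orbit direction of $\phi^\sharp$) to obtain a taut depth one foliation $\mc F$ on $N$ suited to $N$ whose spun leaves are positively transverse to $\phi^\sharp$. By construction the dual class $[\mc F]\in H^1(N)$ equals $i^{*}[S]=[\mc F_L]$, so by the Cantwell--Conlon uniqueness result \Cref{th:class_determines}, $\mc F$ is isotopic to $\mc F_L$ through an isotopy fixing $\partial N$. Transporting a spun leaf $L'\cong L$ of $\mc F$ back to a leaf of $\mc F_L$, the first return map along $\phi^\sharp$ is spA by \Cref{def:spA} and is isotopic to $g$.

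The main obstacle is the initial embedding step: producing a closed hyperbolic fibered manifold $M\supset N$ with the three simultaneous properties that $M$ is hyperbolic, that $M$ admits a fibered class $[S]$ whose restriction to $N$ is exactly $[\mc F_L]$, and that $[\Sigma]$ lies on the boundary of the corresponding fibered cone. Preserving atoroidality through the gluing (so that geometrization delivers hyperbolicity) and simultaneously controlling the Thurston norm picture to ensure $[\Sigma]$ lands on $\partial\mc C$ rather than deep in a fibered cone both require care, and I expect the proof to carry out the filling via a controlled auxiliary pseudo-Anosov piece, verifying atoroidality combinatorially and then using a Dehn-twist-type adjustment to place the homology classes as needed.
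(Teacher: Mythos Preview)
Your overall architecture matches the paper's exactly: embed $N$ in a closed hyperbolic fibered $M$, use \Cref{th:stst} to make $\partial_\pm N$ almost transverse to the suspension flow, spin a cross section around $\partial_\pm N$, and invoke \Cref{th:class_determines} to identify the resulting depth one foliation with $\mc F_L$. The steps after the embedding are essentially what the paper does.

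The gap is precisely where you flag it: the embedding. Your suggestion of ``attaching the mapping cylinder of a sufficiently complicated pseudo-Anosov map'' does not produce a fibered manifold, and there is no mechanism in your sketch for getting a closed cross section $S$ with $i^*[S]=[\mc F_L]$. The paper's device is the \emph{$h$--double}: take a second copy $N^\downarrow$ of $N$ with the coorientation on $\partial_\pm$ reversed, and glue $N$ to $N^\downarrow$ by a componentwise homeomorphism $h\colon\partial_\pm N\to\partial_\pm N$ satisfying $h^*j=-j$ on the juncture class $j$ (e.g.\ a hyperelliptic involution composed with a pseudo-Anosov acting trivially on $H_1$, which also makes $M$ hyperbolic). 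The point of the juncture-reversing condition is that a prefiber $L_U\subset N$ and a matching prefiber $L^\downarrow_V\subset N^\downarrow$ have boundaries that agree with compatible coorientations under the gluing, so they assemble into a \emph{closed} surface $S$ transverse to the extended semiflow $\mc L$ on $M$. This single construction simultaneously delivers all three of your desiderata: $M$ fibers with fiber $S$; $[\partial_\pm N]$ lies in $\mc C_{\mc L}\subset\mc C_\phi$ automatically (since $\partial_\pm N$ is transverse to $\mc L$), hence on the boundary of the fibered cone; and $S\cap N=L_U$ gives $i^*[S]=[\mc F_L]$ by construction. No Dehn-twist adjustment or separate cone-placement argument is needed.
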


\smallskip

\Cref{th:exists} will follow immediately from a more detailed result (\Cref{th:spA_construction}), which we now turn to state.
Let $g \colon L \to L$ be an endperiodic map and let $N_g$ be its compactified mapping torus. Recall that by construction $N_g$ comes equipped with a depth one foliation $\mc F_L$ whose compact leaves are $\partial_\pm N_g$ and whose depth one leaves are parallel to $L$. We call $\mc F_L$ the \define{depth one foliation associated to $L$}.

\begin{theorem} 
\label{th:spA_construction}
Let $g \colon L \to L$ be an atoroidal, endperiodic map with compactified mapping torus $N = N_g$. Then there exists
\begin{itemize}
\item a hyperbolic fibered $3$-manifold $M$,
\item a pseudo-Anosov suspension flow $\phi^\flat$ on $M$, and 
\item an embedding $N \to M$ and a dynamic blowup $\phi$ of $\phi^\flat$ 
so that the depth one foliation $\mc F_L$ extends to a depth one foliation of $M$ with respect
to which $\phi$ is transverse and minimally blown up.
\end{itemize}
Hence, the first return map $f \colon L \to L$ determined by $\phi$ is a spun pseudo-Anosov map isotopic to $g$.
\end{theorem}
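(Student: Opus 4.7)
The plan is to close off $N$ by gluing pseudo-Anosov mapping tori along $\partial_\pm N$ to form a closed hyperbolic fibered manifold $M$, extract a global fibration whose suspension flow restricts compatibly to $N$, and then use the Strong Transverse Surface Theorem to produce the required dynamic blowup. Since $L$ has no planar ends, each component of $\partial_\pm N$ has genus at least two, so admits pseudo-Anosov homeomorphisms $\psi_\pm$ with mapping tori $M_\pm$. Orient their suspension flows to match the coorientations of $\partial_\pm N$ as boundary of $N$ (inward into $N$ from the $M_+$ side, outward from the $M_-$ side), and glue $M_\pm$ to $N$ along $\partial_\pm N$ to form a closed oriented $3$--manifold $M$. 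Since $g$ is atoroidal, $N$ is atoroidal, and $M_\pm$ are atoroidal as hyperbolic fibered manifolds. Precomposing the gluings with sufficiently high powers of $\psi_\pm$ destroys any essential torus crossing the gluing surfaces, so Thurston's geometrization for Haken manifolds makes $M$ hyperbolic.

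To fiber $M$, extend $\mc F_L$ to a depth one foliation $\hat{\mc F}_L$ on $M$ by adjoining the fibrations of $M_\pm$ by $\partial_\pm N$; its only compact leaves are the components of $\partial_\pm N$. To build a closed fiber, pick any depth one leaf $L_0$ of $\mc F_L$ in $N$; each end of $L_0$ spirals onto a component of $\partial_\pm N$. Truncate these spiraling ends along cross-sectional curves of the spirals, and cap off with pieces of fibers of $M_\pm$ that spiral out of those same curves in the opposite sense inside $M_\pm$. This yields a closed oriented surface $S \subset M$ transverse to a combined flow assembled from the mapping-torus flow of $g$ on $N$ and those of $\psi_\pm$ on $M_\pm$. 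Hence $S$ is a cross section of a circular pseudo-Anosov suspension flow $\phi^\flat$ on $M$, which gives $M$ the structure of a hyperbolic fibered $3$--manifold with pseudo-Anosov monodromy.

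It remains to arrange transversality of $\hat{\mc F}_L$ to a dynamic blowup of $\phi^\flat$. By construction, $\partial_\pm N \subset M$ is taut (it is norm-minimizing in its homology class, as discussed in \Cref{sec:endperiodic}) and lies in the closure of the fibered cone containing $[S]$. Consequently $[\partial_\pm N]$ pairs non-negatively with every closed orbit of $\phi^\flat$, and \Cref{th:stst} furnishes a dynamic blowup $\phi$ of $\phi^\flat$ with $\partial_\pm N$ positively transverse. The noncompact leaves of $\hat{\mc F}_L$, being built from leaves of $\mc F_L$ and fibers of $M_\pm$, can be isotoped simultaneously transverse to $\phi$; passing to a minimal further collapse makes $\phi$ minimally blown up with respect to $\hat{\mc F}_L$. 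The first return map $f \colon L \to L$ on an interior leaf $L$ of $\hat{\mc F}_L \cap N$ is then spA by \Cref{def:spA}, and since the flow inside $N$ remains the suspension flow of $g$, $f$ is isotopic to $g$.

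The main obstacle, beyond routine bookkeeping, is ensuring that $[\partial_\pm N]$ lies in the closed fibered cone of $[S]$ so that \Cref{th:stst} applies. This is a compatibility condition between the gluing and the Thurston-norm structure on $M$, and is the place where the choice of $\psi_\pm$ and of the gluing maps must be made carefully --- both to preserve hyperbolicity and to place $[\partial_\pm N]$ on the correct fibered face.
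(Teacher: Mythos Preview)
Your overall strategy --- close up $N$ to a closed hyperbolic manifold, find a fibration, apply the Strong Transverse Surface Theorem to $\partial_\pm N$, and then spin --- is the same as the paper's.  But the specific closing-up you propose does not work as written, and the gap you flag in your final paragraph is exactly the one the paper's construction is designed to avoid.

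First, your extension of $\mc F_L$ to $M$ is not a depth one foliation.  Adjoining the fibrations of $M_\pm$ by copies of $\partial_\pm N$ gives \emph{all} of those fibers as compact leaves, not just the single copies along the gluing.  More seriously, your construction of the closed surface $S$ is not well-defined: the fibers of $M_\pm$ are \emph{closed} parallel copies of $\partial_\pm N$ --- there is no sense in which ``pieces of fibers of $M_\pm$ spiral out of those same curves.''  To cap off the truncated leaf $L_U$ you would need a surface in $M_\pm$, transverse to the flow, with boundary the juncture realization $m \subset \partial_\pm N$.  Since $m$ represents a nonzero class in $H_1(\partial_\pm N)$, this is a genuine constraint on $\psi_\pm$, not routine bookkeeping.

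Second, you correctly identify that the crux is placing $[\partial_\pm N]$ in the closure of the fibered cone of $[S]$, but you do not resolve it.  The paper handles both issues simultaneously with the \emph{$h$-double}: one glues $N$ to a second copy $N^\downarrow$ (with coorientations reversed) via a componentwise homeomorphism $h\colon \partial_\pm N \to \partial_\pm N$ satisfying $h^*j = -j$ on juncture classes.  This condition guarantees that the truncated leaf $L_U \subset N$ glues to a truncated leaf $L_V^\downarrow \subset N^\downarrow$ to form a closed cross section $S$ of the \emph{extended suspension flow} $\mc L$ (\Cref{prop:double_to_fiber}).  Because $\partial_\pm N$ is already transverse to $\mc L$, one has $[\partial_\pm N] \in \C_{\mc L} \subset \C_\phi$ by Fried, and the STST applies without further argument.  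The final step --- showing the spun foliation from the $\phi^\sharp$-cross section is isotopic to $\mc F_L$ --- is handled by \Cref{th:class_determines} (Cantwell--Conlon), not by ad hoc simultaneous isotopy.
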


The spA map $f \colon L \to L$ given by \Cref{th:spA_construction} is called an \define{spA representative} of $g$. 
We note that the constructed manifold $M$ and flow $\phi^\flat$ is quite flexible and this is one strength of the theory. For example, 
let us also give a natural strengthening of \Cref{def:spA}:

\begin{definition}[spA$^{+}$]\label{def:spA+}
An endperiodic map $f\colon L\to L$ is \define{spA$^+$} if there exists a circular pseudo-Anosov flow $\phi$ on a hyperbolic $3$-manifold $M$ and a depth one foliation $\FF$ transverse to $\phi$ such that $L$ is a depth one
leaf of $\mc F$ and $f$ is a power of the first return map induced by $\phi$.
\end{definition}

That is, a map $f\colon L \to L$ is spA$^+$ 
if no dynamic blowup is needed to make $\mc F$ positively transverse to $\phi$. We will prove in \Cref{th:no_blow} that every atoroidal endperiodic map is isotopic to an spA$^+$ map by showing that $M$ in \Cref{th:spA_construction} (or in \Cref{def:spA}) can be taken so that $\mc F$ is transverse to a circular (honest) pseudo-Anosov flow.

\begin{remark}[The spA package] \label{rmk:package}
Each spA map $f \colon L \to L$ comes equipped with the manifold $M$, foliation $\mc F$, and flow $\phi$ as in \Cref{def:spA}. Moreover, the compactified mapping torus $N = N_f$ can also be recovered (as in \Cref{th:spA_construction}) as the component of $M \cut \mc F_0$ containing the leaf $L$, where $\mc F_0$ denotes the compact (depth zero) leaves of $\mc F$. In general, the resulting map $N \to M$ could identify components of $\partial_\pm N$, but one can modify the foliation $\mc F$ by replacing its compact leaves with standardly foliated $I$-bundles. After this modification, $N \to M$ is an embedding. We always assume that this is the case and call the data: $M,\mc F, \phi, N$ an \define{spA package} for $f \colon L \to L$.
\end{remark}

\subsection{Juncture classes and spiraling neighborhoods}\label{sec:junctureclasses}
Fix an endperiodic map $g \colon L \to L$.
Let $N = N_g$ be its compactified mapping torus with the associated taut depth one foliation $\FF = \FF_L$. 
Let $\mc L = \mc L_g$ be the transverse $1$-dimensional oriented foliation on $N$ induced by the suspension flow of $g$.
We will sometimes refer to $\mc L$ as a semiflow on $N$. Note that $\mc F$ is cooriented by $\mc L$.

The foliation $\FF$ uniquely determines 
 its dual class $\xi \in H^1(N)$ 
that assigns to each oriented loop its signed intersection number with $L$. 
The pullback of $\xi$ to $H^1(\partial N)$ is called the \define{juncture class} $j\in H^1(\partial N)$ of $\partial N$ associated to $\FF$, and the restriction of $j$ to each 
component $\Sigma$ of $\partial N$ is called the juncture class of $\Sigma$.
A \define{realization} of the juncture class of $\Sigma$ is a cooriented collection of essential
simple curves in $\Sigma$ that represent the juncture class in $H^1$ having the property
that no subset 
is the boundary of a complementary subsurface of $\Sigma$ with either the
outward or inward pointing coorientation.

\begin{remark}
Since $\Sigma$ is closed,  a juncture class (or any cohomology class) can be realized by a single cooriented curve.
\end{remark}

A \define{spiraling neighborhood} $U$ of a component $\Sigma$ of $\partial_\pm N$ is a
collar neighborhood of $\Sigma$, foliated by arcs of $\mc L$,  whose outer boundary component is $\Sigma$ and whose inner boundary component is a surface $\Sigma_U$ that is transverse to both $\mc L$ and $L$. See, for example, \cite[Section 3]{cantwell1993isotopy} or \cite[Section 4]{fenley1992asymptotic}.
Note that $\Sigma_U \cap L$ represents the juncture class on $\Sigma$ after identifying $\Sigma_U$ with $\Sigma$ along the $\mc L$ fibers and coorienting using the coorientation on $L$. Moreover, after an isotopy of $\Sigma_U$ to remove product regions between $\Sigma_U$ and $L$, we may assume that $\Sigma_U \cap L$ realizes the juncture class on $\Sigma$. 
In general, a spiraling neighborhood is a disjoint collection of spiraling neighborhoods of each component of $\partial_\pm N$. 

Given a spiraling neighborhood $U$ of $\partial_\pm N$ we can collapse it along fibers of $\mc L$ so that $L \ssm \mathrm{int}(U)$ is sent to a properly embedded, cooriented surface $L_U \subset N$ whose boundary, with its induced coorientation, represents the juncture class in each component of $\partial_\pm N$.  More precisely, $L_U$ is obtained from $L \ssm \mathrm{int}(U)$ by flowing $L \cap \Sigma_U$ into $\partial_\pm N$ within $U$ and slightly isotoping the resulting surface rel boundary towards $\partial_\pm N$ so that its interior is transverse to $\mc L$.
See \Cref{fig:L-LU}.

\begin{figure}[htbp]
\begin{center}
\includegraphics[]{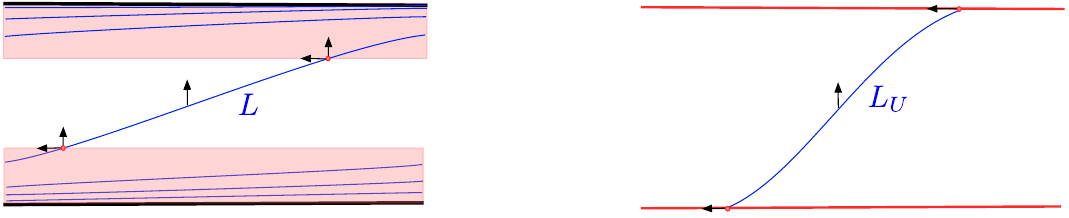}
\caption{Producing $L_U$ by pushing the spiraling of $L$ into $\partial_\pm N$.}
\label{fig:L-LU}
\end{center}
\end{figure}

By construction, $L_U$ is dual to $\xi$ and its interior is positively transverse to $\mc L$. One should think of $L_U$ as obtained from $L$ by `pushing the spiraling into $\partial_\pm N$.' The spiraling neighborhood $U$ determines how much of the spiraling is pushed into $\partial_\pm N$ and the following well-known lemma says that by choosing $U$ appropriately, any realization of the juncture class $j$ occurs as the boundary of $L_U$. See for example \cite[Lemma 0.6]{gabai1987foliations} where a more general statement is proven.

\begin{lemma} \label{lem:spiral}
For any realization $m$ of the juncture class $j\in H^1(\partial_\pm N)$, there is a spiraling neighborhood $U$ so that
$\partial L_U = m \subset \partial_\pm N$.
\end{lemma}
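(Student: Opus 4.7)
The plan is to construct the spiraling neighborhood $U$ directly from $m$ by specifying its inner boundary $\Sigma_U$ in a product collar of $\partial_\pm N$. It suffices to work one component of $\partial_+ N$ at a time, the $\partial_- N$ case being symmetric, so fix a component $\Sigma$ of $\partial_+N$ and set $m_\Sigma = m \cap \Sigma$, a realization of $j|_\Sigma$. The target is a closed surface $\Sigma_U$ transverse to both $\mc L$ and $L$ such that, under the $\mc L$-flow identification $\Sigma_U \cong \Sigma$, the image of $L \cap \Sigma_U$ is exactly $m_\Sigma$; taking $U$ to be the collar region between $\Sigma$ and $\Sigma_U$ will then give $\partial L_U = m$ by construction.

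The local input is the standard product model for a depth-one foliation near a compact leaf: there is a collar $V \cong \Sigma \times [0,1)$ of $\Sigma = \Sigma \times \{0\}$ in which $\mc L$ is foliated by the $[0,1)$-fibers and every non-compact leaf of $\mc F|_V$ spirals monotonically toward $\Sigma$. A baseline spiraling neighborhood $U_0$ is given by $\Sigma_{U_0} = \Sigma \times \{\epsilon_0\}$ for small $\epsilon_0$, and its intersection with $L$ flows to some realization $m_0$ of $j|_\Sigma$. I would then vary $\Sigma_U$ by taking it to be the graph of a smooth function $f\colon \Sigma \to (0,1)$. Dipping $f$ deeper into $V$ across a small subregion of $\Sigma$ sweeps $\Sigma_U$ across additional spiraling sheets of $L$, adding to $L \cap \Sigma_U$ pairs of parallel, oppositely cooriented juncture curves that bound strips in $\Sigma_U$; lifting $f$ removes such pairs. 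Since $m_\Sigma$ and $m_0$ both realize $j|_\Sigma$, they differ by a finite sequence of ambient isotopies together with such pair additions and removals, and each move is implemented by a local modification of $f$ combined with an ambient isotopy of $\Sigma_U$ in $V$ kept transverse to $\mc L$ and $L$.

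The main obstacle is the final matching step. Once $f$ has been chosen so that $L \cap \mathrm{graph}(f)$ contains the correct number of components in each parallelism class, one must perform a final ambient isotopy of $\Sigma_U$—remaining transverse to both $\mc L$ and $L$—that aligns the cooriented multicurve $L \cap \Sigma_U$ with $m_\Sigma$ on the nose, not merely in the same cohomology class. This is precisely where the realization hypothesis on $m$ is essential: the absence of a proper subset of $m_\Sigma$ bounding a subsurface with a consistent coorientation rules out the cobordisms that would otherwise permit two inequivalent realizations in the same class, reducing the matching to a finite sequence of the controllable moves above. Assembling the resulting inner boundaries over all components of $\partial_\pm N$ and taking $U$ to be the union of the collar regions between $\partial_\pm N$ and $\Sigma_U$ then gives the required spiraling neighborhood. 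A more general version of this statement is proved in \cite[Lemma~0.6]{gabai1987foliations}.
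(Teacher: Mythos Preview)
The paper does not supply its own proof of this lemma: it simply records it as well-known and cites \cite[Lemma~0.6]{gabai1987foliations}, exactly as you do in your final sentence. So there is no ``paper's proof'' to compare against beyond that citation.

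Your sketch sets up the right picture (product collar, inner boundary as the graph of a height function, varying the graph to adjust the intersection multicurve), but the step you yourself flag as ``the main obstacle'' is not actually resolved. Two realizations of the same class on a closed surface of genus at least $2$ need \emph{not} be isotopic---any nontrivial Torelli element carries a realization to a homologous, non-isotopic one---so the assertion that $m_0$ and $m_\Sigma$ differ only by ambient isotopy together with insertion/deletion of cancelling parallel pairs requires real justification. Moreover, the local ``dipping'' move you describe, as written, only produces null-homotopic intersection circles (the boundary of the dipped disk), not essential parallel pairs; the genuinely useful moves come from pushing the graph across the spiraling sheets of $L$ along essential curves, and analyzing those is precisely the content of Gabai's lemma. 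Since you close by invoking Gabai anyway, your write-up is on par with the paper's treatment, but the intervening sketch should not be presented as a self-contained argument---it is an outline whose decisive step is the one being cited.
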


Finally, we say that the properly embedded surface $L_U$ is a \define{prefiber}
 if its interior meets each orbit of the semiflow $\mc L$. It is easy to see that for any $L_U$ there is a spiraling neighborhood $U' \subset U$ so that $\partial  L_{U'} = \partial  L_U$ and $L_{U'}$ is a prefiber. Informally, $L_{U'}$ can be obtained by ``peeling off'' another layer of $L$ from each component of $\partial_\pm N$.

\subsection{Extensions to the $h$--double}
\label{sec:ext}
Let $h\colon \partial_\pm N \to \partial_\pm N$ be a component-wise homeomorphism. The \define{$h$--double} of $N$ is the manifold $M = M(N, h)$ obtained by taking two copies of $N$ and gluing their boundaries together by $h$. When considering the 
triple $(N, \FF, \mc L)$ as above, 
$M$ comes
equipped with an induced taut depth one foliation and transverse, oriented $1$-dimensional foliation, still denoted by $\FF$ and $\mc L$ respectively, that extend the associated foliations on the two copies of $N$. 

Let us be more precise about the construction of the $h$--double.  Let $N^\d$ denote the sutured manifold $N$ with the coorientation on $\partial_\pm N$ reversed.  Hence, there are identifications $\sigma_\pm \colon \partial N_\pm \to \partial N^\d_\mp$ induced by the identity $N \to N^\d$. The sutured manifold $N^\d$ comes equipped with a depth one foliation $\FF^\d$ and oriented one dimensional foliation $\mc L^\d$ whose coorientation and orientation, respectively, have been reversed. We then glue $N$ to $N^\d$ via the map $\sigma_\pm \circ h \colon \partial_\pm N \to \partial N^\d_\mp$ and call the resulting manifold $M = M(N, h)$. By construction, the foliations $\FF$ and $\FF^\d$ combine to give a taut, cooriented depth one foliation on $M$, which we continue to denote by $\FF$, that is positively transverse to the oriented one dimensional foliation $\mc L \cup \mc L^\d$, which we continue to denote by $\mc L$. We can smooth $\mc L$ in a neighborhood of $\partial_\pm N$ maintaining transversality to $\FF$ and parameterize to obtain a flow on $M$ that is positively transverse to $\FF$.

\begin{remark}
The $h$--double $M = M(N,h)$ depends only on the sutured manifold $N$ and $h$. Hence, any
depth one foliation suited to $N$ (and transverse semiflow) automatically extend to $M$ by the construction.
\end{remark}

We identify $N$ with its image in $M$ coming from the construction. 
\begin{proposition} \label{prop:double_to_fiber}
If $h\colon \partial_\pm N \to \partial_\pm N$ reverses the sign of each juncture class 
(i.e. $h^*j = - j$ in $H^1(\partial_\pm N)$), then for any spiraling
neighborhood $U$ of $\partial_\pm N$ for which $L_U$ is a prefiber, there is a closed,
cooriented surface $S$ in $M = M(N,h)$ so that
\begin{enumerate}
\item $S$ has positive transverse intersection with every orbit of the extended flow $\mc L$ on $M$,
\item $S$ transversely intersects $\partial_\pm N \subset M$, and 
\item $S \cap N$ is properly isotopic to $L_U$. 
\end{enumerate}
\end{proposition}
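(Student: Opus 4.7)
The plan is to build $S$ by gluing $L_U \subset N$ to a suitably chosen mirror counterpart $L^\d_{U^\d} \subset N^\d$ along $\partial_\pm N = \partial N^\d_\mp \subset M$. The key observation is that in $N^\d$ the foliation $\mc F^\d$ has dual class $-\xi \in H^1(N^\d)$ (since its coorientation was reversed), so the juncture class on $\partial N^\d_\mp$ is $-j$ under the identification $\sigma_\pm$. A short cohomology computation using the hypothesis $h^*j = -j$ then shows that the image $(\sigma_\pm \circ h)(\partial L_U) \subset \partial N^\d_\mp$ is a cooriented realization of $-j$, which matches the juncture class on the $N^\d$ side. This is exactly the step where the hypothesis enters.

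Next I would apply \Cref{lem:spiral} in $N^\d$ to produce a spiraling neighborhood $U^\d$ of $\partial N^\d_\mp$ with $\partial L^\d_{U^\d} = (\sigma_\pm \circ h)(\partial L_U)$ as cooriented sets of curves. A further shrinking of $U^\d$, peeling off additional layers of $\mc F^\d$ as in the discussion following \Cref{lem:spiral}, will ensure that $L^\d_{U^\d}$ is also a prefiber in $N^\d$, without altering its boundary.

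I would then set $S := L_U \cup L^\d_{U^\d}$; the matching of boundaries makes this a closed cooriented surface in $M$ that meets $\partial_\pm N$ transversely along $\partial L_U$, proving conclusion (2). A small smoothing near $\partial_\pm N$, analogous to the smoothing of $\mc L$ performed in \Cref{sec:ext}, makes $S$ positively transverse to the extended flow $\mc L$ on all of $M$, since $L_U$ and $L^\d_{U^\d}$ are transverse to $\mc L$ and $\mc L^\d$ respectively in their interiors and their boundary coorientations align. The conclusion $S \cap N$ properly isotopic to $L_U$, required by (3), is immediate from the construction after this smoothing. For (1), any orbit $\gamma$ of $\mc L$ in $M$ either lies in $N$, lies in $N^\d$, or alternates between the two; in every case a maximal subsegment of $\gamma$ in $N$ is a complete orbit of $\mc L|_N$ and so meets $L_U$ because $L_U$ is a prefiber, and similarly for $N^\d$, hence $\gamma$ meets $S$.

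The main obstacle I expect is the sign and coorientation bookkeeping that ensures $\partial L_U$ and $\partial L^\d_{U^\d}$ really glue to a coherently cooriented surface positively transverse to $\mc L$; the hypothesis $h^*j = -j$ is exactly tailored to make this work. Once the boundary data is arranged correctly, \Cref{lem:spiral} together with the prefiber condition handle the rest with no further difficulty.
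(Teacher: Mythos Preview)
Your proposal is correct and follows essentially the same approach as the paper: both construct $S$ as the union $L_U \cup L^\d_V$ after using the hypothesis $h^*j=-j$ together with $\sigma_\pm^*(j^\d)=-j$ to show that $(\sigma_\pm\circ h)(\partial L_U)$ realizes $j^\d$, then invoke \Cref{lem:spiral} (with the prefiber adjustment) on the $N^\d$ side and smooth near $\partial_\pm N$. Your verification of item~(1) is slightly more explicit than the paper's, but the argument is the same.
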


\begin{remark}\label{rmk:hyp}
For \Cref{prop:double_to_fiber}, one can take $h$ to be any orientation preserving homeomorphism whose restriction to each component of $\partial_\pm N$ acts by $-I$ on $H_1$.
In particular a hyperelliptic involution has this property.
\end{remark}

\begin{proof}
Fix a spiraling neighborhood $U$ of $\partial_\pm N$ and consider the realization $m =
\partial L_U$ of the juncture class $j$ in $\partial_\pm N$.
Observe first that
$$
-j = \sigma_\pm^*(j^\d)
$$
which follows from the fact that 
$\FF^\d$ is $\FF$ with its coorientation reversed, i.e. under the identification $N \cong N^\d$ the
class in $H^1$ determined by $\FF^\d$ is exactly $-j$.
Now the hypothesis on $h$ implies that 
$$ h^* \circ \sigma_\pm^*(j^\d) = h^*(-j) = j.$$
This implies that the cooriented image
$m^\d = \sigma_\pm \circ h(m)$ is a realization of
the juncture class $j^\d$.

Now by \Cref{lem:spiral} there is a spiral neighborhood $V$ of $\partial_\pm N^\d$ so that the boundary of the properly embedded surface $L^\d_V$ in $\partial N^\d_\pm$ is equal to $m^\d$ and so that $L_V$ is a prefiber.
Hence in the $h$--double $M$, the boundaries of $L_U$ and $L^\d_V$ are identified with compatible coorientations. We define $S$ to be the unions $L_U \bigcup L^\d_V$. Since the interiors of $L_U$ and $L^\d_V$ are each positively transverse to the extended flow $\mc L$ and their coorientations agree across their boundary, we see that $S$ can be smoothed in a neighborhood of $\partial_\pm N$ to be positively transverse to $\mc L$ and $\partial_\pm N$. 
See \Cref{fig:glue_match}. Since $L_U$ is a prefiber, $S$ intersects each orbit of the extended flow $\mc L$ on $M$.
\end{proof}

\begin{figure}[htbp]
\begin{center}
\includegraphics[width = 1 \textwidth]{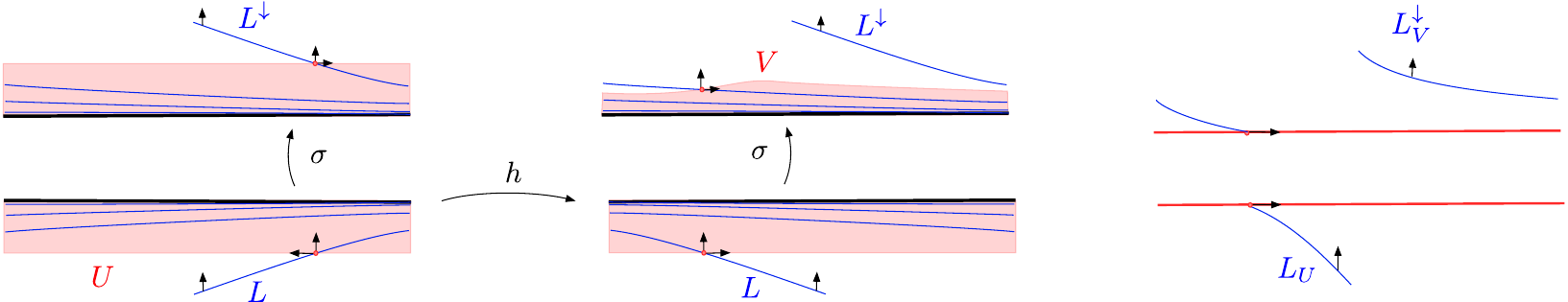}
\caption{Producing the transverse surface $S = L_U \bigcup L^\d_V$.}
\label{fig:glue_match}
\end{center}
\end{figure}

Since $S$ has positive transverse intersection with each orbit of $\mc L$, the manifold $M\cut S$ is a product $I$-bundle foliated by segments of $\mc L$. This is to say that 
there is a well-defined first return map $S \to S$ that determines a fibration of $M$ over $S^1$ with fiber $S$. 
\begin{remark}[Spinning $S$ back to $\mc F$]
\label{rmk:spinning_back}
By construction, spinning the surface $S$ obtained in \Cref{prop:double_to_fiber} about $\partial_\pm N$ reproduces the foliation $\mc F$.
\end{remark}

\subsection{Hyperbolic manifolds and circular pseudo-Anosov flows}
Recall that since $g \colon L  \to L$ is {atoroidal} its compactified mapping torus $N =N_g$ is atoroidal and each component of $\partial_\pm N$ is a closed surface of genus at least $2$.

\begin{lemma} \label{lem:hyp}
For $N$ as above, the gluing map $h$ (satisfying \Cref{prop:double_to_fiber}) can be
chosen so that $M(N,h)$ admits a hyperbolic structure.
\end{lemma}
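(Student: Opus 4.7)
The plan is to choose $h$ so that the closed manifold $M(N,h)$ satisfies the hypotheses of Thurston's hyperbolization theorem for Haken $3$-manifolds. First I would collect the topological properties of $N$: since $g$ is atoroidal, so is $N$; since no end of $L$ is planar, every component of $\partial_\pm N$ has genus at least $2$; and the taut depth-one foliation $\FF_L$ forces $N$ to be irreducible with $\partial_\pm N$ incompressible (leaves of taut foliations are incompressible). Thus $N$ is a compact, orientable, irreducible, atoroidal Haken $3$-manifold with incompressible boundary of negative Euler characteristic, and $M(N,h)$ is closed and Haken for any orientation-preserving $h$.

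Next I would exploit the flexibility noted in \Cref{rmk:hyp}: any orientation-preserving $h$ acting as $-I$ on $H_1$ of each component of $\partial_\pm N$ satisfies the juncture condition $h^* j = -j$. I would therefore take $h = h_0 \circ \psi^n$, where $h_0$ is the hyperelliptic involution on each boundary component of $\partial_\pm N$ and $\psi$ is a component-wise pseudo-Anosov mapping class lying in the Torelli group of $\partial_\pm N$ (these exist in abundance on any closed surface of genus at least $2$). Since $\psi^n$ acts trivially on $H_1$, the composition still satisfies $h^* j = -j$, so $h$ is admissible for \Cref{prop:double_to_fiber}. The final step is to invoke a standard ``high-power gluing'' hyperbolization argument: any essential torus or annulus in $M(N, h_0 \circ \psi^n)$ either lies in a single copy of $N$ (contradicting atoroidality of $N$), or meets $\partial_\pm N$ in essential multicurves whose isotopy classes are matched across $h_0 \circ \psi^n$. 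Because $\psi^n$ uniformly stretches isotopy classes of curves, only finitely many such matchings are possible, and a pigeonhole/compactness argument over the finite set of isotopy classes of essential annuli in $N$ rules out essential tori and any Seifert-fibered structure for $n$ sufficiently large. Thurston's hyperbolization theorem then yields the desired hyperbolic structure on $M(N, h_0 \circ \psi^n)$, and we set $h = h_0 \circ \psi^n$ for such an $n$.

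The main obstacle is the last gluing-hyperbolization step, which is complicated by the fact that $N$ need not be acylindrical: atoroidality of $g$ forbids invariant essential multicurves but permits individual invariant isotopy classes of curves in $L$, whose suspensions produce essential annuli in $N$. Preventing these annuli from assembling into essential tori or Seifert-fibered pieces in $M(N,h)$ across the gluing is exactly what the large power of the Torelli pseudo-Anosov $\psi$ accomplishes, using that $\psi^n$ acts as the identity on $H_1$ yet with unbounded complexity on isotopy classes of simple closed curves.
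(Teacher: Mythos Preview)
Your approach is essentially the paper's: both take $h$ to be a hyperelliptic involution composed with a high power of a Torelli pseudo-Anosov, verify that $M(N,h)$ is atoroidal, and invoke Thurston's hyperbolization. The one imprecision in your sketch is the phrase ``the finite set of isotopy classes of essential annuli in $N$'': this set need not be finite (for instance when $N$ contains an $I$-bundle piece, and in particular when $N\cong S\times[0,1]$). The paper handles this cleanly via JSJ theory: in the non-product case there are \emph{proper} essential subsurfaces $Y_\pm\subset\partial_\pm N$ into which the boundary of every essential annulus can be isotoped, so it suffices that $h$ carry no curve of $Y_\pm$ back into $Y_\pm$ up to isotopy---something a high power of any pseudo-Anosov certainly achieves. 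The product case $N\cong S\times[0,1]$ is treated separately, since then $M(N,h)$ is simply a surface bundle and one arranges the monodromy to be pseudo-Anosov directly. With these refinements your argument goes through and coincides with the paper's.
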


\begin{proof}
We first consider the special case where $N = S \times [0,1]$. Here, if $h_1$ is the restriction of $h$ to $\partial_+ N = S \times 1$ and $h_0$ is the restriction of $h$ to $\partial_- N = S \times 0$, then $M(N,h)$ is the mapping torus of  $h_0h_1 \colon S \to S$, which is hyperbolic if and only if $h_0h_1$ is pseudo-Anosov \cite{thurston1998hyperbolic, otal2001hyperbolization}.

Otherwise, JSJ theory \cite{jaco1980lectures} gives proper essential subsurfaces $Y_+$ of $\partial_+N$ and $Y_-$ of $\partial_-N$ such that each essential annulus $A$ in $N$ can be isotoped so that $\partial A \cap \partial_+ N \subset Y_+$ and $\partial A \cap \partial_- N \subset Y_-$. Hence, if $h$ is chosen to that the image of any curve in $Y_\pm$ is not homotopic into $Y_\pm$, then $M(N,h)$ is atoroidal and hence hyperbolic by Thurston's hyperbolization theorem (see e.g. \cite{kapovich2001hyperbolic}).

From this, it is easy to produce a gluing map $h$ so that $M(N,h)$ is hyperbolic and for which \Cref{prop:double_to_fiber}. For example, $h$ can be taken so that on each component it is a hyperelliptic involution composed with a high power of a pseudo-Anosov homeomorphism that acts trivially on $H_1$.
\end{proof}

We now return to the context of \Cref{th:spA_construction}, where $g \colon L \to L$ is endperiodic and atoroidal and $N = N_g$ is its compactified mapping torus. Let $h \colon \partial_\pm N \to \partial_\pm N$ be any component-wise homeomorphism that satisfies the conditions of \Cref{prop:double_to_fiber} and \Cref{lem:hyp} and let $M = M(N,h)$ be the associated $h$--double. As in \Cref{sec:ext}, there is a fixed embedding $N \to M$ such that $\mc F = \mc F_L$ and $\mc L$ extend to $M$. 

With this structure fixed, we turn to the proof of \Cref{th:spA_construction}.

\begin{proof}[Proof of \Cref{th:spA_construction}]
Let $S$ be the properly embedded surface obtained from \Cref{prop:double_to_fiber}. Since $S$ has positive transverse intersection with each orbit of $\mc L$ on $M$, $S$ is a fiber in a fibration of $M$ over the circle and the class $[S]$ is contained in the interior of the cone $\C_{\mc L} \subset H^1(M)$ of classes that are nonnegative on homology directions of $\mc L$ (see \cite{Fri79} for details). Since $\partial_\pm N$ is also positively transverse to $\mc L$ by construction, $[\partial_\pm N]$ is also contained in  $\C_{\mc L}$. Indeed, $[\partial_\pm N]$ is contained in the boundary of this cone unless $N$ is itself a product. Note that since $\partial_\pm N$ is the union of compact leaves of the taut foliation $\mc F$, $\partial_\pm N$ is taut.

Since $M$ is hyperbolic, the monodromy $S\to S$ is isotopic to a pseudo-Anosov homeomorphism. By suspending the pseudo-Anosov representative of this monodromy we obtain a pseudo-Anosov flow $\phi$ on $M$, unique up to isotopy and reparameterization, which is transverse to $S$.

According to Fried \cite[Theorem 14.11]{Fri79}, the cone $\C_{\phi} \subset H^1(M)$ of
classes that are nonnegative on the homology directions for $\phi$  is equal to the
closure of the fibered cone containing $[S]$,
and $\C_{\mc L} \subset \C_{\phi}$.  
Hence, we also have that $[\partial_\pm N]$ is in the cone $\C_\phi$ and so $\partial_\pm N$ has nonnegative intersection number with each closed orbit of $\phi$. Since $\partial_\pm N$ is also taut, \Cref{th:stst} implies that $\partial_\pm N$ is almost transverse to $\phi$, up to isotopy. 

Let $\phi^\sharp$ be the associated minimal dynamic blowup suitably isotoped so that it is positively transverse to $\partial_\pm N$. This isotopy carries $S$ to a cross section $S'$ of $\phi^\sharp$. As in \Cref{rmk:spinning_back}, the foliation $\mc F$ of $M$ is obtained by spinning $S$ about $\partial_\pm N$. If we denote by $\mc F'$ the foliation of $M$ obtained by spinning the $\phi^\sharp$--cross section $S'$ about the $\phi^\sharp$--transverse surface $\partial_\pm N$, we obtain a transverse, taut, depth one foliation $\mc F'$ of $M$. The following lemma states that $\mc F$ and $\mc F'$ are isotopic. Once established, we can isotope $M$ so that $\mc F$ is positively transverse to the flow $\phi^\sharp$, thereby completing the proof of the theorem.
\end{proof}

It only remains to prove the following lemma, which follows easily from \Cref{th:class_determines}.
\begin{lemma}
Let $S$ and $S'$ be isotopic fibers of $M$ and suppose that $\Sigma$ is a taut surface in the boundary of the associated fibered cone. Then the foliations $\mc F$ and $\mc F'$ obtained by spinning $S$ and $S'$ around $\Sigma$ are isotopic in $M$.
\end{lemma}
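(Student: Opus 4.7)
The plan is to reduce to the Cantwell--Conlon isotopy theorem (\Cref{th:class_determines}) by exhibiting both $\mc F$ and $\mc F'$ as depth one foliations suited to a common sutured manifold, and then checking they define the same dual cohomology class.

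First, I would form the sutured manifold $N = M\cut \Sigma$, where $\partial_+N$ and $\partial_-N$ inherit coorientations from the coorientation of $\Sigma$ (which comes from the fibered direction). By construction of the spinning operation reviewed in \Cref{sec:spinning}, the compact leaves of $\mc F$ are exactly the components of $\Sigma$, and $\mc F$ restricts on $N\ssm\partial N$ to a fibration over $S^1$; hence $\mc F$ is a taut depth one foliation suited to $N$. The same holds for $\mc F'$. So both are in the setting of \Cref{th:class_determines}.

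Next I would verify that $\mc F$ and $\mc F'$ induce the same class in $H^1(N)$. The dual class of $\mc F$ is by definition the pullback to $N$ of the class Poincar\'e dual to the fiber of the spun fibration $N \ssm \partial N \to S^1$. Since this fibration is obtained by ``unspinning'' $\mc F$, its fiber is properly isotopic in $N$ to the transverse surface $S \cap N$ (equivalently $L_U$ in the notation of \Cref{sec:junctureclasses}), and similarly for $\mc F'$ with $S'\cap N$. But $[S]=[S']$ in $H^1(M)$ since $S$ and $S'$ are isotopic, and restriction $H^1(M)\to H^1(N)$ is functorial, so $[\mc F]=[\mc F']$ in $H^1(N)$.

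Applying \Cref{th:class_determines} now gives a continuous isotopy of $N$ carrying $\mc F$ to $\mc F'$ that is constant on $\partial_\pm N$. Since $\partial_\pm N$ corresponds to the two sides of $\Sigma$ in $M$, this boundary-fixing isotopy descends to a continuous isotopy of $M$ (smooth away from $\Sigma$) carrying $\mc F$ to $\mc F'$. The main (and only real) content is therefore packaged inside \Cref{th:class_determines}; the rest is bookkeeping to set up its hypotheses, with the one subtle point being to confirm the coorientations on $\partial_\pm N$ agree for the two spun foliations so that the dual classes genuinely coincide (not merely up to sign). This agreement is automatic since $S$ and $S'$ are isotopic cross sections and the spinning operation uses the flow direction $\mc L$ to determine coorientations consistently.
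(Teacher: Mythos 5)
Your proof is correct and takes essentially the same approach as the paper: cut $M$ along $\Sigma$, observe that both spun foliations are taut depth one foliations suited to the resulting sutured manifold whose dual classes agree because $[S]=[S']$ in $H^1(M)$ and restriction to $H^1$ of the cut pieces is functorial, apply the Cantwell--Conlon isotopy theorem (\Cref{th:class_determines}), and glue the resulting boundary-fixing isotopies back to $M$. The only cosmetic difference is that the paper works with the components $N_1,\dots,N_k$ of $M\cut\Sigma$ individually while you treat $N=M\cut\Sigma$ as one (possibly disconnected) sutured manifold, which is an immaterial bookkeeping choice.
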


\begin{proof}
Let $N_1, \ldots, N_k$ be the components of $M\cut \Sigma$, each of which is a sutured manifold that comes equipped with two taut, depth one foliations $\mc F_i,\mc F'_i$. Since $S'$ and $S$ are isotopic, the classes in $H^1(N_i)$ associated to $\mc F_i$ and $\mc F_i'$ are equal for each $i$. Hence by \Cref{th:class_determines}, $\mc F_i$ is isotopic in $N_i$ to $\mc F'_i$ via an isotopy that is constant on $\partial_\pm N_i$. Therefore, these isotopies glue together in $M$ to give an ambient isotopy taking $\mc F$ to $\mc F'$, completing the proof.
\end{proof}

\begin{remark}[Spinning and de-spinning]
\label{rmk:spin_in_proof}
The proof of \Cref{th:spA_construction} shows that the foliation $\mc F$ on the $h$--double $M$ is obtained by spinning the cross section
$S$ along $\partial_\pm N$, up to isotopy. 
Similarly, the restricted foliations $\mc F$ on $N$ is obtained by spinning the properly embedded surface $S \cap N$ about $\partial_\pm N$.

Reversing this process, the argument in \Cref{prop:double_to_fiber} shows by choosing a spiraling neighborhood of each compact leaf of $\mc F$ in $M$ (or, in other words, choosing spiraling neighborhoods of $\partial_\pm N$ in $N$ and $N^\downarrow$) one can produce a (nonunique) surface $S$ that is a cross section of $\varphi$. We call this process \define{de-spinning}. 

We note that it is not the case that every depth one foliation of an arbitrary closed manifold can be de-spun to a fibration because of a basic cohomological obstruction: each compact leaf has two associated juncture classes from leaves spiraling on it from either side and the foliation can be de-spun if and only if these classes agree.
\end{remark}


\section{Multi sink-source dynamics}
\label{sec:sink-source}

The goal of this section is to relate the periodic point behavior of an spA
map $f:L\to L$ to
the dynamics of the action of a lift $\wt f$ on the universal cover $\wt L$ and its
compactification $\DD = L \union \partial\wt L$, defined with a suitable hyperbolic
metric. The spA map comes with a pair of invariant foliations inherited from those of the
pseudo-Anosov suspension flow, whose expanding and contracting half-leaves interact with
the action on the circle at infinity (see below for complete definitions). 

A homeomorphism $h \colon S^1 \to S^1$ is said to have \define{multi sink-source dynamics}
if it has a finite number $k\ge 4$ of fixed points that alternate between attracting and repelling.

\begin{theorem}\label{thm:fix and boundary}
Let $f\colon L\to L$ be spA and let $\wt f:\wt L \to \wt L$ be a lift of $f$. Then $\wt f$
has a fixed point $\wt p\in \wt L$ if and only if it has a power $\wt f^n$ which acts on
$\partial\wt L$ with multi sink-source dynamics. Moreover when this happens, $\wt f^n$
fixes the half-leaves at $\wt p$ and its attracting/repelling points are equal to the
endpoints of expanding/contracting half-leaves.
\end{theorem}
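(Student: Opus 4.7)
The plan is to exploit the pair of invariant singular foliations $\cW^{u/s}$ on $L$ obtained by intersecting $L$ with the stable and unstable foliations of the almost pseudo-Anosov flow $\phi$ in the spA package of $f$. These lift to $\wt f$-invariant singular foliations $\til\cW^{u/s}$ on $\wt L$ whose regular leaves are properly embedded and therefore acquire well-defined endpoints on $\partial\wt L$; the dynamics on $\partial\wt L$ will be read off through these foliations.

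For the forward direction, suppose $\wt p\in\wt L$ is fixed by $\wt f$ and let $p$ be its projection. Since $f$ is the first return to $L$ of the flow $\phi$, $p$ lies on a closed orbit of $\phi$, at which the standard local model for (almost) pseudo-Anosov flows produces an alternating arrangement of $m$ unstable and $m$ stable prongs of $\cW^{u/s}$, with $m\ge 2$. Some power $f^n$ preserves each of the $2m$ prongs individually, expanding along unstable prongs and contracting along stable prongs; lifting, $\wt f^n$ fixes each of the $2m$ half-leaves emanating from $\wt p$. An unstable half-leaf is a properly embedded ray along which $\wt f^n$ acts as a translation toward infinity, so its endpoint on $\partial\wt L$ is an attracting fixed point of $\wt f^n|_{\partial\wt L}$; stable half-leaves yield repellers. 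These $2m$ endpoints inherit the cyclic order of the prongs at $\wt p$ and thus alternate between attractors and repellers. To see that they are the only fixed points on $\partial\wt L$, I would use a product-box argument: any other boundary point $\zeta$ is separated from some attractor by a leaf of $\til\cW^s$ that is expanded off $\wt p$ under $\wt f^n$, forcing convergence of $\zeta$ to that attractor under iteration.

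For the reverse direction, suppose $\wt f^n$ has multi sink-source dynamics on $\partial\wt L$ with attractors $a_1,\dots,a_m$ and repellers $r_1,\dots,r_m$. The key step is to show that each attractor $a_i$ is the endpoint of an $\wt f^n$-invariant leaf $\ell^u_i$ of $\til\cW^u$, and each repeller $r_j$ the endpoint of an $\wt f^n$-invariant leaf $\ell^s_j$ of $\til\cW^s$. I would establish this by a compactness argument in the leaf space: the expansion of $\wt f^n$ along $\til\cW^u$ combined with the attracting dynamics at $a_i$ forces an accumulating family of unstable leaves whose limit is an invariant leaf landing at $a_i$. For any adjacent pair $(a_i,r_j)$, the endpoints of $\ell^u_i$ and $\ell^s_j$ interlink on $\partial\wt L$, so the leaves must intersect in $\wt L$, producing a fixed point $\wt q$ of $\wt f^n$. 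The local prong structure at $\wt q$ together with the uniqueness of invariant leaves landing at each $a_i,r_j$ forces all the $\ell^u_i$ and $\ell^s_j$ to concur at $\wt q$, so $\wt q$ is the unique fixed point of $\wt f^n$ in $\wt L$ compatible with the given boundary structure. Since $\wt f$ commutes with $\wt f^n$, it permutes the fixed-point set of $\wt f^n$; by uniqueness $\wt f(\wt q)=\wt q$, giving the required fixed point for $\wt f$. The ``moreover'' clause then follows from the forward analysis applied to $\wt p=\wt q$.

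The principal obstacle is the leaf-construction step in the reverse direction: producing an $\wt f^n$-invariant unstable leaf landing at each boundary attractor. Here the spA structure is essential, since $\til\cW^u$ is the restriction to $\wt L$ of the lifted unstable foliation of the pseudo-Anosov flow $\phi$ on $\wt M$, and it is this uniform flow-theoretic origin rather than abstract surface dynamics that should supply the compactness needed to extract an invariant leaf limit.
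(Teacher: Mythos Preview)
Your forward direction has the right shape but glosses over a genuinely nontrivial step. You assert that because $\wt f^n$ translates along an expanding half-leaf toward infinity, its endpoint is attracting on $\partial\wt L$. The translation only shows the endpoint is fixed; attracting requires control over nearby boundary points, and your product-box sketch presumes exactly the transversality between $\cW^u$ and $\cW^s$ that fails at blown half-leaves in the spA (as opposed to spA$^+$) setting. The paper establishes the sink/source property by a different route: it first shows that expanding/contracting half-leaves accumulate on positive/negative ends of $L$ (\Cref{lem:halfleafend}), then uses the endperiodic structure through junctures (\Cref{cor:gen_local_dynamics}) to conclude the endpoint is a sink/source for the action on the whole disk $\DD$, not just on the circle. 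The argument that there are no other boundary fixed points then splits into two cases depending on whether adjacent half-leaves are blown, with the both-blown case handled via a structural lemma about the foliations (\Cref{claim:finding_leaves}) that you do not invoke.

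Your reverse direction has the gap you yourself identify: extracting an $\wt f^n$-invariant unstable leaf landing at a prescribed attractor by ``compactness in the leaf space'' is not straightforward, and again blown leaves obstruct a naive argument. The paper sidesteps this entirely with a bootstrapping maneuver: it uses the already-established forward direction to prove that every atoroidal endperiodic map has an spA$^+$ representative $f^\dagger$ (\Cref{th:no_blow}), for which the invariant foliations are honestly transverse. Since boundary dynamics depend only on the isotopy class (\Cref{lem:same_dyn}), it suffices to find a fixed point for $\wt f^\dagger$. With transversality in hand, \Cref{claim:finding_leaves} shows that any two boundary fixed points are either separated by or joined by a leaf line of either foliation; iterating a separating or joining leaf and passing to a limit produces invariant leaves of both foliations, whose intersection is the fixed point. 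This spA$^+$ detour is the essential idea your proposal is missing.
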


This generalizes well-known properties of pseudo-Anosov maps on compact surfaces, however there
are a number of complications to deal with in our setting. Because the surface is only
almost-transverse to the pseudo-Anosov flow, the structure of the stable and unstable
foliations is harder to work with; in particular they are not everywhere transverse. The
local dynamics on the circle at infinity require more work to understand, especially
showing that sinks/sources on the circle are actually sinks/sources on the closed disk
(see \Cref{cor:gen_local_dynamics}). 

One application of this theorem is a proof of \Cref{th:intro_spA+}, which we restate here:
\begin{theorem} \label{th:no_blow}
Each atoroidal, endperiodic map is isotopic to an spA$^+$ map.
\end{theorem}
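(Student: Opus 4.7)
The plan is to build on \Cref{th:spA_construction} and use the periodic-point analysis of \Cref{thm:fix and boundary} to refine the $h$--double construction so that no dynamic blowup is required.

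First I would take the spA package $(M, \mathcal{F}, \phi^\sharp, N)$ for $g$ produced by \Cref{th:spA_construction}, so that $M = M(N,h)$ and $\phi^\sharp$ is a minimal dynamic blowup of the pseudo-Anosov suspension $\phi^\flat$ of the de-spun fiber. The obstruction to obtaining spA$^+$ directly is that $[\partial_\pm N]$ may sit on the boundary of the fibered cone of $\phi^\flat$: some closed orbits of $\phi^\flat$ can then have zero intersection number with $\partial_\pm N$, and \Cref{th:stst} only provides almost transversality. To improve this, I would exploit flexibility in the $h$--double construction: as in the proof of \Cref{lem:hyp}, we may precompose $h$ on each component of $\partial_\pm N$ with a high power of any pseudo-Anosov acting trivially on $H_1$, which leaves both hyperbolicity of $M(N,h)$ and the juncture-reversal hypothesis of \Cref{prop:double_to_fiber} intact, while leaving $N$ itself and the isotopy class of the return map on $L$ undisturbed. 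Each such modification produces a new spA package for $g$ with a different underlying $\phi^\flat$, so the goal is to choose $h$ making every closed orbit of $\phi^\flat$ positively transverse to $\partial_\pm N$.

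This is where \Cref{thm:fix and boundary} enters: a closed orbit of $\phi^\flat$ sitting inside $\partial_\pm N$ corresponds, via the spA structure, to a rigid sink-source configuration on the circle at infinity of a lift of a boundary leaf, tied combinatorially to the gluing $h$. I expect the main obstacle to be the verification that sufficiently complicated pre-compositions of $h$ destroy all such configurations simultaneously, across all components of $\partial_\pm N$ and all periods of candidate closed orbits; this requires converting the dynamical statement of \Cref{thm:fix and boundary} into a genericity result about the singular orbit structure of pseudo-Anosov suspensions parameterized by gluing data.

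Once $\partial_\pm N$ is honestly transverse to $\phi^\flat$, the remainder is routine: the spinning construction of \Cref{sec:spinning} produces a depth-one foliation $\mathcal{F}'$ of $M$ transverse to $\phi^\flat$ with no blowup needed, and its first return map to a leaf $L$ is then an spA$^+$ representative of $g$, as required.
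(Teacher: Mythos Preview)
Your strategy of modifying $h$ is correct, but the proposal misidentifies the obstruction and misses the mechanism that makes it removable.

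First, the goal cannot be ``to choose $h$ making every closed orbit of $\phi^\flat$ positively transverse to $\partial_\pm N$.'' Whenever $N$ is not a product, $[\partial_\pm N]$ lies on the boundary of the fibered cone and there are always closed orbits with zero intersection number---namely, orbits that stay entirely inside $N$ (these come from periodic points of the return map). That is perfectly compatible with honest transversality; it just means $\partial_\pm N$ is not a cross section. The actual obstruction is the presence of \emph{blown annuli}: by minimality, every blown annulus meets $\partial_\pm N$ in a finite collection of essential closed curves, and the problem is to rule these out.

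Second, the appeal to \Cref{thm:fix and boundary} via ``sink-source configurations on the circle at infinity of a lift of a boundary leaf'' is not the right object: that theorem concerns lifts to $\wt L$, not the compact leaves. What the paper actually uses is its consequence \Cref{lem:same_dyn}, which says that escaping periodic half-leaves of two isotopic spA maps correspond and hence produce the same curves on $\partial_\pm N$. This is the key point you are missing: it allows one to fix an \emph{auxiliary} spA map $f_a$ first, and use it to compute a \emph{finite} set $C$ of isotopy classes of curves on $\partial_\pm N$---the closed leaves of $\partial_\pm N \cap W_a^{s/u}$, together with the boundaries of essential annuli in $N$---such that any blown annulus for \emph{any} later choice of $h$ must meet $\partial_\pm N$ in curves of $C$. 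One then simply chooses $h$ so that no curve of $C$ is sent into $C$; since the blown annuli would force $h$ to match such curves across $\partial_+N$ and $\partial_-N$, this gives a contradiction. Your proposal never produces this finite obstruction set, and without it the ``genericity'' step you flag as the main obstacle has no purchase.
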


\subsubsection*{Summary of the section:} In \Cref{sec:annular-leaves} we analyze the behavior of
{\em periodic half-leaves} of the stable and unstable foliations of $L$, by explaining the possibilities for their suspensions in the two-dimensional foliations of $N$.
In \Cref{sec:fols} we recall results of Fenley and Cantwell-Conlon describing hyperbolic
metrics on $L$ and how they give the universal cover $\wt L$ a canonical circle at
infinity, and on the quasi-geodesic properties of the foliation leaves in this
metric.

In \Cref{sec:bdynamics} we prove \Cref{cor:gen_local_dynamics} and \Cref{lem:local_dynamics} which
give the sink/source properties at infinity for endpoints of periodic half-leaves. 

In \Cref{sec:global_dynamics} we prove \Cref{prop:fix_to_boundary}, which gives
one direction of \Cref{thm:fix and boundary}. 

In \Cref{sec:no_blow} we will apply what we have so far to prove
\Cref{th:no_blow} on spA$^+$ representatives. Finally,
we will apply this in \Cref{sec:sink-source  converse} 
to prove the other direction  of \Cref{thm:fix and boundary}, which will be stated in
\Cref{prop:boundary_to_fix}.

\medskip

Throughout this section we fix a spun pseudo-Anosov map $f \colon L \to L$,
together with an spA package (as in \Cref{rmk:package}) denoted by $M,\mc F, \phi, N$, where $M, \mc F, \phi$ are as in 
\Cref{def:spA} and $N \subset M$ is the compactified mapping torus of $f$. 
Moreover, we denote the induced semiflow on $N$ as $\phi_N$.
For simplicity, we often denote the closed surface $\partial_\pm N$ in this section by $\partial_\pm$, and note that it comprises a subset of the
 compact leaves $\mc F_0$ of $\mc F$.

Since $\phi$ is minimally blown up with respect to $\FF$, for each blown annulus $A$, $\FF_0 \cap A$ is nonempty and consists of finitely many curves homotopic in $M$ to the core of $A$. In particular, no closed orbit in the boundary of a blown annulus intersects $\FF_0$.
See \Cref{fig:decomp}.

\begin{figure}[htbp]
    \centering
    \includegraphics{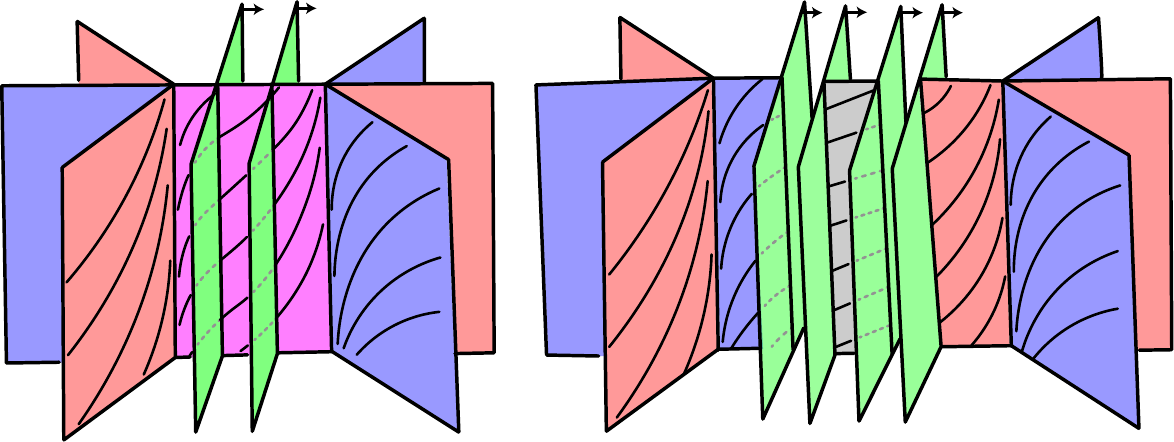}
    \caption{Left: two pieces of $\FF_0$ (green) intersecting a blown annulus (pink). Right: the result of cutting along $\FF_0$, with $\del_+(M\cut \FF_0)$ and $\del_-(M\cut \FF_0)$ indicated by the boundary coorientations. The blown annulus gives rise to two periodic half-leaves in $M\cut \FF_0$, both of which are technically stable \emph{and} unstable. However, one of the half-leaves is expanding (blue, touching $\del_+(M\cut \FF_0)$) and one is contracting (red, touching $\del_-(M\cut \FF_0)$). Cutting along $\FF_0$ also produces an annular leaf which touches $\del_+(M\cut \FF_0)$ and $\del_-(M\cut \FF_0)$ and is not periodic (gray).}
    \label{fig:decomp}
\end{figure}

\subsection{Periodic half-leaves and annuli}
\label{sec:annular-leaves}

The circular flow $\phi$ has invariant stable and unstable singular foliations which we denote $W^s$ and $W^u$, respectively. There is an induced semiflow $\phi_N$ on $N$; 
let $W^s_N$ and $W^u_N$ denote the foliations preserved by $\phi_N$ that arise by cutting
$W^s$ and $W^u$ along $\partial_\pm$. Finally we define $\mc W^s = L \cap W^s$ and $\mc
W^u = L \cap W^u$ to be the invariant \define{stable and unstable foliations} of $f \colon
L \to L$. By construction, $\mc W^{u}$ and $\mc W^s$ suspend in $f$'s compactified mapping
torus $N$ to be $W^u_N$ and $W^s_N$, respectively.

If $p$ is a periodic point of $f$ then the half-leaves of $\mc W^u$ and $\mc W^s$ emanating
from $p$ are fixed by a power of $f$, and we call them \define{periodic half-leaves} of
$f$. A periodic half-leaf contained in a blown up annulus is called a periodic \define{blown half-leaf}.
We remark that $\mc W^u$ and $\mc W^s$ are transverse on $L$ \emph{except} at blown
half-leaves, which are common to both foliations. 
Regardless, 
each periodic half-leaf $\ell$ of $f$ is either 
\define{contracting} or \define{expanding} depending on whether iterating positive or negative powers 
of $f$ attract points of $\ell$ to its periodic point $p$.

The main goal of this subsection is \Cref{lem:halfleafend}, which constrains the asymptotic
behavior of such half-leaves.

If $e$ is an end of $L$ we say that a subset $A\subset L$ \define{accumulates on $e$} if
every neighborhood of $e$ has non-empty intersection with $A$. 
    We say that $A$ \define{escapes $e$} if for every neighborhood $U$ of $e$ there is a compact
    $K\subset A$ such that $A\ssm K\subset U$.

\begin{lemma}\label{lem:halfleafend}
Every periodic half-leaf of $f$ accumulates on some end of $L$. Expanding half-leaves
accumulate on the positive ends, and contracting half-leaves
accumulate on the negative ends. Moreover, each periodic blown half-leaf escapes a unique end.
\end{lemma}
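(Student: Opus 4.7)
The plan is to suspend $\ell$ in the compactified mapping torus $N$, analyze the resulting flow-invariant surface, and then transfer the conclusion back to $L$ via the spiraling of $L$ near $\partial_\pm N$.

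For the setup, fix a periodic half-leaf $\ell$ emanating from a periodic point $p$ with $f^n(p)=p$, and let $\gamma \subset N$ be the closed $\phi_N$-orbit through $p$. The $\phi_N$-saturation $A_\ell$ of $\ell$ in $N$ is a $\phi_N$-invariant two-dimensional submanifold with $\gamma$ as one boundary circle; it lies in $W^u_N$ if $\ell$ is expanding and in $W^s_N$ if $\ell$ is contracting. By construction $A_\ell \cap L = \{\ell, f(\ell), \dots, f^{n-1}(\ell)\}$, and on $A_\ell$, $\gamma$ is the unique closed $\phi$-orbit, with every other orbit flowing away from $\gamma$ in forward time (in the expanding case).

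The central step is to show that $\overline{A_\ell}$ meets $\partial_+ N$ when $\ell$ is expanding; the contracting case is symmetric using $\partial_- N$. If $\ell$ is not a blown half-leaf, then $A_\ell$ sits inside a regular two-dimensional unstable leaf of $\phi$ in $M$; topological mixing of the underlying pseudo-Anosov suspension flow $\phi^\flat$ implies that generic unstable leaves are dense in $M$, so no forward orbit from $\gamma$ on $A_\ell$ can remain trapped in $N$. Since $\partial_\pm N$ is positively transverse to $\phi$, such an orbit can only escape $N$ through $\partial_+ N$, giving $\overline{A_\ell} \cap \partial_+ N \neq \emptyset$. If $\ell$ is a blown half-leaf, then $A_\ell$ lies in a blown annulus $A$, and the minimal-blowup hypothesis on $\phi$ with respect to $\mc F$ forces $\mc F_0 \cap A$ to be a nonempty collection of curves essential in $A$; these cut $A$ into sub-annuli, and the sub-annulus containing $\gamma$ has its second boundary on a component $\Sigma \subset \partial_+ N$.

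Finally, I transfer to $L$. Pick a component $\Sigma \subset \partial_+ N$ met by $\overline{A_\ell}$, and a small collar neighborhood $U$ of $\Sigma$ in $N$. Since $L$ spirals into $\Sigma$ through $U$ and $A_\ell$ accumulates on $\Sigma$, the intersection $A_\ell \cap L$ has points arbitrarily close to $\Sigma$ within $U$. As $A_\ell \cap L = \{\ell, f(\ell), \dots, f^{n-1}(\ell)\}$ is a finite union of curves, by pigeonhole some $f^i(\ell)$ has points arbitrarily close to $\Sigma$ and thus accumulates on the attracting end $e$ of $L$ determined by $\Sigma$; then $\ell = f^{-i}(f^i(\ell))$ accumulates on the attracting end $f^{-i}(e)$. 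In the blown case the structure is cleaner: inside the sub-annulus $A_\ell$, the curve $\ell$ is a genuine spiral converging to the single boundary curve $A_\ell \cap \Sigma$, so $\ell$ is eventually contained in every neighborhood of the corresponding end, hence escapes that unique end. The main obstacle is the density claim in the non-blown sub-case of the central step, which rests on the transitive dynamics of the pseudo-Anosov flow on $M$ and must be checked in the presence of blowups at other singular orbits.
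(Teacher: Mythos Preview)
Your proposal is correct and follows essentially the same route as the paper: suspend the periodic half-leaf to a two-dimensional half-leaf in $N$, use density of noncompact unstable/stable half-leaves of $\phi$ in $M$ (or the minimal-blowup hypothesis in the blown case) to force it to meet $\partial_\pm N$, and read off the end behavior on $L$. The paper packages the central step into two preparatory lemmas (a classification of the annular pieces of $H_0$ cut along $\partial_\pm N$ and periodic orbits, and a description of how $L$ meets each such piece), which yields slightly finer information, but your direct argument suffices for the statement at hand. One small imprecision: you invoke that ``generic unstable leaves are dense in $M$,'' but the half-leaf through the periodic orbit $\gamma$ is not generic; what you actually need (and what the paper uses) is that \emph{every} noncompact half-leaf of $W^{u/s}$ is dense in $M$, which follows from the corresponding fact for pseudo-Anosov surface homeomorphisms and survives the dynamic blowup.
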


This lemma will follow easily once we develop the picture of the half-leaves
of the 2-dimensional foliations $W^{s/u}_N$ obtained as suspensions of the periodic
half-leaves of $\mc W^{s/u}$.

\medskip

We say that a leaf of $W^{s/u}$ or $W^{s/u}_N$ is \textbf{periodic} if it contains a periodic orbit of $\phi$ or $\phi_N$ respectively.  A \textbf{periodic half-leaf} of $W^{s/u}$ or $W^{s/u}_N$ is a component of $H_0\cut\{\gamma_i\}$, where $H_0$ is a periodic leaf of $W^{s/u}$ or $W^{s/u}_N$ respectively, and $\{\gamma_i\}$ is the collection of all periodic orbits contained in $H_0$. A periodic half-leaf of $W^{s/u}$ is compact if and only if it is a blown annulus.

Let $H_0$ be a periodic leaf of $W^{s/u}$. If $H_0$ contains no blown annuli then it
contains a unique periodic orbit $\gamma$. In this case  $H_0\cut\gamma$ 
is a union of noncompact periodic half-leaves, each a half-closed annulus adjacent to $\gamma$ so that the flow lines in it spiral toward or away from $\gamma$ if $H_0$ is in $W^s$ or $W^u$, respectively. If $H_0$ contains blown annuli, then they are attached along their boundaries in a tree pattern, and attached to this complex are
noncompact half-leaves (see \Cref{fig:almosttransverse}).

By definition, the periodic leaves of $W^{s/u}_N$ obtained from $H_0$ are the components of $H_0 \cut
\partial_\pm N$ which contain a periodic orbit.
Because $\phi$ is minimally blown up with respect to  $\mc F$ by assumption, 
each blown annulus that meets $N$
is cut by $\partial_\pm$ and hence each periodic leaf of
$W^{s/u}_N$ contains a unique periodic orbit. 

If $H$ is a periodic half-leaf of $W_N^{u/s}$ 
then we say that $H$ is \define{contracting} or \textbf{expanding} if every flow line in $H$ is asymptotic to $H$'s unique periodic orbit in the forward or backward direction, respectively. Periodic half-leaves that are contained in blown annuli (and hence are half-leaves of \emph{both} $W_N^u$ and $W_N^s$) can be expanding or contracting (see \Cref{fig:decomp}). 
However, if any other periodic half-leaf is contracting or expanding, then it is contained in leaf of $W^s$ or $W^u$, respectively. 

The next lemma describes the pieces obtained by cutting leaves of $W_N^{s/u}$ along periodic orbits, and shows in particular that each periodic half-leaf of $W_N^{s/u}$ is either expanding or contracting.

\begin{lemma}\label{lem:annulus leaf structure}
 Let $H_0$ be a periodic leaf of $W^u$ or $W^s$. Let $C$ be a component of $H_0$ cut along
  $\partial_\pm N$ and along any periodic orbits contained in $H_0$ such that $C\subset N$.
If $C$ is not a disk
 it is an annulus, and it is one of four types (illustrated in \Cref{fig:four-annuli}):
\begin{enumerate}
  \item (compact periodic) $C$ is an expanding or contracting periodic half-leaf of $W_N^{u/s}$ and has a unique closed orbit on its boundary.  Its other 
    boundary component is a single closed loop in $\partial_\pm$ to which the flow lines are
    transverse. 
    
    \item (noncompact periodic) $C$ is an expanding or contracting periodic half-leaf of $W_N^{u/s}$ and has a unique closed orbit on its boundary. The rest of $\partial C$ is a collection of properly embedded lines of $H_0\cap \partial_\pm N$, to which the flow lines are transverse. 

      \item (transient compact) $\partial C$ consists of two closed curves where the flow
        intersects both $\partial_+$ and $\partial_-$ transversely.

      \item (transient noncompact) $\partial C$
        has one closed component meeting one of $\partial_\pm$, and a collection of arcs meeting the other.

\end{enumerate}

If $C$ is contained in a blown annulus it has type (1) or (3). 

If $C$ is periodic and expanding (contracting), then all flow lines in $C$ are backward (forward) asymptotic to $\gamma$ and all other boundary components of $C$ lie in $\del_+N$ ($\del_-N$).
\end{lemma}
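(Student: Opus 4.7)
The plan is to obtain the classification by analyzing the successive cuts of $H_0$ step by step.

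First I would use the structure of the almost pseudo-Anosov flow $\phi$ to describe $H_0$ topologically. As already set up in the excerpt, $H_0$ is either an open cylinder $\mathbb{R}\times S^1$ with its unique periodic orbit $\gamma$ as the core (flow lines spiraling toward or away from $\gamma$ according as $H_0\subset W^s$ or $W^u$), or else a tree of blown annuli joined along their boundary periodic orbits with half-closed annuli attached at the terminal vertices. Cutting $H_0$ along all of its periodic orbits thus yields a disjoint union of annular pieces $P$, each either a compact blown annulus or a half-closed annulus $[0,\infty)\times S^1$ whose compact boundary is a periodic orbit.

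Second, I would analyze $\partial_\pm N\cap P$ in each such piece. Since $\partial_\pm N$ is transverse to $\phi$ and $H_0$ is $\phi$-saturated, $\partial_\pm N$ meets $H_0$ transversely in a 1-submanifold whose components are everywhere transverse to the flow on $H_0$. In a compact $P$ (blown annulus), $P\cap\partial_\pm N$ is compact, hence a finite disjoint union of circles; transversality to the flow together with the annular topology forces each circle to be essential and parallel to a boundary orbit of $P$. In a half-closed $P$, we can additionally have properly embedded lines with both ends escaping the noncompact end of $P$. The minimal-blowup hypothesis ensures that every blown annulus is genuinely cut by $\partial_\pm N$, so no piece inside a blown annulus spans both of its boundary orbits.

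Third, I would classify each component $C\subset N$ using a coorientation/alternation argument. Across each component of $\partial_\pm N\cap P$, the two adjacent pieces lie on opposite sides of $\partial N$, so they alternate between $N$ and $N^c$. Combined with the outward/inward coorientations of $\partial_+N$ and $\partial_-N$ and the direction of the flow on $H_0$, this pins down which pieces lie in $N$ and what their boundaries look like. In particular, a piece $C\subset N$ with a periodic orbit $\gamma$ on its boundary must have its remaining boundary components in $\partial_-N$ if $C$ is contracting (since backward flow exits $N$ through $\partial_-N$) and in $\partial_+N$ if $C$ is expanding. Compactness of $C$ is then determined by whether the bounding cuts are circles or lines, yielding Types (1)--(4); pieces inside a blown annulus are automatically Type (1) or (3) by compactness.

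The main obstacle I expect is verifying that each noncompact $C$ is topologically a disk or annulus, since properly embedded lines can partition a half-closed cylinder in nontrivial ways. Here I would pass to the end-compactification of the half-closed $P$, in which $P$ becomes a closed disk and each properly embedded line with both ends at the noncompact end becomes a circle through the compactification point. Jordan separation in this disk shows that the cut pieces are disks or annuli, and after removing the compactification point one recovers the claimed topology of $C$ and the boundary structure in Types (2) and (4).
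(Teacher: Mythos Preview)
Your proposal follows the same overall structure as the paper's proof: decompose $H_0$ along its periodic orbits into blown annuli and half-closed annuli, then analyze how $\partial_\pm N$ cuts each piece. The blown-annulus case and the final coorientation argument match the paper directly.

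The one substantive difference is in the half-closed annulus case. The paper models $\intr(A)$ as $S^1\times\mathbb{R}$ with the \emph{vertical} flow, so any curve in $A\cap\partial_\pm$ is locally the graph of a function $\theta\mapsto t$; this immediately forces circles to be essential, forces arcs to be graphs over subintervals with both ends escaping to $+\infty$, and makes the disk-or-annulus classification of complementary regions transparent (a region lies ``between two graphs'' over an interval of $\theta$). Your end-compactification approach aims at the same conclusion but is slightly more delicate than you indicate: all of the properly embedded lines compactify to loops sharing the single point at infinity, so the resulting curve system is not a disjoint union of Jordan curves and the naive separation argument needs extra care. This is not a fatal gap---one can still read off the topology by working region-by-region or by first observing (via the no-singularity argument) that the curves are graphs anyway---but the paper's explicit model is cleaner and avoids the issue entirely. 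You also correctly flag the minimal-blowup hypothesis as what prevents a blown-annulus piece from carrying two periodic orbits on its boundary; the paper uses this implicitly via the standing assumptions set up just before the lemma.
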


\begin{figure}[htbp]
    \centering{
    \includegraphics[width=4.8in]{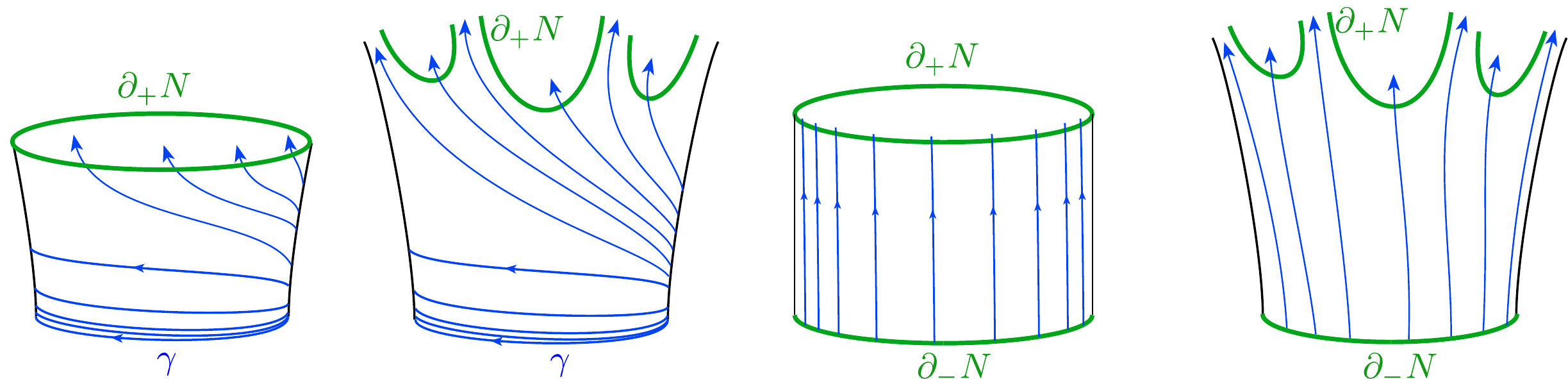}}
    \caption{The types of annular half-leaves of $W^{s/u}_N$. Cases $(1)$, $(2)$ and $(4)$ are shown in
      the expanding case.}
    \label{fig:four-annuli}
\end{figure}

\begin{proof}
We suppose $H_0$ lies in $W^u$; the stable case is analogous.

In each blown annulus of $H_0$, the flow $\phi$ is asymptotic to one boundary component in
each direction. Cutting along the core curves where the blown
annulus intersects $\partial_\pm$, we obtain annuli of types (1) and (3).

Now consider a noncompact half-leaf $A$ of $H_0$, with $\del A$ equal to a closed orbit $\gamma$. It is a standard fact that each end of a leaf of the stable or unstable foliation of a pseudo-Anosov diffeomorphism of a compact surface is dense in the surface, and this implies that $A$ is dense in $M$ (the fact that $\phi$ is obtained by dynamic blowup is not an issue here). Hence any subset of $A$ whose complement is compact has nonempty intersection with $\del_\pm$.

On $\intr(A)$, the flow is equivalent up to diffeomorphism to the vertical
flow on $S^1\times\R$ (where the backward time flow to $-\infty$ spirals toward $\gamma$). The intersection with $\partial_\pm$, by transversality, consists of closed curves or arcs which are graphs of functions from $S^1$ or a subinterval (respectively) to $\R$. Note this forces the arcs to be properly embedded and asymptotic to the $+\infty$ direction. 

From this description we see that every component of the complement of these curves of intersection is either a disk or an annulus, and that each annulus component $C$ either contains $\gamma$ in its boundary (giving cases (1) and (2)) or is bounded below by a single closed curve of $\partial_{\pm}$ (giving (3) and (4)). 

Since any such periodic half-leaf is contained in $N$, the flow lines enter through $\partial_-$ and exit through $\partial_+$; this implies the final statement of the lemma.
\end{proof}

Next, we want to describe how a periodic half-leaf of $W^{s/u}_N$ intersects the surface $L$, or
equivalently any of the noncompact leaves of the depth one foliation $\FF$ of $N$. 

\begin{lemma}\label{lem:annulus_fibering}
Let $H$ be a periodic half-leaf of $W_N^{u/s}$. The fibration $\mathrm{int}(N)\to S^1$ determined by $\mc F$ restricts to a fibration
of $\mathrm{int}(H)$, where each fiber is the intersection of $\mathrm{int}(H)$ with one of the leaves of $\FF$.
Moreover, $L\cap H$ is either:
\begin{enumerate}
  \item (compact periodic) A finite union of rays, each of which is transverse to the periodic boundary
    $\gamma$ and spirals onto the opposite boundary component.

  \item (noncompact periodic) A finite union of rays, each of which is transverse to the periodic boundary
    $\gamma$ and accumulates onto all of the other boundary components.
\end{enumerate}
In particular, the interior of $H$ is the 
suspension of a periodic half-leaf of the foliation $\mc W^{s/u}$ in $L$. When $f$ is
expanding on the half-leaf, its suspension is expanding, and when $f$ is contracting its suspension
is contracting.

Conversely, every suspension of a periodic half-leaf of $\mc W^{s/u}$ gives rise to such a 
periodic half-leaf of $W_N^{u/s}$. 
\end{lemma}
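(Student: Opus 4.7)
The strategy is to exploit that $H$ is a union of $\phi_N$-orbits to deduce that the fibration $\mathrm{int}(N)\to S^1$ determined by $\mc F$ restricts to a fibration of $\mathrm{int}(H)$, and then to read off the fiber topology from the annular classification in \Cref{lem:annulus leaf structure}. First I would argue that $H$ is everywhere transverse to the leaves of $\mc F$ in $\mathrm{int}(N)$: since $H$ is saturated by $\phi_N$-orbits and $\phi_N$ is positively transverse to $\mc F$ (the minimal-blowup hypothesis ensures that even on a blown annulus the leaves of $\mc F$ meet it in closed curves transverse to the flow), no point of $\mathrm{int}(H)$ can be a tangency with a leaf of $\mc F$. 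Hence the fibration restricts to a submersion $\mathrm{int}(H)\to S^1$ whose fibers are $\mathrm{int}(H)\cap \LL$ as $\LL$ ranges over leaves of $\mc F$.

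To upgrade this submersion to a fiber bundle, I would pass to the infinite cyclic cover $L\times\R \to \mathrm{int}(N)$ on which $\phi_N$ lifts to translation in the $\R$-factor and the deck transformation is $(x,t)\mapsto (f(x),t-1)$. Flow-invariance forces the preimage of $\mathrm{int}(H)$ to have product form $\ell\times\R$ where $\ell = H\cap L$, and compatibility with the deck action gives $f(\ell)=\ell$. Thus $\mathrm{int}(H)$ is the mapping torus of $f|_\ell$, which yields both the claimed fiber bundle structure and the suspension description. The expanding/contracting behavior of $f$ on $\ell$ matches that of $\phi$ on $H$ because $f$ is the time-one first return of $\phi_N$.

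It remains to describe $\ell$ topologically and verify the accumulation claims. By \Cref{lem:annulus leaf structure}, $H$ is an annulus with one boundary the closed orbit $\gamma$ and the other boundary consisting either of a single closed curve $\alpha\subset\partial_\pm N$ (case 1) or of a collection of properly embedded arcs in $\partial_\pm N$ (case 2). Since $\gamma\cap L$ is exactly $n$ points, where $n$ is the $f$-period of the associated periodic point, and $L$ meets $\gamma$ transversely, $\ell$ is a disjoint union of $n$ arcs in $H$ based transversely at these points and cyclically permuted by $f$. Each such arc is a ray because a $\phi_N$-orbit starting near $\gamma$ in $H$ spirals infinitely before exiting, so crosses $L$ infinitely often. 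In case 1 the ray stays in the compact annulus $H$ and accumulates on $\alpha$, with the depth-one structure of $\mc F$ near $\partial_\pm N$ making this an honest spiral. In case 2 the same reasoning gives accumulation on $H\cap\partial_\pm N$, and connectedness of $H$ together with the cyclic sweep of orbits in $H$ across its $\partial_\pm N$-arcs forces accumulation on every arc. The converse runs the mapping-torus identification in reverse: given a periodic half-leaf $\ell_p$ of $\mc W^{s/u}$ with $f^n(p)=p$, the $\phi_N$-suspension of the $f$-invariant union $\bigcup_{i=0}^{n-1} f^i(\ell_p)$ produces a periodic half-leaf of $W_N^{u/s}$.

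The main obstacle I anticipate is the full-accumulation claim in case 2: ruling out that a ray accumulates on only a proper sub-collection of the arcs of $H\cap\partial_\pm N$ should follow from the annular picture and the cyclic-sweep property of $\phi_N|_H$, but will require careful bookkeeping.
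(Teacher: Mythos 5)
Your overall strategy matches the paper's closely: establish that the fibration of $\mathrm{int}(N)$ restricts to a fibration of $\mathrm{int}(H)$ with fiber $\ell = L\cap H$, identify $\mathrm{int}(H)$ as a suspension, and then read off the asymptotic behavior from \Cref{lem:annulus leaf structure}. Your route to the bundle structure via the infinite cyclic cover $L\times\R$ and flow-invariance is slightly different from the paper's (which argues directly that $F\cap H$ is a section of the flow in $H$ because every forward ray returns to $F$), but both are valid and buy the same thing.

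The one genuine gap is exactly the one you flag: in case (2), you need accumulation on \emph{every} boundary arc of $H\cap\partial_\pm N$, and "connectedness plus cyclic sweep" does not clearly rule out $\eta$ accumulating on only a proper subcollection. The paper's resolution is sharper and worth internalizing: given any component $\ell'$ of $H\cap\partial_+N$ and any $p\in\ell'$, consider the $\phi_N$-orbit $\gamma_p$ terminating at $p$. Because $\eta$ is a section of $\phi_N|_{\mathrm{int}(H)}$, the orbit $\gamma_p\cap\mathrm{int}(N)$ meets $\eta$ infinitely often, and since $L$ spirals onto $\partial_+N$, these intersection points accumulate at $p$. Hence $\eta$ accumulates at $p$, and since $p$ and $\ell'$ were arbitrary, $\eta$ accumulates on every arc. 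This argument is pointwise rather than global, which is what makes it airtight; you should replace your cyclic-sweep heuristic with it. Aside from this, a minor imprecision: you invoke the minimal-blowup hypothesis to get transversality of $\phi_N$ to $\mc F$ on blown annuli, but transversality of $\phi$ to $\mc F$ is part of the spA package by definition; the minimal-blowup condition is used elsewhere (e.g.\ to ensure each periodic leaf of $W^{u/s}_N$ has a unique closed orbit), not for transversality.
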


\begin{figure}[htbp]
    \centering{
    \includegraphics[width=3.5in]{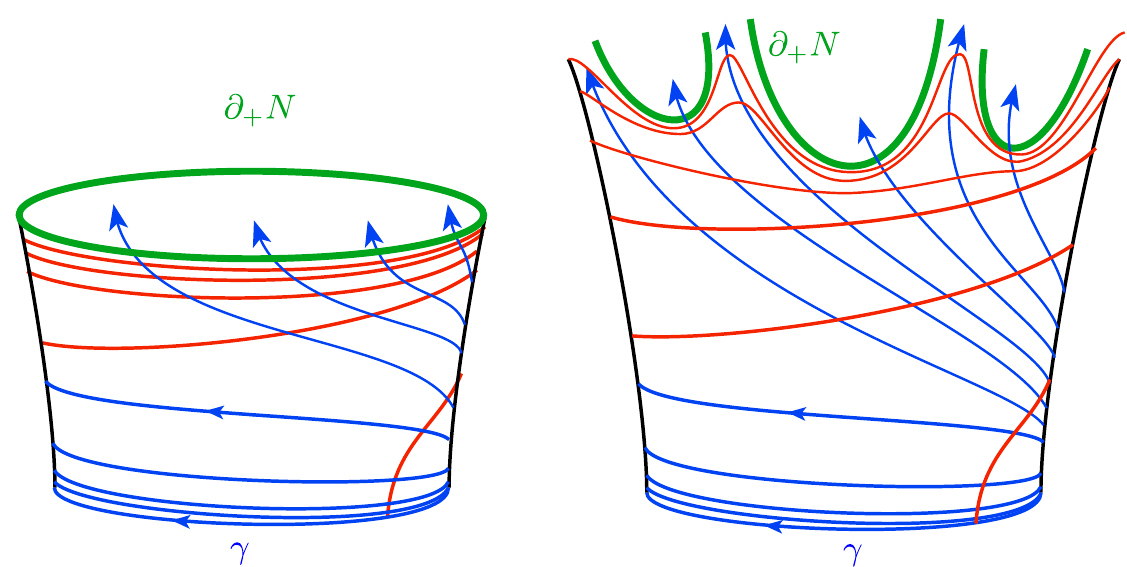}}
    \caption{Expanding periodic half-leaves of $W_N^{u/s}$ with their intersections with $L$ shown in red.}
    \label{fig:two-annuli-fibered}
\end{figure}

\begin{proof}
The fibration map $\mathrm{int}(N)\to S^1$, restricted to the half-leaf $H$, is a submersion since the flow
directions are transverse to the foliation $\FF$. Each leaf $F$ of $\FF$ in $\mathrm{int}(N)$ is the
preimage of a point in $S^1$, and so the same is true for $F\cap H$. Finally, $F\cap H$ is
a section of the flow in $H$, because every forward flow ray of $\phi_N$ eventually
returns to $F$. This implies that $H$ is fibered over $S^1$, with fibers $F\cap
H$ (which need not be connected).

Since $H$ is a periodic half-leaf, the fibration restricted to the (periodic) boundary component $\gamma$ is a
covering map to $S^1$, so that $L\intersect \gamma$ is a finite union of $k>0$ points. Thus
$L\intersect H$ has $k$ components, each of which must be a half-leaf emanating
from $\gamma$. Let $\eta$ be one such component. Then $H\cap \intr(N)$ is the suspension of $f^k$ restricted to $\eta$, so each orbit of $\phi_N|_{\intr(H)}$ returns infinitely often to $\eta$. This picture shows that $H$ is expanding/contracting if and only if $f^k$ is expanding/contracting on $\eta$.

Suppose $H$ is expanding. In this case $H\cap \del_-=\varnothing$ by \Cref{lem:annulus leaf structure}. Let $\ell$ be any component of $H\cap \del_+$, let $p\in \ell$, and let $\gamma_p$ be the orbit of $\phi_N$ terminating at $p$. Since $\gamma_p\cap \intr (N)$ intersects $\eta$ infinitely often in the forward direction, we conclude $\eta$ accumulates on $\ell$. The case of $H$ contracting is symmetric.

Conversely, if we suspend a half-leaf, we obtain an annulus which is properly embedded in
$N$ and hence must be a periodic annulus component as in \Cref{lem:annulus leaf
  structure}.
\end{proof}

\begin{proof}[Proof of \Cref{lem:halfleafend}]
By \Cref{lem:annulus_fibering}, any periodic half-leaf $\ell$ of $\cW^{u/s}$ suspends to a periodic half-leaf $H$ of $W_N^{u/s}$ as in \Cref{fig:two-annuli-fibered}. When $\ell$ is expanding, $H$ is expanding and so $\ell$ accumulates on $\partial_+$. In terms of the surface $L$, this implies that $\ell$ accumulates on a positive end of $L$. When $\ell$ is contracting, $H$ is contracting and $\ell$ accumulates on $\partial_-$.

If $H$ is contained in a blown annulus, it corresponds to case $(1)$ of
\Cref{fig:two-annuli-fibered} by \Cref{lem:annulus leaf structure}, and in that case the
$\ell$ spirals onto a single boundary component of $\partial_\pm$, which implies that it
escapes a unique end. 
\end{proof}

\subsection{Hyperbolic metrics, boundaries, and invariant foliations}
\label{sec:fols}
We now need to connect the foliations $\cW^{u/s}$ of $L$ with its hyperbolic geometry.

A \define{standard} hyperbolic metric on the surface $L$ is a complete hyperbolic metric that contains no embedded hyperbolic half spaces. Suppose that $L$ is given a standard hyperbolic metric so that its universal cover $\wt L$ is isometric to the hyperbolic plane; we denote its hyperbolic boundary by $\partial \wt L$ and the associated compactification by $\DD = \wt L \cup \partial \wt L$. 

The following facts, which we use without further comment, will be crucial: 
\begin{enumerate}
\item For any homeomorphism $g \colon L \to L$, any lift $\wt g \colon \wt L \to \wt L$ has a unique continuous extension to $\DD$ \cite[Theorem 2]{CaCo13} and we continue to denote by this homeomorphism and its restriction to $\partial \wt L$ by $\wt g$. 

\item  If $f, g \colon L \to L$ are homotopic maps, then they are isotopic \cite[Corollary 10]{CaCo13} and if $\wt f, \wt g \colon \wt L \to \wt L$ are obtained by lifting a homotopy to $\wt L$, then $\wt f$ and $\wt g$ agree on $\partial \wt L$ \cite[Corollary 5]{CaCo13}. 

\item If $\sigma_1,\sigma_2$ are two standard hyperbolic metrics on $L$, then any lift of the identity map on $L$ extends to a homeomorphism $\DD_1 \to \DD_2$, where $\DD_i$ is the hyperbolic compactification of $ \wt L$ with respect to $\sigma_i$ \cite[Lemma 10.1]{CCF19}.
\end{enumerate}

\smallskip
The following proposition implies the properties that we will need concerning the singular foliations $\cW^{u/s}$, most of which follow from work of Fenley \cite{Fen09}. We let $\wt{\cW}^{u/s}$ denote the lifts of $\cW^{u/s}$ to $\wt L$. 

\begin{proposition}[Foliations] 
\label{prop:foliation_facts}
Let $f \colon L \to L$ be spA. Then there exists a standard hyperbolic metric on $L$ such that the leaves of $\wt \cW^{u/s}$ are uniformly quasigeodesic. 

Moreover, lifting to the universal cover of $M$, the intersection of $\wt L$ with a leaf of $\wt W^{u/s}$ is connected and hence a leaf of $\wt \cW^{u/s}$.
\end{proposition}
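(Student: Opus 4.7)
The plan is to leverage the spA package $M, \mc F, \phi, N$ for $f$ and reduce the proposition to cited results of Fenley \cite{Fen09}: $L$ is a depth-one leaf of the foliation $\mc F$ in the hyperbolic 3-manifold $M$, and $\mc W^{u/s} = L \cap W^{u/s}$ where $W^{u/s}$ are the 2-dimensional invariant foliations of the almost pseudo-Anosov flow $\phi$. First I would fix the standard hyperbolic metric on $L$. Since any two standard metrics yield the same boundary circle up to homeomorphism by fact (3) above, the conclusions are independent of the choice; a convenient candidate is Candel's leafwise uniformization applied to $\mc F$ restricted to the interior of $N$, which endows each noncompact leaf with a complete hyperbolic metric. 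Because $L$ has no planar ends this metric is standard.

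For the uniform quasi-geodesic property, the key input is Fenley's theorem that for a (circular) pseudo-Anosov flow on a closed hyperbolic 3-manifold, leaves of $\wt W^{u/s}$ in $\wt M \cong \mathbb H^3$ are uniformly quasi-isometrically embedded topological planes. This property survives the dynamic blowup used to construct $\phi$, because the semiconjugacy $\phi \to \phi^\flat$ alters each leaf only by adjoining bounded annulus complexes, so uniform constants are preserved. Combined with the fact that $\wt L$ is itself a uniformly quasi-isometrically embedded plane in $\wt M$ (again via Fenley's structure theorem for depth-one leaves transverse to pseudo-Anosov flows), the intersection of two such planes in $\mathbb H^3$ yields a uniformly quasi-geodesic line in each, giving the desired quasi-geodesic property of leaves of $\wt{\mc W}^{u/s}$ in $\wt L$ with its standard hyperbolic metric.

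Finally, for connectedness: suppose a leaf $\wt H$ of $\wt W^{u/s}$ meets $\wt L$ in two components $c_1, c_2$. Each $c_i$ is a uniform quasi-geodesic in the plane $\wt H$ with two distinct endpoints on the circle at infinity of $\wt H$ (which extends continuously to $\partial \wt M = S^2$ by the quasi-isometric embedding), and likewise distinct endpoints on $\partial \wt L$. In $\wt H$, the lines $c_1$ and $c_2$ cobound a closed strip whose interior is disjoint from $\wt L$. Using the invariance of $\wt H$ under the lifted flow $\wt \phi$ together with the fact that every $\wt \phi$-orbit meeting $\wt L$ does so discretely and infinitely often (from the depth-one structure of $\wt{\mc F}$, which is Reeb-free by tautness), pushing the strip along the flow forces its interior to return to $\wt L$ and produces an intermediate component of $\wt L \cap \wt H$, contradicting adjacency of $c_1$ and $c_2$. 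Once connectedness is established, the final conclusion that $\wt L \cap \wt H$ is a single leaf of $\wt{\mc W}^{u/s}$ is immediate from the definition of $\mc W^{u/s} = L \cap W^{u/s}$. I expect the main obstacle to be cleanly handling the blown half-leaves, where $\mc W^u$ and $\mc W^s$ coincide and transversality degenerates; here one checks that boundedness of the blowup ensures Fenley's results apply with uniform constants both to $\phi$ and to $\phi^\flat$.
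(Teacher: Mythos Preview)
Your proposal diverges from the paper's proof in the key step, and the divergence contains a genuine gap.

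For the quasigeodesic property, the paper does \emph{not} argue via quasi-isometric embeddings of planes in $\wt M\cong\mathbb H^3$. Instead it shows that the leaf spaces of $\wt W^{u/s}$ are Hausdorff (they are $\mathbb R$-trees for the honest pseudo-Anosov flow $\phi^\flat$, and the dynamic blowup does not change the leaf space), hence the induced leaf spaces of $\wt{\cW}^{u/s}$ on $\wt L$ are Hausdorff, and then invokes Fenley's Theorem~C from \cite{Fen09}, which says precisely that Hausdorff leaf space implies uniform quasigeodesic leaves in the Candel metric on $L$. Your route has two problems. First, the assertion that $\wt L$ is uniformly quasi-isometrically embedded in $\wt M$ is not something Fenley establishes for depth-one leaves, and in general noncompact leaves of taut foliations on hyperbolic 3-manifolds need not be quasi-isometrically embedded. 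Second, even granting that $\wt L$ and a leaf $\wt H$ of $\wt W^{u/s}$ are each quasi-isometrically embedded planes in $\mathbb H^3$, it simply does not follow that their intersection is a uniform quasigeodesic in either: two quasi-planes can intersect along an arbitrarily wiggly curve, and no general principle forces the intersection to be close to a geodesic. The Hausdorff-leaf-space argument in the paper sidesteps both issues entirely.

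Two smaller points. Your justification that the Candel metric on $L$ is standard (``because $L$ has no planar ends'') is not correct; absence of planar ends does not by itself rule out embedded half-spaces. The paper argues instead that $L$ spirals onto the compact leaves $\partial_\pm N$, so closed curves in $L$ near the ends have uniformly bounded length by continuity of the leafwise metric, which bounds the injectivity radius from above. For connectedness, the paper simply cites \cite[Proposition~4.2]{Fen09} rather than arguing directly; your flow-pushing sketch is plausible but unnecessary given the citation.
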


\begin{proof}
First, the moreover statement is precisely \cite[Proposition 4.2]{Fen09} and follows easily from the basic structure of $\mc F$ and $\phi$. 

Next, we recall that since $M$ is hyperbolic, Candel proved that $M$ admits a leafwise
hyperbolic metric, i.e. a metric which varies continuously and for which 
each leaf of the foliation $\mc F$ has constant curvature $-1$
\cite{candel1993uniformization}.

Now $L$ spirals onto the boundary leaves, and each component of $\del_+N$ ($\del_-N$) contains a closed curve which comes via the semiflow from a closed curve in $L$ (namely, a curve in a fundamental domain of the ``endperiodic part" of the map $f$). By continuity of the metric, the forward (backward) $f$--orbit of this curve consists of bounded length curves. 
All points in $L$ are a bounded distance from one of these bounded length curves, so the injectivity radius in the hyperbolic metric on $L$ is bounded above, and in particular the metric is standard.

Finally, let $\phi^\flat$ be the circular pseudo-Anosov flow on $M$ obtained by blowing down $\phi$. It is well-known that the stable/unstable foliations of $\phi^\flat$ have Hausdorff leaf space (in fact, they are $\mathbb{R}$-trees). Since the blowup does not change the leaf space of the stable/unstable foliations, the same is true for $\wt W^{u/s}$; see the discussion at the end of Section $3$ of \cite{Fen09}. Since the foliations $\wt \cW^{u/s}$ are determined by the intersection $\wt L \cap \wt W^{u/s}$, the leaf spaces of $\wt \cW^{u/s}$ are also Hausdorff. Hence, we may apply \cite[Theorem C]{Fen09} to conclude that the leaves of $\wt \cW^{u/s}$ are uniformly quasigeodesic in $\wt L$.
\end{proof}

\begin{remark}
\label{rmk:quasi}
If $g \colon L \to L$ is endperiodic and $f_1, f_2$ are each spA maps isotopic to $g$,
then we can choose a standard hyperbolic metric on $L$ so that, lifting to the universal
cover, the leaves of the invariant foliations of {\em both} $f_1$ and $f_2$ are uniformly quasigeodesic. 

To see this, let $N = N_g$ be the compactified mapping torus for $g$ and let $N \to M_1$ and
$N \to M_2$ be the embeddings associated to $f_1$ and $f_2$ as in
\Cref{th:spA_construction}. As in the proof of \Cref{prop:foliation_facts},
each $M_i$ has a leafwise hyperbolic metric $g_i$ and these each pull back to a continuous leafwise hyperbolic metric on $N$, which we also denote $g_i$. By continuity and compactness of $N$, the ratio $g_1/g_2$ is uniformly bounded on $N$ and hence the induced hyperbolic metrics on the leaf $L$ are biLipschitz. In particular, the two metrics on $\wt L$ are quasi-isometric and so the leaves of both sets of invariant foliations are uniformly quasigeodesic in either metric. 
\end{remark}

We have the following immediate consequence, which is the primary place where we use that 
$\phi$ is minimally blown up with respect to $\FF$.

\begin{lemma}\label{lem:unique_fixed}
Let $f \colon L \to L$ be spA. Any lift $\wt f \colon \wt L \to \wt L$ has at most one fixed point.
\end{lemma}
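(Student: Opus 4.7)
I would argue by contradiction, supposing that $\wt f$ has distinct fixed points $\wt p_1, \wt p_2 \in \wt L$. Each $\wt p_i$ sits on a lift $\wt \gamma_i \subset \wt M$ of a closed orbit $\gamma_i$ of the semiflow $\phi_N$ inside $N$, meeting $L$ in exactly one point. Since both lifts are preserved by the deck transformation $T \in \pi_1(M)$ associated to the specific lift $\wt f$, the closed orbits $\gamma_1, \gamma_2$ are freely homotopic in $M$.

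The next step is to locate $\wt p_1$ and $\wt p_2$ on a common leaf of $\wt \cW^u$. Blowing $\phi$ down to the underlying circular pseudo-Anosov flow $\phi^\flat$ on the hyperbolic manifold $M$, distinct closed orbits of $\phi^\flat$ represent distinct $\pi_1(M)$-conjugacy classes, so the freely homotopic orbits $\gamma_1, \gamma_2$ must collapse to the same closed orbit of $\phi^\flat$. If $\gamma_1 = \gamma_2$ then $\wt p_1 = \wt p_2$ (as $\wt L$ meets a single $\wt \phi$-orbit at most once), so we may assume $\gamma_1 \ne \gamma_2$ and they are boundary orbits of blown annuli in a common annulus complex over a blown-up singular orbit. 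This complex lies in a single leaf $H$ of $W^u$, and the minimal blow-up hypothesis forces $\gamma_1, \gamma_2$ to be disjoint from $\mc F_0$, hence contained in $\mathrm{int}(N)$. Taking a $T$-invariant lift of the complex to $\wt M$ yields a common lift $\wt H$ of $H$ containing $\wt \gamma_1$ and $\wt \gamma_2$, and the \emph{moreover} clause of Proposition \ref{prop:foliation_facts} then asserts that $\wt L \cap \wt H$ is connected, so it is a single leaf $\wt \ell$ of $\wt \cW^u$ containing both $\wt p_1$ and $\wt p_2$.

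Finally, the pseudo-Anosov dynamics on $\wt \ell$ provide the contradiction. The leaf $\wt \ell$ is a quasi-geodesic tree (Proposition \ref{prop:foliation_facts}), so there is a unique compact embedded arc $\sigma$ joining $\wt p_1$ to $\wt p_2$, and $\sigma$ is $\wt f$-invariant setwise because both endpoints are fixed. Away from blown half-leaves, $\wt f$ strictly expands leaf-length along $\wt \ell$ by the pseudo-Anosov stretch factor $\lambda > 1$, forcing any non-blown portion of $\sigma$ to be degenerate. On the other hand, by Lemma \ref{lem:halfleafend} each blown half-leaf escapes a unique end of $L$ and contains no other $f$-periodic points in its interior, so $\sigma$ cannot be covered by blown half-leaves either, and we reach the desired contradiction.

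The main technical obstacle is obtaining the common lift $\wt H$ in the middle paragraph: two $T$-invariant lifts of $\gamma_2$ can a priori exist, so one must track the (connected) annulus complex in $\wt M$ and use the $T$-invariance of this tree of blown annuli to match the lift $\wt \gamma_2$ containing $\wt p_2$ with the lift reached from $\wt \gamma_1$ through the complex. This is exactly where minimal blow-up is essential, since without it a boundary orbit of a blown annulus could intersect $\mc F_0$ and fail to lie in $\mathrm{int}(N)$, breaking the required identification $\wt p_1, \wt p_2 \in \wt L \cap \wt H$.
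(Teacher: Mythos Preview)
Your setup through the second paragraph essentially matches the paper: both fixed points lie on $T$-invariant lifted orbits, the projections are homotopic in $M$, so after blowing down they collapse to a single pseudo-Anosov orbit; hence $\gamma_1,\gamma_2$ lie in a common blown annulus complex inside one leaf $H$, and by \Cref{prop:foliation_facts} the intersection $\wt L\cap \wt H$ is a single leaf $\wt\ell$ containing both $\wt p_1,\wt p_2$.

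Two points need attention. First, the case $\gamma_1=\gamma_2$: your inference ``$\wt L$ meets a single $\wt\phi$-orbit at most once, so $\wt p_1=\wt p_2$'' silently assumes $\wt\gamma_1=\wt\gamma_2$, i.e.\ that $T$ has a \emph{unique} invariant lift of $\gamma_1$. This is true (closed orbits of a pseudo-Anosov flow on a hyperbolic manifold represent primitive conjugacy classes), but you have not said why. The paper instead observes that two distinct $T$-invariant lifts of the same orbit would force an essential torus in $N$, contradicting atoroidality.

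Second, and more seriously, your final paragraph does not close. There is no canonical ``leaf-length'' along $\wt\ell$, and the first return map of a blown-up suspension flow does not expand by any fixed $\lambda$; so the statement ``non-blown portions of $\sigma$ are degenerate'' is not justified as written. More to the point, you have inverted the role of the minimal blow-up hypothesis. It is \emph{not} used to keep boundary orbits out of $\FF_0$; rather, it says every blown annulus \emph{does} meet $\FF_0$. The paper's contradiction comes directly from this: project $\sigma$ to an arc $a\subset L$ and suspend under $\phi_N$ to obtain a compact annulus $A\subset H$ in $\mathrm{int}(N)$ cobounded by $\gamma_1$ and $\gamma_2$. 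Since its boundary consists of closed orbits in the annulus complex, $A$ is a union of blown annuli, and $A\subset\mathrm{int}(N)$ means these blown annuli miss $\FF_0$ --- contradicting minimal blow-up. Your appeal to \Cref{lem:halfleafend} can be pushed through (a blown half-leaf escapes an end and has no interior periodic points, so $\sigma$ cannot reach $\wt p_2$), but this is just an indirect repackaging of the same contradiction, since \Cref{lem:halfleafend} itself rests on the fact that blown annuli are cut by $\FF_0$.
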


Since powers of spA maps are spA by definition, the same holds for powers of $f$.

\begin{proof}
Suppose $p, q \in \wt L$ are distinct fixed points of $\wt f$. Then their respective $\wt
\phi$--orbits $\wt \gamma_p$ and $\wt \gamma_q$ project to homotopic closed $\phi$--orbits
$\gamma_p$ and $\gamma_q$ in $M$. If $\gamma_p =\gamma_q$, then $N$ contains an
an essential torus, a contradiction. Otherwise,
since distinct closed orbits of a circular pseudo-Anosov flow are never homotopic,
$\gamma_p$ and $\gamma_q$ are closed orbits of a blown leaf $\lambda$ shared by $W^u$ and
$W^s$. The lifted leaf $\wt \lambda$ contains $\wt \gamma_p$ and $\wt \gamma_q$ and by
\Cref{prop:foliation_facts} its intersection $\wt \ell$ with $\wt L$ is a (connected) leaf
of $\cW^{u/s}$ that contains both $p$ and $q$.
Let $\wt a$ be an arc in $\wt \ell$ from $p$ to $q$. Its image in $L$ suspends under $\phi_N$ to give an annulus $A\subset \lambda$ contained in the interior of $N$ cobounded by $\gamma_p$ and $\gamma_q$. In particular, $A$ is a union of blown annuli that do not meet the compact leaves of $\mc F$. This contradicts that $\phi$ is minimally blown up with respect to $\mc F$.
\end{proof}

\subsection{Local dynamics on $\partial \wt L$} \label{sec:bdynamics}
The first step in proving \Cref{thm:fix and boundary} is to analyze the {\em local
  dynamics} of lifts of endperiodic maps 
to $\wt L$. This lemma gives conditions for a fixed point  on $\partial \wt L$
to be a sink or a source, not just on the boundary but for points of the disk $\DD = \wt L
\union \partial \wt L$. 

\begin{lemma}
\label{cor:gen_local_dynamics}
Let $g \colon L \to L$ be an endperiodic map, fix $n\ge0$, and let $\xi \in \partial \wt L$ be fixed by 
a lift $\wt g^n$ of $g^n$. Suppose further that there is a quasigeodesic 
ray $\wt r$ in $\wt L$ with $\wt r_\infty = \xi$ such that its projection $r$ to $L$
accumulates on an attracting (repelling) end of $L$.
Then $\xi$ is a sink (source) for the action of $\wt g^n$ on $\DD$.
\end{lemma}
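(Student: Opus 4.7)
By applying the statement to $g^{-n}$ (which interchanges attracting and repelling ends), I may assume $e$ is attracting. Fix a connected neighborhood $U$ of $e$ with $g^n(\ol U) \subset U$ and $\bigcap_{k\ge 0} g^{nk}(U) = \emptyset$; after an isotopy I take $\partial U$ to be a disjoint union of essential geodesic simple closed curves in the standard hyperbolic metric on $L$ (using \Cref{prop:foliation_facts}, the hyperbolic structure is in hand). The preimage $\mc C := \pi^{-1}(\partial U)$ is then a locally finite, $\pi_1(L)$-equivariant family of pairwise disjoint bi-infinite geodesics in $\wt L = \HH^2$, and each component of $\wt L \ssm \mc C$ is a lift of $U$ or of $L \ssm U$.

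The central step is to produce a lift $\wt U$ of $U$ satisfying: (i) a tail of $\wt r$ lies in $\wt U$; (ii) $\wt g^n(\wt U) \subset \wt U$; and (iii) the closure of $\wt U$ in $\DD$ contains an open neighborhood of $\xi$. For (i), I combine two facts: each $\gamma \in \mc C$ with endpoints distinct from $\xi$ separates $\DD$ with $\xi$ on a single side, so the quasigeodesic ray $\wt r$ (converging to $\xi$) eventually lies on that side; and the accumulation of $r$ on $e$ constrains the relevant side to correspond to a lift of $U$ rather than of $L\ssm U$. For (ii), $\wt g^n(\wt U)$ is connected and contained in $\pi^{-1}(U)$, hence lies in a single lift of $U$; applying (i) to the ray $\wt g^n(\wt r)$ (which also terminates at $\xi$ and whose projection still accumulates on $e$) identifies this lift as $\wt U$. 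For (iii), the fact that $\wt r$ approaches $\xi$ from the interior of $\wt U$ forces $\xi$ to lie in the relative interior of $\ol{\wt U} \cap \partial \wt L$ inside $\partial \wt L$.

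With $\wt U$ in place, the sink property follows by iteration. The closures $K_k := \ol{\wt g^{nk}(\wt U)}$ in $\DD$ form a nested decreasing sequence of compact neighborhoods of $\xi$. Their intersection meets $\wt L$ in $\bigcap_k \wt g^{nk}(\wt U)$, which projects into $\bigcap_k g^{nk}(U) = \emptyset$; hence $K := \bigcap_k K_k \subset \partial \wt L$. The boundary geodesics of $\wt g^{nk}(\wt U)$ in $\wt L$ are $\wt g^{nk}$-images of the boundary geodesics of $\wt U$, whose projections lie in the shrinking sets $g^{nk}(U)$; I argue that these boundary geodesics escape every compact set of $\wt L$ and accumulate on $\partial \wt L$ only at $\xi$. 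Together with step (iii), this forces $K = \{\xi\}$, and the nesting from (ii) yields the sink.

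The hardest part is expected to be step (iii) together with the final shrinkage argument: both require controlling how a potentially infinite family of lifts of $\partial U$ may accumulate at $\xi$ in $\DD$. Concretely, one must rule out that lifts of $\partial U$ converge into $\wt U$ toward $\xi$ from its interior (which would prevent $\ol{\wt U}$ from being an open neighborhood of $\xi$) and that $\wt g^n$-images of these lifts leave a residual $\wt g^n$-invariant accumulation set on $\partial \wt L$ away from $\xi$.
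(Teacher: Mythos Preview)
Your step (i) has a genuine gap. The hypothesis only says that $r$ \emph{accumulates} on the end $e$, not that $r$ escapes to $e$; hence $r$ may cross $\partial U$ infinitely often, and then the geodesic ray $\wt r$ crosses infinitely many geodesics of $\mc C$. In that situation the complementary regions of $\mc C$ traversed by $\wt r$ alternate between lifts of $U$ and lifts of $L\ssm \ol U$, and no single lift $\wt U$ contains a tail of $\wt r$. Your sentence ``$\wt r$ eventually lies on the $\xi$-side of each $\gamma$'' is correct for each individual $\gamma$, but it yields nothing uniform over the infinite family. (This case genuinely arises in the intended application: by \Cref{lem:annulus_fibering} a noncompact periodic half-leaf of $\mc W^{u/s}$ need only accumulate on, not escape to, its end.) The same obstruction undermines step (iii): when geodesics of $\mc C$ nest down toward $\xi$, no lift $\wt U$ has closure containing an open $\DD$-neighborhood of $\xi$. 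A secondary issue is that $g^n(\ol U)\subset U$ tacitly assumes the period $q$ of $e$ divides $n$.

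The paper avoids all of this by abandoning full lifts of $U$. It fixes a single \emph{connected} geodesic juncture $j_0$ bounding a neighborhood $U_0$ of $e$, chooses one lift $\wt j_0$ separating the basepoint of $\wt r$ from $\xi$, and works with the half-disk $\wh U_0\subset\DD$ that $\wt j_0$ bounds on the $\xi$-side; this is automatically a $\DD$-neighborhood of $\xi$, so your step (iii) comes for free. Setting $s=\lcm(n,q)$, the nested half-disks $\wh U_k=\wt g^{ks}(\wh U_0)$ are shown to shrink to $\{\xi\}$ by arguing that the bounding geodesics $\wt j_k$ escape compact sets and that their endpoint-intervals $I_k$ collapse; for the latter the key input is that the geodesic tightenings $j_k^*$ also escape to $e$ (\cite[Theorem 4.24]{CCF19}), so the geodesics $\wt j_k^*$---which share endpoints with $\wt j_k$---escape as well.
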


\smallskip
Before the proof we need a definition, following \cite{CCF19}. Let $g \colon L \to L$ be an endperiodic map and
let $e$ be an attracting end of $L$ with period $q$. A \textbf{$g$-juncture} is a compact
$1$-manifold $J$ which is the boundary of a neighborhood $U$ of an end $e$ of $L$ such
that
\begin{itemize}
\item $g^q(U)\subset U$
\item $\bigcap_{k\ge0} g^{kq}(U)=\varnothing$.
\end{itemize}
If $e$ is a repelling end, then a $g$-juncture for $e$ is defined to be a $g^{-1}$-juncture for $e$.
Note that since 
$L$ has no boundary, each end of $L$ has a \emph{connected} $g$-juncture.

\begin{proof}
We can begin the proof by replacing $\wt r$ with the geodesic ray with the same endpoints in $\DD$ since it will accumulate on the same ends when projected to $L$.

Let $e$ denote a positive end of $L$ so that $r$ accumulates on $e$
and suppose that $e$ has period $q$. 
Let $s=\lcm(n,q)$.
Choose a connected $g$-juncture $j_0$ for $e$,
 and let $j_k=g^{ks}(j_0)$. In this proof, we will use a star superscript to denote passage to the geodesic tightening in the fixed hyperbolic metric. For example $j_k^*$ is the geodesic tightening of $j_k$.

By definition, $\{j_k\}$ escapes to $e$ and so by \cite[Theorem 4.24]{CCF19}, $j_0^*,j_1^*, j_2^*,\dots$ also escapes to $e$. 
After fixing $m$ sufficiently large, we can assume that $j_m^*$ 
separates the initial point of $r$ from the end $e$, and hence
we can choose a lift $\wt j_m^*$ of $j_m^*$ which 
separates the initial point of $\wt r$ from $\xi = \wt r_\infty$. 

At this point, we redefine $j_0 := j_m^*$ and $\wt j_0=\wt j_m^*$ and, as before, set $j_k=g^{ks}(j_0)$. Note that $j_0 = j_0^*$ by 
definition but this is not necessarily the case for $k>0$.

Let $\wt j_k$ be the lift of $j_k$ to $\wt L$ defined by $\wt g^{ks}(\wt j_0)$. 
We note that each $\wt j_k$ also separates the initial point of $\wt r$ from
$\xi$. Indeed, if $U_0$ is the neighborhood of $e$ with boundary $j_0$ and $\wt U_0$ is
its lift with boundary $\wt j_0$, whose closure in $\DD$ contains $\xi$, then $\wt U_k =
\wt g^{ks}(\wt U_0)$ is the lift of $g^{ks}(U_0)$ with boundary $\wt j_k$ and has closure
in $\DD$ that also contains $\xi = \wt g^{ks}(\xi)$. Also, since $g^{ks}(U_0) \subset U_0$, we necessarily have $\wt U_k \subset \wt U_0$.

Since $\{j_k\}$ escapes to $e$, $\{\wt j_k\}$ must escape compact subsets of $\wt L$. 
Hence the sequence $\wt j_0, \wt j_1, \wt j_2,\dots$ Hausdorff-limits in $\DD$ to an interval
$I\subset \del \wt L$. Letting $I_k$ denote the intersection of the closure $\wh
U_k$ of $\wt U_k$
with $\del\wt L$, we must have $I\subset I_k$ because the $\wt U_k$ are nested.
But $I_k$ are also the intervals bounded by the endpoints of $\wt j_k$, which are the same
as the endpoints of $\wt j_k^*$. Since $\wt j_k^*$ also escape compact subsets of $\wt L$,
we must have that $I$ is a single point. 
In particular, $\bigcap_{k\ge0} \wt g^{ks}(\wh U_0) = \{\xi \}$ and so $\xi$ is a sink for
$\wt g^n$ with respect to the action on $\DD$, as required.
\end{proof}

We will primarily use \Cref{cor:gen_local_dynamics} in the following form:

\begin{corollary} \label{lem:local_dynamics}
Let $f:L\to L$ be spA and let $r$ be an expanding (contracting) half-leaf in $L$ of period $n$. Let $\wt r$ be a lift of $r$ to $\wt L$.
Let $\wt r_\infty$
denote the endpoint of $\wt r$ on $\del \wt L$, and let $\wt f^n$ 
be a lift of $f^n$ to $\wt L$ such that $\wt f^n(\wt r)=\wt r$.
Then 
\begin{enumerate}[label=(\alph*)]
\item $\wt r_\infty$ is a sink (source)
for the action of $\wt f^n$ on $\DD$, and
\item if $g\colon L\to L$ is an endperiodic map homotopic to $f$, and $\wt g^n$ is a lift of $g$ compatible with the lift $\wt f^n$, then $\wt r_\infty$ is also a sink (source) for the action of $\wt g^n$ on $\DD$.
\end{enumerate}
\end{corollary}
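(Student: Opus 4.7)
The plan is to deduce both parts of the corollary as direct applications of Lemma \ref{cor:gen_local_dynamics}, using the half-leaf $\wt r$ itself as the required quasigeodesic ray. The key ingredients are already in hand: Proposition \ref{prop:foliation_facts} makes the leaves of $\wt \cW^{u/s}$ quasigeodesic, and Lemma \ref{lem:halfleafend} identifies which ends these half-leaves accumulate on. The main subtlety to navigate will be part (b), where one has to transfer information between the two distinct endperiodic maps $f$ and $g$.

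For part (a), I would first note that by hypothesis $\wt f^n(\wt r)=\wt r$, so in particular $\wt f^n$ fixes the endpoint $\wt r_\infty \in \partial\wt L$. Choose the standard hyperbolic metric on $L$ provided by Proposition \ref{prop:foliation_facts}, so that $\wt r$ is a quasigeodesic ray with $\wt r_\infty$ as its endpoint on $\partial \wt L$. If $r$ is expanding, then Lemma \ref{lem:halfleafend} says its projection accumulates on an attracting end of $L$; if $r$ is contracting, it accumulates on a repelling end. Applying Lemma \ref{cor:gen_local_dynamics} to $f^n$ with this $\wt r$ immediately yields that $\wt r_\infty$ is a sink in the expanding case and a source in the contracting case, with respect to the action of $\wt f^n$ on $\DD$.

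For part (b), I would first observe that $f$ and $g$ have the same set of attracting and repelling ends: homotopic endperiodic maps are isotopic (fact (2) of Section \ref{sec:fols}), and the attracting/repelling dichotomy on ends is an isotopy invariant of an endperiodic map. Compatibility of the lifts $\wt f^n$ and $\wt g^n$ means they arise from a lift of the homotopy, so by fact (2) they agree on $\partial \wt L$; in particular $\wt g^n(\wt r_\infty)=\wt f^n(\wt r_\infty)=\wt r_\infty$. The ray $\wt r$ is a quasigeodesic and $r$ accumulates on the same end as before, which is now also an attracting (resp.\ repelling) end for $g$. Thus the hypotheses of Lemma \ref{cor:gen_local_dynamics} are again satisfied, now for $g^n$, and we conclude that $\wt r_\infty$ is a sink (resp.\ source) for $\wt g^n$ on $\DD$.

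The only point that requires care is the compatibility claim in part (b): one must be sure that ``compatible'' really does force $\wt g^n$ and $\wt f^n$ to agree on $\partial\wt L$, and that the attracting/repelling structure on ends is an invariant of the homotopy class of the endperiodic map. Both are standard consequences of the Cantwell--Conlon extension theory quoted in Section \ref{sec:fols}, so no new work is needed; the rest of the argument is bookkeeping about which hypothesis of Lemma \ref{cor:gen_local_dynamics} corresponds to which case.
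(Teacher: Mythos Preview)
Your proposal is correct and follows essentially the same approach as the paper: equip $L$ with the standard hyperbolic metric from Proposition~\ref{prop:foliation_facts} so that $\wt r$ is quasigeodesic, invoke Lemma~\ref{lem:halfleafend} for the end-accumulation, and apply Lemma~\ref{cor:gen_local_dynamics} to both $f$ and $g$. The paper's proof is a terse three sentences; your version just makes explicit the two points the paper leaves to the reader in part~(b) --- that compatible lifts agree on $\partial\wt L$ and that the attracting/repelling structure on ends is an isotopy invariant.
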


\begin{proof}
We equip $L$ with a standard hyperbolic metric so that the leaves of $\wt \cW^{u/s}$ are
uniformly quasigeodesic (\Cref{prop:foliation_facts}). By \Cref{lem:halfleafend},
the expanding (contracting) half-leaf $\wt r$ accumulates on a positive (negative) end of $L$. Now apply \Cref{cor:gen_local_dynamics} to both $f$ and $g$.
\end{proof}

\subsection{Global dynamics on the hyperbolic boundary}\label{sec:global_dynamics}
We are now ready to prove one direction of \Cref{thm:fix and boundary}, namely
that the dynamics of half-leaves at fixed points of an spA map determine the corresponding dynamics at infinity.
The other direction will be proved in \Cref{prop:boundary_to_fix}.

\begin{proposition} \label{prop:fix_to_boundary}
Let $f \colon L \to L$ be spA with fixed point $p$. Let $\wt p$ be a lift of $p$ to $\wt L$, let $\wt f$ be a lift that fixes $\wt p$, and let $n\ge 1$ be such that $\wt f^n$ fixes the half-leaves at $\wt p$. Then $\wt f^n$ has multi sink-source dynamics on $\partial \wt L$
with attracting/repelling points equal to the endpoints of expanding/contracting half-leaves.
\end{proposition}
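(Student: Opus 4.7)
The plan is to prove the proposition in two stages: first, identify the endpoints of the periodic half-leaves at $\wt p$ as alternating sinks and sources of $\wt f^n$ on $\DD$; and second, show these are the only fixed points of $\wt f^n$ on $\partial\wt L$.

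For the first stage, enumerate the half-leaves of $\wt\cW^{u/s}$ at $\wt p$ as $\wt r_1,\dots,\wt r_m$ in cyclic order around $\wt p$, all fixed by $\wt f^n$ by hypothesis. The alternation between expanding and contracting is forced by the alternating orientation requirement at each vertex of the tree $\ol T$ in the dynamic blowup construction (Section 2.2), so $m\ge 4$ is even. By \Cref{prop:foliation_facts}, each $\wt r_i$ is a uniform quasigeodesic in $\wt L$, hence has a unique endpoint $\xi_i\in\partial\wt L$. Since the $\wt r_i$ are embedded and pairwise disjoint off $\wt p$, the $\xi_i$ are distinct and occur in the same cyclic order on $\partial\wt L$. \Cref{lem:local_dynamics}(a) then identifies $\xi_i$ as a sink (resp.\ source) for $\wt f^n$ on $\DD$ precisely when $\wt r_i$ is expanding (resp.\ contracting), producing the alternating sink-source pattern.

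For the second stage, I would apply \Cref{lem:unique_fixed} to the spA map $f^n$ to obtain that $\wt p$ is the only fixed point of $\wt f^n$ in $\wt L$. The arcs $\wt r_i$ divide $\DD$ into $m$ closed $\wt f^n$-invariant sectors $S_j$, each bounded by $\wt r_j$, $\wt r_{j+1}$, and the closed arc $\ol{I_j}\subset\partial\wt L$ between $\xi_j$ and $\xi_{j+1}$. Assume for contradiction that $\eta\in I_j$ is an extra fixed point of $\wt f^n$; by alternation we may take $\xi_j$ a sink and $\xi_{j+1}$ a source. The idea is to exploit the invariant singular foliation $\wt\cW^u$ on $S_j$, whose leaves are uniformly quasigeodesic and are stretched by $\wt f^n$ toward the sink $\xi_j$. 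Since endpoints of $\wt\cW^u$-leaves are dense in $\partial\wt L$, one can select $\wt\cW^u$-leaves in $S_j$ with endpoints tending to $\eta$ and extract, by a compactness argument based on the uniform quasigeodesic constant, a limit leaf $\ell\subset S_j$ with one endpoint at $\eta$. Iterating $\wt f^n$ on $\ell$ and passing to a further subsequential limit produces an $\wt f^n$-invariant leaf of $\wt\cW^u$ in $S_j$ ending at $\eta$. This leaf cannot pass through $\wt p$ (as $\eta\notin\{\xi_i\}$), so its image in $L$ is a closed or essentially recurrent curve invariant under a power of $f$, contradicting atoroidality via \Cref{lem:uncontainable}.

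The hard part will be the leaf-extraction step: ensuring the limit leaf genuinely terminates at $\eta$ rather than degenerating to a point of $\partial\wt L$, and upgrading approximate $\wt f^n$-invariance to exact invariance. An alternative would be to analyze the one-dimensional dynamics of $\wt f^n$ on $\partial\wt L$ directly: an extra fixed point $\eta$ between the sink $\xi_j$ and source $\xi_{j+1}$ would have to be semi-stable (attracting from one side, repelling from the other), and one then argues that such semi-stable behavior is incompatible with the pseudo-Anosov structure coming from $\wt\cW^{u/s}$, either because the basins of attraction of $\xi_j$ and of $\xi_{j+1}$ under $\wt f^{\pm n}$ together exhaust $I_j$, or by producing transverse leaves that would force $\eta$ itself to be the endpoint of an invariant half-leaf.
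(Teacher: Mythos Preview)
Your Stage~1 is fine and matches the paper's use of \Cref{lem:local_dynamics}. The real problem is the contradiction at the end of Stage~2. An $\wt f^n$--invariant leaf of $\wt\cW^u$ projects to an $f^n$--invariant \emph{leaf} of $\cW^u$ in $L$, which is a noncompact line (or singular tree), not a simple closed curve. \Cref{lem:uncontainable} applies only to essential simple closed curves whose forward iterates stay in a fixed compact set, so the appeal to atoroidality does not go through. This is a genuine gap, not a detail: you have produced an invariant leaf but no invariant multicurve.

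The paper's argument obtains the contradiction differently, and this is the key idea you are missing. Once you have two distinct $\wt f^n$--invariant leaves of the \emph{same} foliation $\wt\cW^s$ (namely the one through $\wt p$ and the new one $\wh\ell$), you suspend them to leaves of $\wt W^s$ in $\wt M$ fixed by the same deck transformation. Their projections to $M$ then contain homotopic closed $\phi$--orbits; but distinct homotopic closed orbits of a circular pseudo-Anosov flow lie in a common blown leaf, and \Cref{prop:foliation_facts} says the intersection of $\wt L$ with any lifted leaf of $\wt W^s$ is connected. This forces the two leaves to coincide, a contradiction. The paper also organizes the leaf--extraction step more cleanly than your sketch: when an adjacent half-leaf is not blown, transversality lets you track leaves of the \emph{other} foliation crossing $r$ and watch their intersection point with $r$ escape to $r_\infty$; when both adjacent half-leaves are blown, a separate lemma (\Cref{claim:finding_leaves}) shows that any two boundary points not separated by a leaf are joined by one, which supplies the needed invariant leaf directly. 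Your density/compactness extraction could perhaps be made to work, but you would still need the ``at most one invariant leaf'' argument above, not \Cref{lem:uncontainable}, to finish.
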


Before giving the proof, we will require the following lemma. First, following Fenley, we say that a \define{slice leaf} of a singular foliation is a line that is the union of two half-leaves meeting at their common initial point. The slice leaf is a \define{leaf line} if the two half-leaves are adjacent in the circular ordering of half-leaves about their common initial point. Note that regular leaves are leaf lines and any nonregular slice leaves contain a singularity.

\begin{lemma} \label{claim:finding_leaves}
Let $\wt \cW$ be either $\wt \cW^u$ or $\wt \cW^s$ and suppose that $x,y \in \partial \wt L$ are not separated by any leaf of $\wt \cW$. Then there is a leaf $\ell \in \wt\cW$ with $\{x,y\} \subset \partial \ell $.
\end{lemma}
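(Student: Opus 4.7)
The plan is to produce the desired leaf as a Hausdorff limit of leaves through points approaching $y$ along the hyperbolic geodesic from $x$ to $y$, and then to force the other endpoint of this limit to be $x$ via a disjointness/interlinking argument.

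Fix the standard hyperbolic metric on $L$ from \Cref{prop:foliation_facts} so that the leaves of $\wt\cW$ are uniformly $K$-quasigeodesic in $\wt L \cong \mathbb H^2$. Each leaf $\ell$ then has a well-defined pair of distinct endpoints $\partial\ell \subset \partial\wt L$ and lies within uniformly bounded Hausdorff distance of the hyperbolic geodesic between them. Let $A^+, A^-$ denote the two open arcs of $\partial\wt L \smallsetminus \{x,y\}$. The hypothesis that no leaf separates $x$ from $y$ says that every leaf of $\wt\cW$ has $\partial\ell \subset \ol{A^+}$ or $\partial\ell \subset \ol{A^-}$.

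The first step is to produce a leaf of $\wt\cW$ with $y$ as an endpoint. Let $\gamma$ be the hyperbolic geodesic from $x$ to $y$, and let $q_n = \gamma(t_n)$ with $t_n \to +\infty$, so $q_n \to y$ in $\DD$. Through each $q_n$ select a slice leaf $\ell_n$ of $\wt\cW$. Because each $\ell_n$ is a $K$-quasigeodesic passing through $q_n$ and $q_n \to y$, the Morse lemma for quasigeodesics in $\mathbb{H}^2$ forces (after subsequencing) one endpoint $a_n^+$ of $\ell_n$ to converge to $y$; by compactness of $\partial\wt L$ we may also assume $a_n^- \to z$ for some $z \in \partial\wt L$. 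Passing to a further subsequence, the leaves $\ell_n$ Hausdorff-converge in $\DD$ to a closed set $\ell^*$. Using uniform quasigeodesicity together with the fact that leaves of $\wt\cW$ form a closed decomposition (distinct leaves are disjoint in $\wt L$ outside singularities), the limit $\ell^*$ is itself a leaf (or slice leaf) of $\wt\cW$ with endpoints $y$ and $z$.

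It then remains to show $z = x$. Suppose for contradiction $z \neq x$. The symmetric construction using $q_n' = \gamma(-t_n) \to x$ produces a leaf $\ell^{**}$ of $\wt\cW$ with endpoints $x$ and some $z' \neq y$. As $\ell^*$ and $\ell^{**}$ are distinct leaves of the foliation, they are disjoint in $\wt L$, which forbids the endpoint pairs $\{y,z\}$ and $\{x, z'\}$ from interlinking on $\partial\wt L$. One then argues, by exploiting the freedom in the choice of sequences $q_n, q_n'$ and of slice leaves through each point (and using that the leaf space of $\wt\cW$ is a Hausdorff $\R$-tree, from the discussion preceding \Cref{prop:foliation_facts}), that the set of possible ``other endpoints'' of leaves of $\wt\cW$ ending at $y$ is a closed subset of $\partial\wt L\smallsetminus\{y\}$ whose closure contains $x$, allowing one to choose $\ell^*$ with $z$ arbitrarily close to $x$ and then take a final Hausdorff limit. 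The non-interlinking constraint then either yields the desired interlink (and hence a contradiction from leaf crossings) or, in the nested cyclic configuration, allows one to iterate and squeeze $z$ onto $x$.

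The main obstacle is the last step: ruling out $z \neq x$. The direct interlinking contradiction covers only one of the two possible cyclic orderings of $\{x,y,z,z'\}$ on $\partial\wt L$; the ``nested'' configuration requires either a limiting argument that drives $z \to x$ monotonically along the $\R$-tree leaf space, or an iteration that repeatedly replaces $\ell^*$ with a leaf whose non-$y$ endpoint is strictly closer to $x$.
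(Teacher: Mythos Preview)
Your proposal has a genuine gap, which you yourself flag: the final step showing $z = x$ is not actually carried out. The interlinking argument only handles one cyclic ordering of $\{x,z',z,y\}$, and for the nested configuration you offer only a vague plan to ``iterate'' or ``squeeze $z$ onto $x$.'' As written this is a sketch of a strategy, not a proof. There is also a smaller issue you do not address: the Hausdorff limit $\ell^*$ could in principle degenerate to the single boundary point $y$ (both $a_n^\pm \to y$), in which case you do not get a leaf at all.

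The paper bypasses both difficulties with a different device. Fix one of the closed arcs $A \subset \partial\wt L$ with endpoints $x,y$. Partially order the leaf lines of $\wt\cW$ with both endpoints in $A$ by inclusion of the subarcs they determine, and use Zorn's lemma (limits of chains exist by uniform quasigeodesicity) to produce a maximal leaf line $\ell_A$. One then approaches $\ell_A$ from the side away from $A$ by leaf lines $\ell_i$ through nearby interior points; these converge to a leaf line $\ell'$ through a point of $\ell_A$ whose endpoints cannot lie in the interior of $A$ (by maximality of $\ell_A$ and the no-separation hypothesis). Since $\ell'$ and $\ell_A$ share a point, they lie in a common singular leaf, and a short case analysis forces $\partial\ell_A = \{x,y\}$.

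Your iteration idea is morally trying to climb this partial order one step at a time; the paper's use of Zorn's lemma does it in one stroke and avoids the bookkeeping of alternating cyclic orders. If you want to salvage your approach, the cleanest fix is exactly this: replace the ad hoc iteration by maximizing over leaf lines with endpoints in a single arc.
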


\begin{proof}
Let $A$ be one of the two closed intervals in $\partial \wt L$ with endpoints $x,y$.

Define a partial order on the leaf lines of $\wt \cW$ with endpoints in $A$ as follows:
$\ell_1 \prec_A \ell_2$ if the endpoints of $\ell_2$ determine a subinterval of $A$
containing the subinterval determined by the endpoints of $\ell_1$. 
Using \Cref{prop:foliation_facts},
it is clear that every linear chain has an upper bound and so there is a maximal element $\ell_A$ by Zorn's lemma. We claim that $\ell_A$ must join $x$ to $y$. 

Let $p$ be any point of $\ell_A$ and $p_i$ a sequence converging to $p$ such that each $p_i$ is contained in the complementary component of the closure of $\ell_A$ in $\DD$ that meets $A^c$. Let $\ell_i$ be a leaf line of $\wt\cW$ through $p_i$. Then the $\ell_i$ converge to a leaf line $\ell'$ through $p$. Note that the endpoints of $\ell'$ are not in the interior of $A$. Indeed, no endpoints of $\ell_i$ are  contained in $A$ because this would contradict either the maximality of $\ell_A$ or the assumption that no leaf of $\wt\cW$ separates $x$ and $y$.

We conclude that there exists a (possibly singular) leaf $\ell$ of $\wt \cW$ containing $\ell_A$ and $\ell'$. Suppose toward a contradiction that $\ell_A$ has an endpoint $c$ in the interior of $A$. If $\ell'$ has an endpoint {outside} $A$, then $\ell$ contains a slice leaf separating $x$ and $y$, a contradiction. On the other hand if $\ell'$ joins $x$ and $y$ then this contradicts the maximality of $\ell_A$. We conclude that $\ell_A$ has no endpoint in the interior of $A$, so $\partial \ell_A = \{x,y\}$.
\qedhere
\end{proof}

\begin{proof}[Proof of \Cref{prop:fix_to_boundary}]
After replacing $f$ by $f^n$, we may assume that $f$ fixes the half-leaves at $p$. We remark that if one half-leaf at $p$ is fixed, then they all are.

Let $r$ be an expanding half-leaf at $\wt p$. We will show that that if $r'$ is a contracting half-leaf that is adjacent to $r$ in the cyclic order around $\wt p$, then each point in the innermost interval $(r'_\infty,r_\infty) \subset \partial \wt L$ is attracted to $r_\infty$ under positive powers of $\wt f$ and to $r'_\infty$ under negative powers of $\wt f$.
The case where $r$ is a contracting half-leaf is symmetric and so this will complete the proof.

There are two cases:
\smallskip

\noindent \textbf{Case 1:} \textit{$r$  (or $r'$) is not a blown half-leaf}.

Note this is the case if $p$ is nonsingular. Assume that $r$ is not a blown half-leaf; the case for $r'$ is similar.

Since $r$ is expanding and not a blown half-leaf, it is a half-leaf of $\wt {\cW}^u$ and
its interior is transverse to $\wt {\cW}^s$. Let $r',r''$ be the contracting half-leaves
starting at $\wt p$ that are adjacent to $r$ in the cyclic ordering (when $p$ is not
singular, these are the only half-leaves of 
  $\wt{\cW}^s$ through $\wt p$), and 
let $\ell$ be any leaf line of $\wt \cW^s$ that crosses the interior of $r$. We show that 
the endpoints $\wt f ^k (\partial \ell)$ converge to $r_\infty$ 
as $ k \to \infty$ and to the two point set $\{r'_\infty,  r''_\infty\}$ as $ k \to -\infty$. 
We recall that by \Cref{prop:foliation_facts} the leaves of $\wt \cW^{u/s}$ are uniformly quasigeodesic.

First, as $ k \to -\infty$, the leaves $\wt f^k(\ell)$ meet $r$ along points that converge to $\wt p$. Since the interiors of $r'$ and $r''$ do not meet singularities of  $\wt \cW^s$, $f^k(\ell)$ must converge to the leaf line $r' \cup r''$ as $ k \to -\infty$ since this is the unique leaf line of $\wt \cW^s$ through $\wt p$ that meets the side of $r' \cup r''$ containing $r$. Hence, $\wt f ^k (\partial \ell)$ converges to $\{ r'_\infty,  r''_\infty\}$ as $ k \to -\infty$ as claimed.

Next, as $ k \to \infty$, the leaves $\wt f^k(\ell)$ meet $r$ along points that exit $r$
and hence converge to $r_\infty$. If $\wt f^k(\ell)$ exits compact sets of $\wt L$, then
the sequence of leaves converges to $r_\infty$ and hence $\wt f ^k (\partial \ell)$
converge to $r_\infty$ as required. Otherwise, as $ k \to \infty$, $\wt f^k(\ell)$
converges to a line in a leaf $\wh\ell$ of $\wt \cW^s$ 
which must have $r_\infty$ as an endpoint. Let $\ell'$
be the leaf of $\wt {\cW}^u$ containing $\wt p$ (and hence containing $r'$ 
and $r''$ as half-leaves).

Hence, $\wh \ell$ and $\ell'$
are distinct leaves of $\wt \cW^{s}$ that are invariant under $\wt f$.
But this contradicts the fact that the lift of an spA map fixes at most one leaf of $\wt
\cW^{s}$ (or $\wt \cW^{u}$), which we show as follows: 
The lift $\wt f$ extends to a deck translation $\tau$ of  $\wt N$, and
since $\pi_1 N \to \pi_1 M$ is injective, $\tau$ extends to a deck translation of $\wt M$. 
Now $\ell'$ and $\wh \ell$
suspend to give leaves $R$ and $E$, respectively, of the stable foliation $\wt W^s$ of $\wt \phi$
in $\wt M$ which are fixed under $\tau$, and this implies that their closed orbits are
homotopic. 
Since distinct homotopic closed orbits of $\phi$ in $M$ are contained in the same (necessarily blown) leaf of $W^s$,
we must have that $R=E$, and by
\Cref{prop:foliation_facts} we have that the intersection $\wt L \cap R$ is connected. But
since it contains both $\ell'$ and $\wh \ell$, this is a contradiction.

\smallskip

\noindent \textbf{Case 2:} \textit{Both $r$ and $r'$ are blown half-leaves}.

In this case, $r$ and $r'$ are half-leaves of both $\wt \cW^s$ and $\wt \cW^u$. Let $[r_\infty, r'_\infty] \subset \partial \wt L$ be the interval between their endpoints that does not contain the other endpoints of half-leaves based at $\wt p$.
By \Cref{lem:local_dynamics}, on the action of $\wt f$ on $[r_\infty, r'_\infty]$ is such that $r_\infty$ is locally an attractor and $r'_\infty$ is locally a repeller. Hence, it suffices to show that there are no other fixed points in $[r_\infty, r'_\infty]$. 

Suppose towards a contradiction that $(r_\infty, r'_\infty)$ contains a fixed point
of $\wt f$. We will show that there is an $\wt f$--invariant leaf line $\wh \ell$ with
endpoints in $[r_\infty,r'_\infty)$, which gives a contradiction exactly as at the end of Case
  $1$ (since $r$ and $r'$ are already fixed by $\wt f$, and cannot be in the same leaf with
  $\wh \ell$).

Let $q_1$ be the fixed point closest to $r_\infty$; this exists since the fixed point set
of $\wt f$ is closed and
any point sufficiently close to $r_\infty$ cannot be fixed. 
First suppose that there is a leaf line $\ell$ of $\wt \cW^s$ that separates $r_\infty$
from $q_1$. Then the limit $\wt f^k(\ell)$ as $ k \to \infty$ is an $\wt f$--invariant leaf
line $\wh \ell$ of $\wt \cW^s$ that joins $r_\infty$ with some fixed point $q$ in
$(r_\infty, r'_\infty)$ (here
we are using the fact that $r'_\infty$ is a local repeller). If no such separating
leaf line exists, then by
\Cref{claim:finding_leaves} we obtain a leaf line $\ell$ of $\wt \cW^s$ with endpoints
$r_\infty$ and $q_1$. If $\ell$ is not fixed by $\wt f$, consider the family of all leaf lines
joining $r_\infty$ to $q$. This set is closed and $\wt f$--invariant and so its boundary
lines are fixed, and we let $\wh \ell$ be one of them.   The contradiction now proceeds
as in Case 1. 
\end{proof}

As a consequence of
\Cref{lem:local_dynamics} and \Cref{prop:fix_to_boundary}
we can show that two isotopic spA maps have the same local dynamics. This will be essential in \Cref{sec:no_blow}.

\begin{lemma} \label{lem:same_dyn}
Suppose that $f,f' \colon L \to L$ are homotopic spA maps. Let $\wt f, \wt f'$ be corresponding lifts to $\wt L$ and suppose that $\wt f$ fixes a point $\wt p$ and its half-leaves. Then $f'$ also fixes a (unique) point $\wt q$ and its half-leaves. There is a bijective correspondence between the half-leaves at $\wt p$ and $\wt q$ induced by having the same limit point on $\partial \wt L$.

Moreover, a half-leaf $\wt r$ at $\wt p$ has image in $L$ that escapes to an end $e$ if and only if the corresponding leaf $\wt r'$ at $\wt q$ has the same property.
\end{lemma}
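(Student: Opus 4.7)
My plan is to proceed in three stages: transfer the boundary dynamics from $\wt f$ to $\wt{f'}$, locate the interior fixed point of $\wt{f'}$, and then handle the end-accumulation statement via quasigeodesic stability.

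First, I will use that $\wt f$ and $\wt{f'}$, being corresponding lifts of homotopic maps, agree on $\partial \wt L$ by the extension facts recalled in Section~\ref{sec:fols}. Applying \Cref{prop:fix_to_boundary} to $(\wt f,\wt p)$ with $n=1$ shows that $\wt f$ acts with multi sink-source dynamics on $\partial \wt L$, and so therefore does $\wt{f'}$, with the same $k\geq 4$ sinks/sources located at the endpoints of the expanding/contracting half-leaves at $\wt p$. Next, for each such boundary sink/source I will apply \Cref{cor:gen_local_dynamics} to $\wt{f'}$, taking the auxiliary quasigeodesic ray to be a half-leaf of $\wt f$ at $\wt p$---\Cref{lem:halfleafend} supplies the accumulation hypothesis and \Cref{prop:foliation_facts} the quasigeodesicity---to conclude that each of these boundary fixed points is in fact a sink or source of $\wt{f'}$ acting on the full disk $\DD$.

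The central step, and the main obstacle since the converse of \Cref{prop:fix_to_boundary} is not yet available, will be producing the fixed point $\wt q$ of $\wt{f'}$ in $\wt L$. My approach is a Lefschetz--Hopf fixed-point-index argument for $\wt{f'}\colon\DD\to\DD$: the Lefschetz number is $1$, each of the $k\geq 4$ boundary sinks/sources contributes local fixed-point index $+1$, so the interior must carry total index $1-k\neq 0$, forcing an interior fixed point, which \Cref{lem:unique_fixed} makes unique. After choosing $m$ so that $\wt{f'}^m$ fixes the (finitely many) half-leaves at $\wt q$, I will apply \Cref{prop:fix_to_boundary} to $(\wt{f'}^m,\wt q)$ and use that the fixed-point set of a multi sink-source homeomorphism of $S^1$ is unchanged under taking powers. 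This identifies the endpoints of the half-leaves at $\wt q$ with those at $\wt p$, yielding the bijection. Since $\wt{f'}$ fixes each such endpoint and distinct half-leaves at $\wt q$ are uniformly quasigeodesic rays with distinct ideal endpoints, the induced permutation of half-leaves at $\wt q$ must be trivial, so $\wt{f'}$ itself fixes each half-leaf.

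For the moreover statement I will use \Cref{rmk:quasi} to choose a single standard hyperbolic metric on $L$ under which both $\wt \cW^{u/s}_f$ and $\wt \cW^{u/s}_{f'}$ are uniformly quasigeodesic. Corresponding half-leaves $\wt r$ at $\wt p$ and $\wt r'$ at $\wt q$ then share an ideal endpoint in $\partial \wt L$, so by stability of quasigeodesic rays in the hyperbolic plane they lie within bounded Hausdorff distance. This bound descends to their projections $r,r'\subset L$ via the $1$-Lipschitz covering map, and since distinct ends of $L$ can be isolated by arbitrarily small neighborhoods with compact junctures, $r$ escapes an end $e$ if and only if $r'$ does.
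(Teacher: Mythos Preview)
Your approach is essentially the paper's own: transfer the sink/source structure on $\partial\wt L$ from $\wt f$ to $\wt{f'}$ via \Cref{lem:local_dynamics} (your route through \Cref{prop:fix_to_boundary}, \Cref{lem:halfleafend}, and \Cref{cor:gen_local_dynamics} just unpacks that corollary), then use a Lefschetz/index argument to produce the interior fixed point, apply \Cref{prop:fix_to_boundary} at $\wt q$, and finish the ``moreover'' via \Cref{rmk:quasi} and quasigeodesic fellow-traveling. Your extra care in showing $\wt{f'}$ (not just a power) fixes each half-leaf at $\wt q$ is a nice clarification the paper leaves implicit.

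The one point that is not quite right as written is the index computation on $\DD$. The Lefschetz--Hopf theorem in its standard form applies to isolated fixed points in the \emph{interior}; assigning ``local index $+1$'' to a sink or source on $\partial\DD$ is not well-defined without further argument. The paper sidesteps this by doubling $\DD$ along its boundary to obtain a homeomorphism of $S^2$ (this is exactly \Cref{lem:fixed_points}): each boundary sink/source becomes an honest interior attractor/repeller of index $+1$, the Lefschetz number is $\chi(S^2)=2$, and since $k\ge4$ there must be a fixed point of negative index off the equator, hence in $\intr(\DD)$. Your argument is easily repaired by invoking \Cref{lem:fixed_points} directly in place of the raw index count.
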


The proof will use the following well-known fact. The proof we give appears in Farb--Margalit \cite{FM}, where it is attributed to Handel \cite{handel1985global}.

\begin{lemma} \label{lem:fixed_points}
Let $g \colon \DD \to \DD$ be a homeomorphism with at least $4$ fixed points on $\partial \DD$ each of which has an attracting or repelling neighborhood in $\DD$. Then $g$ has a fixed point in $\mathrm{int}({\DD})$. 
\end{lemma}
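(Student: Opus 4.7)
The plan is to extend $g$ to a homeomorphism of the $2$-sphere by Schwarz reflection across $\partial\DD$ and derive a contradiction from the Lefschetz fixed point theorem. Assume for contradiction that $g$ has no fixed point in $\mathrm{int}(\DD)$. Identify $\DD$ with the closed unit disk in $\mathbb{C}$ and let $\iota \colon S^2 \to S^2$ denote the involution $\iota(z) = 1/\bar z$ on $S^2 = \mathbb{C} \cup \{\infty\}$; it is an orientation-reversing homeomorphism that fixes $\partial\DD$ pointwise and interchanges $\mathrm{int}(\DD)$ with $S^2 \setminus \DD$. Define
\[
\hat g(z) = \begin{cases} g(z) & \text{if } z \in \DD, \\ \iota \circ g \circ \iota(z) & \text{if } z \in S^2 \setminus \mathrm{int}(\DD). \end{cases}
\]
Because $\iota$ restricts to the identity on $\partial\DD$, the two definitions agree there, so $\hat g$ is a homeomorphism of $S^2$.

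Next I would identify the fixed points of $\hat g$. There are none in $\mathrm{int}(\DD)$ by the contradiction hypothesis, and any fixed point of $\hat g$ in the exterior would yield a fixed point $\iota(z) \in \mathrm{int}(\DD)$ of $g$, which also does not exist. Hence the fixed points of $\hat g$ are exactly the $k \geq 4$ boundary fixed points of $g$. For each attracting fixed point $a \in \partial\DD$ with attracting neighborhood $U_a \subset \DD$, the open set $U_a \cup \iota(U_a)$ is a full $S^2$-neighborhood of $a$ that is forward $\hat g$-invariant with iterates shrinking to $\{a\}$, so $a$ is a topological attractor of $\hat g$; similarly each repelling fixed point becomes a topological repeller. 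A short standard computation shows that in dimension $2$ both topological attractors and repellers are isolated fixed points of fixed-point index $+1$.

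Finally I would invoke the Lefschetz fixed point theorem on $S^2$: the sum of fixed-point indices of $\hat g$ equals its Lefschetz number $L(\hat g)$. Since $\hat g$ is a homeomorphism of $S^2$, it acts as $+1$ on $H_0(S^2)$ and as $\pm 1$ on $H_2(S^2)$, so $L(\hat g) \in \{0, 2\}$. On the other hand the at least $4$ boundary fixed points each contribute index $+1$, giving a total of at least $4$. This contradicts $L(\hat g) \leq 2$ and forces $g$ to have a fixed point in $\mathrm{int}(\DD)$.

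The one delicate step is the index computation at the reflected boundary fixed points: one must verify that the attracting (resp.\ repelling) neighborhood in $\DD$ and its $\iota$-image genuinely give a topological attractor (resp.\ repeller) of $\hat g$ in a full $S^2$-neighborhood, rather than producing some mixed behavior along the equator $\partial\DD$. This follows cleanly from the fact that $\iota$ fixes $\partial\DD$ pointwise and conjugates $\hat g$ on the exterior hemisphere to $g$ on $\DD$, so the local dynamics at $a$ mirror symmetrically across $\partial\DD$.
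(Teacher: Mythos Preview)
Your proof is correct and follows essentially the same approach as the paper: double $g$ across $\partial\DD$ to obtain a homeomorphism of $S^2$, observe that each boundary fixed point becomes an attractor or repeller of index $+1$, and derive a contradiction with the Lefschetz number of $S^2$. The paper phrases this directly (the Lefschetz number forces a fixed point of negative index, which must lie off the equator) rather than by contradiction, and you are slightly more careful in noting $L(\hat g)\in\{0,2\}$ rather than just $2$, but the arguments are otherwise identical.
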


\begin{proof}
Double the map to get a homeomorphism $Dg \colon S^2 \to S^2$. By assumption, this map has at least $4$ fixed points along the equator, each of which is attracting or repelling and hence has positive Lefschetz index. Since $\chi(S^2) = 2$, there must be a fixed point of negative index. Since this fixed point necessarily occurs off the equator, we have found a fixed point of $g$ in $\intr(\DD)$. 
\end{proof}

\begin{proof}[Proof of \Cref{lem:same_dyn}]
By part $(a)$ of \Cref{lem:local_dynamics}, each endpoint of each half-leaf at $\wt p$ has an attracting/repelling neighborhood in $\DD$. By part $(b)$, the same is true for the action of $\wt f'$. Hence, by \Cref{lem:fixed_points}, $\wt f'$ has a fixed point $\wt q$ in $\wt L$. We then apply \Cref{prop:fix_to_boundary} to $\wt f'$ at $\wt q$ to complete the proof of the first claim.

For the moreover statement, we apply \Cref{rmk:quasi} to choose a standard metric on $L$ so that the leaves of both sets of invariant foliations on $L$ have uniformly quasigeodesic lifts to $\wt L$. Hence, $\wt r$ and $\wt r'$ fellow travel in $\wt L$ and so their images fellow travel in $L$. In particular, one escapes $e$ if and only if the other does. This completes the proof.
\end{proof}

\subsection{Existence of spA$^+$ representatives}
\label{sec:no_blow}
We will now prove \Cref{th:no_blow} (\Cref{th:intro_spA+} from the introduction) which states that any atoroidal endperiodic map is in
fact isotopic to an spA$^+$ map.

\begin{proof}[Proof of \Cref{th:no_blow}]
The proof proceeds just as for \Cref{th:spA_construction}, where we must now show that for an appropriate $M = M(h,N)$, we have $\phi^\flat = \phi$, i.e. $\partial_\pm N$ is positively transverse to the circular pseudo-Anosov flow $\phi$. We assume, as we may, that $N$ is not a product.

First choose an auxiliary spA map $f_{a}$ as in \Cref{th:spA_construction}. Using the spA package associated to $f_a$, we can consider the collection $C_1$ of isotopy classes of curves in $\partial_\pm N$ that are closed leaves of the singular foliation $\partial_\pm N \cap W_a^{s/u}$ on $\partial_\pm N$. Also, let $C_2$ be the collection of curves in $\partial_\pm N$ that cobound essential annuli of $N$ from $\partial_-N$ to $\partial_+ N$. Let $C = C_1 \cup C_2$.

Now let $h\colon \partial_\pm N \to \partial_\pm N$ be a component-wise homeomorphism that
satisfies \Cref{prop:double_to_fiber} and for which no curve of $C$ is mapped into $C$, up
to isotopy. This can be achieved just as in the proof of \Cref{lem:hyp}.
Let $M =M(h,N)$ be the associated $h$--double (\Cref{sec:ext}). As in \Cref{lem:hyp}, $M$ is hyperbolic and we consider the spA map $f^\dagger$ produced in the proof of \Cref{th:spA_construction}.  

We show that $f^\dagger$ is spA$^+$. Referring to the proof of \Cref{th:spA_construction},
it suffices to show that $\phi$ has no blown annuli, i.e. that $\phi^\flat =
\phi$. Suppose otherwise that there is a blown annulus $A$. After cutting $A$ along
$\partial_\pm$ we obtain, as in \Cref{lem:annulus leaf structure} and \Cref{lem:annulus_fibering}, 
annuli corresponding to cases (1) and (3) of \Cref{fig:four-annuli}. Case (3) yields an essential
annulus in $N$, which intersects $\partial_\pm$ in curves of $C_2$. Case (1) yields
the suspension of an escaping (\Cref{lem:halfleafend}), expanding/contracting half-leaf of $f^\dagger$. We show
this latter case meets $\partial_\pm$ in curves of $C_1$. Note this is not immediate since $W^{u/s}$ and $W^{u/s}_a$ are foliations on different manifolds with different flows.

Let $r$ be an escaping, expanding/contracting half-leaf of $f^\dagger$. By \Cref{lem:same_dyn}, there is a corresponding escaping, periodic half-leaf $r_a$ of $f_a$. Moreover, from a spiraling neighborhood $U$ of $\partial_\pm N$, one sees that the suspensions $H$ and $H_a$ of $r$ and $r_a$ under their respective flows produce homotopic curves $H \cap \partial_\pm N$ and $H_a \cap \partial_\pm N$. Hence, we have show that all curves of $\partial_\pm N \cap A$ are contained in $C$. But from the construction of $M = M(h,N)$, we see that $h$ maps curves in $\partial_+ N \cap A$ to curves in $\partial_- N \cap A$ and hence curves in $C$ back into $C$. This contradicts our choice of $h$ and completes the proof.
\end{proof}

\subsection{Sink-source dynamics imply fixed points}
\label{sec:sink-source converse}
With \Cref{th:no_blow} in hand we can complete the proof of \Cref{thm:fix and
  boundary}. The remaining direction is stated in this proposition: 

\begin{proposition} \label{prop:boundary_to_fix}
Let $f \colon L \to L$ be spA. Suppose that $\wt f$ is a lift to $\wt L$ such that $\wt f^n$ acts with multi sink-source dynamics on $\del \wt L$
for some $n\ge 1$. Then $\wt f$ has a unique fixed point $\wt p$ in $\wt L$ whose half-leaves are fixed by $\wt f^n$ such that the endpoints of expanding/contracting half-leaves at $\wt p$ are exactly its attracting/repelling points in $\partial \wt L$.
\end{proposition}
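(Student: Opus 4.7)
The plan is to reduce to the spA$^+$ case via \Cref{th:no_blow}, use iteration of a stable leaf to produce an invariant leaf with endpoints at two boundary sinks, apply an intermediate value argument on this invariant leaf to obtain an interior fixed point of $\wt f^n$, and then conclude via \Cref{lem:unique_fixed}, \Cref{lem:same_dyn}, and \Cref{prop:fix_to_boundary}.

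First I would reduce to the spA$^+$ case. Choose an spA$^+$ map $f'$ isotopic to $f$ via \Cref{th:no_blow}, and let $\wt{f'}$ be the lift corresponding to $\wt f$. Since homotopic lifts agree on $\partial \wt L$, $\wt{f'}^n$ has the same multi sink-source dynamics as $\wt f^n$. Once the proposition is proved for $f'$, yielding a fixed point $\wt q$ of $\wt{f'}^n$ with fixed half-leaves, \Cref{lem:same_dyn} applied to the homotopic spA maps $f^n$ and ${f'}^n$ transfers this to a fixed point $\wt p$ of $\wt f^n$ with fixed half-leaves; the half-leaf bijection preserves boundary endpoints (and hence sink/source status, via \Cref{lem:local_dynamics}) and expanding/contracting types. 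By \Cref{lem:unique_fixed}, $\wt p$ is the unique fixed point of $\wt f^n$ in $\wt L$, and since $\wt f$ commutes with $\wt f^n$ and permutes its fixed set, $\wt f(\wt p)=\wt p$.

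Second, assuming $f$ is spA$^+$, the main step is to produce a fixed point of $\wt f^n$ in $\wt L$. Using that the leaves of $\wt \cW^s$ are uniformly quasigeodesic (\Cref{prop:foliation_facts}) and that their endpoints are dense in $\partial \wt L$ (from density of closed $\phi$-orbits in $M$ together with the spA$^+$ structure), I would select a stable leaf or leaf line $\ell_0 \in \wt \cW^s$ whose two endpoints lie in the attracting basins of two distinct boundary sinks $\xi_a$ and $\xi_b$ of $\wt f^n$. Iterating, $\ell_k := \wt f^{nk}(\ell_0)$ is a stable leaf whose endpoints converge to $\xi_a$ and $\xi_b$. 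Passing to a subsequence, closedness of the foliation and quasigeodesicity ensure that the $\ell_k$ converge to a (possibly slice) leaf $\ell_\infty$ of $\wt \cW^s$ with endpoints $\xi_a, \xi_b$, and continuity of $\wt f^n$ forces $\wt f^n(\ell_\infty)=\ell_\infty$. If $\ell_\infty$ is a slice leaf, the singular point it contains must be fixed by $\wt f^n$, producing the required interior fixed point. Otherwise $\ell_\infty \cong \R$ is a regular leaf on which $\wt f^n$ acts as an orientation-preserving homeomorphism fixing both ends at infinity; since both $\xi_a, \xi_b$ are sinks, $\wt f^n(x)-x$ is negative near the end at $\xi_a$ and positive near the end at $\xi_b$, and the intermediate value theorem yields a fixed point of $\wt f^n$ in $\ell_\infty \subset \wt L$.

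Third, \Cref{lem:unique_fixed} makes this fixed point unique, and commutation with $\wt f^n$ forces $\wt f$ to fix it; call it $\wt p$. To finish, let $d$ be the smallest positive integer with $\wt f^d$ fixing every half-leaf at $\wt p$. Then \Cref{prop:fix_to_boundary} applied to $\wt f^d$ gives multi sink-source dynamics on $\partial \wt L$ whose sinks and sources are exactly the endpoints of expanding and contracting half-leaves at $\wt p$. Because a circle homeomorphism with multi sink-source dynamics is gradient-like between adjacent fixed points, its fixed-point set on $\partial \wt L$ is unchanged under positive powers; hence the boundary fixed sets of $\wt f^n$, $\wt f^d$, and $\wt f^{\lcm(n,d)}$ all coincide, so the sinks and sources of $\wt f^n$ are precisely the endpoints of expanding and contracting half-leaves at $\wt p$. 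Since distinct half-leaves at $\wt p$ have distinct boundary endpoints by quasigeodesicity, $\wt f^n$'s fixing the endpoints forces it to fix each half-leaf, completing the proof. The main obstacle is the existence in Paragraph~2 of a stable leaf $\ell_0$ whose endpoints lie in two distinct sink basins; this is the step that requires density of leaf endpoints in $\partial \wt L$ and uses essentially that in the spA$^+$ setting there are no blown leaves.
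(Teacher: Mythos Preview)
Your reduction to the spA$^+$ case and your plan to manufacture an $\wt f^n$--invariant leaf of $\wt\cW^s$ by iterating a well--chosen leaf are close to the paper's approach, but the step where you extract an interior fixed point from that invariant leaf has a genuine gap. You assert that on the invariant leaf $\ell_\infty$ joining two boundary sinks $\xi_a,\xi_b$, ``since both $\xi_a,\xi_b$ are sinks, $\wt f^n(x)-x$ is negative near the end at $\xi_a$ and positive near the end at $\xi_b$,'' and then apply the intermediate value theorem. But the hypothesis gives sink behavior only for the action on the boundary circle $\partial\wt L$, \emph{not} for the action on the disk $\DD$; the paper flags exactly this obstacle (``our hypothesis does not give sink/source properties in the disk $\DD$, only on its boundary''). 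Without knowing that points of $\wt L$ near $\xi_a$ (resp.\ $\xi_b$) are pulled toward $\xi_a$ (resp.\ $\xi_b$), there is no reason for $\wt f^n|_{\ell_\infty}$ to have opposite displacement signs at the two ends, and a homeomorphism of a closed interval fixing both endpoints need have no interior fixed point.

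The paper circumvents this by using \emph{both} foliations rather than an IVT on one leaf: given an invariant leaf $\ell$ of $\cW$ joining two sinks, it takes a regular leaf of the transverse foliation $\cW'$ crossing $\ell$, iterates it backward so the endpoints converge to two distinct sources $y_1,y_2$, obtains an $\wt f^\dagger$--invariant leaf line $\ell''$ of $\cW'$ joining $y_1,y_2$, and then takes the transverse intersection $\ell\cap\ell''$ as the fixed point. The remaining case, where no leaf of either foliation joins the two chosen sinks, is handled by the join--or--separate dichotomy of \Cref{claim:finding_leaves}; this also replaces your appeal to ``density of leaf endpoints'' for finding the initial leaf $\ell_0$, which as stated would need further justification.
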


\begin{proof}
It suffices to show the existence of the (necessarily unique by \Cref{lem:unique_fixed})
fixed point $\wt p$ since the rest then follows from \Cref{prop:fix_to_boundary}.
Note that we cannot directly use the Lefschetz argument of \Cref{lem:fixed_points} as before,
because our hypothesis does not give sink/source properties in the
disk $\DD$, only on its boundary. 

Further, if $\wt p$ is fixed by $\wt f^n$, then it must also be fixed by $\wt f$ since
otherwise $\wt f^n$ has multiple fixed points. 
Hence, it suffices to replace $\wt f$ with
$\wt f^n$ and we do so now.

Next, by \Cref{lem:same_dyn} we are free to replace $f$ with any isotopic spA map; let $f^\dagger$ be an isotopic spA$^+$ map whose existence is guaranteed by \Cref{th:no_blow}. 
So it remains to prove that if a lift $\wt f^\dagger$ has multi sink-source dynamics on $\partial \wt L$ then it has a fixed point in $\wt L$. The key place we use that $f^\dagger$ is spA$^+$ is that its invariant foliations $\cW^{u/s}$ are transverse away from singularities, i.e. there are no blown leaves.

 Let $x_1,x_2$ be attracting fixed points of $\wt f^\dagger$ on $\partial \wt L$. First
  suppose that they are joined by slice leaf $\ell$ of $\cW$, where $\cW$ denotes either
  $\cW^u$ or  $\cW^s$ (and let $\cW'$ denote the other one). 
As in the proof of \Cref{prop:fix_to_boundary}, by choosing $\ell$ to be a boundary leaf
of the family of all lines of $\cW$ joining $x_1,x_2$ we can assume that $\wt f^\dagger(\ell) = \ell$.
Let $\ell'$ be any regular leaf of, say, $\cW'$ that crosses $\ell$. 
 By the multi sink-source dynamics, the end points of $(\wt f^\dagger)^{-k} (\ell')$ converge to distinct repelling points $y_1,y_2$ in $\partial \wt L$,
as $k\to\infty$.
We conclude that there is an $\wt f^\dagger$-invariant leaf line $\ell''$ joining
$y_1,y_2$ and the intersection $\ell \cap \ell''$ in $\wt L$ is the required fixed point
$\wt p$.

Now suppose $x_1$ and $x_2$ are not joined by a slice leaf 
of either $\cW^{u/s}$. Then
according to \Cref{claim:finding_leaves} they are separated by a slice leaf $\ell$ of
$\cW^u$ (or of $\cW^s$). Because there are infinitely many choices for $\ell$ we can 
assume it has endpoints that are not fixed. Again considering the limit of 
$(\wt f^\dagger)^{-k} (\ell)$ as $k\to\infty$, the endpoints converge to distinct repelling
points $y_1,y_2\in \partial\wt L$,
which are therefore joined by a leaf of one of the
foliations. This then reduces to the previous case, with $\wt f^\dagger$ replaced by $(\wt f^\dagger)^{-1}$.
\end{proof}

\section{Invariant laminations and topological entropy}
\label{sec:lam_entropy}

Over the next two sections we establish the various characterizations of the stretch factor of an spA map as given in \Cref{th:intro_stretch}. Here, we focus on the topological entropy of spA maps, whereas the next section is devoted to the sense in which spA maps are dynamically optimal.

For this, recall from \Cref{sec:intro_results} that for any homeomorphism $g \colon L \to L$, its \define{growth rate}
$\lambda(g)$ is the exponential growth rate of its periodic points:
\[
\lambda(g) = \limsup_{n \to \infty} \sqrt[n]{\# \mathrm{Fix}(g^n)},
\]
where $\mathrm{Fix}(g)$ denotes the set of fixed points of $g$. Given an spA map $f \colon L \to L$, there is a largest compact invariant \emph{core dynamical system} $C_f \subset L$ defined in \Cref{sec:core_ent}, and the entropy $\ent(f)$ is defined to be the topological entropy of the restriction of $f$ to $C_f$. The main theorem of this section is 

\begin{theorem} \label{th:ent_spA}
If $f$ is spA, then $\ent(f) = \log \lambda(f)$.
\end{theorem}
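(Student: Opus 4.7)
The plan is to reduce the equality to a combinatorial identity on a Markov partition of the core dynamical system $C_f$, using the (almost) pseudo-Anosov flow structure on $M$ to build that partition. First I would identify $C_f$ with $L \cap K$, where $K \subset \intr(N)$ is the maximal invariant set of the semiflow $\phi_N$, namely the set of orbits that do not limit into $\partial_+N$ in forward time or into $\partial_-N$ in backward time. Because an $f$-orbit in $L$ suspends to a $\phi_N$-orbit in $N$, and escape to a positive or negative end of $L$ corresponds to escape to $\partial_\pm N$, this identification is natural, every periodic point of $f$ lies in $C_f$, and $f|_{C_f}$ is realized as the first return map of $\phi_N|_K$ to $L \cap K$.

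My next step would be to construct a Markov partition for $f|_{C_f}$ whose rectangles are small unions of local products of pieces of $\cW^u$ and $\cW^s$. Starting from a Markov partition for the monodromy of a nearby fibration of $M$ (which exists by the standard theory of pseudo-Anosov homeomorphisms), one slides this partition along the flow onto $L$ and restricts to the core. The sink-source dynamics at periodic points established in Section~4, in particular \Cref{prop:fix_to_boundary} and \Cref{cor:gen_local_dynamics}, guarantee that the images of rectangles under $f$ cross them in the standard Markov fashion, with the stable/unstable geometry providing the required contraction and expansion along the respective foliations.

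Given a transition matrix $A$ for this partition, two classical consequences finish the argument: $\ent(f|_{C_f}) = \log \rho(A)$, where $\rho$ denotes spectral radius, and $\#\mathrm{Fix}(f^n|_{C_f}) = \mathrm{tr}(A^n)$ up to a bounded additive correction from periodic points lying on the boundaries of the rectangles. Perron-Frobenius then yields $\limsup \sqrt[n]{\mathrm{tr}(A^n)} = \rho(A)$, and since all periodic points of $f$ lie in $C_f$ we obtain $\lambda(f) = \rho(A)$ and hence $\ent(f) = \log \lambda(f)$. I expect the main obstacle to be the Markov partition construction itself in this almost-transverse, singular setting: the foliations $\cW^{u/s}$ fail to be transverse along blown half-leaves, $L$ is non-compact with ends spiraling onto $\partial_\pm N$, and verifying that the first-return combinatorics to $L$ yield a genuine subshift of finite type (rather than only a sofic extension) requires careful book-keeping of the blown annuli and of how rectangles cross $\partial_\pm N$ inside $M$.
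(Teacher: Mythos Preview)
Your strategy---reduce to symbolic dynamics via a Markov partition and read off both entropy and periodic-point growth from the transition matrix---is the right one, and it is essentially what the paper does. But the paper executes it by a route that sidesteps exactly the obstacles you flag at the end, and those obstacles are real gaps in your proposal as written.

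The paper makes two preliminary reductions you do not. First, using \Cref{lem:conjugate_cores} (cores of isotopic spA maps are conjugate) together with \Cref{th:no_blow}, it replaces $f$ by an spA$^+$ representative, so $\phi$ is an honest pseudo-Anosov suspension flow and there are no blown annuli to worry about. Second, rather than sliding a Markov partition from a fiber $S$ onto the non-compact leaf $L$, it does the opposite: via \Cref{prop:pA_subsystem} it embeds $C_f$ into $S$ as a closed $F$-invariant set $C$, where $F\colon S\to S$ is the pseudo-Anosov first return map. Now one has a \emph{subsystem} of a compact pseudo-Anosov, and the standard Markov partition for $F$ restricts to a symbolic coding of $C$ with uniformly bounded fibers.

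The remaining issue---and the place where your proposal is genuinely incomplete---is verifying that the restricted coding is a subshift of \emph{finite type} rather than merely sofic. Your appeal to \Cref{prop:fix_to_boundary} and \Cref{cor:gen_local_dynamics} does not address this: sink-source dynamics at infinity say nothing about which finite words are allowed in the coding of $C$. The paper handles this by invoking a criterion of Fried: if $C\subset S$ is an \emph{isolated} invariant set (there is an open $U$ with $C=\bigcap_{i\in\ZZ}F^i(U)$), then the preimage of $C$ in the full shift is of finite type. Isolation is then checked directly from the flow picture, using an $\epsilon$-collar of $\partial_\pm N$. Your ``slide to $L$ and restrict'' step would need an analogous argument, and as stated it does not supply one; moreover, sliding a partition along the flow from $S$ to the spiraling leaf $L$ produces infinitely many copies of each rectangle, so it is not clear what object you would even be restricting.
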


Along the way, we define canonical invariant laminations $\Lambda^\pm$ 
associated to an spA map $f$ (\Cref{th:laminations}) and show that $C_f$ depends only on the homotopy class of the spA map: two homotopic spA maps have conjugate core dynamical systems (\Cref{lem:conjugate_cores}).

\subsection{Invariant laminations}
\label{sec:inv_lam}
Let $f\colon L \to L$ be an spA map. A point $x\in L$ is called \define{positive escaping} if $f^{n}(x)$ exits compact sets through positive ends of $L$ as $n\to \infty$ and called \define{negative escaping} if $f^n(x)$ exits compact sets through negative ends of $L$ as $n \to -\infty$.

\begin{definition}
Let $\Lambda^+$ be the union of all points in $L$ which are not negative escaping, and let $\Lambda^-$ be the union of all points in $L$ which are not positive escaping. \end{definition}

We will call $\Lambda^+$ and $\Lambda^-$ the \textbf{positive} and \textbf{negative invariant laminations} for $f$, respectively. We will justify this terminology by proving in \Cref{th:laminations} that they are the supports of (singular) sublaminations of $\mc W^u$ and $\mc W^s$, respectively. In general, a \emph{sublamination} $\Lambda$ of a singular foliation $\cW$ is a closed subset that is the union of
subleaves of $\mc W$, where a \emph{subleaf} of $\mc W$ is the union of at least $2$ half-leaves. Note that each leaf is itself a subleaf and a subleaf not containing a singularity of $\mc W$ is an entire (regular) leaf.
See \Cref{prop:1dlams_leafdef} for the precise description of how $\Lambda^\pm$ are obtained from $\mc W^{u/s}$.

\smallskip
We can now state the main theorem of this subsection.

\begin{theorem}[Invariant laminations]
\label{th:laminations}
For an spA map $f \colon L \to L$, the invariant laminations $\Lambda^\pm$ are a pair of $f$--invariant, transverse, nowhere dense (singular) laminations 
such that:
\begin{enumerate}
\item each periodic expanding half-leaf of $\cW^u$ (resp. periodic contracting half-leaf of $\cW^s$) is a half-leaf of $\Lambda^+$ (resp. $\Lambda^-$),
\item the collection of periodic leaves of $\Lambda^\pm$ is dense in $\Lambda^\pm$,
\item each half-leaf of $\Lambda^+$ (resp. $\Lambda^-$) accumulates on a positive (resp. negative) end of $L$.
\end{enumerate}
\end{theorem}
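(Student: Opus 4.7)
The plan is to work throughout in the flow picture afforded by the spA package $M, \mc F, \phi, N$. The critical reformulation is that $x \in \Lambda^+$ if and only if the backward $\phi_N$-orbit of $x$ stays in $\intr(N)$ for all (backward) time, and symmetrically for $\Lambda^-$. Once this is established, closedness of $\Lambda^\pm$ follows: since $\phi_N$ is transverse to $\partial_-N$ pointing inward, any $\omega^-$-limit of a non-exiting backward orbit is compact, $\phi_N$-invariant, and bounded away from $\partial_-N$, so the backward iterates are eventually bounded away from negative ends of $L$. The $f$-invariance is immediate from the definition, and nowhere density will follow once $\Lambda^\pm \subset \cW^{u/s}$ is established, since the $\cW^{u/s}$ are $1$-dimensional singular foliations of the surface $L$.

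The main structural step is to show $\Lambda^+$ is a sublamination of $\cW^u$ (symmetrically for $\Lambda^-$). I would use a contraction argument: if $x \in \Lambda^+$ and $y$ lies on the same leaf $\ell$ of $\cW^u$ as $x$, then $f^{-n}(x)$ and $f^{-n}(y)$ stay on the common leaf $f^{-n}(\ell)$, and the leaf-distance between them contracts exponentially because $f$ expands along $\cW^u$. Since leaf-distance bounds hyperbolic distance on $L$, one has $d_L(f^{-n}(x), f^{-n}(y)) \to 0$. By the characterization above, $f^{-n}(x)$ stays uniformly bounded away from negative ends, so $f^{-n}(y)$ does as well, and hence $y \in \Lambda^+$. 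Thus the entire $\cW^u$-leaf through any point of $\Lambda^+$ lies in $\Lambda^+$. Singularities of $\cW^u$ are periodic points of $f$ (coming from singular or blown-annulus-boundary orbits of $\phi$) and hence lie in $\Lambda^+$; each expanding periodic half-leaf based at a singular point lies in $\Lambda^+$ by the same contraction, verifying item (1). Periodic contracting half-leaves of $\cW^u$ must be blown half-leaves (since non-blown $\cW^u$-half-leaves are automatically expanding), and by \Cref{lem:halfleafend} these accumulate on negative ends; the collar argument below then shows such half-leaves cannot lie in $\Lambda^+$. Since blown half-leaves are the only half-leaves shared between $\cW^u$ and $\cW^s$, this also gives transversality of $\Lambda^+$ and $\Lambda^-$.

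For item (3), suppose a half-leaf of $\Lambda^+$ accumulated on a negative end $e$. Then some of its points would lie in an arbitrarily small collar neighborhood of the component of $\partial_-N$ carrying $e$; by transversality of $\phi_N$ to $\partial_-N$, such points have backward $\phi_N$-orbit exiting $\partial_-N$ in uniformly bounded backward flow-time, so they are negative escaping --- contradicting membership in $\Lambda^+$. To see that each half-leaf of $\Lambda^+$ does accumulate on \emph{some} (and hence positive) end, I would combine item (2) with \Cref{lem:halfleafend}: nearby periodic leaves of $\Lambda^+$ are expanding and accumulate on positive ends, and uniform expansion of $f$ along $\cW^u$ propagates this unboundedness to the ambient half-leaf. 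Item (2) itself I would derive from density of closed $\phi$-orbits in the backward-trapped set $\Omega^+ := \{p \in N : \phi_N^{-t}(p) \in \intr(N) \text{ for all } t \geq 0\}$, via a shadowing-type argument for pseudo-Anosov dynamics; closed orbits of $\phi_N$ that lie in $\intr(N)$ intersect $L$ in periodic orbits of $f$, giving the required periodic leaves.

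The main obstacle is item (2). One must approximate backward-trapped orbits of $\phi_N$ by \emph{closed} orbits that themselves stay inside $\intr(N)$, which goes beyond the standard pA shadowing on the ambient closed manifold $M$ and requires care near $\partial N$ and around the blown annuli.
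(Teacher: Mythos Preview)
Your framing via the flow picture---characterizing $\Lambda^+$ as the set of points whose backward $\phi_N$-orbit never exits through $\partial_-N$---matches the paper's approach through the two-dimensional laminations $W^\pm_N$ in $N$. However, several key steps have gaps.

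For the sublamination claim, your contraction argument (``$f$ expands along $\cW^u$, so $d_L(f^{-n}(x),f^{-n}(y))\to 0$'') is not justified: $\phi$ may be a dynamic blowup with no expansion along blown annuli, and even away from these the return times to $L$ are unbounded near the ends, so no uniform contraction rate is available. Your conclusion that the entire $\cW^u$-leaf through a point of $\Lambda^+$ lies in $\Lambda^+$ is in fact false when that leaf contains a contracting blown half-leaf (as you yourself note a few lines later). The paper avoids metrics entirely: for a leaf of $W^u_N$ without periodic blown half-leaves, all backward $\phi$-orbits are asymptotic in $\wt M$, and a purely topological lemma (\Cref{lem:allornone}) shows that asymptotic backward orbits either both cross a fixed lift of $\partial_-N$ or neither does; periodic leaves are handled separately via the explicit annulus structure of \Cref{lem:annulus leaf structure}. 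For item~(3), your argument that half-leaves of $\Lambda^+$ avoid negative ends is fine, but ``uniform expansion propagates unboundedness from nearby periodic leaves'' is again vague and rests on the same unavailable uniformity. The paper argues instead by contradiction: a half-leaf of $\Lambda^+$ accumulating on no end would be bounded, hence would accumulate on a sublamination $L_1\subset\Lambda^+$; iterating $f^{-1}$ and passing to a further limit yields a compact $f$-invariant sublamination whose suspension lies in $\intr(N)$, contradicting the density in $M$ of every half-leaf of $W^{u/s}$.

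For item~(2) you correctly identify the main obstacle, but your shadowing proposal is undeveloped and, as you note, producing closed orbits that stay in $\intr(N)$ is genuinely delicate near $\partial_\pm N$ and the blown annuli. The paper's route is quite different: since $\Lambda^+$ is nowhere dense, its boundary leaves are dense, so it suffices to show each boundary leaf of $W^+_N$ contains a closed orbit. This is proved via the flow space $\orb$ of $\phi$: a lifted boundary leaf projects into the topological boundary of $\Theta(\wt\Sigma)$ for some lift $\wt\Sigma$ of a component of $\partial_-N$, and slice leaves in such a boundary are periodic---a nontrivial input from \cite{LMT_stst} (extending a result of Cooper--Long--Reid for honest pseudo-Anosov suspension flows). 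No shadowing is needed.
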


The proof of \Cref{th:laminations} will appear in \Cref{subsubsec:lamthmproof}.

We will observe in \Cref{rmk:inv_lams} that the laminations $\Lambda^\pm$ depend only on the isotopy class of $f$ rather than the specific choice of spA representative.

\subsubsection{Laminations in $N$}
For the proof of \Cref{th:laminations}, we return to the compactified mapping torus $N$ of $f$ and its semiflow $\varphi_N$. 
We refer the reader to \Cref{sec:fols} for notation surrounding the invariant foliations $W^{u/s}_N = W^{u/s} \cap N$. 

Let $W^+_N$ be the union of all $\phi_N$-orbits which do not meet $\del_-N$, and let $W^-_N$ be the union of all $\phi_N$-orbits which do not meet $\del_+N$.
For the next lemma, a \define{periodic blown half-leaf} of $W_N^{u/s}$ is the suspension of a periodic blown half-leaf of $\cW^{u/s}$. In other words, it is a component of the intersection of a leaf of $W_N^{u/s}$ with a blown annulus in $N$ that contains a closed orbit of $\phi_N$. See \Cref{fig:decomp}.

\begin{lemma} \label{lem:bad_leaves}
Let $W^\pm_N$ and $W^{u/s}_N$ be as above.
\begin{enumerate}[label=(\alph*)]
\item Let $H$ be a leaf of $W^{u}_N$. Then one of the following is true:
\begin{itemize}
\item $H$ does not contain a periodic blown half-leaf,
in which case $H$ meets $\del_- N$ if and only if the backward orbit of every point of $H$ meets $\del_-N$, or
\item $H$ contains at least one periodic blown half-leaf, in which case the points in $H$ whose backward orbits meet $\del_-N$ are exactly those points in the interior of contracting blown half-leaves.
\end{itemize}

\item The set $W^+_N$ is obtained from $W^u_N$ by removing
\begin{itemize}
\item every leaf $H$ with the property that for all $p\in H$, the backward $\phi_N$-orbit from $p$ meets $\del_-N$, and 
\item the open periodic blown half-leaves which are contracting.
\end{itemize}
\end{enumerate}
Statements (a) and (b) are true after replacing `$W^u_N$' with `$W^s_N$,' `backward with `forward,' `$\del_-N$' with `$\del_+N$,' and `contracting' with `expanding'.
\end{lemma}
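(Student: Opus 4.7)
The plan is to exploit the classification in \Cref{lem:annulus leaf structure} together with the observation that a leaf $H$ of $W^u_N$ is, by definition, a connected component of $H_0 \cap \mathring N$ where $H_0$ is the ambient leaf of $W^u$; crucially, we do not cut at the periodic orbits inside $H$. Hence $H$ decomposes, after further cutting at its closed orbits, into pieces $C$ of the four types from \Cref{lem:annulus leaf structure}, glued back together along closed orbits of $\phi$. First I would record the per-type backward dynamics. Interior points of transient pieces (types (3) and (4)) have backward orbits that cross the piece's $\partial_-N$-boundary component and so exit $\mathring N$; interior points of an expanding periodic half-leaf (types (1) or (2), expanding case) are backward-asymptotic to its closed orbit $\gamma \subset \mathring N$ and never reach $\partial N$; interior points of a contracting periodic half-leaf move backward away from $\gamma$ toward the piece's $\partial_-N$-boundary and exit there; and points on the closed orbits themselves are periodic and never exit.

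Two structural observations then drive the argument. First, a transient piece has its entire boundary in $\partial_\pm N$ and hence is already by itself a component of $H_0 \cap \mathring N$; therefore, if $H$ contains any closed orbit, no transient piece can be part of $H$, and $H$ is purely a union of periodic pieces glued along closed orbits. Second, because $W^u$ is the \emph{unstable} foliation, backward flow strictly contracts to the core on any regular (non-blown) cylindrical $W^u$-leaf; consequently every non-blown periodic half-leaf of $W^u_N$ is expanding, so any contracting periodic half-leaf of $W^u_N$ must lie inside a blown annulus, i.e., it is a contracting blown half-leaf.

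With these observations in hand, part (a) splits into the two cases of the lemma. In Case 1 ($H$ contains no periodic blown half-leaf), either $H$ is a single transient piece---whose boundary meets $\partial_-N$ and all of whose interior points exit backward through $\partial_-N$---or $H$ is a union of expanding periodic pieces, whose $\partial_\pm N$-boundaries lie entirely in $\partial_+N$ and from which no backward orbit meets $\partial_-N$. In both subcases the equivalence ``$H$ meets $\partial_-N$ iff the backward orbit of every point of $H$ meets $\partial_-N$'' holds trivially (both sides true in the transient subcase, both false otherwise). In Case 2 ($H$ contains a periodic blown half-leaf), $H$ has a closed orbit so all its pieces are periodic; by the second observation, its contracting pieces are exactly its contracting blown half-leaves, and the per-type dynamics show that the points of $H$ with backward $\phi_N$-orbit meeting $\partial_-N$ are precisely the interiors of those pieces. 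Part (b) is then an immediate consequence: deleting from $W^u_N$ the leaves of Case 1 whose every point exits backward (the transient leaves) and the open contracting blown half-leaves of Case 2 removes exactly the points whose backward orbit meets $\partial_-N$, leaving $W^+_N$. The analogous statements for $W^s_N$, $\partial_+N$, and forward orbits then follow by time reversal, which interchanges $W^u_N \leftrightarrow W^s_N$, $\partial_+N \leftrightarrow \partial_-N$, and expanding $\leftrightarrow$ contracting.

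The step I expect to require the most care is the second structural observation, namely the identification of contracting periodic half-leaves of $W^u_N$ with contracting blown half-leaves. This rests on the fact that a regular (non-singular, non-blown) closed orbit of $\phi$ has a genuinely cylindrical $W^u$-leaf on which the backward flow is a strict contraction to the core, ruling out any non-blown contracting periodic half-leaf of $W^u$. Combined with the first observation---that transient pieces cannot coexist with periodic pieces inside a single leaf of $W^u_N$---this yields the precise identification needed in Case 2 and hence the ``exactly those points'' clause of part (a).
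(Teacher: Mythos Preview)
Your approach correctly handles the case where the ambient leaf $H_0 \subset W^u$ is \emph{periodic}, and in that case it is essentially the paper's argument (the paper also dispatches periodic $H$ by a direct appeal to \Cref{lem:annulus leaf structure}). However, there is a genuine gap for non-periodic leaves. \Cref{lem:annulus leaf structure} is stated and proved only for \emph{periodic} leaves $H_0$ of $W^{u/s}$; a non-periodic leaf of $W^u$ is a plane containing no closed orbits at all, and the components of its intersection with $N$ need not be annuli (or disks) of the four types classified there. Consequently your Case~1 dichotomy---``either $H$ is a single transient piece or $H$ is a union of expanding periodic pieces''---simply does not apply to such $H$: there are no periodic pieces to speak of, and ``transient piece'' in the sense of types (3)--(4) is not defined for non-periodic $H_0$. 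The statement you need in this case, that if $H$ meets $\partial_-N$ then \emph{every} backward orbit in $H$ meets $\partial_-N$, is exactly the content that still has to be proved.

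The paper supplies the missing idea. For non-periodic $H$, all backward $\phi$-orbits of points in $H$ are asymptotic in $M$ (they lie in an unstable leaf containing no blown annuli). The paper then proves a short general claim: if two backward-asymptotic $\widetilde\phi$-orbits in $\widetilde M$ lie on the same side of a lift $\widetilde\Sigma$ of a transverse surface, then one crosses $\widetilde\Sigma$ in backward time if and only if the other does; the proof uses nothing more than a product $\epsilon$-collar of $\Sigma$. Applying this with $\Sigma$ a component of $\partial_-N$ gives the ``all or none'' statement for non-periodic $H$. Your argument needs this asymptotic ingredient (or an equivalent one) to close the gap; the rest of your plan---including your observation that every non-blown periodic half-leaf of $W^u_N$ is expanding---is fine and aligns with the paper.
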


\begin{proof}
Note that (a) implies (b), so we will simply prove (a). If $H$ is periodic,
this follows immediately from \Cref{lem:annulus leaf structure}, where each periodic half-leaf of $H$ is either of type (1) or (2).

Now suppose that $H$ is not periodic and consider the flow $\phi$ on $M$ with its unstable foliation $W^u$.
Since $H$ is a leaf of $W_N^u$ that is \emph{not} periodic, it does not contain a closed orbit of a blown annulus. 
As the closed orbits in the boundary of a blown annulus do not intersects $\FF_0$ (again see \Cref{fig:decomp}),
this implies that $H$ does not intersect any closed orbit of a blown annulus.
Hence all backwards $\phi$-orbits of points in $H$ are asymptotic \emph{in $M$}. The following claim implies that a single backward orbit in $H$ meets $\partial_-N$ if and only if they all do.

\begin{claim}\label{lem:allornone}
Suppose $\phi$ is a flow in a compact 3-manifold $M$ with universal cover $\wt M$, that $\wt \phi$ is the lift of $\phi$ to $\wt M$, and that $\Sigma$ is a surface positively transverse to $\phi$. Suppose $\wt x,\wt y\in \wt M$ both lie above a fixed lift $\wt \Sigma$ of $\Sigma$. If the backward $\wt \phi$-orbits from $\wt x$ and $\wt y$ are asymptotic, then $\wt x$ intersects $\wt \Sigma$ in backward time under $\wt \phi$ if and only if $\wt y$ does. 

The same holds after replacing `above' with `below,' and `backward' with `forward.'
\end{claim}

\begin{proof}[Proof of \Cref{lem:allornone}]
Let $\wt N_\epsilon$ be a regular neighborhood of $\wt \Sigma$ obtained by lifting a regular $\epsilon$-neighborhood of $\Sigma$, where $\epsilon$ is small enough so that $\wt x,\wt y\notin \wt N_\epsilon$ and the foliation of $\wt N_\epsilon$ by flowlines gives $\wt N_\epsilon$ the structure of an oriented interval bundle over $\wt \Sigma$.

Let $\wt \gamma_x$ and $\wt \gamma_y$ be the the backward orbits from $\wt x$ and $\wt y$ respectively. Suppose that $\wt \gamma_x$ meets $\wt \Sigma$. Since $N_\epsilon$ is an $I$-bundle, $\wt \gamma_x$ eventually lies below $\wt \Sigma$ and outside of $\wt N_\epsilon$. Since the forward orbits from $\wt x$ and $\wt y$ are forward asymptotic, $\wt\gamma_y$ must also eventually lie below $\wt \Sigma$. Since the roles of $\wt x$ and $\wt y$ are symmetric, we are done.
\end{proof}

Returning to our leaf $H$,  let $p,q\in H$ and suppose the backward orbit from $p$ meets
$\del_-N$. let $\wt H$ be a lift of $H$ to $\wt M$, and let $\wt p,\wt q\in \wt H$ be
lifts of $p$ and $q$ respectively. The backward $\wt \phi$-orbit from $\wt p$ intersects
some lift $\wt \Sigma$ of a component $\Sigma$ of $\del_-N$. Since the backward orbits
from $\wt p$ and $\wt q$ are asymptotic,
they must both intersect $\wt \Sigma$ by \Cref{lem:allornone}. Projecting to $M$, we see that the backward orbit from $q$ must intersect $\del_- N$.

The case where $H$ is a leaf of $W^s_N$ follows symmetrically, finishing the proof of \Cref{lem:bad_leaves}.
\end{proof}

\begin{proposition}[Laminations $W^\pm_N$]
\label{prop:lamsN}
The pair $W^\pm_N$ are nowhere dense (singular) sublaminations of $W^{u/s}_N$ such that each boundary leaf 
contains a closed orbit of $\varphi$.
\end{proposition}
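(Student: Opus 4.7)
The plan is to apply \Cref{lem:bad_leaves} and then rule out the possibility that $W^+_N$ contains non-periodic leaves of $W^u_N$; the argument for $W^-_N$ is symmetric after swapping the roles of stable/unstable and $\partial_\pm$.

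First, I would verify that $W^+_N$ is closed in $N$. If $p \in N \ssm W^+_N$, then the backward $\phi_N$-orbit from $p$ crosses $\partial_- N$ transversely at some finite time $-t_0$; by continuity of $\phi$ and transversality of $\partial_- N$ to the flow, every point in a small neighborhood of $p$ has its backward orbit crossing $\partial_- N$ near time $-t_0$, so $N \ssm W^+_N$ is open.

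Second, I would argue that each piece of $W^+_N$ lies on a periodic leaf of $W^u_N$. Suppose $H$ is a non-periodic leaf of $W^u_N$ with $H \cap W^+_N \neq \varnothing$; then by \Cref{lem:bad_leaves}(a) we have $H \cap \partial_- N = \varnothing$ and every backward $\phi_N$-orbit on $H$ stays in $N$ forever. Because $\phi$ is an almost pseudo-Anosov flow, any bounded backward orbit either accumulates on a closed orbit $\gamma'$, which forces the orbit into the unstable leaf of $\gamma'$ and hence makes $H$ periodic (a contradiction), or its closure contains all of $M$, so the orbit visits $M \ssm N$, contradicting that it stays in $N$. Thus $H$ must be periodic, and combined with \Cref{lem:bad_leaves}(b), $W^+_N$ is exactly the union over closed orbits $\gamma \subset \intr N$ of the periodic leaf of $W^u_N$ containing $\gamma$, with its open contracting blown half-leaves removed.

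From this description the conclusions follow. The intersection of $W^+_N$ with each leaf of $W^u_N$ is a closed union of half-leaves based at the closed orbit $\gamma$, giving $W^+_N$ the structure of a singular sublamination of $W^u_N$, with singularities occurring at closed orbits shared with $W^-_N$ along blown annuli. Since circular pseudo-Anosov flows have only countably many closed orbits, $W^+_N$ is a closed countable union of $2$-dimensional subsets of the $3$-manifold $N$, and is therefore nowhere dense. Every leaf of $W^+_N$ contains the closed orbit around which it is built, so in particular every boundary leaf contains a closed orbit of $\phi$.

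The step I expect to require the most care is the dynamical argument in the second paragraph: one needs that any orbit of the almost pseudo-Anosov flow $\phi$ that stays in the compact set $N$ for all negative time must lie on the unstable manifold of a closed orbit of $\phi$ in $\intr N$. This follows from standard hyperbolicity and transitivity properties of pseudo-Anosov flows, but should be stated carefully for the almost pseudo-Anosov setting produced by a minimal dynamic blowup.
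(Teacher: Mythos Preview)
Your argument contains a genuine gap: the claim that every leaf of $W^+_N$ is periodic is false, and the dynamical dichotomy you invoke does not establish it. The step ``accumulates on a closed orbit $\gamma'$, which forces the orbit into the unstable leaf of $\gamma'$'' is the problem. Having the backward orbit of $p$ \emph{accumulate} on $\gamma'$ (i.e.\ $\gamma'$ lies in the $\alpha$-limit set) is much weaker than having it \emph{converge} to $\gamma'$; only the latter places $p$ on $W^u(\gamma')$. In a transitive pseudo-Anosov flow periodic orbits are dense, so by the closing lemma the $\alpha$-limit set of \emph{any} bounded backward orbit contains a periodic orbit---yet most unstable leaves are not periodic. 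Thus your dichotomy collapses to the first alternative without yielding the desired conclusion. Concretely, \Cref{th:laminations}(2) asserts only that periodic leaves are \emph{dense} in $\Lambda^\pm$, which already indicates that non-periodic leaves of $W^+_N$ exist in general.

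Because the subsequent deductions (nowhere density via countability of leaves, and ``every leaf contains a closed orbit'') rest on this false claim, they do not go through. The paper instead argues these points separately: nowhere density comes from the existence of a single bi-dense $\phi$-orbit in $M$ (transitivity on an atoroidal manifold), whose intersection with $N$ lies entirely in the complement of $W^+_N$; and only \emph{boundary} leaves are shown to be periodic, by passing to the flow space $\mathcal O$ of $\wt\phi$ and using that slice leaves in the frontier of the image of a lifted transverse surface $\wt\Sigma$ are periodic (a fact from \cite{LMT_stst}, or \cite{cooper1994bundles} in the non-blown-up case). That boundary leaves are periodic suffices, since boundary leaves are dense in any nowhere-dense lamination.
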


\smallskip
In the proof, we will use the flow space of $\phi$, defined as follows. Let $\wt M$ denote
the universal cover of $M$. Collapsing orbits of the lifted flow defines a quotient map
$\Theta \colon \wt M \to \mc O$, and $\orb$ is called the \define{flow space} of
$\phi$. According to Fenley--Mosher \cite[Proposition 4.1, 4.2]{fenley2001quasigeodesic},
$\orb$ is homeomorphic to the plane. 
 Note that the images of $W^{u/s}$ in $\mc O$ are a pair of singular foliations.

\begin{proof}[Proof of \Cref{prop:lamsN}]
We will focus on $W^+_N$ since the case for $W^-_N$ is similar.

To show that $W^+_N$ is a sublamination of $W^u_N$, note that its complement is the set of points whose backward $\phi_N$-orbits meet $\del_-N$. This set is clearly open, so $W^+_N$ is closed. By \Cref{lem:bad_leaves}(b), $W^+_N$ is a union of subleaves of $W^u_N$, 
and hence is a (singular) sublamination. Here, we have used that fact that every singular orbit of $\varphi_N$ bounds at least $2$ contracting and $2$ expanding half-leaves.

That $W^+$ is nowhere dense follows from the fact $\phi$ has an orbit \underline{in $M$} that is dense in both the forward and backward direction. This orbit exists because every pseudo-Anosov flow on an atoroidal manifold is transitive \cite[Proposition 2.7]{Mos92}.

It remains to prove that each boundary leaf of $W^+_N$ contains a closed orbit of $\varphi$. For this, let $H$ be a boundary leaf of $W^+_N$. If we let $H_M$ be the leaf of $W^u$ that contains $H$, we claim that it suffices to show that $H_M$ contains a closed orbit of $\varphi$. Indeed, for any point $p \in H \subset H_M \cap N$, the backward $\phi$-orbit from $p$ limits on a closed orbit of $H_M$. However, since $H$ is a leaf of $W^+_N$ the backwards orbit through $p$ never meets $\partial_-N$ and hence stays in $N$. In particular, $H$ contains a closed orbit of $H_M$.

To prove that $H_M$ contains a closed orbit, we use the projection $\Theta \colon \wt M \to \mc O$ from the universal cover of $\wt M$ to the flow space $\mc O$ of $\phi$. First, let $\wt N$ be a component of the preimage of $N$ in $\wt M$ and let $\wt H$ be a component of the preimage of $H$ contained in $N$. Since $H$ is a boundary leaf, there is a component $\Sigma$ of $\partial_-N$ and a lift $\wt \Sigma$ to the boundary of $\wt N$ with the property that points arbitrarily close to $\wt H$ flow backwards to meet $\wt \Sigma$. 
Hence, the image $\Theta(\wt H)$ lies at the boundary of the $\Theta (\wt \Sigma)$. (For context, the topological boundary of $\Theta (\wt \Sigma)$ is a union of slices leaves of the projected foliations \cite[Theorem 4.1]{Fen09}.) But we prove in \cite{LMT_stst} that
each slice leaf in the boundary of $\Theta (\wt \Sigma)$ is periodic. (For $\varphi$  an honest pseudo-Anosov suspension flow, this was proved in \cite[Lemma 3.21]{cooper1994bundles}.)
This implies that $H_M$ contains a periodic orbit as required.
\end{proof}

\subsubsection{Proof of \Cref{th:laminations}}\label{subsubsec:lamthmproof}

It remains to translate properties of $W^\pm_N$ into properties of the laminations $\Lambda^\pm$.

\begin{corollary}\label{fact:escaping_leaf}
Let $\ell$ be a leaf of $\cW^u$. Then one of the following is true:
\begin{itemize}
\item $\ell$ does not contain a periodic blown half-leaf, in which case $\ell$ contains a negative escaping point if and only if every point in $\ell$ is negative escaping, or
\item $\ell$ contains at least one periodic blown half-leaf, in which case the negative escaping points in $\ell$ are exactly those contained in the interiors of contracting blown half-leaves.
\end{itemize}
The same hold for leaves of $\cW^s$ after replacing `negative' with `positive' and `contracting' with `expanding.'
\end{corollary}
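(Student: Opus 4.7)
The plan is to translate \Cref{lem:bad_leaves}(a) from leaves of $W^u_N$ in the compactified mapping torus $N$ down to leaves of $\cW^u$ on $L$ via the suspension correspondence. The essential dictionary is that a point $p \in L$ is negative escaping under $f$ if and only if the backward $\phi_N$-orbit of $p$ in $N$ meets $\partial_- N$. This will follow from the spiraling structure of $L$ around $\partial_- N$ in $N$: if the backward orbit exits through $\partial_- N$ at some finite time, then the preceding flowline lies in a spiraling neighborhood of a component of $\partial_- N$, so its intersections with $L$---which are precisely the points $f^{-k}(p)$---escape a negative end; conversely, if the backward orbit stays interior to $N$ for all time, then it accumulates on a compact invariant set of $\phi_N$, so the return points $f^{-k}(p)$ remain in a compact subset of $L$.

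Given this dictionary, let $\ell \subset L$ be a leaf of $\cW^u$ and let $H$ be the leaf of $W^u_N$ containing the $\phi_N$-suspension of $\ell$. The negative escaping points of $\ell$ are exactly $\ell \cap E$, where $E \subset H$ is the set of points whose backward $\phi_N$-orbit meets $\partial_- N$. I would next apply \Cref{lem:bad_leaves}(a) to $H$: either $H$ contains no periodic blown half-leaf and $E$ is all of $H$ or empty, or $H$ contains at least one periodic blown half-leaf and $E$ is exactly the union of interiors of the contracting blown half-leaves of $H$.

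The remaining task is to reconcile the blown-half-leaf dichotomies on $\ell$ and on $H$ using \Cref{lem:annulus_fibering}. Each contracting blown half-leaf of $H$ meets $L$ in a finite union of rays that are themselves contracting periodic blown half-leaves of $\cW^u$, and since $\cW^u$ and $\cW^s$ coincide along such rays, any leaf of $\cW^u$ meeting the interior of such a ray must contain the entire ray. So if $\ell$ contains no periodic blown half-leaf then $\ell$ avoids the interiors of all blown half-leaves of $H$: in the first case of the previous paragraph this directly gives the all-or-nothing alternative on $\ell$, and in the second case it forces $\ell \cap E = \emptyset$, again vacuously matching the first bullet of the corollary. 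If instead $\ell$ contains a periodic blown half-leaf then $H$ automatically does too, and $\ell \cap E$ is exactly the union of interiors of the contracting periodic blown half-leaves of $\cW^u$ contained in $\ell$, which is the second bullet. The $\cW^s$ statement is obtained symmetrically by applying the parallel case of \Cref{lem:bad_leaves} with the roles of $\partial_-$ and $\partial_+$ (and of contracting and expanding) reversed.

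The main subtlety I anticipate is this last paragraph---tracking which blown half-leaves of $H$ correspond to blown half-leaves of $\ell$ rather than to other leaves of $\cW^u$, and ensuring that the expanding/contracting labels are correctly preserved in the passage between $H$ and $\ell$. Both amount to careful bookkeeping with \Cref{lem:annulus_fibering} and should not require any new ideas beyond those already deployed in the proof of \Cref{prop:lamsN}.
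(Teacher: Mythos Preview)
Your approach is correct and essentially the same as the paper's: establish the dictionary (negative escaping $\Leftrightarrow$ backward $\phi_N$-orbit meets $\partial_-N$), which the paper phrases as $\Lambda^\pm = W^\pm_N \cap L$, and then read off the statement from \Cref{lem:bad_leaves}. The paper's proof is considerably terser, simply noting that leaves of $W^\pm_N$ correspond bijectively to $f$-orbits of leaves of $\Lambda^\pm$; the blown-half-leaf bookkeeping you flag as a subtlety is left implicit, but your observation that $f$ preserves the periodic blown half-leaf structure (so $\ell$ contains one iff every leaf in its $f$-orbit does, iff $H$ does) is exactly what makes that step go through.
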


\begin{proof}
Since $f \colon L \to L$ is the first return map to $L$ under the flow $\varphi$ and the positive (negative) ends of $L$ accumulate precisely on $\partial_+ N$ ($\partial_- N$), we have $\Lambda^\pm = W^\pm_N \cap L$. Moreover, leaves of $W^\pm_N$ are in bijective correspondence with $f$--orbits of leaves of $\Lambda^\pm$ and so the corollary follows from \Cref{lem:bad_leaves}.
\end{proof}
\Cref{fact:escaping_leaf} and the definition of $\Lambda^\pm$ now imply the following proposition:

\begin{proposition}\label{prop:1dlams_leafdef}
The positive invariant lamination $\Lambda^+$ is obtained from the foliation $\cW^u$ by removing 
\begin{itemize}
\item the leaves for which every point is negative escaping, and
\item the open periodic (blown) half-leaves that are contracting. 
\end{itemize}
The {negative invariant lamination} $\Lambda^-$ is characterized symmetrically by replacing `$\mc W^u$' with `$\mc W^s$,' `negative' with `positive,' and `contracting' with `expanding.'
\end{proposition}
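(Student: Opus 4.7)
The plan is to derive the proposition directly from Corollary \ref{fact:escaping_leaf} together with the definition of the invariant laminations. By definition, $\Lambda^+$ is the set of points of $L$ which are not negative escaping, so to describe $\Lambda^+$ leaf-by-leaf in $\cW^u$, I just need to record, for each leaf of $\cW^u$, which of its points are removed when passing to $\Lambda^+$.

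First I would fix a leaf $\ell$ of $\cW^u$ and invoke the dichotomy of Corollary \ref{fact:escaping_leaf}. In the first case, $\ell$ contains no periodic blown half-leaf, and then $\ell$ either consists entirely of negative escaping points (so $\ell \cap \Lambda^+ = \emptyset$ and $\ell$ is removed in its entirety) or no point of $\ell$ is negative escaping (so $\ell \subseteq \Lambda^+$). In the second case, $\ell$ contains at least one periodic blown half-leaf, and the negative escaping points of $\ell$ are precisely the interiors of the contracting blown half-leaves contained in $\ell$. Thus $\ell \cap \Lambda^+$ equals $\ell$ with these open contracting blown half-leaves deleted.

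Taking the union of these descriptions over all leaves $\ell$ of $\cW^u$ yields the asserted formula for $\Lambda^+$: it is $\cW^u$ with (a) the leaves all of whose points are negative escaping removed, and (b) the open contracting periodic blown half-leaves removed. I would then invoke the symmetric statement of Corollary \ref{fact:escaping_leaf} for $\cW^s$ (replacing negative with positive escaping, and contracting with expanding) to obtain the parallel characterization of $\Lambda^-$.

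Since the main content has already been packaged in Corollary \ref{fact:escaping_leaf}, I do not expect a genuine obstacle here; the argument is essentially bookkeeping. The only mild point to check is that this description indeed produces a closed set, which is already guaranteed by the identification $\Lambda^\pm = W^\pm_N \cap L$ from the proof of Corollary \ref{fact:escaping_leaf} together with the fact (from Proposition \ref{prop:lamsN}) that $W^\pm_N$ is closed; this closedness is precisely what justifies calling $\Lambda^\pm$ laminations rather than arbitrary $f$-invariant sets.
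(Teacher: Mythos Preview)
Your proposal is correct and matches the paper's approach exactly: the paper simply states that \Cref{fact:escaping_leaf} together with the definition of $\Lambda^\pm$ imply the proposition, and your leaf-by-leaf unpacking of the dichotomy is precisely how that implication goes. Your additional remark about closedness via $\Lambda^\pm = W^\pm_N \cap L$ and \Cref{prop:lamsN} is a reasonable aside, though not strictly required for the statement itself.
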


\begin{proof}[Proof of \Cref{th:laminations}]
First, since $W^\pm_N$ are nowhere dense sublaminations of $W^{u/s}_N$ by \Cref{prop:lamsN}, $\Lambda^\pm = W^\pm_N \cap L$ are nowhere dense sublaminations of $\cW^{u/s}$. 

Let $\ell$ be an expanding periodic half-leaf of $\cW^u$.
By \Cref{lem:halfleafend},
$\ell$ does not accumulate on negative ends. Hence, $\ell$ is not negative escaping and therefore is a half-leaf of $\Lambda^+$. 
Since every periodic point of $L$ is the endpoint of at least two expanding periodic half-leaves of $\cW^u$, the same is true in $\Lambda^+$; in other words, there are no ``1-pronged singular leaves" of $\Lambda^+$.
The case where $\ell$ is contracting periodic leaf of $\cW^u$ is similar and thus item $(1)$ is proven.

Next we prove item (3). Let $r$ be a half-leaf of $\Lambda^+$. Note that $r$ does not accumulate on a negative ends of $L$, because it contains no negative escaping points. Suppose for a contradiction that $r$ does not accumulate on positive end. Then $r$ is contained in a compact set $K_1 \subset L$, and hence accumulates on a sublamination $L_1$ of $\Lambda^+$. Since $L_1 \subset \Lambda^+$, $f^{n}(L_1)$ stays in a fixed compact set $K_2$ as $n \to -\infty$ and so it limits to a compact $f$-invariant sublamination $L_2 \subset \Lambda^+$. The suspension of $L_2$ under $\varphi$ 
is contained in interior of $N \subset M$, contradicting the fact that all half-leaves of $W^{u/s}$ are dense in $M$ (as in the proof of \Cref{lem:halfleafend}).

It remains to prove that periodic leaves of $\Lambda^\pm$ are dense. Since $\Lambda^\pm$ are nowhere dense, boundary leaves are dense in each and so it suffices to know that each boundary leaf is $f$--periodic. Let $\ell$ be a boundary leaf of (say) $\Lambda^+$. Then the suspension of $\ell$ under $\phi_N$ is a boundary leaf $H$ of $W^+_N$. According to \Cref{prop:lamsN}, $H$ contains a periodic orbit $\gamma$ and so its intersection $\gamma \cap L$ includes an $f$--periodic point in $\ell$. This gives item (2), completing the proof.
\end{proof}

\begin{remark}[Periodic leaves of $\Lambda^\pm$]
\label{rmk:periodic}
From the argument above, we see that $f$--periodic leaves of $\Lambda^\pm$ contain $f$--periodic points. That is, if $f^n(\ell) = \ell$ for $\ell$ a leaf of $\Lambda^\pm$, then $\ell$ contains a (unique) fixed point of $f^n$. Indeed, the suspension of $\ell$ under the semiflow is a leaf $H$ of $W^\pm_N$ that is contained in a periodic leaf $H_M$ of $W^{u/s}$. As in the proof of \Cref{prop:lamsN}, if $H$ is a leaf of $W^+_N$ (resp. $W^-_N$) then following a point of $H$ backwards (resp. forwards) along the flow, we stay in $H \subset N$ and arrive at a periodic orbit of $\varphi$. Intersecting this orbit with $\ell$ produces the required $f$--periodic point.
\end{remark}

\begin{remark}[Invariant measures on $\Lambda^\pm$] \label{rmk:no_measure}
An example of Fenley \cite{Fen97}, together with \Cref{th:lam_relation}, shows that $\Lambda^\pm$ do not always admit projectively invariant transverse measures of full support. It would be interesting to characterize when there are such transverse measures. 
\end{remark}

\subsection{Topological entropy of spA maps}
\label{sec:core_ent}
We next associate to each spA map $f \colon L \to L$ its \define{core dynamical system} $C_f$:
\[
C_f  = \{x \in L : (f^n(x))_{n \in \ZZ} \text{ is bounded in $L$}\}.
\] 

Since $f$ is endperiodic, $C_f$ is exactly the set of points that are neither positive nor negative escaping. In symbols, $C_f = \Lambda^+ \cap \Lambda^-$.

\begin{lemma}\label{lem:core}
The core $C_f$ is $f$--invariant, compact, and contains any other  $f$--invariant compact set. 
\end{lemma}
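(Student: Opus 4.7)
The plan is to prove the three claims (invariance, compactness, maximality) separately, with compactness being the only nontrivial one. For $f$--invariance, I would observe that if $x \in C_f$ then the orbit $(f^n(x))_{n \in \Z}$ is bounded in $L$, so the orbit of $f^{\pm 1}(x)$, being a time-shift of that sequence, is also bounded; hence $f(C_f) = C_f$. For maximality, if $K \subset L$ is a compact $f$--invariant set and $x \in K$, then $(f^n(x))_{n\in\Z} \subset K$ is bounded, so $x \in C_f$.

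The main work is showing $C_f$ is compact. I would use the identity $C_f = \Lambda^+ \cap \Lambda^-$ given just before the statement. By \Cref{th:laminations}, $\Lambda^\pm$ are (closed) sublaminations of $\cW^{u/s}$, so $C_f$ is closed in $L$. It remains to find a compact subset of $L$ containing $C_f$. For this I would use the endperiodic structure: for each end $e$ of $L$, endperiodicity provides a neighborhood $U_e$ and a power $q \geq 1$ with $f^{q}(U_e) \subset U_e$ (or $f^{-q}(U_e) \subset U_e$ when $e$ is repelling) and $\bigcap_n f^{nq}(U_e) = \emptyset$. If $e$ is a positive end then every $x \in U_e$ is positive escaping, so $U_e \cap \Lambda^- = \emptyset$; if $e$ is negative then every $x \in U_e$ is negative escaping, so $U_e \cap \Lambda^+ = \emptyset$. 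In either case $U_e \cap C_f = \emptyset$. Since $L$ has only finitely many ends, the complement $L \ssm \bigcup_e U_e$ is compact and contains $C_f$; combined with closedness, this gives compactness.

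The main conceptual obstacle here is really just keeping straight the asymmetry between positive and negative ends in the definition of $\Lambda^\pm$: points near a positive end fail to lie in $\Lambda^-$ (because they are positive escaping), and symmetrically for negative ends, so both neighborhoods contribute to excluding $C_f$. No deep input from the spA structure is needed beyond \Cref{th:laminations} and the definition of endperiodicity from \Cref{sec:endperiodic}.
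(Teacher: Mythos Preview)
Your argument is correct and is essentially the intended one; the paper itself states \Cref{lem:core} without proof, treating the three assertions as immediate from the definitions and from the identification $C_f=\Lambda^+\cap\Lambda^-$ just before the lemma. Your proof of compactness via \Cref{th:laminations} (for closedness) together with neighborhoods of the ends (for boundedness) is exactly what the paper has in mind, and the invariance and maximality arguments are routine as you say.

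One minor point worth tightening: when you assert that every $x$ in a neighborhood $U_e$ of a positive end is positive escaping, you are implicitly using that the nested sets $f^{qn}(U_e)$ form a neighborhood basis of $e$, not merely that their intersection is empty. This is standard (choose $U_e$ with compact frontier, so that each $f^{qn}(U_e)$ is a connected neighborhood of $e$ with compact frontier, and a nested such sequence with empty intersection is automatically a basis), and the paper itself relies on the same fact when it asserts $C_f=\Lambda^+\cap\Lambda^-$ without comment.
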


We define the \define{entropy} of an spA map $f$, denoted $\ent(f)$, to be the topological entropy of the restriction $f|_{C_f}\colon C_f \to C_f$. For background on topological entropy in the related setting of pseudo-Anosov maps, see \cite[Expos\'e 10]{FLP}.

\begin{remark}
Topological entropy is usually (and unambigously) defined for maps on compact spaces, but there have been several generalizations. For example, C\'anovas and Rodr\'iguez define the topological entropy of a general map to be the supremum of topological entropies of the restriction to compact invariant subset \cite{canovas2005topological}. With this definition, $\ent(f)$ is unchanged by \Cref{lem:core}.
\end{remark}

An endperiodic map on $L$ is a \define{translation} if every point of $L$ is both positive and negative escaping. The following lemma characterizes translations among spA maps.

\begin{lemma}\label{lem:translation}
Let $f \colon L \to L$ be spA. The following are equivalent:
{\setlength{\multicolsep}{2pt}
\begin{multicols}{2}
\begin{enumerate}
\item one of $\Lambda^+$ or $\Lambda^-$ is empty,
\item both $\Lambda^+$ and $\Lambda^-$ are empty,
\item $f$ has no periodic points,
\item $C_f$ is empty,
\item $f$ is a translation.
\end{enumerate}
\end{multicols}}
\end{lemma}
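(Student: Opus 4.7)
My plan is to establish the equivalences via a short cycle through the cheap implications together with two substantive ones, all of which are direct consequences of \Cref{th:laminations} and the surrounding structure already developed.

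First, (2)$\Rightarrow$(1) is immediate, and the equivalence (2)$\Leftrightarrow$(5) is tautological: $\Lambda^+$ (resp.\ $\Lambda^-$) is by definition the set of points that are \emph{not} negative (resp.\ positive) escaping, so $\Lambda^+=\Lambda^-=\emptyset$ is precisely the statement that every point is both positive and negative escaping. Next, (2)$\Rightarrow$(4) is immediate from $C_f=\Lambda^+\cap\Lambda^-$, and (4)$\Rightarrow$(3) holds because every periodic point has bounded $f$-orbit and therefore lies in $C_f$.

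The two substantive implications are (1)$\Rightarrow$(3) and (3)$\Rightarrow$(2). For (1)$\Rightarrow$(3), I would argue by contrapositive: if $p$ is a periodic point of $f$, then after passing to a power and lifting, \Cref{thm:fix and boundary} produces expanding and contracting fixed half-leaves of $\cW^u$ and $\cW^s$ at $p$. By item~(1) of \Cref{th:laminations}, the expanding periodic half-leaf of $\cW^u$ is a half-leaf of $\Lambda^+$ and the contracting periodic half-leaf of $\cW^s$ is a half-leaf of $\Lambda^-$; in particular both laminations are nonempty, contradicting~(1). For (3)$\Rightarrow$(2), suppose $f$ has no periodic points. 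By \Cref{rmk:periodic}, every $f$-periodic leaf of $\Lambda^\pm$ contains a periodic point of $f$, so neither $\Lambda^+$ nor $\Lambda^-$ has a periodic leaf. But \Cref{th:laminations}(2) asserts that periodic leaves are dense in $\Lambda^\pm$, forcing $\Lambda^\pm=\emptyset$.

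Putting these together in the order (2)$\Rightarrow$(1)$\Rightarrow$(3)$\Rightarrow$(2), together with (2)$\Leftrightarrow$(5) and (2)$\Rightarrow$(4)$\Rightarrow$(3), closes the loop. The only step requiring anything beyond routine bookkeeping is the use of \Cref{th:laminations}(2) together with \Cref{rmk:periodic}: I expect the subtlety to be only notational, since the substantive dynamical content (existence of expanding/contracting periodic half-leaves at periodic points, density of periodic leaves, and the fact that periodic leaves carry periodic points) has already been established. Thus I do not anticipate a real obstacle beyond citing these results cleanly.
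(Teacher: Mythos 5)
Your proof is correct, and the substantive step---(3)$\Rightarrow$(2) via density of periodic leaves (\Cref{th:laminations}(2)) together with the fact that periodic leaves contain periodic points (\Cref{rmk:periodic})---is exactly the argument in the paper. The only place you diverge is how you reconnect (1) into the cycle: you prove (1)$\Rightarrow$(3) by contrapositive, invoking \Cref{thm:fix and boundary} to produce expanding/contracting periodic half-leaves at a periodic point and then feeding them into \Cref{th:laminations}(1). The paper instead closes the loop with the one-line implication (1)$\Rightarrow$(4) (if one of $\Lambda^\pm$ is empty then so is $C_f=\Lambda^+\cap\Lambda^-$), which is cheaper and avoids any appeal to the boundary dynamics. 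Your route is valid but costs more than needed; since you already record (2)$\Rightarrow$(4) by the same identity $C_f=\Lambda^+\cap\Lambda^-$, you could have gotten (1)$\Rightarrow$(4) for free and dropped the use of \Cref{thm:fix and boundary} entirely.
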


\begin{proof}
Any periodic point has a bounded orbit, so (4) implies (3). Each periodic point is contained in a leaf of both $\Lambda^+$ and $\Lambda^-$, and by \Cref{th:laminations}  the periodic leaves of $\Lambda^\pm$ are dense in $\Lambda^\pm$, so (3) implies (2). Clearly (2) implies (1), and (1) implies (4) since $C_f=\Lambda^+\cap \Lambda^-$ as noted above. Hence (1) through (4) are equivalent.

To finish the proof, we note that (2) and (5) are equivalent because $\Lambda^+=\varnothing$ and $\Lambda^-=\varnothing$ if and only if each point of $L$ is both positive and negative escaping, i.e. $f$ is a translation.\end{proof}

With an eye toward proving \Cref{th:ent_spA}, we recall the fact, due to Bowen \cite[Theorem 17]{bowen1971entropy}, that if $S \colon X \to X$ and $T \colon Y \to Y$ are maps and $F \colon X \to Y$ is continuous, surjective, equivariant, and has finite fibers, then the topological entropies of $S$ and $T$ are equal. In particular, conjugate systems have the same topological entropy.

The next proposition shows that the core dynamical system of an spA map is naturally conjugate to the restriction of a pseudo-Anosov homeomorphism to a closed invariant subspace. 

\begin{proposition} 
\label{prop:pA_subsystem}
Suppose $f \colon L \to L$ is an spA map with associated closed manifold $M$ and circular flow $\varphi$ (\Cref{rmk:package}) so that there is a cross section $S$ of $\varphi$ obtained by de-spinning (\Cref{rmk:spin_in_proof}). If $F \colon S \to S$ is the first return map for $S$, then $f \colon C_f \to C_f$ is conjugate to the restriction of $F$ to some closed, invariant subspace.
\end{proposition}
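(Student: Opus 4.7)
The plan is to show that under the natural inclusion $L\hookrightarrow M$, the compact set $C_f$ already sits inside the cross section $S$ and that $F$ and $f$ literally agree on it, so the required conjugation is just the inclusion $C_f\hookrightarrow S$.

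Let $U$ and $V$ be the spiraling neighborhoods used to de-spin, so that $S\cap N = L_U$ and $S\cap N^\downarrow = L_V^\downarrow$. The construction of $L_U$ from $L$ modifies $L$ only inside $U$ (replacing the spiraling ends by a collar onto $\partial_\pm N$), so the two surfaces agree on $N\setminus U$. The crux of the proof is the following geometric claim: for every $p\in C_f$, the $\phi$-orbit of $p$ lies entirely in $\intr(N)\setminus U$. Granting this, the orbit meets $L$ and $L_U$ at the same points and never enters $N^\downarrow$, so its $L$-returns coincide with its $S$-returns; in particular $p\in L_U\subset S$ and $F(p)=f(p)$. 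Combined with compactness and $f$-invariance of $C_f$, this realises $C_f$ as a closed $F$-invariant subset of $S$ and shows that the inclusion conjugates $f|_{C_f}$ to $F$ restricted to this subspace.

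To prove the orbit-avoidance claim, recall that $C_f=\Lambda^+\cap\Lambda^-=(W_N^+\cap W_N^-)\cap L$, so the $\phi_N$-orbit through $p$ is bi-infinite in $\intr(N)$; equivalently, the $\phi$-orbit of $p$ in $M$ stays in $\intr(N)$ for all time. The neighborhood $U$ splits as $U_+\sqcup U_-$, the collars of $\partial_+N$ and $\partial_-N$, each foliated by $\mc L$-arcs joining its inner boundary in $\Sigma_U$ to the appropriate component of $\partial_\pm N$. Along these arcs the flow is strictly monotonic: in $U_+$ the forward-time direction runs from $\Sigma_U$ to $\partial_+N$, so any orbit ever meeting $U_+$ exits $N$ through $\partial_+N$ in finite forward time, contradicting $p\in W_N^-$; symmetrically any orbit meeting $U_-$ exits through $\partial_-N$ in finite backward time, contradicting $p\in W_N^+$. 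Hence the orbit is disjoint from $U$.

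The main obstacle is the orbit-avoidance claim above; once it is in hand the rest is essentially formal, relying only on the coincidence of $L$ and $L_U$ outside $U$ and on the fact that the orbit does not visit the $N^\downarrow$ side of the $h$-double.
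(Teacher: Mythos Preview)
Your argument is correct and follows essentially the same route as the paper's proof: both identify $C_f$ with a subset of $S$ by observing that $L$ and $S\cap N=L_U$ agree outside the spiraling neighborhood $U$, and that $C_f$ (indeed, the full $\phi$-orbit of any point of $C_f$) avoids $U$. The paper compresses your orbit-avoidance claim into the single sentence ``since $U$ is foliated by flow segments, the isotopy from $L'\subset L$ to $S\cap N\subset S$ along segments of the flow is supported away from $C_f$,'' whereas you spell out explicitly why the one-way nature of the collars $U_\pm$ forces any orbit entering them to exit through $\partial_\pm N$, contradicting membership in $W_N^\mp$.
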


\begin{proof}
Since $S$ is obtained by de-spinning, there is an isotopy along flow lines carrying $S \cap N$ to a compact subsurface $L'$ of $L$, such that the isotopy is supported in a spiraling neighborhood $U$ of $\partial_\pm N$ (see \Cref{sec:junctureclasses}). In the notation of \Cref{sec:spA_exist}, $S \cap N = L_U$ and $L' = L \ssm U$. Since $U$ is foliated by flow segments, 
the isotopy from $L' \subset L$ to $S \cap N \subset S$ along segments of the flow is supported away from $C_f$. With this setup, $C_f \subset S$, and $f$ agrees with $F$ since they are both the return map to $C_f$ along $\varphi$.
\end{proof}

Next we show that the core dynamical system does not depend on the choice of spA representative. In other words, it is canonically associated to the isotopy class of $f$.

\begin{proposition}
\label{lem:conjugate_cores}
If $f_1, f_2 \colon L \to L$ are isotopic spA maps, then their core dynamical systems are conjugate.
\end{proposition}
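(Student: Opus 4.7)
The plan is to construct a conjugacy $h \colon C_{f_1} \to C_{f_2}$ by matching leaves of the invariant laminations via their endpoints on the hyperbolic boundary. First I would apply \Cref{rmk:quasi} to fix a standard hyperbolic metric on $L$ such that the leaves of the invariant foliations $\wt{\cW}^{u/s}$ of both $f_1$ and $f_2$, lifted to $\wt L$, are uniformly quasigeodesic. Lifting a homotopy from $f_1$ to $f_2$ yields compatible lifts $\wt f_i \colon \wt L \to \wt L$ whose extensions agree as homeomorphisms of $\partial \wt L$; in particular $\wt f_1^n$ and $\wt f_2^n$ induce the same action on $\partial \wt L$ for every $n\ge 1$.

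Next I would identify the endpoint sets of the lifted invariant laminations. Each half-leaf of $\wt\Lambda^\pm$ (for either $f_i$) is a uniform quasigeodesic ray with a unique endpoint on $\partial\wt L$. Combining \Cref{prop:fix_to_boundary} with \Cref{prop:boundary_to_fix}, together with \Cref{th:laminations}(1) and \Cref{rmk:periodic}, the endpoints of periodic half-leaves of $\wt\Lambda^+$ for $f_1$ coincide exactly with those for $f_2$: they are precisely the attracting fixed points of powers of compatible lifts, and this set depends only on the boundary action. Symmetrically for $\wt\Lambda^-$ using repelling points. Using density of periodic leaves (\Cref{th:laminations}(2)) together with continuous dependence of endpoints of uniform quasigeodesic rays, I would conclude that the full endpoint sets agree and that there is a natural bijection between leaves of $\wt\Lambda^\pm$ for $f_1$ and for $f_2$ that commutes with deck transformations.

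With the leaf matching in place, I would define $h\colon C_{f_1}\to C_{f_2}$ as follows. For $x\in C_{f_1}=\Lambda^+_{f_1}\cap\Lambda^-_{f_1}$, choose a lift $\wt x$; by \Cref{prop:1dlams_leafdef}, $\wt\Lambda^+$ and $\wt\Lambda^-$ meet only in single-point transverse intersections away from shared singular periodic points (where the intersection is again a single point). Let $\wt y$ be the unique intersection of the leaves of $\wt\Lambda^\pm_{f_2}$ corresponding under the bijection above to the leaves of $\wt\Lambda^\pm_{f_1}$ through $\wt x$, and set $h(x)$ to be the projection of $\wt y$ to $L$. Equivariance of the matching under deck transformations shows that $h$ is independent of the chosen lift, and matching the leaves for $\wt f_1(\wt x)$ with those for $\wt f_2(\wt y)$ yields $h\circ f_1=f_2\circ h$. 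Continuity follows from uniform quasigeodesicity: small perturbations of $\wt x$ produce small changes in endpoints of the leaves through it, which by the matching produce Hausdorff-close leaves for $f_2$ and hence a small change in $\wt y$. A symmetric construction produces $h^{-1}$.

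The most delicate step I expect is the leaf-matching bijection at singular periodic points, where multiple half-leaves share a common initial point and the cyclic prong structure must be preserved by the correspondence. Concretely, I must check that the combinatorial arrangement of half-leaves at a periodic lift of $f_1$ is reproduced at the matched periodic lift of $f_2$, so that transverse intersections of $\wt\Lambda^+$ and $\wt\Lambda^-$ match with transverse intersections. This should follow from \Cref{lem:same_dyn}, which transfers the bijection between half-leaves at corresponding fixed points, combined with the proof of \Cref{prop:fix_to_boundary} identifying each half-leaf endpoint with an attracting or repelling point in the multi sink-source picture on $\partial\wt L$.
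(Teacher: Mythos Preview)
Your proposal is correct and follows essentially the same route as the paper: match the invariant laminations via their boundary data (periodic leaves first, then pass to closures using density), observe that compatible lifts of $f_1$ and $f_2$ act identically on this boundary data, and recover the conjugacy on the cores by intersecting matched leaves. The paper streamlines the bookkeeping by introducing an intermediate space $\wt\Lambda_\pitchfork$ of linking pairs of leaf lines in $\partial^2\wt L$ together with an equivariant factor map $\Lambda_\pitchfork \to C_{f_i}$ whose fibers are determined purely by $\Lambda_\pitchfork$; this, combined with a lemma that leaf lines are determined by their endpoints and that $\wt\Lambda^+$ and $\wt\Lambda^-$ share no endpoints, cleanly handles the singular-point ambiguity you flag as the delicate step.
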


For the proof, we require a bit more structure associated to the core. First, we have the following lemma about the endpoints of the lifts $\wt \Lambda^\pm$ of $\Lambda^\pm$ to $\wt L$:

\begin{lemma}\label{lem:lam_inf}
Each slice leaf of $\wt \Lambda^\pm \subset \wt L$ is uniquely determined by its endpoints in $\partial \wt L$, and leaves of $\wt \Lambda^+$ and $\wt \Lambda^-$ do not share endpoints. 

Moreover, if $\ell_1$ and $\ell_2$ are leaves of (say) $\Lambda^+$ that share an endpoint at infinity, then they share a periodic half-leaf based at a singularity.
\end{lemma}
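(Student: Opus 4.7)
The plan is to establish the three assertions using the quasi-geodesic structure of $\wt \cW^{u/s}$ (\Cref{prop:foliation_facts}) together with the tree-like structure of leaves of $\wt W^{u/s}$ in $\wt M$ and the transversality of $\cW^u$ and $\cW^s$ away from blown half-leaves.

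First, equip $L$ with the standard hyperbolic metric from \Cref{prop:foliation_facts}, so that leaves of $\wt \cW^{u/s}$ are uniformly quasi-geodesic in $\wt L$. Each half-leaf then has a well-defined endpoint in $\partial \wt L$, so each slice leaf has a well-defined pair of endpoints.

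For the uniqueness assertion, I would first show each leaf of $\wt \cW^{u/s}$ is tree-like: by \Cref{prop:foliation_facts}, for any leaf $H$ of $\wt W^{u/s}$ the intersection $H \cap \wt L$ is connected and equals a single leaf of $\wt \cW^{u/s}$. Since both $H$ and $\wt L$ are simply-connected, this intersection is a simply-connected $1$-complex, hence a tree, and any two ideal endpoints of a tree are joined by a unique bi-infinite path. To rule out two distinct leaves of $\wt \cW^{u/s}$ sharing both endpoints in $\partial \wt L$, suppose they do; being quasi-geodesic they fellow travel and bound a bigon $B$ in $\wt L$. Suspending $B$ under $\wt \varphi$ yields a disk in $\wt M$ whose boundary meets two distinct leaves of $\wt W^{u/s}$, but in the flow space $\orb$ this would force the two corresponding leaves of the projected foliation to share ideal endpoints, contradicting the Hausdorff leaf-space property of Fenley \cite{Fen09}.

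For the second assertion, suppose $\ell^+ \subset \wt \Lambda^+$ and $\ell^- \subset \wt \Lambda^-$ share an endpoint $x \in \partial \wt L$. Their half-leaves approaching $x$ fellow travel. If they coincide along some initial segment near $x$, that segment lies in a blown half-leaf; but each blown half-leaf is either expanding or contracting and hence belongs to only one of $\Lambda^\pm$ by \Cref{prop:1dlams_leafdef}, a contradiction. Otherwise the two half-leaves are distinct near $x$ and transverse along $\cW^u \pitchfork \cW^s$. A standard argument in hyperbolic geometry shows that two distinct transverse quasi-geodesic rays sharing an endpoint at infinity must cross infinitely often in any neighborhood of that endpoint. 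On the other hand, the containing leaves of $\wt W^u$ and $\wt W^s$ in $\wt M$ (being distinct) intersect in at most one orbit of $\wt \varphi$, which meets $\wt L$ in a discrete set whose projection cannot accumulate at $x \in \partial \wt L$ — a contradiction.

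For the moreover statement, if $\ell_1, \ell_2 \subset \wt \Lambda^+$ share an endpoint, the uniqueness established above forces $\ell_1$ and $\ell_2$ to lie in a common tree-leaf $\wt H$ of $\wt \cW^u$. Their approaching half-leaves must coincide beyond their last common branching point $\wt p$ of $\wt H$, giving a shared half-leaf $h$ based at a singular vertex $\wt p$. Singular vertices of $\wt \cW^u$ project to singular orbits of $\varphi$, which for the circular pseudo-Anosov flow $\phi^\flat$ are closed orbits; since dynamic blowup preserves closedness of singular orbits, $\wt p$ projects to a periodic point of $f$. Thus $h$ is a periodic half-leaf based at a singularity, as claimed.

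The main obstacle is likely the first assertion, in particular ruling out distinct leaves of $\wt \cW^{u/s}$ with shared endpoints: this requires carefully combining the Hausdorff leaf-space property of $\wt W^{u/s}$ with the quasi-geodesic structure in $\wt L$ and tracking how bigons in $\wt L$ interact with the two-dimensional foliations of $\wt M$ via the flow.
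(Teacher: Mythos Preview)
Your approach diverges substantially from the paper's, and it has real gaps.

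The paper argues in the opposite order: it proves the ``moreover'' statement first, using the dynamical properties of $\Lambda^\pm$ established in \Cref{th:laminations} and \Cref{prop:lamsN}. If $\ell_1,\ell_2\subset\wt\Lambda^+$ share an endpoint $p$ and do not share a half-leaf, the paper finds a periodic boundary leaf of $\wt\Lambda^+$ with endpoint $p$, deduces $p$ is fixed by some $\wt f^k$, and then derives a contradiction either from \Cref{lem:unique_fixed} or from the fact that leaves of $\Lambda^+$ cannot approach negative ends. The first assertion then follows immediately (shared half-leaves at both ends, and at most one singularity per leaf by \Cref{lem:unique_fixed}), and the second follows from item (3) of \Cref{th:laminations}: half-leaves of $\Lambda^+$ accumulate on positive ends while half-leaves of $\Lambda^-$ accumulate on negative ends, so fellow-traveling rays in $L$ cannot belong to both.

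Your argument for the moreover statement has a logical gap. You write that ``the uniqueness established above forces $\ell_1$ and $\ell_2$ to lie in a common tree-leaf,'' but what you established in the first part is that two slice leaves sharing \emph{both} endpoints coincide. You have not shown that distinct leaves of $\wt\cW^u$ cannot share a \emph{single} endpoint at infinity, which is what you need here. This is not automatic from Hausdorff leaf space plus quasi-geodesity.

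Your argument for the second assertion is also problematic. The claim that two transverse quasi-geodesic rays sharing an ideal endpoint must cross infinitely often is not standard and needs justification. More seriously, even granting this, your contradiction relies on the intersection of two leaves of $\wt W^u$ and $\wt W^s$ being a single orbit that meets $\wt L$ in a set not accumulating at $x\in\partial\wt L$; but a single $\wt\phi$-orbit can meet a lift $\wt L$ infinitely often, and you have not ruled out accumulation at the boundary. The paper's argument here is one line: fellow-traveling rays in $L$ accumulate on the same ends, contradicting \Cref{th:laminations}(3).

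Finally, your bigon/suspension argument for the first assertion is too vague to be a proof; you acknowledge this yourself. The paper avoids this entirely by deducing uniqueness from the moreover statement.
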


\begin{proof}
We begin by proving the moreover statement. Suppose that $\ell_1$ and $\ell_2$ are distinct leaves of $\wt \Lambda^+$ that share an endpoint $p \in \partial \wt L$. It suffices to show that they share a half-leaf since distinct leaves that share a half-leaf are singular and all singularities in $\Lambda^+$ are periodic: if $s$ is a singularity of $\Lambda^+$ then $f^{n}(s)$ stays in a compact set of $L$ as $n \to -\infty$ and hence becomes periodic. If $\ell_1$ and $\ell_2$ do not share a half-leaf, we obtain a contradiction as follows: since the laminations $\Lambda^+$ are nowhere dense, there is also a boundary leaf line $\ell$ of $\wt \Lambda^+$ with endpoint $p$. Since boundary leaves are periodic (\Cref{prop:lamsN}), $p$ itself is fixed under some $\wt f^k$. If both $\ell_1,\ell_2$ are also fixed under $\wt f^k$, then we are done since they both contains the unique fixed point of $\wt f^k$. If not, then iterating either $\ell_1,\ell_2$ under $\wt f^k$ would produce a leaf line whose image in $\Lambda^+ \subset L$ gets arbitrarily close to a negative end, contradicting \Cref{th:laminations}.

The remaining claims now follow easily. If $\ell_1$ and $\ell_2$ are slice leaves of $\wt \Lambda^+$ with the same endpoints, then $\ell_1 = \ell_2$ because they agree on two singular half-leaves and each leaf contains a most one (necessarily periodic) singularity (\Cref{lem:unique_fixed}). Finally, if a leaf $\ell^+$ of $\wt \Lambda$ shared an endpoint with a leaf $\ell^-$ of $\wt \Lambda$, then their projection to $L$ would fellow travel. But this again contradicts item $(3)$ of \Cref{th:laminations}.
\end{proof}

Next, recall that the \textbf{double boundary} $\partial^2 \wt L$ is defined as the space of distinct pairs of points in the circle $\partial \wt L$ modulo the involution $(x,y) \mapsto (y,x)$. The laminations $\Lambda^\pm$ determine closed, pairwise unlinked, $\pi_1(L)$--invariant subsets $\db(\Lambda^\pm) \subset \partial^2 \wt L$ obtained by taking the endpoints of \emph{leaf lines} (see \Cref{sec:global_dynamics})
in $\wt \Lambda^\pm \subset \wt L$.
In light of \Cref{lem:lam_inf}, we are free to blur the distinction between a leaf line of $\wt \Lambda$ and the corresponding point in $\db(\Lambda^\pm)$.
Since $\Lambda^\pm$ are $f$-invariant, $\db(\Lambda^\pm)$ are invariant under the homeomorphism induced by any lift of $\wt f$ of $f$, which we continue to denote by $\wt f$.

Define
\[
\wt \Lambda_\pitchfork =  \{(l^+,l^-) \in \db(\Lambda^+) \times \db(\Lambda^-) :  l^+ \text { and } l^- \text{ link in } \partial \wt L\},
\]
and note that $\wt \Lambda_\pitchfork$ is $\pi_1(Y)$ invariant and $\wt f(\wt \Lambda_\pitchfork) = \wt \Lambda_\pitchfork$ for each lift of $f$. In fact, $\wt \Lambda_\pitchfork$ admits a natural equivariant surjection to the preimage $\wt C_f$ of the core in $\wt L$. To see this, define the map $\wt \Lambda_\pitchfork \to \wt C_f$ that sends each $(l^+,l^-)$ to the unique point of intersection $l^+ \cap l^-$.  This map is surjective essentially by definition, and working in foliation charts of $\wt \cW^\pm$, we see that the map is also continuous. Hence, by equivariance, it descends to a continuous surjective map that conjugates the induced action of $f$ on $\Lambda_\pitchfork = \wt \Lambda_\pitchfork/ \pi_1(L)$ to the action of $f$ on $C_f$. 

In fact, the fibers of the map $\Lambda_\pitchfork \to C_f$ are easily determined (see e.g. Casson--Bleiler \cite[Lemma 6.2]{casson-bleiler}). In particular, once sees that the fibers of the map $\Lambda_\pitchfork \to C_f$ generate an equivalence relation $\sim$ that is entirely determined by $\Lambda_\pitchfork$ 
and that the the fibers have uniformly bounded size.

With this setup, we give the 
\begin{proof}[Proof of \Cref{lem:conjugate_cores}]
From the above discussion, each spA map $f_i$ determines a subspace $\db(\Lambda^\pm_{i}) \subset \partial^2 \wt L$ in which the periodic leaf lines are dense. However, by \Cref{lem:same_dyn}, the collection of periodic leaf lines of these laminations are equal and hence $\db(\Lambda^\pm):= \db(\Lambda^\pm_{1}) = \partial^2 (\Lambda^\pm_{2})$. Since the maps $f_1,f_2$ are homotopic, compatible lifts have the same action on $\partial \wt L$ and so $f_1, f_2$ have the same action on $\Lambda_\pitchfork := \Lambda_{1\pitchfork} = \Lambda_{2\pitchfork}$. Finally, the factor maps $\Lambda_\pitchfork \to C_{f_1}$ and $\Lambda_\pitchfork \to C_{f_2}$ make exactly the same identifications by above discussion. Hence, the cores $C_{f_1}$ and $C_{f_2}$ are conjugate, as required.
\end{proof}

\begin{remark}[Invariance of laminations]
\label{rmk:inv_lams}
We record the fact, which follows from the proof of  \Cref{lem:conjugate_cores}, that if $f_1$ and $f_2$ are isotopic spA maps, then their respective invariant laminations $\Lambda_1^\pm$ and $\Lambda_2^\pm$ are isotopic (they have the same geodesic tightenings and they differ from their tightenings by a controlled pinching).
\end{remark}

We now use \Cref{prop:pA_subsystem}, \Cref{lem:conjugate_cores}, and the symbolic dynamics of pseudo-Anosov homeomorphisms to give the 

\begin{proof}[Proof of \Cref{th:ent_spA}]
According to \Cref{lem:conjugate_cores}, it suffices to prove the theorem for any isotopic spA map and by \Cref{th:no_blow} we may choose $f$ to be spA$^+$. That is, 
if we consider the spA package associated with $f$, as in \Cref{rmk:package}, then the flow $\phi$ is an honest pseudo-Anosov suspension flow on $M$.  We further assume that $M$ is constructed as an $h$--double, as in the proof of \Cref{th:no_blow}, so that the foliation $\mc F$ can be de-spun to a cross section $S$ of $\phi$; see \Cref{rmk:spin_in_proof}. 
In particular,  the first return map $F \colon S \to S$ produced by  \Cref{prop:pA_subsystem} is an honest pseudo-Anosov homeomorphism. Let $C$ be the closed $F$--invariant subspace of $S$ that is conjugate to $C_f$; it suffices to show that the entropy of $F|_C$ is equal to the logarithm of the growth rate of its fixed points. 

Let $\sigma \colon \Sigma \to \Sigma$ be a subshift of finite type associated to a Markov partition for $F \colon S \to S$ as constructed in \cite[Expos\'e 10]{FLP}. The symbolic coding $\theta \colon \Sigma \to S$ is a semiconjugacy with the property that that its fibers have uniformly bounded size. Hence, the same is true for the restricted coding: 
$\theta|_{\theta^{-1}(C)} \colon \theta^{-1}(C) \to C$. Since the entropy of a subshift of finite type is given by the logarithm of the growth rate of its fixed points \cite{bowen1970topological}, it suffices to show that $\theta^{-1}(C) \subset \Sigma$ has finite type.

According to Fried \cite[p. 492]{fried1987finitely}, it suffices to show that the subsystem $C \subset S$ is \emph{isolated}, i.e. it has an open neighborhood $U$ such that $C = \bigcap_{i\in \mathbb{Z}}F^i(U)$. This is easy from our topological setup: First, reparameterize the flow $\phi$ so that $F\colon S \to S$ is the time $1$ return map and chose any metric on $M$. Next, note that $C \subset S$ is exactly the $F$--invariant closed set with the property that no flow line through its points ever intersects $\Sigma = \partial_\pm N$ (here we are referring to the notation of  \Cref{prop:pA_subsystem} and \Cref{rmk:package}). The $\phi$--transverse surface $\Sigma$ has an $\epsilon$--collar foliated by segments of the flow and we choose an open neighborhood $U \subset S$ of $C$ so that for each $u\in U$ there is a $c\in C$ having the property that $d(\phi_t(u),\phi_t(c)) \le \epsilon/2$ for $t \in [-1,1]$. To show $C = \bigcap_{i\in \mathbb{Z}}F^i(U)$ note that any $x \in \bigcap_{i\in \mathbb{Z}}F^i(U)$ has the property that $d(\phi_t(x), \phi_{t}(C))\le \epsilon/2$ for all $t \in \mathbb{R}$. But since $\phi_t(C) \subset N$ for all $t$, $\phi_t(C)$ never meets the $\epsilon$--collar of $\Sigma$ (since such points necessarily escape $N$). Hence, $\phi_t(u)$ never meets $\Sigma$ and hence $u \in C$ as required. This completes the proof.
\end{proof}


\section{Stretch factors of spun pseudo-Anosov maps}
\label{sec:strech_factors}

In this section we complete the proof of \Cref{th:intro_stretch} by establishing that spA maps minimize growth rate among all homotopic endperiodic maps (\Cref{cor:spA_growth}) and that for an spA map $f$, $\lambda(f)$ is equal to the growth rate of intersection numbers of curves under iteration (\Cref{th:intersect}).

Taken together, this is our justification for calling $\lambda(f)$ the \define{stretch factor} of the spA map $f \colon L\to L$.

\subsection{Growth rates of fixed points} \label{sec:growth}

Having developed the required structure in \Cref{sec:sink-source}, the proof of the following theorem can proceed just as in the case of a pseudo-Anosov homeomorphism. See, for example, \cite[Theorem 14.20]{FM}.
\begin{theorem}
Let $f \colon L \to L$ be a spun pseudo-Anosov map. Then for each $n\ge 1$, the homeomorphism $f$ has the minimum number of periodic points of period $n$ among all homotopic, endperiodic homeomorphisms.
\end{theorem}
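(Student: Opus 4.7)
The plan is to adapt the Nielsen-theoretic proof used in the finite-type pseudo-Anosov setting (as in \cite[Theorem 14.20]{FM}), using \Cref{thm:fix and boundary} and \Cref{lem:unique_fixed} in place of their pseudo-Anosov analogs. The key observation is that an spA map realizes each Nielsen class of period-$n$ points by exactly one point, while any homotopic endperiodic map must realize each such class by at least one point.

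For the upper-bound side, let $p\in \mathrm{Fix}(f^n)$. Choose a lift $\wt p\in \wt L$ and let $\wt f^n$ be the (unique) lift of $f^n$ fixing $\wt p$. Since a power of an spA map is spA, \Cref{lem:unique_fixed} asserts that $\wt f^n$ has no other fixed points in $\wt L$. Consequently, the assignment $p \mapsto [\wt f^n]$, where brackets denote $\pi_1(L)$-conjugacy class of lifts, gives a bijection between $\mathrm{Fix}(f^n)$ and the set of conjugacy classes of lifts of $f^n$ that admit a fixed point.

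For the lower-bound side, let $g \simeq f$ be endperiodic, and fix a homotopy $H$ from $f$ to $g$. Given $p\in \mathrm{Fix}(f^n)$ with lift $\wt p$ and corresponding $\wt f^n$, the homotopy $H$ lifts and iterates to induce a lift $\wt g^n$ of $g^n$ agreeing with $\wt f^n$ on $\partial \wt L$ (by the facts in \Cref{sec:fols}). By \Cref{prop:fix_to_boundary}, $\wt f^n$ acts on $\partial \wt L$ with multi sink-source dynamics whose attractors and repellers are the endpoints of the expanding and contracting periodic half-leaves at $\wt p$ (of which there are at least four). Part (b) of \Cref{lem:local_dynamics} then implies that these same endpoints are attractors and repellers for the extension of $\wt g^n$ to $\DD$. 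Applying \Cref{lem:fixed_points} produces a fixed point of $\wt g^n$ in $\intr(\DD)=\wt L$, whose projection to $L$ is a period-$n$ point of $g$ that is Nielsen-equivalent to $p$.

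Finally, distinct points of $\mathrm{Fix}(f^n)$ give rise to distinct $\pi_1(L)$-conjugacy classes of lifts of $f^n$ (by the bijection above), and hence to distinct Nielsen classes of fixed points of $g^n$; the corresponding fixed points of $g^n$ produced in the previous paragraph therefore lie in distinct $g$-orbits, yielding $\#\mathrm{Fix}(g^n)\ge \#\mathrm{Fix}(f^n)$. The main technical step, really carried out in \Cref{sec:sink-source} rather than in this proof itself, is upgrading the sink-source property from the circle at infinity to the closed disk for arbitrary homotopic endperiodic maps; this is precisely what \Cref{cor:gen_local_dynamics} and \Cref{lem:local_dynamics}(b) supply, and without it the Lefschetz argument of \Cref{lem:fixed_points} could not be invoked for $\wt g^n$.
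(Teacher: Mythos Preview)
Your argument follows the same Nielsen-theoretic template as the paper, but it has a genuine gap in the case where $f^n$ permutes the half-leaves at $p$ nontrivially. \Cref{prop:fix_to_boundary} does \emph{not} say that $\wt f^n$ has multi sink-source dynamics whenever $\wt p$ is fixed; it requires in addition that $\wt f^n$ fix each half-leaf at $\wt p$. If the prong rotation at $p$ has order $k\ge 2$, then $\wt f^n$ cyclically permutes the half-leaf endpoints and has \emph{no} fixed points on $\partial\wt L$. Consequently \Cref{lem:local_dynamics}(b) is vacuous for $\wt g^n$, and \Cref{lem:fixed_points} cannot be invoked. The paper treats this case separately: since $\wt g^n$ and $\wt f^n$ agree on $\partial\wt L$ and have no boundary fixed points, their doubles on $S^2$ are homotopic with no equatorial fixed points; the unique interior fixed point $\wt p$ of $\wt f^n$ has negative Lefschetz index (it is a rotating saddle), forcing $D\wt g^n$, and hence $\wt g^n$, to have an interior fixed point as well.

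A second, smaller omission: you conclude only $\#\mathrm{Fix}(g^n)\ge \#\mathrm{Fix}(f^n)$, but the theorem concerns points of \emph{minimal} period $n$. One still needs to check that the Nielsen-equivalent fixed point $q$ of $g^n$ cannot have strictly smaller period; the paper defers this to the argument in \cite[Theorem~14.20]{FM}, but you should at least flag it.
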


\begin{proof}[Proof sketch]
Let $g$ be another endperiodic map that is homotopic to $f$. The proof works by showing that for any period $n$ periodic point $x$ for $f$ there is a \emph{Nielsen equivalent} periodic point $y$ of $g$ with the same period $n$. Here $x$ and $y$ are Nielsen equivalent if there are lifts $\wt x$ and $\wt y$ to $\wt L$ such that compatible lifts of $f^n$ and $g^n$ fix $\wt x$ and $\wt y$ respectively. Note that no two periodic points of $f$ are Nielsen equivalent since each lift of $\wt f^k$ fixes a most one point by \Cref{lem:unique_fixed}. Hence, from this fact it follows that Nielsen equivalence determines an injective map from period $n$ periodic points of $f$ to period $n$ periodic points of $g$.

To this end, suppose that $x$ is a periodic point of $f$ of period $n$. To simplify notation, replace $f$ (and $g$) with $f^n$ (and $g^n$, respectively) so that $x$ is now a fixed point. Let $\wt x$ be a lift of $x$ and let $\wt f$ be a lift of $f$ that fixes $\wt x$. By \Cref{prop:fix_to_boundary}, there is a $k\ge 1$ such that $\wt{f}^k$ acts with multi sink-source dynamics on $\partial L$; hence, so does the $k$th power of a compatible lift $\wt g$ of $g$. First suppose that $k=1$.
Then by \Cref{cor:gen_local_dynamics}, each fixed point of $\wt g$ on $\partial L$ is attracting/repelling for the action on $\DD$. So by \Cref{lem:fixed_points}, $\wt g$ has a fixed point $\wt y$ in $\wt L$. If $k\ge 2$, then we note that $\wt g$ does not fix any points on $\partial L$. Then, referring to the proof of \Cref{lem:fixed_points}, the homotopic maps $D\wt g$ and $D\wt f$ have no fixed points on the equator and the same Lefschetz index. But the unique fixed point of $\wt f$ has negative index and so $D\wt g$, and hence $\wt g$, must have a fixed $\wt y$ point in $L$. 

The proof that $x$ and $y$ have the same period now follows exactly as in \cite[Theorem 14.20]{FM}. This completes the proof.
\end{proof}

The following corollary is now immediate:

\begin{corollary} \label{cor:spA_growth}
If $f \colon L \to L$ is spA, then 
\[
\lambda(f) = \inf_{g \simeq f} \lambda(g),
\]
where the infimum is over all endperiodic maps homotopic to $f$.

In particular, $\lambda(f)$ for an spA map $f$ depends only on the homotopy class of $f$.
\end{corollary}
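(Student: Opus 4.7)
The plan is to derive the corollary as an immediate consequence of the preceding theorem, which produces, for each $n \ge 1$ and each endperiodic $g \simeq f$, an injection (via Nielsen equivalence) from period-$n$ periodic points of $f$ into period-$n$ periodic points of $g$. Once that injection is in hand, the argument is just a manipulation of the defining $\limsup$.

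First, I would record the pointwise inequality $\# \mathrm{Fix}(f^n) \le \# \mathrm{Fix}(g^n)$ for every $n \ge 1$ and every endperiodic $g$ homotopic to $f$, which is exactly the content of the previous theorem. Taking $n$-th roots, applying $\limsup_{n \to \infty}$, and using the definition of $\lambda(\cdot)$ gives
\[
\lambda(f) = \limsup_{n \to \infty} \sqrt[n]{\# \mathrm{Fix}(f^n)} \;\le\; \limsup_{n \to \infty} \sqrt[n]{\# \mathrm{Fix}(g^n)} = \lambda(g),
\]
for every endperiodic $g \simeq f$. Taking the infimum yields $\lambda(f) \le \inf_{g \simeq f} \lambda(g)$, and since $f$ is itself endperiodic and homotopic to itself, the reverse inequality is trivial. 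This gives the displayed equation.

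For the second assertion, that $\lambda(f)$ depends only on the homotopy class of $f$ when $f$ is spA, I would argue as follows. Suppose $f_1, f_2 \colon L \to L$ are both spA and homotopic. Both are endperiodic, so the first assertion applied to $f_1$ gives $\lambda(f_1) \le \lambda(f_2)$, and applied to $f_2$ gives $\lambda(f_2) \le \lambda(f_1)$. Hence $\lambda(f_1) = \lambda(f_2)$.

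The corollary really has no obstacle of its own; the substantive work is in the preceding theorem's Nielsen-equivalence argument, which in turn rested on the sink–source machinery (\Cref{prop:fix_to_boundary}, \Cref{cor:gen_local_dynamics}, \Cref{lem:fixed_points}) and on the uniqueness of fixed points for a lift (\Cref{lem:unique_fixed}). The only thing to be a little careful about is that the $\limsup$ respects the inequality $\# \mathrm{Fix}(f^n) \le \# \mathrm{Fix}(g^n)$ term-by-term, but this is automatic from monotonicity of $x \mapsto \sqrt[n]{x}$ and of $\limsup$.
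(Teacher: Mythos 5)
Your proof is correct and takes essentially the same approach as the paper, which simply asserts that the corollary ``is now immediate'' after the preceding theorem. The only small point worth noting: the theorem is phrased in terms of an injection from exact period-$n$ points of $f$ to exact period-$n$ points of $g$, so to get $\#\mathrm{Fix}(f^n)\le\#\mathrm{Fix}(g^n)$ one sums these injections over all divisors $d\mid n$; this is trivial and your argument is otherwise exactly what the paper has in mind.
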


\begin{remark}
It is unclear whether \Cref{cor:spA_growth} remains true if $g$ is allowed to vary over all homotopic (not necessarily endperiodic) homeomorphisms.
\end{remark}

\subsection{Stretch factors and growth of intersection numbers}

Next, we turn to the following purely topological characterization of the stretch factor. 

\begin{theorem}
\label{th:intersect}
Let $f$ be a spun pseudo-Anosov map.
Then
\begin{align} \label{eq:intersect}
\lambda(f) = \max_{\alpha,\beta} \limsup_{n \to \infty} \sqrt[n]{ i(\beta, f^n(\alpha))},
\end{align}
where the maximum is taken over all isotopy classes of essential simple closed curves. 
\end{theorem}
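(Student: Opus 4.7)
The equality splits into an upper bound $\limsup_{n \to \infty}\sqrt[n]{i(\beta, f^n(\alpha))} \le \lambda(f)$ for all essential simple closed curves $\alpha, \beta \subset L$, together with a matching lower bound realized by some specific choice of $\alpha, \beta$. Both sides depend only on the isotopy class of $f$---the left by Corollary \ref{cor:spA_growth}, the right by definition---so the plan is to first invoke Theorem \ref{th:no_blow} and replace $f$ with an isotopic spA$^+$ map. This ensures the flow $\phi$ on $M$ is an honest circular pseudo-Anosov suspension flow, and Proposition \ref{prop:pA_subsystem} then supplies a pseudo-Anosov first return map $F \colon S \to S$ on a de-spun cross section with $f|_{C_f}$ conjugate to $F|_C$ for some closed invariant $C \subset S$.

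For the upper bound I would construct a train track $\tau \subset L$ carrying the invariant lamination $\Lambda^+$ from Theorem \ref{th:laminations}, together with an induced self-map $f_\ast \colon \tau \to \tau$ whose nonnegative integer transition matrix $M_\tau$ has spectral radius exactly $\lambda(f)$. Such $\tau$ is obtained from a Markov partition for $F$ on $S$ by restricting to the sub-Markov system that tracks $\phi$-orbits trapped in $N$; by construction this restricted subshift of finite type coincides with the symbolic model of $f|_{C_f}$ produced in the proof of Theorem \ref{th:ent_spA}, whose Perron--Frobenius eigenvalue is $\lambda(f)$. After sufficiently many iterates, $f^n(\alpha)$ becomes carried by $\tau$ (after a homotopy pushing purely escaping arcs outside the support of $\tau$) with branch weights of size $O(\lambda(f)^n)$; the intersection $i(\beta, f^n(\alpha))$ is then controlled by a $\beta$-dependent linear combination of these weights, giving the desired exponential bound.

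For the lower bound I would exploit that $\#\mathrm{Fix}(f^n)$ grows at rate $\lambda(f)^n$ by the definition of $\lambda(f)$. Choose a periodic point $p \in C_f$---which lies on leaves of both $\Lambda^+$ and $\Lambda^-$ with local pseudo-Anosov dynamics by Theorem \ref{th:laminations} and Proposition \ref{prop:fix_to_boundary}---and take $\alpha$ and $\beta$ to be simple closed curves crossing the local unstable and stable leaves through $p$ essentially. A counting argument analogous to the closed-surface case (see \cite[Expos\'e 10]{FLP}) then shows that each of the $\sim \lambda(f)^n$ period-$n$ fixed points of $f^n$ in a Markov rectangle about $p$ contributes at least one intersection of $f^n(\alpha)$ with $\beta$.

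The principal obstacle is that $\Lambda^\pm$ may not admit transverse invariant measures (Remark \ref{rmk:no_measure}), so the standard pseudo-Anosov argument---computing intersection numbers via integration against a transverse measure on $\cW^{u/s}$---is unavailable. It must be replaced by the purely combinatorial count supplied by the symbolic model of $f|_{C_f}$ in Theorem \ref{th:ent_spA}. A secondary technical issue is controlling arcs of $f^n(\alpha)$ that escape toward positive ends of $L$: the plan is to show they contribute at most polynomially to $i(\beta, f^n(\alpha))$, so the exponential rate is governed entirely by the compact core. This should follow from endperiodicity, since each $f$-orbit passes through a fixed spiraling neighborhood of $\partial_+ N$ only boundedly many times before definitively escaping.
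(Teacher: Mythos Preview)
Your high-level decomposition (upper bound for all $\alpha,\beta$, lower bound for a specific pair) and the reduction to an spA$^+$ representative are both right and match the paper. Beyond that, though, both halves of your argument have genuine gaps, and the paper's route is substantially different.

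\textbf{Upper bound.} The train track you propose---obtained by restricting a Markov partition for $F$ to the orbits trapped in $N$---lives in the compact core of $L$. But $f^n(\alpha)$ cannot be carried by any compact track: by \Cref{lem:uncontainable} it must eventually exit every compact subsurface. Your fix of ``pushing purely escaping arcs outside the support of $\tau$'' is not well-defined for a closed curve; cutting off arcs disconnects $\alpha$, and there is no obvious way to account for the intersections those arcs contribute with $\beta$. The polynomial bound you hope for on escaping arcs is also not justified: endperiodicity bounds how many times a single \emph{point} re-enters a spiraling neighborhood, but says nothing a priori about how many arcs of $f^n(\alpha)$ thread through it.

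\textbf{Lower bound.} The step ``each period-$n$ fixed point contributes at least one intersection of $f^n(\alpha)$ with $\beta$'' is not an argument. In the closed-surface case this kind of bound goes through the transverse measures (which you correctly note are unavailable here by \Cref{rmk:no_measure}), not through periodic points directly. You need a curve $\beta$ that is in \emph{efficient position} with respect to the carrier of $f^n(\alpha)$, so that combinatorial crossings equal geometric intersections; nothing in your sketch produces such a $\beta$.

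\textbf{What the paper does instead.} The paper builds a single \emph{infinite} train track $\mc V$ on $L$, namely the intersection of $L$ (in carried position) with the unstable branched surface $B^u$ of the veering triangulation associated to $\phi$. Its complementary regions are $n$-gons with $n\ge 3$, so \emph{every} essential curve $\alpha$ can be homotoped into $\mc V$ (not necessarily carried). The first return map induces a folding map $f_{\mc V}\colon \mc V \to \mc V$ with an infinite transition graph $G_f$, but endperiodicity forces each end of $G_f$ to be a simple ray, so its cycle growth rate agrees with that of a finite subgraph and equals $\lambda(f)$ via the flow graph $\Phi_N$ of \cite{LMT21}. This gives the upper bound directly. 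For the lower bound the paper takes $\alpha$ carried on $\mc V$ and traversing a maximal recurrent component of $G_f$, and for $\beta$ takes a positive juncture far out in an end; because $\mc V$ in the ends semi-covers the veering track $\mc V_\Sigma$ on $\partial_+ N$, and $\mc V_\Sigma$ can be put in lowest (no large branches) position, $\beta$ can be made transverse to $\mc V$ with no complementary bigons. Then $i(\beta,f^n(\alpha))$ equals the combinatorial count, which grows like $\lambda(f)^n$.

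The veering machinery is doing real work here: it supplies both a track that fills $L$ (handling the escaping-ends problem) and a no-bigons transversal (handling the lack of transverse measures). Your Markov-partition track does neither.
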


Recall that if $\alpha$ and $\beta$ are simple closed curves in $L$, then $i(\alpha,\beta)$ denotes their geometric intersection number, i.e. the minimal number of intersection points as $\alpha$ and $\beta$ are varied within their isotopy class. In particular, it is immediate that the right hand side of \cref{eq:intersect} depends only on the homotopy class of $f$.

\begin{remark}
We will prove \Cref{th:intersect} using an invariant train track $\mc V$ for $f \colon L \to L$ (see \Cref{sec:invariant_track}). In the finite-type setting, the analogous intersection property for pseudo-Anosov maps can be proven using the singular flat metric associated to the invariant \textit{measured} foliations. While in our setting this structure does not necessarily exist (see \Cref{rmk:no_measure}), there could still be an interesting approach using similar ideas.
\end{remark}

The proof of \Cref{th:intersect} will occupy the next several subsections. We will assume,
as we may by \Cref{th:no_blow} and \Cref{cor:spA_growth}, that $f \colon L \to L$ is an spA$^+$ map as defined in \Cref{def:spA+}. Referring to the spA package from \Cref{rmk:package}, the foliation $\mc F$, which has $L$ as a depth one leaf, is transverse to the pseudo-Anosov flow $\phi$ on $M$. We also assume, as in the construction (see \Cref{rmk:spinning_back}), that $\mc F$ is obtained by spinning a cross section $S$ of $\phi$ about the transverse surface $\Sigma = \partial_\pm N$. Finally, we assume that there exists at least one periodic point of $f$ and hence $\lambda(f)\ge 1$; otherwise the equality in \Cref{th:intersect} holds by \Cref{lem:translation}.

\subsubsection{Some veering combinatorics} \label{sec:veering}
For the purposes of proving \Cref{th:intersect} we will need some combinatorial tools.

Recall that associated to $\phi$ is the veering triangulation $\tau$ of the manifold $\cM$ obtained by removing the singular orbits of $\phi$ from $M$ \cite{agol2011ideal, gueritaud}. The $2$--skeleton of $\tau$ is a branched surface in $\cM$ transverse to $\phi$ (\cite[Theorem 5.1]{LMT21}) and, after replacing $\cM$ with the complement of a tubular neighborhood $U$ of the singular orbits, we obtain the compact model of $\tau$ considered $\tau^{(2)}$ as a \emph{partial branched surface} in $M$. See \cite{Landry_norm, LMT_stst} for details. 

Also associated to $\tau$ is an \textbf{unstable branched surface}  which we denote $B^u$ (see \cite[Section 5.2.1]{LMT20} or \cite[Section 3.3]{Landry_norm}). This branched surface is characterized by the property that it is topologically dual to $\tau$, and its intersection with each $\tau$-face $F$ is a train track whose single switch points toward the unique edge of $F$ which is topmost for the $\tau$-tetrahedron immediately below $F$. As a consequence, the portion of $B^u$ lying in a single $\tau$-tetrahedron looks like the suspension of a single train track folding move. See \Cref{fig:Bu} where the one-skeleton of $B^u$ is indicated in red; note that there are two branch lines crossing in a single `triple point' in the center of the tetrahedron.

\begin{figure}
    \centering
    \includegraphics[height=1.5in]{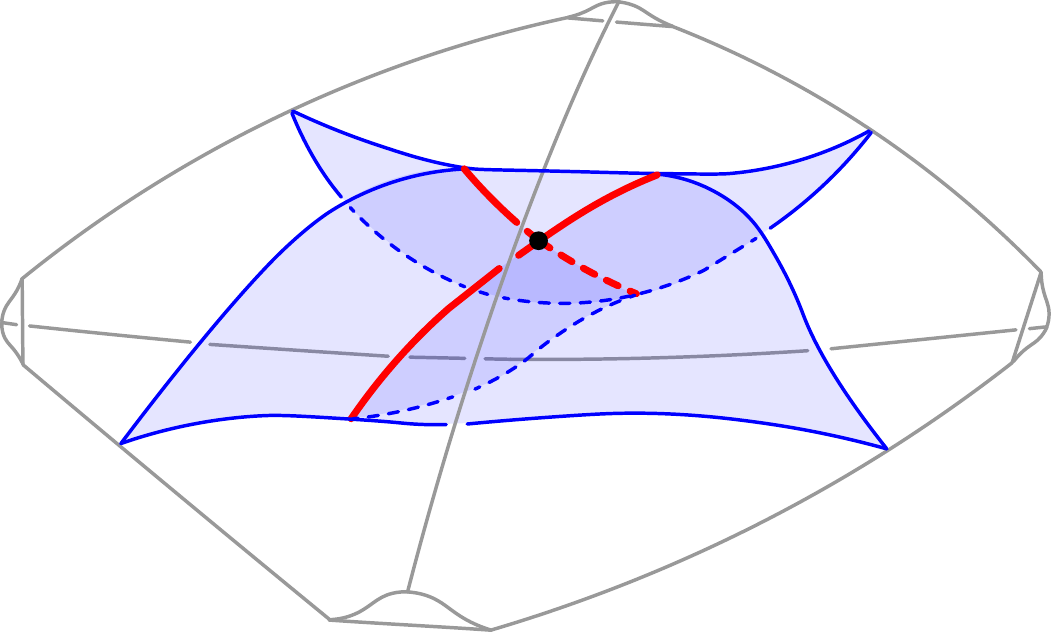}
    \caption{Part of the unstable branched surface $B^u$ lying in a single $\tau$-tetrahedron. The branch locus is indicated in red.}
    \label{fig:Bu}
\end{figure}

We now describe some combinatorics of $B^u$ that was developed in \cite[Section 4]{LMT20}. Strictly speaking the combinatorics were developed for a related branched surface $B^s$, but one can pass between the branched surfaces by reversing the coorientation on $\tau$-faces. 

The sectors of $B^u$ are diffeomorphic to rectangles. Let $s$ be a $B^u$-sector. Then the sides of $s$ inherit orientations from the \define{dual graph} $\Gamma$ of $\tau$; this is the directed graph dual to the faces of $\tau$, which is naturally identified with the $1$--skeleton of $B^u$. There is a unique corner of $s$ which is a source, which we call the \textbf{bottom point} of $s$, and a unique sink which we call the \textbf{top point} of $s$. The two remaining corners are called \textbf{side points}. The side points are linked in $\del s$ with the top and bottom points. The complementary components of the two side points containing the top and bottom points of $s$ are the \textbf{top} and \textbf{bottom} of $s$, respectively. The bottom point is the unique triple point in the bottom of $s$, while the top of $s$ may contain any number of triple points. See the lefthand side of \Cref{fig:Busector}.

\begin{figure}
    \centering
    \includegraphics{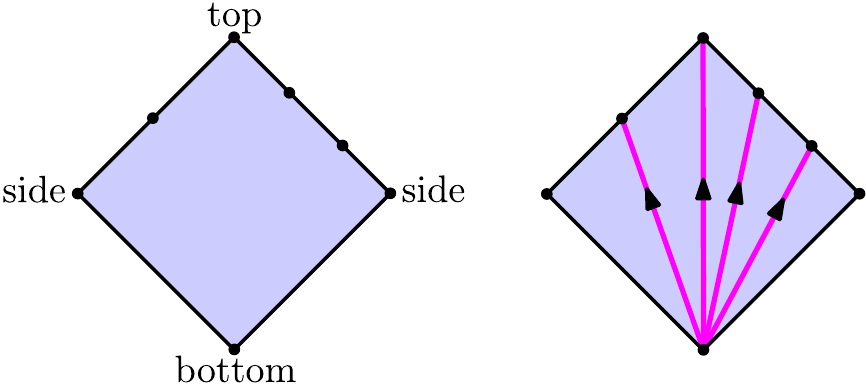}
    \caption{Left: a sector of $B^u$ with side, top, and bottom points labeled. Right: the corresponding part of the flow graph $\Phi$ in dual position.}
    \label{fig:Busector}
\end{figure}

The complementary regions of $B^u$ in $M$ are ``cusped solid tori." That is, each is diffeomorphic to the mapping torus of an $n$-cusped disk by a diffeomorphism. The number $n$ is equal to the number of prongs of the singular $\phi$-orbit at the core of the complementary region, so in particular $n\ge 3$.

The final object associated to $\tau$ is the \textbf{(unstable) flow graph} $\Phi$, which can be defined as follows: 
its vertices are the edges of $\tau$, and for each $\tau$-tetrahedron there is a directed $\Phi$-edge to the top $\tau$-edge from the bottom $\tau$-edge and the two equatorial $\tau$-edges that are involved with the associated train track folding move (see, e.g., \Cref{fig:Gfstep}).
When $\Phi$ is embedded in $M$ in this way we say it is in \textbf{standard position}. There is another position for $\Phi$ we will use called \textbf{dual position}. In dual position there is a vertex at each triple point of $B^u$ (i.e. vertex of $\Gamma$), and for each $B^u$-sector $s$ there is a directed edge in $s$ from the bottom triple point of $s$ to each triple point which is not a side vertex. See the righthand side of \Cref{fig:Busector}. In dual position, each $\Phi$-edge is positively transverse to the 2-skeleton $\hbs$. 

Starting with $\Phi$ in standard position, we can put $\Phi$ in dual position by homotoping it downward with respect to $\tau$ \cite[Section 4]{LMT20}.

\subsubsection{Carried position and the veering track on $\partial_+ N$}

Since $\Sigma$ is transverse to the pseudo-Anosov flow $\phi$, $\Sigma$ is `carried' by the partial branched surface $\tau^{(2)}$ in the following way: after an isotopy, $\Sigma$ intersects the tubular neighborhood $U$ in meridional disks transverse to $\phi$ and otherwise is contained in an $I$-fibered neighborhood of $\tau$ where it is transverse to the fibers, which can be taken to be segments of the flow. 
This follows from a strengthening of \Cref{th:stst} which is proven in \cite{LMT_stst}.

The  intersection of $\Sigma$ with the unstable branched surface $B^u$ is an (unstable) \define{veering train track} $\mc V_\Sigma$ on $\Sigma$. The complementary regions $\Sigma \cut \mc V$ are cusped $n$-gons with $n\ge 3$ because of the structure of the complementary regions of $B^u$ discussed earlier. 

The carried position of $\Sigma$ is not unique and as in \cite[Proposition 4.5]{landry2019stable} we can choose $\Sigma$ in a `lowest' position. 
In such a position the track $\mc V_\Sigma$ has no \textit{large branches}, which are branches whose switches both point inward---a large branch indicates a diagonal exchange that moves $\Sigma$ to a lower position. We fix $\Sigma$ once and for all in this position.

Since $\Sigma$ and $S$ are both carried by $\tau$, we can spin $S$ about $\Sigma$ to
produce the foliation $\mc F$ whose leaves are carried by $\tau$ in the above sense.
In particular, $L$ is carried by $\tau$ so we can define an associated unstable track $\mc
V$ on $L$  as we did for $\Sigma$.

\begin{claim}
\label{claim:trans_pos}
Any essential curve $c$ in $\Sigma$ can be isotoped to be transverse
to $\mc V_\Sigma$, such that there are no bigons in $\Sigma \cut \left( \mc V_\Sigma \cup
c \right)$.
\end{claim}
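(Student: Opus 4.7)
The strategy is a standard train track \emph{efficient position} argument. I will use two properties established above: (i) $\mc V_\Sigma$ is essential on $\Sigma$ in the sense that each complementary region is a cusped $n$-gon with $n \geq 3$, and (ii) $c$ is an essential simple closed curve on the closed surface $\Sigma$.

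By general position, first isotope $c$ to be transverse to $\mc V_\Sigma$. Among such transverse representatives in the free isotopy class of $c$, choose one minimizing $|c \cap \mc V_\Sigma|$. Because $\Sigma$ is compact, $c$ is essential, and $\mc V_\Sigma$ has no disk or bigon complementary regions by (i), this minimum exists and is finite.

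The claim now reduces to showing that at this minimum, $\Sigma \cut(\mc V_\Sigma \cup c)$ contains no bigons. Any $2$-corner disk complementary region has corners of two possible types: intersection points in $c \cap \mc V_\Sigma$, or cusps of $\mc V_\Sigma$. A short case analysis, using that the boundary of such a region decomposes into two smooth segments each lying entirely in $c$ or entirely in $\mc V_\Sigma$, shows the only possibility is a \emph{standard bigon} with both corners intersections and boundary $\alpha \cup \beta$ where $\alpha \subset c$ and $\beta \subset \mc V_\Sigma$ is smooth. Indeed, the cusp-cusp case is excluded by (i); and the mixed case is impossible since a segment adjacent to a cusp corner must lie in $\mc V_\Sigma$, while a segment adjacent to an intersection corner transitions between $c$ and $\mc V_\Sigma$, yielding contradictory type assignments on the two segments.

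Suppose such a bigon $B$ exists, chosen innermost so that $\intr(B)$ meets $c \cup \mc V_\Sigma$ only in its boundary. Let $B'$ denote the complementary region of $\mc V_\Sigma$ meeting $\beta$ on the side opposite $B$. Pushing $\alpha$ across $B$ and just past $\beta$ into $B'$ produces a new curve $c'$ freely isotopic to $c$ with $|c' \cap \mc V_\Sigma| = |c \cap \mc V_\Sigma| - 2$, contradicting minimality. The main technical obstacle is verifying this push is well-defined and introduces no new intersections on the far side of $\beta$; this is handled by working in a tie neighborhood of $\beta$, where innermostness of $B$ and the control on complementary regions from (i) imply that $c'$ stays inside $B'$ after its initial descent across the two corners. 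The lowest carried position of $\Sigma$, and the resulting absence of large branches in $\mc V_\Sigma$, further simplify the local picture near any switches along $\beta$.
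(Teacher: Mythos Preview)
Your case analysis ruling out bigons with cusp corners is correct, so the only possible bigon $B$ has both corners in $c\cap\mc V_\Sigma$ and sides $\alpha\subset c$, $\beta\subset\mc V_\Sigma$ with $\beta$ a smooth train path. The gap is in the removal step. The path $\beta$ may pass through switches of $\mc V_\Sigma$ in its interior; at each such switch the third half-branch must lie on the side of $\beta$ \emph{away} from $B$ (otherwise $B$ would acquire a cusp corner). Hence when you push $\alpha$ across $B$ to the far side of $\beta$, the new arc necessarily crosses each of these extra half-branches, and the intersection count changes by $-2+k$ where $k$ is the number of switches along $\beta$. For $k\ge 2$ this is not a decrease, so the minimization argument breaks down. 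Your appeals to ``innermostness'' (which is vacuous---$B$ is already a complementary region), to tie neighborhoods, and to the absence of large branches do not address this: the extra branches are exactly on the side you are pushing into, and ``no large branches'' is a global condition on individual branches, not a bound on how many switches a smooth train path can traverse.

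The paper avoids this obstacle by not attempting direct bigon removal on the train track at all. Instead it exploits the ``no large branches'' hypothesis to produce a geodesic lamination $\lambda$ carried by $\mc V_\Sigma$, consisting of finitely many closed ``sink'' curves together with noncompact leaves spiraling onto them. The geodesic representative $c^*$ is automatically efficient with respect to $\lambda$ (no bigons among geodesics), and then one isotopes $\mc V_\Sigma$ by spinning its sinks so as to approximate $\lambda$; this puts the complementary regions of $c^*\cup\mc V_\Sigma$ in index-preserving bijection with those of $c^*\cup\lambda$, ruling out bigons. The ``no large branches'' condition is thus used essentially---it is what gives $\lambda$ its simple sink-and-spiral structure---rather than as an afterthought.
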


\begin{proof}[Proof of \Cref{claim:trans_pos}]
We sketch a proof of the claim for any train track $\tau$ whose complementary regions are $n$-gons  ($n\ge 3$) and which has no large branches. Such a train track carries finitely many closed curves which we will call the \emph{sinks} of $\tau$, and a unique geodesic lamination $\lambda$ with one closed leaf for each sink of $\tau$. The sinks are disjoint in $\tau$ by the no large branches condition. Each end of a noncompact leaf of $\lambda$ spirals on a closed leaf, and each closed leaf has noncompact leaves spiraling onto it from both sides in a consistent direction. We may isotope $\tau$ so that its sinks agree with the closed leaves of $\lambda$. Note that by spinning the sinks of $\tau$ in the direction of the spiraling by an isotopy supported in a small neighborhood of the sinks, we can approximate $\lambda$ arbitrarily well by $\tau$.

Given any simple closed curve $c$ which is not isotopic to a sink, its geodesic representative $c^*$ is transverse to $\lambda$ and all the complementary regions of $c^*\cup \lambda$ have nonpositive index. If $c$ is isotopic to a sink then we can push $c^*$ off the sink slightly in one direction to achieve the same end. Now by a spinning isotopy as described above, we may assume each complementary region of $c^*\cup \tau$ corresponds to a complementary region of $c^*\cup \lambda$ with the same index, ruling out bigons.
\end{proof}

\begin{remark}
\label{rmk:nobigons}
Since $L$ is obtained by spinning, the track $\mc V$ in the ends of $L$ semi-covers the track $\mc V_\Sigma$. An easy covering argument then implies that any curve $c$ far enough into the ends of $L$ can be isotoped to be transverse to $\mc V$ with no complementary bigons.
\end{remark}


\subsubsection*{Cutting the unstable flow graph along $\Sigma$}
Let $\Phi$ be the unstable flow graph of $\tau$ embedded in $M$ in dual position.

Let $\eta$ be the cohomology class dual to $\Sigma$. Let $\Phi \cut \Sigma$ denote $\Phi$
minus the edges which (in our embedding) cross faces of $\tau$ that carry
$\Sigma$ with positive weight. Then let $\Phi|\eta$ be the {\em dynamical core} of
$\Phi\cut\Sigma$, namely the subset consisting of edges that are contained in directed
cycles. We note that $\Phi|\eta$ carries all directed cycles of $\Phi$ whose pairing with
$\eta$ is $0$. 

Let $\Phi_N$ be the subgraph of $\Phi|\eta$ which is contained in $N$.

\subsubsection{The invariant veering track and growth rates}
\label{sec:invariant_track}
The argument will hinge on the following constructions, which we will justify below. 
\begin{enumerate}
  \item 
The map $f\colon L\to L$ determines a train track folding map $f_{\mc V}\colon \mc V\to \mc V$
by pushing $L$ through the tetrahedra in $\mathrm{int}(N)\cut L$ and noting that each step is a
folding move on the track, until the final track is mapped
back to $\mc V$ by $f$. 
\item
Let $G_f$ be the transition graph of the folding map $f_{\mc V}$. That is, the vertices of $G_f$
are the branches of $\mc V$, and for each branch $b$, the map $f_{\mc V}$ takes it to a sequence
of branches and $G_f$ contains a directed edge from $b$ to $c$ for each appearance of a branch $c$
in this sequence. When $f_{\mc V}$ maps $b$ homeomorphically to a single branch $c$, we get just
one edge  emanating from $b$, which terminates in $c$, and we call that a simple edge.

\item
  $G_f$ is equipped with an embedding $h_1:G_f \to N$. 
\item
 $h_1$ is homotopic to a map $h_2:G_f \to \Phi$ which is ``monotonic" in the sense that each edge of $G_f$ is either collapsed to a vertex of $\Phi$ or mapped in an orientation-preserving manner to a directed path in $\Phi$. 
\item
 $h_2$ gives a bijection between positive cycles in $G_f$ and positive cycles in
 $\Phi_N$, so that the number of edges in a cycle in $G_f$ is equal to the intersection number of its image with $L$.
\end{enumerate}

Once we have $G_f$, we define the
growth rate $\gr_{G_f}(1)$ 
as the exponential growth rate of directed cycles of length $n$, where length is
just number of edges (i.e each edge has length one). Our main claim about this is
\begin{claim}\label{claim:Gf growth is lambda} 
  $\lambda(f) = \gr_{G_f}(1).$
\end{claim}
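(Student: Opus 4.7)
The plan is to establish an itinerary-based semi-conjugacy between $f|_{C_f}$ and the bi-infinite subshift of finite type with transition graph $G_f$, which I will denote $\Sigma_{G_f}$, and then combine Theorem~\ref{th:ent_spA} with the classical identification of topological entropy and cycle growth for subshifts of finite type. For the forward map, I would assign to each $p\in C_f$ the sequence $(b_i(p))_{i\in\ZZ}$, where $b_i(p)$ is the branch of $\mc V$ containing $f^i(p)$ (with a finite choice made if $f^i(p)$ lies at a switch). The defining property of the folding $f_{\mc V}$ ensures that $b_{i+1}(p)$ appears in $f_{\mc V}(b_i(p))$, so $(b_i(p))$ is an admissible path in $G_f$. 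This yields a shift-equivariant map $\theta\colon C_f\to \Sigma_{G_f}$.

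For the inverse direction, given an admissible bi-infinite path $\sigma=(b_i)\in\Sigma_{G_f}$, the set $\bigcap_{i\in\ZZ} f^{-i}(b_i)$ is a nested intersection of closed subarcs in $b_0$ which, by the contracting/expanding dynamics of the folding along the transverse invariant foliations $\cW^{u/s}$, consists of a single point. This point lies in $C_f$ because its $f$-orbit is contained in $\bigcup_i b_i$, which is bounded in $L$: by endperiodicity of $f$, branches of $\mc V$ near the ends of $L$ are transient under $f_{\mc V}$ or $f_{\mc V}^{-1}$, so only branches in some fixed compact region of $L$ lie on bi-infinite admissible paths. The fibers of $\theta$ are uniformly bounded: a non-singular point has a unique itinerary, while a singular point of $\mc V$ lies on at most finitely many branches, with this number globally bounded by the largest prong count, which is finite because $N$ is compact.

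Once $\theta$ is established as a uniformly finite-to-one semi-conjugacy, Bowen's theorem (already invoked in the proof of Theorem~\ref{th:ent_spA}) gives $\ent(f|_{C_f})=\ent(\Sigma_{G_f})$, and the topological entropy of $\Sigma_{G_f}$ equals $\log$ of its cycle growth rate, which is precisely $\log\gr_{G_f}(1)$. Combining with Theorem~\ref{th:ent_spA} yields $\log\lambda(f)=\ent(f|_{C_f})=\log\gr_{G_f}(1)$, as required. The main obstacle will be verifying that the nested intersection above is a single point, which requires uniform contraction along $\cW^s$ and expansion along $\cW^u$ under iterates of $f$. In the finite-type pseudo-Anosov setting this follows from an invariant transverse measure; in our setting, where such a measure need not exist (see Remark~\ref{rmk:no_measure}), we would instead extract the required contraction from the uniform quasi-geodesic property of leaves of $\wt\cW^{u/s}$ in $\wt L$ given by Proposition~\ref{prop:foliation_facts}, together with compactness of $C_f$.
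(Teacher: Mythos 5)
Your strategy is genuinely different from the paper's: the paper derives the claim from $\lambda(f) = \gr_{\Phi_N}(\xi)$, which is \cite[Theorem 7.1]{LMT21} (a statement about closed orbits of the semiflow $\phi_N$ on the $3$-manifold $N$), and then uses the bijection between directed cycles of $G_f$ and directed cycles of $\Phi_N$ constructed in item (5). You instead propose to work intrinsically on the surface $L$, coding $f|_{C_f}$ by a subshift of finite type on $G_f$ and invoking Theorem~\ref{th:ent_spA} together with Bowen's theorem. If it worked, this would be an attractive alternative that bypasses the flow-graph machinery, but there are two genuine gaps.

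First, the coding map $\theta\colon C_f \to \Sigma_{G_f}$ is not well-defined as you have stated it. The point $f^i(p)$ for $p\in C_f$ lies in the surface $L$, not on the $1$-complex $\mc V = L\cap B^u$; the core $C_f = \Lambda^+\cap\Lambda^-$ is carried by $\mc V$ but is not a subset of it. So ``the branch of $\mc V$ containing $f^i(p)$'' has no meaning. The natural repair is to code by Markov-type rectangles, namely bands of $I$-fibers over the branches of $\mc V$ in the fibered neighborhood $\mc N$, but then one must verify a Markov property relating the combinatorial folding map $f_{\mc V}$ to the actual homeomorphism $f$ acting on these $2$-dimensional bands, which is a nontrivial step you do not address. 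The same confusion between $f$ and $f_{\mc V}$ undermines the inverse direction: $f^{-i}(b_i)$ is $1$-dimensional in $L$ and has no reason to be a nested family of subarcs of $b_0$.

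Second, even with the rectangle coding in place, the assertion that $\theta$ has uniformly finite fibers requires showing that the nested intersections $\bigcap_{i\in\ZZ} f^{-i}(R_{b_i})$ are finite, which you correctly identify as the main obstacle but do not convincingly resolve. The fallback you suggest---extracting contraction from the uniform quasigeodesic property of the leaves of $\wt\cW^{u/s}$ (Proposition~\ref{prop:foliation_facts}) plus compactness of $C_f$---does not obviously work: the quasigeodesic property is a coarse, large-scale statement about the leaves and does not by itself give the pointwise expansivity needed to prevent the intersection from containing an arc, and as Remark~\ref{rmk:no_measure} records, the laminations $\Lambda^\pm$ need not carry an invariant transverse measure, which is precisely the tool one uses in the pseudo-Anosov case. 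One can probably recover the required expansivity by transporting it through the conjugacy of Proposition~\ref{prop:pA_subsystem} with a genuine pseudo-Anosov $F|_C$, but then one must relate the Markov partition for $F$ on the cross-section $S$ to the track $\mc V$ on $L$, another nontrivial step. The paper's route through $\Phi_N$ sidesteps all of these hyperbolicity issues because the relevant counting happens at the level of closed orbits of the circular pseudo-Anosov flow, where the Anosov-like structure is already fully in place.
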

We will prove this claim after describing points (1--5) of the construction. 

\medskip

We first describe the folding map $f_{\mc V}$. Because $M$ is fibered with fiber $S$, there is some multiple $mS$ which is fully carried by the 2-skeleton $\hbs$. Since $L$ is obtained by spinning $S$ around $\Sigma$, we obtain (possibly enlarging $m$) a cycle of parallel copies $L=L_0,L_1,\ldots,L_m=L$ of $L$, each carried by $\tau$, so that between two successive copies there is exactly one
tetrahedron. The passage from $L_i$ to $L_{i+1}$ (mod $m$) applies a folding move to the
train track $\mc V_i = L_i\cap B^u$. Composing these we obtain the map $f_{\mc V} : \mc V
\to \mc V$. This gives part (1).

Part (2) gives the definition of $G_f$. 

We obtain the map $h_1:G_f \to N$ of part (3) as follows:

Realize the union of the  $L_i$ in carried position in a fibered regular neighborhood $\mc N$ of
the $2$-skeleton of $\tau$ in $\cM$. For clarity we work in the punctured manifold, and
so each leaf $L_i$ is punctured by the singular orbits and admits an ideal triangulation: 
For each cell $c$ (face or edge) of the veering triangulation let
$\mc N(c)$ be the union of fibers of $\mc N$ passing through $c$. We then get an
ideal triangulation of $L_i$
whose cells are components of the intersection of $L$ with each $\mc N(c)$.

Between $L_i$ and $L_{i+1}$, there is one tetrahedron $t_i$ so that $L_i$ passes along its
bottom faces and $L_{i+1}$ passes along the top. The folding move is illustrated in
\Cref{fig:Gfstep}. We can build an intermediate graph $G_i$ connecting the triangulation
edges in $L_{i}$ to those in $L_{i+1}$. Outside of $t_i$, the directed edges of $G_i$ are
mapped along vertical fibers of $\mc N$ from a triangulation edge of $L_i$ to the parallel
copy above it. Within the tetrahedron we have such vertical edges together with additional
edges indicating the folding, as in the figure. 
\begin{figure}
    \centering
    \includegraphics{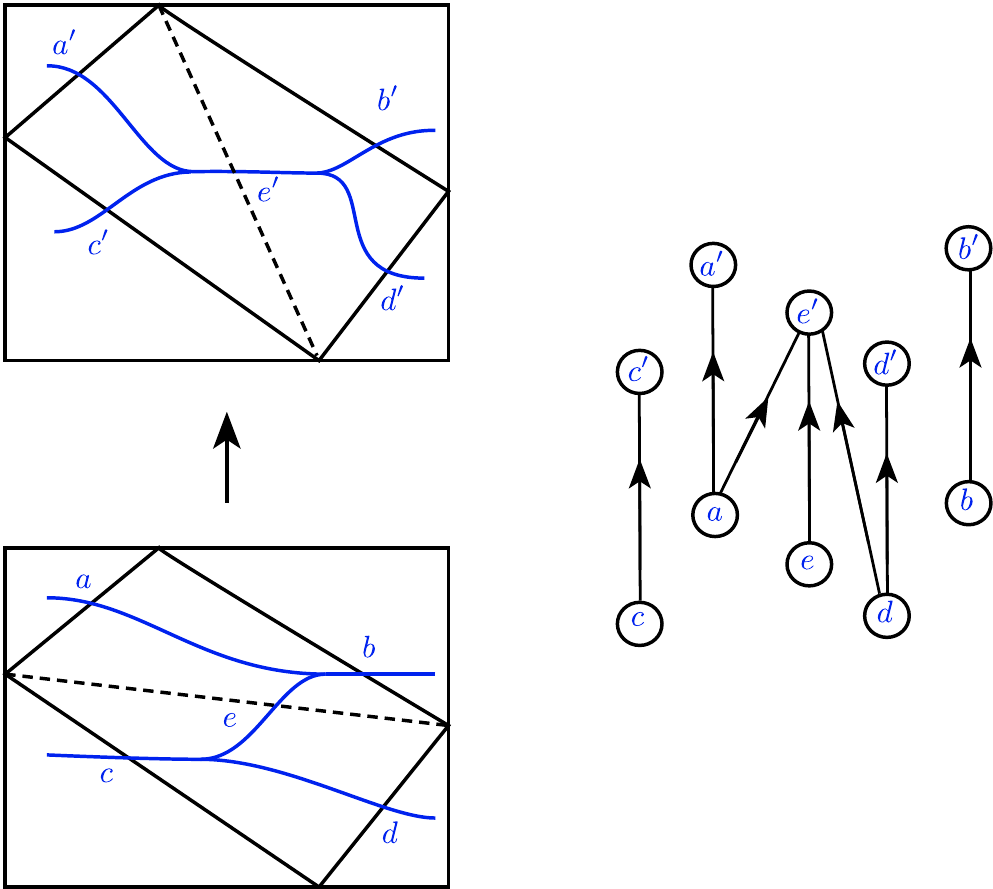}
    \caption{A folding map across a single tetrahedron produces edges in $G_{f}$}
    \label{fig:Gfstep}
\end{figure}

Now each edge of $G_f$ corresponds to a directed path in the union of the $G_i$, from
$L_0$ back to itself. This gives the map  $h_1\colon G_f\to N$ described in part (3). 

Now we observe that the three edges terminating in $e'$ in \Cref{fig:Gfstep} correspond
to the edges of the flow graph $\Phi$ for that tetrahedron, using the definition of $\Phi$ in standard position. Each of the vertical $G_i$ edges in
the figure (as well as any outside this tetrahedron) is contained in a product region $\mc N(e)$ for some edge $e$. 
Thus each edge of the intermediate graphs $G_i$ can either be mapped to an edge of $\Phi$ or collapsed to a vertex of $\Phi$. Following the map $h_1$ from edges of $G_f$ to paths along the $G_i$ with this map to $\Phi$, we obtain the map $h_2$. Note that, in the dual position embedding of $\Phi$, each vertex moves down from its corresponding $\tau$ edge to the interior of the tetrahedron below it. This is a homotopy, so we see that the final map $h_2$ is homotopic to $h_1$. This establishes part (4).

It remains to explain item (5), the correspondence between cycles of $G_f$ and cycles of
$\Phi$. 
By the construction of $h_2$, every directed cycle of $G_f$ maps to a directed cycle
of $\Phi$. Moreover, because $h_2$ and $h_1$ are homotopic, and $h_1$ maps to the
complement of $\Sigma$, the intersection number of $h_2(c)$ with $\Sigma$ is $0$ for each
cycle $c$ of $G_f$. Since $\Phi$ is positively transverse to $\Sigma$ this implies that $h_2(c)$
lies in $\Phi|\eta$. Finally each cycle in $G_f$ crosses $L$ non-trivially
which places the image cycle in $\Phi_N$.

Conversely given a directed cycle $c$ in $\Phi_N$ we must find a cycle in $G_f$ that maps
to it. Note that $c$ must have positive intersection with $L$ because
$N$ is fibered by $L$. We will obtain the cycle in $G_f$ by cutting $c$ along its
intersections with $L$, but first we consider the local picture.

Let $r$ be an edge of $\Phi_N$ that $c$ traverses. Then $r$ begins in the interior of a
tetrahedron $t_-$ whose top edge $e_-$ is identified with the initial vertex of $r$. 
It passes transversely through the 2-skeleton of $\tau$ to end in a tetrahedron $t_+$, for
which $e_-$ is either its bottom edge or one of two side edges. After a slight homotopy we
can arrange for $r$ to pass in $t_-$ directly through a small neighborhood $U$ of $e_-$ and
from there into $t_+$. See \Cref{fig:phihomotopy}.

\begin{figure}[h]
\begin{center}
\includegraphics{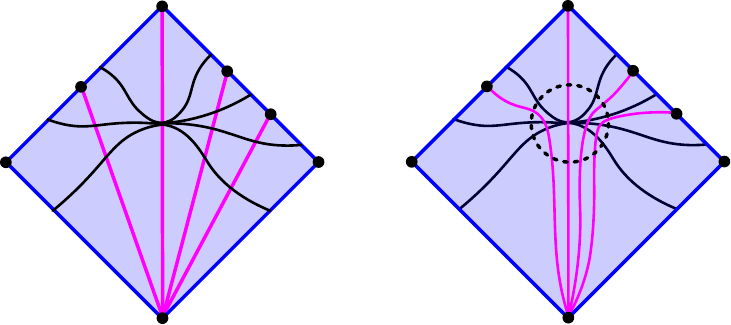}
\caption{In the proof of property (5), we homotope edges of the flow graph as shown. The dashed circle indicates the intersection of a small neighborhood of a $\tau$-edge with a sector.}
\label{fig:phihomotopy}
\end{center}
\end{figure}

Note that $\Sigma$, in its carried position, is not intersected by
$r$ since $r$ is an edge of $\Phi_N$. But $r$ might intersect a part of $L$ passing
through $U$, and this must happen for at least one such $r$ through which $c$ travels.
Thus, $r$ cut along its intersections with $L$ gives us a sequence of segments in $U$ that
are homotopic within $U$ to simple edges of the graph $G_f$ (in its $h_1$ embedding),
followed by one edge that we identify with the original $r$. 

Thus, if we now cut this modified $c$ along all its intersections with $L$, we obtain sequences of
simple $h_1(G_f)$ edges, interspersed with sequences of $\Phi_N$ edges that correspond
to non-simple edges of $h_1(G_f)$. This decomposition gives us a cycle $c'$ in $G_f$,
whose $h_2$ image is $c$. Moreover, the number of edges in $c'$ is equal to
the number of points of $c\cap L$. This establishes point (5).

\medskip

We can now give a proof of  \Cref{claim:Gf growth is lambda}. First,
from \cite[Theorem 7.1]{LMT21}
 we know that
\[
 \lambda(f) =\gr_{\Phi_N}(\xi), 
 \]
 where $\xi\in H^1(N)$ is the cohomology class dual to $L$. 
Indeed, the graph $\Phi$ encodes the flow \cite[Theorem 6.1]{LMT21} and the directed cycles of $\Phi|\eta$ are in
correspondence with the flow orbits that avoid $\Sigma$. Restricting to $\Phi_N$ gives
us the flow orbits that cross $L$, so that $\xi$ records size of the $f$-orbits associated
to each of them.

Thus \Cref{claim:Gf growth is lambda} follows from the equality
\[
\gr_{\Phi_N}(\xi)  = \gr_{G_f}(1),
\]
which is a consequence of item (5) above: it gives a 
bijection between cycles of $G_f$ and $\Phi_N$, so that the class
that counts  each edge of $G_f$ exactly once is taken to $\xi$.

\subsubsection{Finishing the proof of \Cref{th:intersect}}

Let $w^{(n)}_{ij}$ be the number of times the branch $e_i$ of $\mc V$ maps over the branch $e_j$ under $(f_{\mc V})^n$.
This quantity also counts the number of directed paths of length $n$ in $G_f$ from the
vertex $e_i$ to the vertex $e_j$.
Note that for each $i$ and $n\ge 1$, $\sum_{j}w^{(n)}_{ij}$ is finite and similarly for each
$j$ and $n\ge 1$, $\sum_{i}w^{(n)}_{ij}$ is finite.

Our next claim is straightforward given the following observation: the endperiodicity of $f$ implies that all but finitely many branches of $\mc V$ are mapped homeomorphically by $f_{\mc V}$ to other branches of $\mc V$. This implies that each end of $G_f$ has a neighborhood homeomorphic to a ray.

\begin{claim}
\label{claim:edge_growth}
\begin{align} \label{eq:gr}
 \gr_{G_f}(1) = \max_{i} \limsup_{n \to \infty} \left (\sum_{j}w^{(n)}_{ij}\right)^{1/n}.
\end{align}
\end{claim}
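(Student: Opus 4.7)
The plan is to reduce the computation of both quantities to a spectral analysis of a finite subgraph of $G_f$.

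First I would use the observation preceding the claim to isolate a finite core. Let $V_0 \subset V(G_f)$ denote the set of branches of $\mc V$ whose $f_{\mc V}$-image has length at least $2$; by hypothesis, $V_0$ is finite. Every vertex of $G_f \setminus V_0$ has a unique outgoing edge, so $G_f \setminus V_0$ decomposes into (possibly infinite) simple directed paths. Using the endperiodic structure of $f$: branches deep in an attracting end get mapped deeper under $f_{\mc V}$, producing rays that escape to infinity and never return to $V_0$; branches deep in a repelling end get mapped shallower, producing rays that lead from infinity into $V_0$. Combining these facts with the finiteness of $V_0$, I would argue that every directed cycle of $G_f$ is contained in a finite subgraph $G_0 \subset G_f$ consisting of $V_0$ together with the (finitely many, bounded-length) simple-edge paths joining its vertices.

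Next I would prove $\gr_{G_f}(1) \le \max_i \limsup_n (\sum_j w^{(n)}_{ij})^{1/n}$. Since every length-$n$ directed cycle lies in $G_0$ and is based at some $e_i \in V_0$, the number of length-$n$ closed walks based at $e_i$ is $w^{(n)}_{ii} \le \sum_j w^{(n)}_{ij}$. Because $V_0$ is finite, summing over $i \in V_0$ and extracting $n$th roots yields the bound.

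For the reverse direction, I would fix $e_i$ and analyze the growth of $\sum_j w^{(n)}_{ij}$, the number of length-$n$ paths starting at $e_i$. If $e_i$ lies on a ray escaping to infinity, then $\sum_j w^{(n)}_{ij} = 1$ for all $n$, so the $\limsup$ is $1$, which is at most $\gr_{G_f}(1)$ by our standing assumption that $f$ has a periodic point (producing at least one cycle in $G_0$). Otherwise, every path from $e_i$ enters $G_0$ within a bounded number of steps, so the growth of these paths is bounded by the growth of walks in the finite graph $G_0$, namely the spectral radius $\rho$ of its transition matrix. Standard Perron--Frobenius theory for finite nonnegative matrices identifies $\rho$ with the exponential growth rate of $\operatorname{tr}(W_0^n)$ over $G_0$, which is precisely $\gr_{G_f}(1)$. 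Maximizing over $i$ gives the reverse inequality.

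The hard part will be verifying finiteness of the simple-edge paths in $G_0$ (which requires carefully tracking how $f_{\mc V}$ interacts with end neighborhoods of $L$) and ensuring that the Perron--Frobenius comparison between path growth and cycle growth holds uniformly across the possibly several strongly connected components of $G_0$.
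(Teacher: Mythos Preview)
Your approach is essentially the same as the paper's: reduce to a finite subgraph using the ray structure at the ends of $G_f$, and then invoke the standard spectral identity for finite directed graphs. The paper compresses your argument considerably---it simply notes that the observation preceding the claim gives a finite $G'\subset G_f$ whose complement is a union of directed rays, remarks that both sides of the equation are unchanged upon replacing $G_f$ by $G'$, and cites McMullen for the finite case---whereas you unpack the Perron--Frobenius comparison by hand.

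One small correction: your assertion that $G_f\setminus V_0$ ``decomposes into simple directed paths'' is not quite right as stated, since outdegree $1$ alone allows backward branching. What you actually need (and what the paper uses) is that sufficiently far into each end of $L$, branches of $\mc V$ have both indegree and outdegree equal to $1$ in $G_f$, so that each end of $G_f$ is a directed ray; this follows from endperiodicity since $f_{\mc V}$ acts as a bijection on branches deep in any end neighborhood. Once you have this, the finiteness of your $G_0$ and the rest of your argument go through.
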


\begin{proof}[Sketch]
As observed above, there is a finite subgraph $G'$ of $G_f$ such that each complementary component of $G'$ is a directed ray. Then both sides of \Cref{eq:gr} are unchanged if $G_f$ is replaced by $G'$ (that is, when the right-hand side counts directed paths in $G'$). But the equality in \Cref{eq:gr} is well-known for finite graphs, see e.g. \cite[Lemma 3.1]{mcmullen2015entropy}.
\end{proof}

All the pieces are now in place for the proof of \Cref{th:intersect} which topologically characterizes the stretch factor.

\begin{proof}[Proof of \Cref{th:intersect}]
Let $\alpha$ and $\beta$ be any curves on $L$. Fix any representative $b$ of the homotopy class of $\beta$ on $L$ that is transverse to the branches of $\mc V$ and let $a$ be a representative of $\alpha$'s homotopy class contained in---not necessarily carried by---the track $\mc V$. That such a representative exists is consequence of  the fact that the complementary regions of $\mc V$ are $n$-gons. Let $a_n$ be the image of $a$ after taking $f^n(a) \subset f^n(\mc V)$ and folding it back into $\mc V$, and observe that $i(\beta, f^n(\alpha)) \le \#(b \cap a_n)$. 

After reindexing the branches of $\mc V$, we assume that $a$ traverses the branches $c_1 e_1, \ldots, c_ke_k$ where the $c_i$ are the positive integer multiplicities. Let $d_j$ be the number of intersections that $b$ has with $e_j$. Then 
\[
\#(b \cap a_n) =  \sum_j d_j \sum_{i=1}^k w^{(n)}_{ij}c_i.
\]

Using the above together with \Cref{claim:edge_growth} and \Cref{claim:Gf growth is lambda} we conclude that
\begin{align*}
\limsup_{n\to\infty} i(\beta, f^n(\alpha))^{1/n}&\le \limsup_{n\to\infty} \#(b\cap a_n)^{1/n}\\
&=\limsup_{n\to\infty}\left( \sum_{j} d_j\sum_{i=1}^k w^{(n)}_{ij}c_i\right)^{1/n}\\
&\le \max_i \limsup_{n\to\infty}\left(\sum_j w^{(n)}_{ij}\right)^{1/n}\\
&\overset{\ref{claim:edge_growth}}{=}\gr_{G_f}(1)\\
&\overset{\ref{claim:Gf growth is lambda}}{=}\lambda(f),
\end{align*}
and hence
\begin{equation}\label{eq:onedirection}
\max_{\alpha,\beta} \limsup_{n \to \infty} i(\beta, f^n(\alpha))^{1/n} \le \lambda(f).
\end{equation}

We now aim to prove the reverse of \Cref{eq:onedirection}.
Let $C$ be a recurrent component of $G_f$ with growth rate $\lambda(f)$. This subgraph $C$ exists because $\lambda(f)=\gr_{G_f}(1)$ by \Cref{claim:Gf growth is lambda}, and because the growth rate of a directed graph is always achieved by a recurrent component.

We can find a curve $\alpha$ carried on $\mc V$
that traverses every branch of $\mc V$ associated to a vertex of $C$ as follows. Let $e$ be a branch of $\mc V$ corresponding to a vertex of $C$.
Then the train paths $f^n(e)$ in $\mc V$ eventually cross every branch associated to $C$
(and possibly other branches as well). By surgery at a $\mc V$-branch corresponding to a
vertex of $C$ crossed multiple times by some $f^n(e)$, we can find a closed curve carried by $\mc V$ that traverses this $\mc V$-branch. This curve may not be embedded, but we can resolve any points of intersection to obtain an embedded multicurve. Applying $f$ sufficiently many times to a given component of this multicurve yields a simple closed curve traversing each branch corresponding to $C$. Let $\alpha$ be this curve.

Next, note that since $f^n(\alpha)$ is not eventually contained in any compact set of $L$ up to homotopy by \Cref{lem:uncontainable}, it eventually crosses a positive juncture $\beta$. From \Cref{claim:trans_pos} and \Cref{rmk:nobigons}, we can realize $\beta$ so that it is transverse to $\mc V$ and there are no complementary bigons. 
In particular, all of the carried curves $f^n(\alpha)$ are in minimal position with respect to $\beta$ and hence their geometric intersection number is given by counting their points of intersection. 

To complete the proof, reindex so that $e_1,\dots, e_\ell$ are the vertices of $C$, which we identify with the corresponding branches of $\mc V$, each of which is traversed by $\alpha$ with positive weights $c_1,\dots, c_\ell$. Also let $e_{\ell+1},\dots, e_m$ be the branches intersected by $\beta$, and $d_i$ be the number of intersections of $\beta$ with $e_i$ for $\ell<i\le m$. (Note that we do not assume that the $(e_i)$ lie in $C$ for $i>\ell$.) We will also use the symbol $e_i$ to denote the corresponding vertex of $G_f$.

The fact that $f^n(\alpha)$ intersects $\beta$ for some $n\ge1$  translates into the fact
that for some fixed $k>\ell$, there is a directed path $p$ in $G_f$ from a vertex of $C$
to $e_k$.
For this fixed $k$, each path of length $n$ from the vertex $e_i$ of $C$ to $e_k$ contributes $c_i d_k$ points of intersection to $i(\beta,f^n(\alpha))$. Then
\[
i(\beta, f^n(\alpha)) \ge \sum_{i=1}^\ell w^{(n)}_{ik}c_id_k,
\]
and so it suffices to apply the fact that
\[
\limsup_{n \to\infty} \left (\sum_i (A^n)_{ik}\right )^{\frac{1}{n}} \ge \lambda,
\]
whenever $C$ is a component with growth $\lambda$ of a directed graph with transition matrix $A$, $k$ is a fixed vertex of the graph reachable from a vertex of $C$, and the sum is over all vertices of $C$. As $C$ was chosen so that $\lambda = \lambda(f)$, the proof is complete.
\end{proof}


\section{Foliation cones and entropy functions}
\label{sec:foliation_cones}
\subsection{Foliation cones via fibered cones}

In \cite{cantwell1999foliation, cantwell2017cones}, Cantwell and Conlon show that if $N$ is a sutured manifold that admits a taut, depth one foliation suited to $N$, and each component of $\del_\pm N$ has negative Euler characteristic, then there exists a finite collection of disjoint open, convex, polyhedral cones in $H^1(N)$, called \define{foliation cones}, 
so that the integer points 
of each cone are exactly the foliated classes. See also \cite[Section 13]{CCF19}.
Here, we recall from \Cref{sec:intro_results} that a class in $H^1(N)$ is foliated if it
is dual to a depth one foliation suited to $N$; 
by \Cref{th:class_determines} each depth one foliation is determined by its cohomology class, up to isotopy. 

In this section, we show that for the atoroidal sutured manifolds considered here,
one can completely understand the foliation cones of $N$ as pullbacks of fibered cones of a closed $3$-manifold. We emphasize that this independently develops much of the Cantwell--Conlon foliation cone theory directly from the fibered face theory of Thurston \cite{thurston1986norm} and Fried \cite{Fri79}.

\begin{theorem}[Foliation cones]
\label{th:foliation_cones}
Let $N$ be an atoroidal sutured manifold such that each component of $\partial_\pm N$ is a closed surface of genus at least $2$.
Then there is a closed hyperbolic $3$-manifold $M$ and an embedding $i \colon N \to M$ with the property that a class in $H^1(N)$ is foliated if and only if it is the pullback under $i^*\colon H^1(M) \to H^1(N)$ of a fibered class that is contained in a $i(\partial_+N)$--adjacent fibered cone. 

Hence,
\[
\{ i^*\big(\C\big) \colon \C \text { is a fibered cone of } M \text{ containing } [i(\partial_+ N]) \big)\}
\]
is exactly the set of foliation cones of $H^1(N)$.
\end{theorem}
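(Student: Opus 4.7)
The plan is to choose a single closed hyperbolic manifold $M$ with embedding $i \colon N \hookrightarrow M$ that simultaneously realizes every foliated class on $N$ as a pullback, via the $h$--double construction of Section \ref{sec:ext}. Following the proof of \Cref{lem:hyp}, I fix a component-wise homeomorphism $h \colon \partial_\pm N \to \partial_\pm N$ whose restriction to each boundary component acts by $-I$ on $H_1$ (for example a hyperelliptic involution post-composed with a high power of a pseudo-Anosov acting trivially on homology) and that is sufficiently complicated that $M = M(N,h)$ is hyperbolic. By \Cref{rmk:hyp} this $h$ automatically satisfies the hypothesis of \Cref{prop:double_to_fiber} for \emph{every} juncture class simultaneously, so the manifold $M$ is independent of any particular foliation. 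Let $i \colon N \hookrightarrow M$ be the resulting embedding.

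For the forward implication, let $\xi \in H^1(N)$ be foliated and realize it by its depth one foliation $\mc F$ suited to $N$, with transverse suspension semiflow $\mc L$ (unique up to isotopy by \Cref{th:class_determines}). Extend $(\mc F, \mc L)$ to a cooriented depth one foliation of $M$ with transverse semiflow via $h$. Applying \Cref{prop:double_to_fiber} to a prefiber $L_U$ yields a closed cooriented surface $S \subset M$ positively transverse to $\mc L$ with $i^*[S] = \xi$. As in the proof of \Cref{th:spA_construction}, the cone $\C_{\mc L}$ lies in the closure of the fibered cone $\C$ containing $[S]$, and by construction $[\partial_\pm N]$, and hence $[i(\partial_+ N)]$, lies in $\overline{\C_{\mc L}} \subset \overline{\C}$. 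Thus $\C$ is an $[i(\partial_+ N)]$--adjacent fibered cone with $i^*[S] = \xi$.

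For the converse, let $\tilde \xi \in H^1(M)$ lie in an $[i(\partial_+ N)]$--adjacent fibered cone $\C$, with associated pseudo-Anosov suspension flow $\phi = \phi_\C$. By hypothesis $[\partial_\pm N]$ lies in $\overline{\C}$, and $\partial_\pm N$ is taut since it is the union of compact leaves of the taut depth one foliation of $M$ constructed in the forward direction. By Fried's identification of $\overline{\C}$ with $\overline{\C_\phi}$, $\partial_\pm N$ pairs nonnegatively with every closed orbit of $\phi$, so \Cref{th:stst} supplies a dynamic blowup $\phi^\sharp$ to which $\partial_\pm N$ is isotoped positively transverse. A fiber $S$ dual to $\tilde\xi$ is simultaneously isotoped to a cross section of $\phi^\sharp$; spinning $S$ around $\partial_\pm N$ as in Section \ref{sec:spinning} produces a depth one foliation $\mc F'$ of $M$ transverse to $\phi^\sharp$, whose restriction to $N$ is a depth one foliation suited to $N$ with cohomology class $i^*\tilde\xi$. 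Hence $i^*\tilde\xi$ is foliated.

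For the displayed identification of foliation cones, observe that each $i^*(\C)$ is an open rational polyhedral cone in $H^1(N;\R)$ whose integer points are foliated by the equivalence just established, and the union of the $i^*(\C)$ over $[i(\partial_+N)]$--adjacent fibered cones $\C$ exhausts the foliated classes of $H^1(N)$. By the Cantwell--Conlon description \cite{cantwell1999foliation, CCF19}, or independently by noting that distinct fibered cones $\C$ give distinct flows $\phi_\C$ and therefore distinct transverse depth one foliations, these pullbacks coincide with the foliation cones of $N$. The main obstacle is the converse direction, where one must simultaneously make $\partial_\pm N$ and the fiber $S$ (almost) transverse to the same flow $\phi$ and verify that the resulting spun foliation is genuinely suited to $N$---that is, that $\partial_\pm N$ is its full set of compact leaves with the correct coorientation. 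The key inputs are \Cref{th:stst} for the almost transversality and \Cref{th:class_determines} for unique recovery of $\mc F$ from its cohomology class.
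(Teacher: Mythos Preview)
Your two main implications match the paper's approach: foliated classes on $N$ lift to fibered classes via \Cref{prop:double_to_fiber}, and fibered classes in $[i(\partial_+N)]$--adjacent cones restrict to foliated classes by spinning a cross section around $\partial_\pm N$ after invoking \Cref{th:stst}.

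The genuine gap is in your final paragraph, where you must show that the pullbacks $i^*(\C)$ are pairwise disjoint in order to identify each with a single foliation cone. Your ``independent'' argument is a non sequitur: distinct fibered cones do give distinct flows $\varphi_i$, but nothing you have said rules out a single depth one foliation on $N$ (or its extension to $M$) being transverse to two such flows, which is precisely what would occur if $i^*(\C_i) \cap i^*(\C_j) \neq \emptyset$. The paper closes this with a de-spinning argument: if the pullbacks overlapped, the common foliated class would be represented (via the converse direction and \Cref{th:class_determines}) by a depth one foliation on $M$ transverse to both $\varphi_i$ and $\varphi_j$; de-spinning it (\Cref{rmk:spin_in_proof}) then yields a closed surface that is positively transverse to and meets every orbit of each flow, i.e., a simultaneous cross section, contradicting that a cross section determines its fibered cone uniquely. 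Falling back on Cantwell--Conlon is also insufficient as written: knowing that the foliation cones are disjoint and open does not by itself force each connected image $i^*(\C)$ to equal a single one of them without first establishing that the images themselves do not overlap. Moreover, the paper explicitly frames this theorem as an independent derivation of the Cantwell--Conlon cone structure from Thurston--Fried fibered face theory, so invoking their result here is circular in spirit.
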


Note that from this it is immediate that there are finitely many foliation cones and each one is polyhedral and rational since this is the case for fibered cones \cite{thurston1986norm}. Also, as in \Cref{rmk:spin_in_proof}, each depth one foliation suited to $N$ is obtained from a fibration of $M$ by spinning. 

\medskip

Toward the proof of \Cref{th:foliation_cones}, we construct the manifold $M$ 
using the results from \Cref{sec:ext}.
Let $h \colon \partial_\pm N \to \partial_\pm N$ be a component-wise homeomorphism that acts by $-1$ on 
$H^1(\partial_\pm N)$ (or at least on the cone spanned by all  juncture classes) 
and such that the $h$--double $M = M(N,h)$ is hyperbolic (see \Cref{lem:hyp}). 
As before, we identify $N$ with its image in $M$ and let $\C_1, \ldots, \C_k$ be the fibered cones of $M$ that contain $[\partial_+ N]$. (Note that $[\partial_+N] = [\partial_- N]$ in $H^1(M)$.)
Each cone $\C_i$ is associated to a pseudo-Anosov flow $\varphi_i$ of $M$ for which $\partial_\pm N$ is almost transverse. We replace each $\varphi_i$ with its minimal dynamic blowup so that it is transverse to $\partial_\pm N$.

\begin{proof} 
First observe that if $\xi$ is a fiber class contained in $\C_i$, then it is represented by a fiber surface $S$ that is a cross section of the flow $\varphi_i$. Just as in the proof of \Cref{th:spA_construction}, $S$ can be spun about $\partial_\pm N$ to produce a taut, depth one foliation $\mc F$ that is transverse to $\varphi_i$. Hence, the restriction of $\mc F$ to $N$ represents the foliated class $i^*(\xi)$. 

Conversely, let $\alpha$ be foliated class in $H^1(N)$ represented by a depth one foliation $\mc F$. For $M$ fixed as above, \Cref{prop:double_to_fiber} applies to the foliation $\mc F$ and therefore it along with \Cref{rmk:spin_in_proof} gives that there is a $\phi_i$--cross section $S$ of $M$, whose fibered cone contains $[\partial_+N]$ in its boundary, such that spinning $S$ about $\partial_\pm N$ produces the foliation $\mc F$, up to isotopy. 
Hence, $\alpha$ is the pullback of the class dual to $S$ in $M$.

We note that the above shows that there exists a taut, depth one foliation suited to $N$ if and only if there exists a fibered cone of $M$ containing $[\partial_+ N]$ in its boundary.

To complete the proof, it remains to show $i^*$ takes each fibered cone onto a foliation
cone. If not, since the images of fibered cones cover the foliation cones, there are two distinct fibered cones whose pullbacks overlap in $H^1(N)$. In this case, there is a foliation of $N$, and hence $M$, which is transverse to both $\varphi_i$ and $\varphi_j$ for $i\ne j$. But then we can de-spin this foliation as above (again see \Cref{rmk:spin_in_proof}) to produce a surface that is positively transverse to $\varphi_i$ and $\varphi_j$ and meets every orbit, a contradiction.
\end{proof}

The following corollary characterizes integral classes of a foliation cone as those represented by foliations that are transverse to the restriction of an (honest) pseudo-Anosov flow.

\begin{corollary}
\label{cor:dual_semiflow}
The closed manifold $M$ in \Cref{th:foliation_cones} can be constructed so that the
pseudo-Anosov flow $\varphi$ associated to a fibered cone $\C \subset H^1(M)$ containing $[i(\partial_+ N)]$ is transverse to $i(\partial_\pm N)$. 
The integral points of the foliation cone $i^*(\C)$ are exactly the points represented by depth one foliations that are transverse to the restricted semiflow $\varphi_N$.
\end{corollary}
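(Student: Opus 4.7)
The proof naturally splits into two parts. For the first claim, I plan to strengthen the construction of $M$ from \Cref{th:foliation_cones} by carefully choosing the gluing map $h\colon \partial_\pm N \to \partial_\pm N$ so that the pseudo-Anosov flow $\varphi_\C$ associated to \emph{every} fibered cone $\C$ of $M$ containing $[\partial_+ N]$ on its boundary is honestly transverse to $\partial_\pm N$, not merely almost transverse. The strategy mimics \Cref{th:no_blow}: that proof shows that blown annuli meeting $\partial_\pm N$ for one specific flow can be avoided by choosing $h$ to send no curve in a finite ``bad set'' $C = C_1 \cup C_2$ back into itself, where $C_2$ consists of boundaries of essential annuli in $N$ (intrinsic to $N$) and $C_1$ is determined by the flow of one auxiliary spA map. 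Here we need the analogous condition to hold for every fibered cone $\C$ containing $[\partial_+ N]$. Since the Thurston norm ball of $M$ has only finitely many top-dimensional faces, there are only finitely many such cones, and each contributes a finite collection of curves to the overall bad set; choosing $h$ to avoid all of them simultaneously yields transversality for every such $\varphi_\C$. The main obstacle is circularity: the bad sets depend on $M$, which depends on $h$. I expect to resolve this either through an iteration/genericity argument in which $h$ is adjusted to avoid a prescribed finite set of constraints (using that composing $h$ with a sufficiently generic pseudo-Anosov mapping class of $\partial_\pm N$ can avoid any finite list of isotopy classes), or by establishing a uniform bound showing that bad curves across all possible $M(N,h)$ lie in a finite set of isotopy classes determined by $N$ alone.

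For the second claim, fix a fibered cone $\C$ containing $[\partial_+ N]$ and its associated flow $\varphi$. The forward direction is direct: an integral class $\alpha \in i^*(\C)$ is represented, by \Cref{th:foliation_cones}, by spinning a fiber surface $S$ of $\varphi$ (with $[S] \in \C$) about $\partial_\pm N$, producing a depth one foliation $\mc F$ suited to $N$ with $[\mc F] = \alpha$. Since $S$ is a cross section of $\varphi$ and $\partial_\pm N$ is transverse to $\varphi$ by Part~1, the spinning construction of \Cref{sec:spinning} yields a foliation on $M$ positively transverse to $\varphi$, which restricts to a depth one foliation of $N$ transverse to $\varphi_N$.

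For the converse, suppose $\mc F$ is a depth one foliation of $N$ transverse to $\varphi_N$ with class $\alpha = [\mc F]$. By \Cref{th:foliation_cones}, $\alpha \in i^*(\C')$ for some fibered cone $\C'$ of $M$ containing $[\partial_+ N]$; I must show $\C' = \C$. My plan is to extend $\mc F$ across the double $M = M(N,h)$ by gluing to its dual foliation $\mc F^\d$ on $N^\d$. By the symmetry of the double construction, $\varphi$ restricts to $N^\d$ as the dual semiflow, and transversality of $\mc F$ to $\varphi_N$ transfers to transversality of $\mc F^\d$ to $\varphi|_{N^\d}$. The resulting extended depth one foliation $\widehat{\mc F}$ on $M$ is then positively transverse to $\varphi$, and de-spinning it as in \Cref{rmk:spin_in_proof} produces a cross section $S$ of $\varphi$, hence $[S] \in \C$. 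Since $i^*([S]) = \alpha$, we conclude $\alpha \in i^*(\C)$, forcing $\C = \C'$ by the injectivity of $i^*$ on fibered cones from \Cref{th:foliation_cones}. The most delicate step is verifying transversality of $\widehat{\mc F}$ to $\varphi$ along the compact leaves $\partial_\pm N$, which should follow from the transversality of $\partial_\pm N$ to $\varphi$ established in Part~1 combined with a standard smoothing of $\widehat{\mc F}$ across $\partial_\pm N$.
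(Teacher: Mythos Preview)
Your approach matches the paper's, which is equally terse: it too applies the proof of \Cref{th:no_blow} to each of the finitely many fibered cones containing $[\partial_+ N]$, taking $h$ to be a hyperelliptic involution composed with a sufficiently high power of a fixed pseudo-Anosov acting trivially on $H_1$, and for the second claim simply refers back to the spinning/de-spinning argument already given in the proof of \Cref{th:foliation_cones}.

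On the circularity you flag: it dissolves once you observe that the foliation cones of $N$ are \emph{intrinsic to $N$}---their definition makes no reference to $M$. Apply \Cref{th:foliation_cones} once with any auxiliary $M_0=M(N,h_0)$ to enumerate the finitely many foliation cones, pick one auxiliary spA map per cone (exactly as in the proof of \Cref{th:no_blow}), collect the resulting finite bad set $C$, and only then choose the final $h$ to avoid $C$. The fibered cones of the new $M(N,h)$ containing $[\partial_+ N]$ correspond to the \emph{same} foliation cones of $N$, and the argument of \Cref{th:no_blow} applies cone by cone. No iteration or uniform bound over all possible $M(N,h)$ is required. In Part~2, your detour through an a priori cone $\C'$ is also unnecessary: once the de-spun surface $S$ satisfies $[S]\in\C$ and $i^*([S])=\alpha$, you already have $\alpha\in i^*(\C)$.
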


\begin{proof}
This was essentially established in the proof of \Cref{th:foliation_cones}, except that $\varphi$ was the dynamic blow up of a pseudo-Anosov flow. 
However, by applying the proof of \Cref{th:no_blow}, we see that 
there is a choice of $h \colon \partial_\pm N \to \partial_\pm N$
so that the flow $\phi$ on $M = M(N,h)$ transverse to $\partial_\pm N$
is an honest pseudo-Anosov flow. In fact, this can be achieved by taking $h$ on each component to be a hyperelliptic involution composed with a sufficiently high power of a fixed pseudo-Anosov homeomorphism that acts trivially on $H_1$ (see \Cref{lem:hyp}). Taking a larger power if necessary, we see that $M$ can be chosen so that each
$\phi_i$ is transverse to $\partial_\pm N$ for each of the (finitely many) fibered cones containing $[i(\partial_+ N)]$ in its boundary.
\end{proof}

\subsection{Entropy functions of foliation cones}
Fix $N$ as in the statement of \Cref{th:foliation_cones} and let $\C \subset H^1(N)$ be one of its foliation cones. For each depth one foliation $\mc F$ suited to $N$, we let $h_\mc F \colon L \to L$ be its monodromy, defined up to isotopy, where $L$ is a depth one leaf of $\mc F$. Let $\lambda([\mc F])$ denote the stretch factor of any spA representative of $h_{\mc F}$. By \Cref{cor:spA_growth} and \Cref{th:class_determines}, this depends only on the class $[\mc F] \in H^1(N)$.

\begin{theorem}
\label{th:entropy_cone}
Let $N$ be a sutured manifold such that each component of $\partial_\pm N$ is a closed surface of genus at least $2$, and let $\C \subset H^1(N)$ be a foliation cone of $N$. 
Then the function
\[
[\mc F] \mapsto \log \lambda([\mc F])
\]
that assigns to a depth one foliation the logarithm of the stretch factor of its monodromy extends to a convex, continuous function $\ent \colon \C \to [0,\infty)$.
\end{theorem}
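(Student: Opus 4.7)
The plan is to realize $\ent$ as the logarithm of a weighted growth rate on a finite directed graph intrinsically associated to $N$, and then deduce continuity and convexity from standard properties of Perron--Frobenius eigenvalues, adapting to $\Phi_N$ the approach used in \cite{LMT21} for the flow graph $\Phi$ on fibered cones of a closed manifold.

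First I would apply \Cref{th:foliation_cones} together with \Cref{cor:dual_semiflow} to embed $N$ into a closed hyperbolic $3$-manifold $M$ admitting a circular pseudo-Anosov flow $\phi$ transverse to $\partial_\pm N$, with $\mc C = i^*(\widetilde{\mc C})$ for a fibered cone $\widetilde{\mc C} \subset H^1(M)$ having $[i(\partial_+N)]$ in its closure. Then I would form the unstable flow graph $\Phi$ of the veering triangulation associated to $\phi$, and let $\Phi_N$ be the restriction to $N$ of the dynamical core of $\Phi \cut \partial_\pm N$, as in \Cref{sec:invariant_track}. This $\Phi_N$ is a finite directed graph whose positive directed cycles correspond bijectively, via item (5) of \Cref{sec:invariant_track}, to the closed orbits of $\phi$ contained in $\intr(N)$.

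For each integral foliated class $\xi \in \mc C$ with associated depth one foliation $\mc F_\xi$ and spA monodromy $f_\xi \colon L_\xi \to L_\xi$, combining \Cref{claim:Gf growth is lambda} with \cite[Theorem 7.1]{LMT21} gives
\[
\log \lambda(f_\xi) \;=\; \log \gr_{\Phi_N}(\xi),
\]
where $\gr_{\Phi_N}(\xi)$ is the exponential growth rate of directed cycles in $\Phi_N$ weighted by their pairing with $\xi$. Since every directed cycle of $\Phi_N$ represents a closed $\phi$-orbit inside $N$, and such orbits pair non-negatively with every class of $\mc C$ by the definition of the foliation cone (and \Cref{cor:dual_semiflow}), the right-hand side extends unambiguously to all $\xi \in \mc C$ by the Perron--Frobenius theory of weighted directed graphs. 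I would then define $\ent(\xi) := \log \gr_{\Phi_N}(\xi)$ on all of $\mc C$; this agrees with the prescribed values at integral foliated classes.

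The final step is to deduce continuity and convexity of $\ent$ by McMullen-style arguments following \cite{mcmullen2000polynomial}, carried out with $\Phi_N$ in place of $\Phi$. For each maximal strongly connected component $G$ of $\Phi_N$, one forms a non-negative matrix polynomial over the group ring $\ZZ[H_1(N)/\text{torsion}]$ whose specialization at $\xi \in \mc C$ has Perron--Frobenius eigenvalue equal to $\gr_G(\xi)$. Each $\log \gr_G$ is continuous and convex on its positivity cone (e.g.\ via Kingman's theorem or the variational characterization of the Perron eigenvalue), and $\log \gr_{\Phi_N} = \max_G \log \gr_G$ inherits these properties as a pointwise maximum of finitely many continuous convex functions. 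The main obstacle will be verifying that $\mc C$ is contained in the common positivity cone of all strongly connected components contributing to $\log \gr_{\Phi_N}$; this requires carefully tracing the correspondence between cycles of $\Phi_N$ and closed $\phi$-orbits in $N$, combined with the characterization of foliated classes given by \Cref{cor:dual_semiflow}.
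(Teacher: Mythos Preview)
Your proposal is correct and follows essentially the same route as the paper. Both reduce, via \Cref{cor:dual_semiflow}, to identifying $\log\lambda([\mc F])$ with the growth rate $\log\gr_{\phi_N}(\xi)$ (equivalently $\log\gr_{\Phi_N}(\xi)$) and then deducing continuity and convexity; the paper does this in one line by citing \cite[Theorems~8.1,~9.1 and Remark~9.2]{LMT21} after observing that $\xi$ is \emph{strongly positive} as the pullback of a fibered class---which is exactly the ``main obstacle'' you flag---whereas you unwind that citation into the underlying Perron--Frobenius argument on the strongly connected components of $\Phi_N$.
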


\begin{proof}
Let $M$ and $\varphi$ be as in \Cref{cor:dual_semiflow}.
 Note that for any $\mc F$ with $\xi = [\mc F] \in \mc C$ with depth one leaf $L$, the first return map to $L$ under $\phi_N$ is an spA map $f \colon L \to L$. Hence, $\lambda([\mc F]) = \lambda(f)$ is exactly $\gr_{\phi_N}(\xi)$, the growth rate of closed orbits of $\phi_N$ with respect to class $\xi$. In the terminology of \cite{LMT21}, $\xi$ is strongly positive since it is the pullback of a fibered class in $M$ (\Cref{th:foliation_cones}). Hence, \cite[Theorems 8.1, 9.1]{LMT21} and \cite[Remark 9.2]{LMT21} imply that $\ent_{\phi_N}(\xi) = \log (\gr_{\phi_N}(\xi))$ defines a continuous, convex function on $\xi \in \C$.
\end{proof}

\subsection{spA maps as limits of pseudo-Anosovs and accumulations of stretch factors}
\label{sec:Lein}

Let $M$ be a closed, hyperbolic fibered manifold and let $\mc C \subset H^1(M)$ be a fibered cone with associated pseudo-Anosov suspension flow $\phi$. 

Let $\alpha_i$ be a sequence of primitive integral classes in the interior of $\mc C$, each one representing a cross section $S_i$ with first return map $f_i$; we set $\lambda(\alpha_i) = \lambda(f_i)$ to be the stretch factor of the pseudo-Anosov $f_i$. 

In \cite[Theorem 9.10]{LMT21}, we characterize the limit set of all stretch factors arising from a single fibered cone $\mc C$, answering a question of Chris Leininger. Here we show that all such limits are themselves stretch factors of spun pseudo-Anosov maps:

\begin{theorem}
With the setup above, if $\lambda(\alpha_i) \to \lambda >1$, then $\lambda$ is the stretch factor of an spA map $f \colon L \to L$. 

Moreover, there is a face $\mc C' \subset \partial \mc C$ such that for any primitive integral $\eta$ in the relative interior of $\mc C'$, $L$ can be chosen to be a leaf of a depth one foliation of $M$ that is obtained by spinning a fiber surface of $M$ about a surface $\Sigma$ that is dual to $\eta$ and almost transverse to $\phi$. 
\end{theorem}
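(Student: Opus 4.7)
The plan is to first extract, from the convergence $\lambda(\alpha_i) \to \lambda$, a distinguished face $\mc C'$ of $\partial \mc C$ on which the projectivized classes accumulate; then to cut $M$ along an almost-transverse surface dual to a class in $\mc C'$ to produce a sutured manifold $N$; and finally to realize $\lambda$ as the stretch factor of the spA monodromy of a depth one foliation of $N$ obtained by spinning a $\phi$-fiber around $\Sigma$.

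First I would argue that the Thurston norms $x(\alpha_i)$ must tend to infinity. Since Fried's entropy function $\ent(\alpha) = \log \lambda(\alpha)$ is $(-1)$-homogeneous and continuous on $\mc C$, if $x(\alpha_i)$ remained bounded then only finitely many primitive integral classes would appear among the $\alpha_i$ and the limit would already be a pseudo-Anosov stretch factor, realizing the conclusion trivially. Passing to a subsequence, the projectivizations $\alpha_i / x(\alpha_i)$ converge to a point $\alpha_\infty$ in the boundary of the projectivized cone; let $\mc C'$ denote the unique face of $\partial \mc C$ whose relative interior contains $\alpha_\infty$. This will be the face in the conclusion.

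Next, fix any primitive integral $\eta$ in the relative interior of $\mc C'$. As a class in the boundary of a fibered cone, $\eta$ is taut (norm-minimizing) and pairs nonnegatively with every closed orbit of $\phi$, so the Strong Transverse Surface Theorem (\Cref{th:stst}) represents $\eta$ by a surface $\Sigma$ almost transverse to $\phi$; replacing $\phi$ with a minimal dynamic blowup $\phi^\sharp$, we may take $\Sigma$ positively transverse. Let $N = M \cut \Sigma$ with induced semiflow $\phi_N$. By \Cref{th:foliation_cones} and \Cref{cor:dual_semiflow}, the pullback $i^* \colon H^1(M) \to H^1(N)$ sends $\mc C$ onto a foliation cone $\mc C_N \subset H^1(N)$, each primitive integral class $\xi$ of which is dual to a depth one foliation of $N$ obtained by spinning a fiber of $\phi$ around $\Sigma$. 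The monodromy of this foliation is an spA map $f_\xi \colon L_\xi \to L_\xi$, and \Cref{th:entropy_cone} together with \cite[Theorem 7.1]{LMT21} identifies its stretch factor as $\lambda(f_\xi) = \exp(\ent_N(\xi)) = \gr_{\phi_N}(\xi)$.

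The main obstacle is exhibiting some $\xi_0 \in \mc C_N$ whose flow graph growth rate exactly equals $\lambda$. This is precisely what \cite[Theorem 9.10]{LMT21} provides: it characterizes the limit set of stretch factors arising from the fibered cone $\mc C$ as the set of growth rates $\gr_{\phi_N}(\xi)$ taken over cut manifolds $N$ dual to boundary faces of $\mc C$ and classes $\xi$ in the associated foliation cones. Applying this characterization to our limit $\lambda$ and to the face $\mc C'$ picked out in the first step produces a $\xi_0 \in \mc C_N$ with $\gr_{\phi_N}(\xi_0) = \lambda$; the corresponding spun depth one foliation has spA monodromy $f \colon L \to L$ with $\lambda(f) = \lambda$, completing the proof. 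The freedom to use any $\eta$ in the relative interior of $\mc C'$ reflects that for every such $\eta$ the surjection $i_\eta^* \colon H^1(M) \twoheadrightarrow H^1(N_\eta)$ maps $\mc C$ onto the foliation cone of $N_\eta$, so the same limit value $\lambda$ is realized as a growth rate, and hence as an spA stretch factor, regardless of the specific representative $\Sigma$ chosen in $\mc C'$.
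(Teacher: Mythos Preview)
Your overall architecture is close to the paper's, but the argument has a real gap at the step where you link the face $\mc C'$ you produced to the realization of $\lambda$ as an spA stretch factor.

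You identify $\mc C'$ as the face whose relative interior contains the projective limit of the $\alpha_i$, and then invoke \cite[Theorem 9.10]{LMT21} to obtain a class $\xi_0$ in the foliation cone of $N = M\cut\Sigma$ with $\gr_{\phi_N}(\xi_0)=\lambda$. But that theorem only characterizes the \emph{full} limit set of stretch factors as a union of growth rates over \emph{all} boundary faces and all cut manifolds; it does not tell you that a given limit value $\lambda$ is realized in the cut manifold associated to the \emph{particular} face $\mc C'$ you picked out. Your final sentence (``the surjection $i_\eta^*$ maps $\mc C$ onto the foliation cone of $N_\eta$, so the same limit value $\lambda$ is realized as a growth rate'') does not close this gap: surjectivity of $i_\eta^*$ onto a foliation cone says nothing about which real numbers occur as $\gr_{\phi_{N_\eta}}(\xi)$. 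In addition, \Cref{th:foliation_cones} and \Cref{cor:dual_semiflow} construct a special $M$ from a given $N$, so they cannot be cited as stated to describe the pullback for the $M$ you were handed.

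The paper avoids this problem by appealing instead to \cite[Claim 9.11]{LMT21}, which, after passing to a subsequence, simultaneously produces the face $\mc C'$ \emph{and} shows that for any $\eta$ in its relative interior the limit $\lambda$ equals $\gr_\varphi(\alpha;\eta)$, the growth rate of $\eta$-null orbits counted by a \emph{specific} class $\alpha$ taken from the original sequence $(\alpha_i)$. One then cuts along $\Sigma$ dual to $\eta$, obtains components $N_1,\dots,N_k$ with restricted semiflows, and observes directly that the $\eta$-null orbits are exactly those contained in some $N_j$, so $\gr_\varphi(\alpha;\eta)=\max_j \lambda(f_j)$ with $f_j$ the spA return map on a depth one leaf of $N_j$. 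This yields both the face $\mc C'$ and the spA realization in one stroke, without needing to match an externally chosen face to the output of a global characterization theorem.
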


Note that if $\eta$ and $\alpha$ are primitive integral classes in $\partial \C$ and $\mathrm{int}(\mc C)$, respectively, then $\alpha_i = \alpha + i \eta$ for $i\in \mathbb N$ satisfy the above conditions so along as there are infinitely many closed primitive orbits of $\varphi$ that are $\eta$--null (see \cite[Corollary 9.8 and Lemma 9.11]{LMT21}). Here, an orbit $\gamma$ is $\eta$--null if $\eta(\gamma) = 0$.

\begin{proof}
For $\alpha \in \mathrm{int} (\C)$ and $\eta \in \C$, let $\gr_{\varphi}(\alpha; \eta)$ be the growth rate of $\eta$--null closed orbits of $\varphi$ with respect to $\alpha$. 

The proof of Claim 9.11 in \cite{LMT21} established that, after passing to a subsequence, there exists a subface $\mc C'$ in the boundary of $\mc C$ such that for any primitive integral $\eta$ in the relative interior of $\mc C'$, the sequence $\gr_{\varphi}(\alpha_i; \eta)$ is eventually constant and that
\[
\lambda(\alpha_i) = \gr_\varphi(\alpha_i) \to \gr_{\varphi}(\alpha; \eta),
\]
where $\alpha$ is equal to some $\alpha_i$ for $i$ sufficiently large. We remark that the proof \cite[Claim 9.11]{LMT21} is only explicitly given for the manifold obtained from $M$ by removing the singular orbits of $\phi$, but essentially the same proof applies here.

Now let $\Sigma$ be a surface dual to $\eta$ that is almost transverse to $\phi$ (\Cref{th:stst}) and let $\phi^\sharp$ be a dynamic blowup of $\phi$ that is transverse to $\Sigma$. Also let $S$ be a cross section of $\phi$ (and hence $\phi^\sharp$) that is dual to $\alpha$, and let $\mc F$ be the $\phi^\sharp$--transverse foliation of $M$ obtained by spinning $S$ about $\Sigma$. If we cut $M$ along $\Sigma$, we obtain a disjoint union $N_1, \ldots, N_k$ of manifolds with induced foliations $\mc F_1, \ldots, \mc F_k$ transverse to semiflows $\phi_{N_1}, \ldots, \phi_{N_k}$. 

Since the $\eta$--null orbits of $\varphi^\sharp$ are exactly the orbits missing $\Sigma$, and hence contained in some $N_i$, we have
\begin{align*}
 \gr_{\varphi}(\alpha; \eta) &=  \gr_{\varphi^\sharp}(\alpha; \eta) \\
 &= \max_i  \gr_{\varphi_{N_i}}(\alpha)\\
 & = \max_i \lambda(f_i),
\end{align*}
where $f_i$ is the spA return map to a depth one leaf $L_i$ of $\mc F_i$. This completes the proof.
\end{proof}

\section{Connection to Handel--Miller theory}
\label{sec:HM}

In this section, we relate the structure of spun pseudo-Anosov maps developed here to Handel--Miller theory.

\subsection{Handel--Miller basics}\label{sec:HMbasics}

Let $g\colon L\to L$ be an endperiodic map. 
Fix a standard hyperbolic metric on $L$, and choose systems of $g$-junctures (see \Cref{sec:bdynamics}) for all $g$-cycles of ends of $L$. In \cite{CCF19} it is proven that the geodesic tightenings of the negative $g$-junctures limit on a geodesic lamination $\Lambda^+_\HM$ and symmetrically the geodesic representatives of the positive $g$-junctures limit on a geodesic lamination $\Lambda^-_\HM$. The laminations $\Lambda^+_\HM$ and $\Lambda^-_\HM$ are called the positive and negative \textbf{Handel--Miller laminations}, respectively. These laminations are mutually transverse. Furthermore by \cite[Corollary 4.72]{CCF19} $\Lambda^\pm_\HM$ are independent of the choice of junctures, and by \cite[Corollary 10.16]{CCF19}, $\Lambda^\pm_\HM$ are independent of the choice of standard hyperbolic metric up to an ambient isotopy of $L$.

One property of $\Lambda^\pm_\HM$ we will use below is that each leaf of say $\Lambda^+_\HM$ accumulates on a positive end and hence intersects positive $g$-junctures.

By \cite[Theorem 4.54]{CCF19}, $g$ is isotopic to an endperiodic map $h\colon L\to L$ which permutes the set of tightened $g$-junctures and leaves invariant $\Lambda^\pm_\HM$. 
This map $h$ is called a \textbf{Handel--Miller map}, or sometimes a \textbf{Handel--Miller representative} of the isotopy class of $g$. While $h$ is not uniquely determined, it is uniquely determined on $\Lambda^+_\HM\cap \Lambda^-_\HM\subset L$.

Let $\ms U_+$ be the set consisting of all points in $L$ that escape to the positive ends of $L$ under iteration of $h$, and $\ms U_-$ the set consisting of points that escape to the negative ends of $L$ under iteration of $h^{-1}$. We call $\ms U_+$ and $\ms U_-$ the positive and negative \textbf{escaping sets}, respectively.
By \cite[Lemma 4.71]{CCF19}, $\Lambda^+_\HM=\del \ms U_-$ and $\Lambda^-_\HM=\del \ms U_+$.

Let $\mc P_+=L-(\Lambda^+_\HM\cup \ms U_-)$, and symmetrically $\mc P_-=L-(\Lambda^-_\HM\cup \ms U_+)$. A component of $\mc P_+$ or $\mc P_-$ is called a positive or negative \textbf{principal region}, respectively. A negative principal region is a positive principal region of $h^{-1}$ and vice versa.

The structure of principal regions is developed in detail in \cite[Sections 5.3,
  6.1-6.4]{CCF19}.  In the case of interest to us,
when $L$ is without boundary and $h$ is atoroidal, this  is particularly simple
and directly analogous to what happens for pseudo-Anosov maps of closed surfaces. 
We will sketch the structure with these hypotheses, which we will be assuming from now
on.

Each positive principal region $P_+$ is homeomorphic to an open disk, and any lift $\wt P_+$ to the
universal cover $\wt L$ is the interior of a finite-sided polygon with vertices in $\partial
\wt L$. This lift has a {\em dual} lift $\wt P_-$ of a negative principal region, whose
vertices on $\partial \wt L$ alternate with those of $\wt P_+$ (see
\Cref{fig:principalregions}). 
A suitable power of a lift of $h$ preserves the vertices of both dual
polygons.
The dynamics of this lift on the circle at infinity are as
follows:

\begin{lemma}\label{lem:principalboundaryaction}
Let $h\colon L\to L$ be an Handel-Miller atoroidal endperiodic map. Let $\wt P_+$ be the
lift of a positive principal region of $h$ with dual lifted negative principal region $\wt
P_-$. Suppose $\wt{h^p}$ is a lift of a power of $h$ to $\wt L$ that preserves $P_+$, and
hence $P_-$, as well as all their vertices. Then the action of $\wt{ h^p}$ on $\del \wt L$ is
multi sink-source, where the sources are exactly the vertices of $\wt P_-$ and
the sinks are the vertices of $\wt P_+$. 
\end{lemma}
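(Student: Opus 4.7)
Since the induced action on $\partial \wt L$ depends only on the homotopy class of an endperiodic map, I will replace $h$ by an isotopic spA$^+$ representative $f \colon L \to L$ via \Cref{th:no_blow}. The lift $\wt{f^p}$ compatible with $\wt{h^p}$ then induces the same action on $\partial \wt L$. By \Cref{th:intro_HM}, the ideal endpoints of leaves of the Handel-Miller laminations $\wt \Lambda^\pm_\HM$ coincide with those of the invariant sublaminations $\wt \Lambda^\pm$ of $\wt \cW^{u/s}$, so under this identification each vertex of $\wt P_+$ (resp. $\wt P_-$) becomes an endpoint of a leaf of $\wt \Lambda^+$ (resp. $\wt \Lambda^-$), fixed by $\wt{f^p}$.

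Next, I locate the interior fixed point of $\wt{f^p}$. Each vertex of $\wt P_+$ is the endpoint of a geodesic leaf of $\wt \Lambda^+_\HM$ whose projection to $L$ accumulates on a positive end, so by \Cref{cor:gen_local_dynamics} it is a sink of $\wt{f^p}$ on $\DD$; symmetrically each vertex of $\wt P_-$ is a source. Because $\wt P_\pm$ are ideal polygons with at least two vertices each, \Cref{lem:fixed_points} yields a fixed point $\wt p \in \wt L$ of $\wt{f^p}$, unique by \Cref{lem:unique_fixed}. For any vertex $v$ of $\wt P_+$, the two boundary leaves of $\wt P_+$ through $v$ correspond to distinct leaves of $\wt \Lambda^+$ sharing endpoint $v$; by \Cref{lem:lam_inf} they share a periodic half-leaf ending at $v$ emanating from a periodic singularity $\wt q_v$ of $\wt \Lambda^+$. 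Since $\wt{f^p}$ fixes $v$ and each singularity admits a unique half-leaf of $\wt \Lambda^+$ with a given ideal endpoint, $\wt{f^p}$ must fix $\wt q_v$; \Cref{lem:unique_fixed} then forces $\wt q_v = \wt p$ for every vertex $v$.

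Consequently every vertex of $\wt P_+$ is the ideal endpoint of a unique half-leaf of $\cW^u$ at $\wt p$, and $\wt{f^p}$ fixes each such half-leaf (again by the uniqueness clause of \Cref{lem:lam_inf}); the analogous statement holds at $\wt P_-$. Applying \Cref{prop:fix_to_boundary} to $\wt{f^p}$ at the fixed point $\wt p$ then yields multi sink-source dynamics on $\partial \wt L$, with attractors at endpoints of expanding half-leaves and repellers at endpoints of contracting half-leaves. Item (1) of \Cref{th:laminations} identifies the expanding (resp. contracting) half-leaves at $\wt p$ with those in $\wt \Lambda^+$ (resp. $\wt \Lambda^-$), so their ideal endpoints are exactly the vertices of $\wt P_+$ (resp. $\wt P_-$); transferring the conclusion back to $\wt{h^p}$ finishes the argument. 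The main subtlety lies in the invocation of \Cref{th:intro_HM}: the endpoint identification between $\wt \Lambda^\pm_\HM$ and $\wt \Lambda^\pm$ must be combinatorially compatible with the polygonal structure of the principal regions, so that the singularities $\wt q_v$ arising at distinct vertices all coincide with the single fixed point $\wt p$ rather than with unrelated periodic singularities.
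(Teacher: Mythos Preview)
Your argument is circular. The lemma you are trying to prove (\Cref{lem:principalboundaryaction}) is invoked in the proof of \Cref{th:lam_relation}, which is the precise statement underlying \Cref{th:intro_HM}. In the paper, the inclusion $\db(\Lambda^\pm) \subset \db(\Lambda_\HM^\pm)$ is established by appealing to \Cref{lem:principalboundaryaction} in the case where the Handel--Miller fixed point lies in the closure of a lifted principal region. Thus when you write ``By \Cref{th:intro_HM}, the ideal endpoints of leaves of the Handel--Miller laminations $\wt \Lambda^\pm_\HM$ coincide with those of the invariant sublaminations $\wt \Lambda^\pm$,'' you are assuming exactly what remains to be shown. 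Without that identification you have no way to transport the polygonal boundary structure of $\wt P_\pm$ into the spA foliations $\wt\cW^{u/s}$, and the subsequent appeals to \Cref{lem:lam_inf} and \Cref{prop:fix_to_boundary} lose their input.

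The paper avoids this entirely: it does not prove \Cref{lem:principalboundaryaction} from the spA machinery at all, but simply cites Proposition~6.9(ii) and Corollary~6.13 of \cite{CCF19}, which establish the sink--source dynamics directly within Handel--Miller theory (using that in the atoroidal, boundaryless setting lifts of principal regions are finite-sided ideal polygons). In other words, this lemma is an \emph{input} from Handel--Miller theory that the paper uses to compare with the spA picture; it is not meant to be a consequence of that comparison. If you want a self-contained argument, you would need to reproduce the relevant portions of \cite{CCF19} rather than route through \Cref{th:intro_HM}.
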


\Cref{lem:principalboundaryaction} follows from Proposition 6.9(ii) and Corollary 6.13 \cite{CCF19} since in our setting lifts of principal regions have finitely many ideal vertices on the circle at infinity.

\begin{figure}
    \centering
    \includegraphics[height=2in]{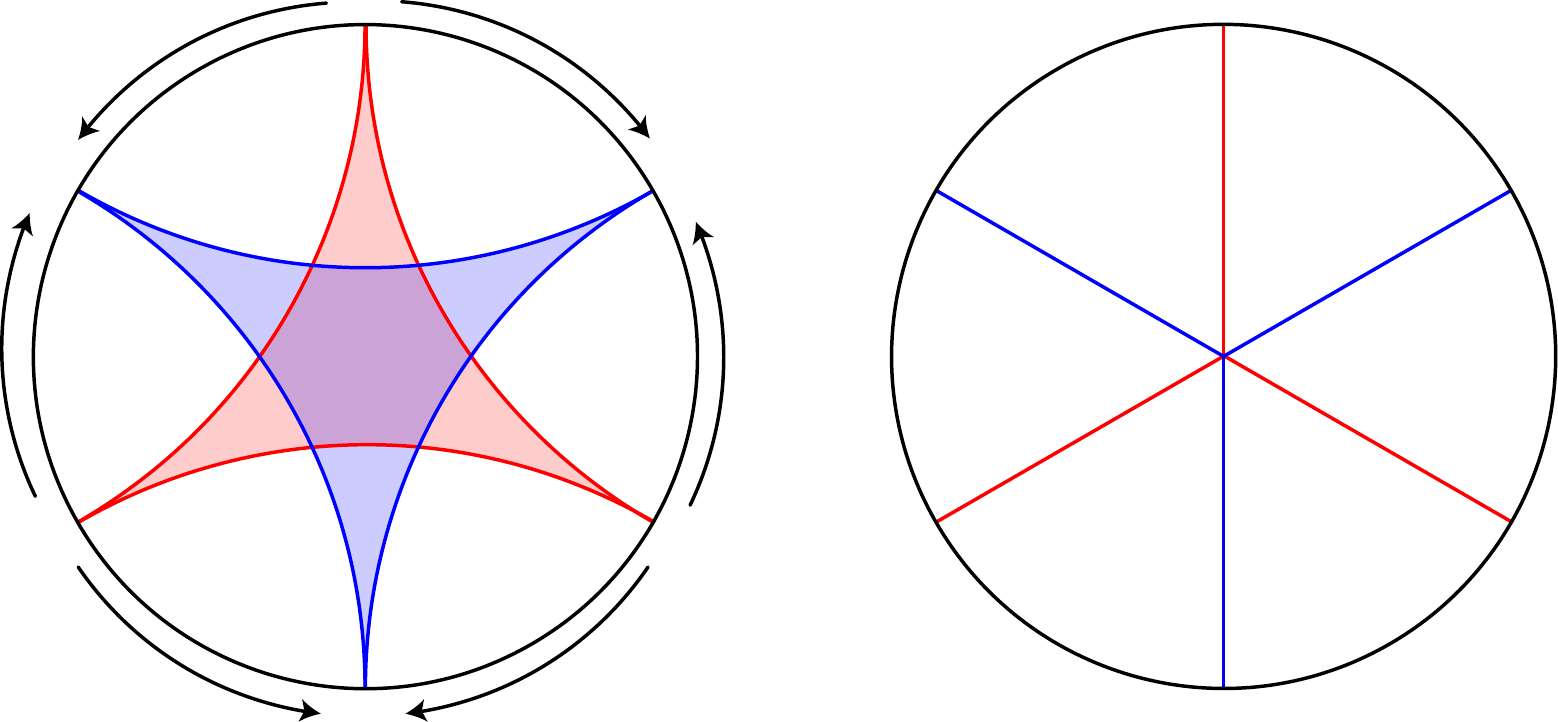}
    \caption{Left: lifts of two dual principal regions to $\wt L$, as well as the boundary dynamics of the action of a lift of a power of $f$ fixing the ends of two lifted principal regions. Right: the corresponding lifts of singular leaves of $\Lambda^\pm$.}
    \label{fig:principalregions}
\end{figure}

\subsection{Spun pseudo-Anosov maps are Handel--Miller}
We collect the remaining needed properties of the Handel--Miller map in this proposition:
\begin{proposition}
\label{prop:HM}
Let $h \colon L \to L$ be a Handel--Miller map with invariant laminations $\Lambda_\HM^\pm$. Then the collection of periodic leaves of $\Lambda_\HM^\pm$ is dense in $\Lambda_\HM^\pm$. 

If $\wt h\colon \wt L \to \wt L$ is any lift of a power of $h$ that fixes a point $\wt q$ of $\wt L$, then either $\wt q$ is contained in the closure of a lift of a principal region or the following holds:
$\wt q$ is the unique fixed point of $\wt h$ in $\wt L$, it is the point of intersection between two leaves $\ell^\pm \subset \wt \Lambda^\pm_\HM$, and $\partial \ell^\pm \subset \partial \wt L$ are exactly the fixed points of $\wt h^p$ on $\partial \wt L$, where $p$ is the smallest natural number such that $\wt h^p$ fixes the ends of $\ell^\pm$.
\end{proposition}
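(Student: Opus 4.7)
The plan handles the two parts separately. For the density of periodic leaves I will exploit the finite combinatorial structure of principal regions, and for the fixed-point description I will use the transverse leaf structure of $\wt\Lambda^\pm_\HM$ at $\wt q$ together with the boundary-dynamics machinery developed in \Cref{sec:sink-source}.

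For density, in the atoroidal case each lift $\wt P_\pm$ of a principal region is an ideal polygon with finitely many vertices on $\partial\wt L$, as recorded in \Cref{sec:HMbasics}. Since there are only finitely many $h$-orbits of principal regions, some power of $h$ preserves each principal region, and after a further power a suitable lift fixes the vertices of a given $\wt P_\pm$ individually; the ideal sides of these polygons are leaves of $\wt\Lambda^\pm_\HM$ and are therefore periodic. Every boundary leaf of $\Lambda^\pm_\HM$ arises this way, because the complementary regions of $\Lambda^\pm_\HM$ are either principal regions or components of $\ms U_\pm$, each bounded by such leaves. Density will then follow from the standard fact that in an atoroidal geodesic lamination boundary leaves are dense; in our setting I would verify this directly by noting that each leaf of $\Lambda^\pm_\HM$ accumulates on attracting or repelling ends of $L$ and hence is approximated in the Hausdorff topology by boundary leaves of the principal-region/escaping-set decomposition.

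For the fixed-point part, let $\wt h$ be a lift of $h^n$ with $\wt h(\wt q)=\wt q$. The image $q=\pi(\wt q)$ is $h$-periodic and hence lies outside $\ms U_+\cup\ms U_-$. If $q$ is not in a principal region then $q\in \Lambda^+_\HM\cap \Lambda^-_\HM$; the transverse geodesic leaves $\ell^+\subset\wt\Lambda^+_\HM$ and $\ell^-\subset\wt\Lambda^-_\HM$ through $\wt q$ meet only at $\wt q$, and both are $\wt h$-invariant, so some power $\wt h^p$ fixes their four endpoints on $\partial\wt L$. I will then apply \Cref{cor:gen_local_dynamics} to the four half-leaves, which by the HM construction project to rays accumulating on attracting ends for $\ell^+$ and on repelling ends for $\ell^-$, to identify the endpoints of $\ell^+$ as sinks and those of $\ell^-$ as sources for the action of $\wt h^p$ on $\DD$.

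The main obstacle is pinning down uniqueness of $\wt q$ and showing that the four endpoints of $\ell^\pm$ exhaust the fixed points of $\wt h^p$ on $\partial\wt L$. My plan is to transfer the question to an isotopic spA$^+$ representative $f$ provided by \Cref{th:no_blow}: a compatible lift $\wt f$ of $f^n$ agrees with $\wt h$ on $\partial\wt L$, so $\wt f^p$ inherits the same four attractors and repellers identified above. A Lefschetz-index argument on the closed disk, in the spirit of \Cref{lem:fixed_points}, will rule out any additional boundary fixed points of $\wt h^p$, and \Cref{prop:boundary_to_fix} then forces $\wt f^p$ to have a unique interior fixed point whose leaf structure matches the claim. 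Since $\wt h$ and $\wt f$ act identically on $\partial\wt L$, the same description transfers back to $\wt h^p$; as any $\wt h$-fixed point is automatically $\wt h^p$-fixed, uniqueness of $\wt q$ for $\wt h$ follows.
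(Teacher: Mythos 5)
The fixed-point half of your argument has a genuine gap, caused by the direction in which you try to transfer information. You want to pass from the Handel--Miller map $\wt h^p$ to a homotopic spA$^+$ representative $\wt f^p$ via their common action on $\partial \wt L$, apply \Cref{prop:boundary_to_fix} and \Cref{lem:fixed_points} to $\wt f^p$, and then transfer the conclusion back. The problem is twofold. First, agreement on $\partial \wt L$ does not let you ``rule out additional boundary fixed points'' by Lefschetz: \Cref{lem:fixed_points} produces an interior fixed point from boundary sinks/sources, but says nothing about how many boundary fixed points there are. You have identified four attractors/repellers on $\partial\wt L$, but to invoke multi sink-source dynamics for $\wt f^p$ you need to know there are no others, and that amounts to knowing the spA fixed point $\wt p$ is nonsingular. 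Second, even granting that, the equivalence ``$\wt p$ is nonsingular $\iff$ $\wt q$ is not in the closure of a lifted principal region'' is exactly the content of the spA/HM comparison in \Cref{th:lam_relation}, which is proved \emph{using} this proposition --- so your route is circular. Likewise, uniqueness of the interior fixed point of $\wt f^p$ (from \Cref{lem:unique_fixed}) does not transfer to uniqueness of $\wt q$ for $\wt h^p$: two homeomorphisms that agree at infinity can have different interior fixed-point sets. The paper instead proves both the exhaustion of boundary fixed points and the uniqueness of $\wt q$ by working entirely inside Handel--Miller theory: it takes a lifted positive $h$-juncture $\wt\sigma$ through $\ell^+$, uses the Handel--Miller map's juncture-permuting property to see that $\wt h^{-kp}(\wt\sigma)$ converges to a leaf $\ell^-$ of $\wt\Lambda^-_\HM$, and invokes \cite[Corollary 6.15]{CCF19} to conclude that the junctures on both sides of $\ell^+$ converge to the \emph{same} $\ell^-$ precisely because $\ell^+$ does not bound a lifted principal region. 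That juncture picture is what forces $\wt q = \ell^+ \cap \ell^-$ to be unique and identifies the four endpoints as the entire fixed set on $\partial\wt L$, and it is the ingredient your proposal cannot replace.

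There is also a smaller gap in your density argument. You establish periodicity only for the sides of lifted principal regions, but the boundary leaves of $\Lambda^\pm_\HM$ that bound components of the escaping sets $\ms U_\mp$ are generally \emph{not} sides of principal regions. The paper gets density of periodic boundary leaves from \cite[Proposition 5.12]{CCF19} (leaves bordering $\ms U_\mp$ are dense) together with \cite[Theorem 6.5]{CCF19} (such leaves are periodic). Your ``finitely many $h$-orbits of principal regions'' observation does not cover the escaping-set boundary leaves, and your heuristic about Hausdorff approximation by ``boundary leaves of the principal-region/escaping-set decomposition'' would still need these CCF19 facts to upgrade ``boundary'' to ``periodic.''
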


\begin{proof}
By \cite[Proposition 5.12]{CCF19}, the leaves of $\Lambda^\pm_\HM$ which border $\ms U_\mp$ are dense in $\Lambda^\pm_\HM$. By \cite[Theorem 6.5]{CCF19}, each such leaf is periodic. This proves the first claim.

For the second claim, note that the projection of $\wt q$ cannot lie, in particular, in
the negative escaping set. By the definition of principal regions we have $L=\ms
P_+\cup\ms U_-\cup \Lambda^+_\HM$,
so we conclude that $\wt q$ must lie in the closure of a lift of a principal region or in the lift of a leaf of $\Lambda^+_\HM$. We remark that the two cases are not mutually exclusive since positive principal regions are bordered by leaves of $\Lambda^+_\HM$.

Thus it suffices to consider only the case when $\wt q$ lies in a lifted leaf $\ell^+$ of $\wt \Lambda^+_\HM$ which does \emph{not} lie on the boundary of a lifted principal region. 

Let $p$ be the least natural number such that $\wt h^p$ fixes both ends of $\ell^+$. Let $\wt\sigma$ be the lift of a positive $h$-juncture in $L$ intersecting $\ell^+$. Since $h$ is  Handel--Miller and $\wt h^p$ fixes the ends of $\ell^+$, under iteration of $\wt h^{-p}$ we have that $\wt \sigma$ must converge to a leaf $\ell^-$ of $\wt \Lambda^-_\HM$ from one of its sides. Under iteration of $\wt h^p$, $\wt \sigma$ must converge to a point in $\del \wt L$ since its image in $L$ escapes to the positive ends of $L$ under iteration of $h$. In fact, the sequence converges to the boundary point of $\ell^+$ that lies to the same side of $\ell^+$ intersecting $\wt \sigma$.

Let $\wt\sigma'$ be another lifted $h$-juncture intersecting $\ell^+$ on the opposite side
of $\ell^-$ as $\wt \sigma$. Then under iteration of $\wt h^{-p}$, $\wt \sigma'$ converges
to a leaf $\ell^-_1$ of $\wt \Lambda^-_\HM$. By \cite[Corollary 6.15]{CCF19}, we must have that $\ell^-=\ell^-_1$ because otherwise $\ell^+$ would bound the lift of a principal region. This forces $\wt q$ to be the unique point in $\ell^+\cap \ell^-$. 

Furthermore, the picture of lifted $h$-junctures developed above shows that every point in $\wt L$ limits under iteration of $\wt h^{-p}$ to  $\wt q$ or a boundary point of $\ell^-$ in $\partial L$. It follows that $\wt q$ is the unique fixed point of $\wt h^p$. The statement about dynamics on $\del \wt L$ also follows.
\end{proof}

\begin{remark}
\Cref{prop:HM} is true without the assumption that $h$ is atoroidal, but in that case the lifts of principal regions are more complicated than the picture developed in \Cref{sec:HMbasics}.
\end{remark}

Recall from \Cref{sec:core_ent}, that if $\Lambda$ is a (possibly singular) lamination on $L$, then ${\partial}^2(\Lambda)$ denotes the 
closed, pairwise unlinked, $\pi_1(L)$--invariant subset of $\partial^2 \wt L$ obtained by taking the boundaries of \emph{leaf lines} in $\wt \Lambda \subset \wt L$. Note that if the lamination is nonsingular, then every leaf is regular and hence a leaf line.

\begin{theorem}\label{th:lam_relation}
Suppose that $f \colon L \to L$ is an spA map and $h$ is a homotopic Handel--Miller map. Then as $\pi_1(L)$--invariant subspaces of $\partial^2 \wt L$,
\[
\db(\Lambda^\pm) = \db(\Lambda_\HM^\pm).
\]
\end{theorem}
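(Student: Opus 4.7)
The strategy is to show $\db(\Lambda^+) = \db(\Lambda^+_\HM)$ (the negative case being symmetric) by matching endpoint pairs of periodic leaf lines. Both sides are closed subsets of $\partial^2 \wt L$, and periodic leaves are dense in both $\Lambda^+$ and $\Lambda^+_\HM$ (by \Cref{th:laminations}(2) and \Cref{prop:HM}, respectively). Thus it suffices to check that a pair of boundary points arises as the endpoint pair of a periodic leaf line of $\wt \Lambda^+$ if and only if it does so for $\wt \Lambda^+_\HM$. The crucial tool is that since $f \simeq h$, compatible lifts $\wt f$ and $\wt h$ agree on $\partial \wt L$, so $\wt f^n$ and $\wt h^n$ share their fixed-point sets and cyclic dynamics at infinity for every $n \ge 1$.

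For $\db(\Lambda^+) \subseteq \db(\Lambda^+_\HM)$, I would take a periodic leaf line $\wt \ell$ of $\wt \Lambda^+$ based at a fixed point $\wt p$ of some $\wt f^n$ (\Cref{rmk:periodic}). By \Cref{prop:fix_to_boundary}, $\wt f^n$ has multi sink-source dynamics on $\partial \wt L$, and the endpoints of $\wt \ell$ are two sinks separated in the cyclic order by exactly one source. Applying \Cref{cor:gen_local_dynamics} to $h$ along the geodesic rays given by the half-leaves of $\Lambda^\pm_\HM$ (which accumulate on the appropriate ends) shows these sinks and sources for $\wt h^n$ actually attract and repel on the closed disk $\DD$, so \Cref{lem:fixed_points} produces a fixed point $\wt q \in \wt L$ of $\wt h^n$. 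By \Cref{prop:HM}, either $\wt q$ is the unique non-principal fixed point, in which case the leaf $\ell^+ \subset \wt \Lambda^+_\HM$ through $\wt q$ has precisely the two sinks as endpoints; or $\wt q$ lies in the closure of a principal region $\wt P_+$, in which case \Cref{lem:principalboundaryaction} shows the boundary edges of $\wt P_+$ are leaves of $\wt \Lambda^+_\HM$ joining consecutive sinks, including the pair we need. Either way, the endpoint pair of $\wt \ell$ is realized by a leaf of $\wt \Lambda^+_\HM$. The reverse inclusion runs symmetrically: a periodic leaf line of $\wt \Lambda^+_\HM$ connects two consecutive sinks of some $\wt h^n$ (by \Cref{prop:HM} and \Cref{lem:principalboundaryaction}), and the common boundary action combined with \Cref{prop:boundary_to_fix} produces a fixed point of $\wt f^n$ in $\wt L$ whose pair of adjacent expanding half-leaves of $\cW^u$ toward these sinks---a leaf line of $\wt \Lambda^+$ by \Cref{th:laminations}(1)---realizes the same endpoint pair.

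The main obstacle I anticipate is the combinatorial matching at each periodic fixed point: verifying that the $k$-pronged structure coming from an spA singular periodic point corresponds precisely to a $k$-vertex Handel--Miller principal region, and that both structures pair up the same consecutive sinks via leaf lines. Since the cyclic arrangement of sinks and sources on $\partial \wt L$ is determined by the shared boundary action of $\wt f^n$ and $\wt h^n$, this reduces to the observation that in both theories, leaf lines of the positive lamination join exactly those pairs of sinks separated by a single source at infinity.
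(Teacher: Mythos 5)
Your proposal follows the paper's strategy closely: reduce to periodic endpoint pairs by density of periodic leaves, exploit the shared boundary action of compatible lifts, and use \Cref{cor:gen_local_dynamics}, \Cref{lem:fixed_points}, \Cref{prop:fix_to_boundary}/\Cref{prop:boundary_to_fix}, \Cref{prop:HM}, and \Cref{lem:principalboundaryaction} to transfer fixed points and leaf lines between the two theories.

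There is, however, a circularity in your forward direction. You invoke \Cref{cor:gen_local_dynamics} for $h$ ``along the geodesic rays given by the half-leaves of $\Lambda^\pm_\HM$,'' but at that stage you do not yet know that any Handel--Miller leaf terminates at the sink or source in question --- that is precisely what the forward inclusion $\db(\Lambda^\pm)\subseteq\db(\Lambda^\pm_\HM)$ is trying to establish. The quasigeodesic rays you should feed into \Cref{cor:gen_local_dynamics} are the spA half-leaves at $\wt p$, which by construction end at the sinks/sources of $\wt f^n$ and accumulate on the correct ends by \Cref{lem:halfleafend}; this is exactly what \Cref{lem:local_dynamics}(b) packages for a homotopic endperiodic map such as $h$. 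A milder version of the same issue appears in the reverse direction: ``\Cref{prop:HM} and \Cref{lem:principalboundaryaction}'' alone give multi sink-source dynamics for $\wt h^n$ only in the principal-region case, while in the non-principal case \Cref{prop:HM} only identifies the boundary fixed points as $\partial\ell^\pm$, and one still needs \Cref{cor:gen_local_dynamics} (now legitimately applied along the HM rays at $\wt q$, which do exist there) to see these alternate as sinks and sources. With these repairs your argument coincides with the paper's; the remaining difference --- using \Cref{prop:boundary_to_fix} in place of \Cref{lem:fixed_points} followed by \Cref{prop:fix_to_boundary} --- is a cosmetic rerouting, interchangeable once the sink/source behavior on $\DD$ is in hand.
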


\begin{proof}
Combining \Cref{prop:HM} and \Cref{th:laminations} it suffices to prove that
$\db(\Lambda^\pm)$ and $\db(\Lambda_\HM^\pm)$ have the same set of periodic points
(i.e. points fixed under a lift of some power of the homotopy class of $f$).

Let $\ell$ be a periodic leaf line of either $\wt \Lambda^+$ or $\wt \Lambda^-$. By \Cref{rmk:periodic}, there is an $n\ge1$, a lift $\wt f^n$ of $f^n$, and a point $\wt p \in \ell$ such that $\wt f^n$ fixes $p$ and preserves $\ell$.  
By \Cref{prop:fix_to_boundary}, $\wt f^n$ has multi sink-source dynamics on $\del \wt L$. Let $\wt h$ be the corresponding lift of $h$ so that $\wt h$ and $\wt f$ have the same action on $\partial \wt L$. \Cref{lem:local_dynamics} and \Cref{lem:fixed_points} then imply that $\wt h^n$ fixes a point $\wt q$ in $\wt L$.
According to \Cref{prop:HM}, either $\wt q$ is the unique such point and we see that the
end points of $\ell$ must agree with those of the leaf of $\Lambda^\pm_\HM$ through $\wt
q$, or $\wt q$ is in the closure of the lift of a principal region. In the latter case,
\Cref{lem:principalboundaryaction} shows that the endpoints of $\ell$ are shared by a leaf
of $\wt \Lambda^\pm_\HM$ which is fixed under $\wt h^n$.
This shows that $\db(\Lambda^\pm) \subset \db(\Lambda_\HM^\pm)$.

If, instead, $\ell$ is a periodic leaf of (say) $\wt \Lambda_\HM^+$, then \Cref{prop:HM} furnishes a periodic point $\wt q$ contained in $\ell$ and also contained in a periodic leaf $\ell^-$ of $\wt \Lambda_\HM^-$, so that $\ell \cap \ell^- = \wt q$. Indeed, if $\ell$ does not bound a lifted principal region then there is nothing to say, and if $\ell$ does bound a lifted principal region then there are two possible choices of $\wt q$ and $\ell^-$.

Let $\wt h$ be a lift of $h$ and $n\ge 1$ so that $\wt h^n$ fixes both $\ell$ and $\ell^-$. 
Then \Cref{cor:gen_local_dynamics}
implies that the endpoints of $\ell$ are sinks and the endpoints of $\ell^-$ are sources, both for the action of $\wt h^n$ on $\DD$ and of $\wt f^n$ on $\DD$. 
So we may again apply \Cref{lem:fixed_points} to conclude that $\wt f^n$ has a 
fixed point $\wt p$, where $\wt f$ is the $\wt h$--compatible lift of $f$.
Note that $\wt f^n$ must fix the half-leaves of $\wt \Lambda^\pm$ from $\wt p$ because $\wt f^n$ has a nonempty fixed point set in $\del \wt L$. Hence by \Cref{prop:fix_to_boundary}, there exist leaves of $\wt \Lambda^+$ and $\wt \Lambda^-$ whose endpoints are, respectively, the sinks and sources of the action of $\wt f^n$.
This shows that $\db(\Lambda^\pm_\HM) \subset \db(\Lambda^\pm)$.
\end{proof}

\Cref{th:lam_relation} gives the precise relation between spA and Handel--Miller representatives. In particular, a spun pseudo-Anosov map $f \colon L \to L$ preserves the pair of singular laminations $\Lambda^\pm$, which according to \Cref{th:lam_relation} are singular versions of $\Lambda^\pm_\HM$ obtained by pinching principal regions to singular leaves (see \Cref{fig:principalregions}). It is in this precise sense that one can consider spA maps as `singular' Handel--Miller maps.

\begin{remark}
Handel--Miller theory applies more generally to endperiodic maps $f \colon L \to L$ for which $\partial L \neq \emptyset$, as long as each noncompact component of $\partial L$ joins a positive end to a negative end and contains no periodic points; see \cite[Hypothesis 3]{CCF19}. It would be interesting to generalize the theory of spA maps to cover this case as well. Potential difficulties include that $(1)$ \Cref{prop:foliation_facts}, which uses work of Fenley \cite{Fen09}, would need to be extended to the case with boundary, and $(2)$ to extend \Cref{th:foliation_cones} one needs a homeomorphism of a surface with boundary that sends every juncture class to its negative.
\end{remark}

\bibliography{vp3.2_spA.bib}

\def\cprime{$'$} \def\cprime{$'$}
\providecommand{\bysame}{\leavevmode\hbox to3em{\hrulefill}\thinspace}
\providecommand{\MR}{\relax\ifhmode\unskip\space\fi MR }
\providecommand{\MRhref}[2]{%
  \href{http://www.ams.org/mathscinet-getitem?mr=#1}{#2}
}
\providecommand{\href}[2]{#2}
\begin{thebibliography}{FKLL23}

\bibitem[Ago11]{agol2011ideal}
Ian Agol, \emph{Ideal triangulations of pseudo-{A}nosov mapping tori}, Topology
  and geometry in dimension three \textbf{560} (2011), 1--17.

\bibitem[Bow70]{bowen1970topological}
Rufus Bowen, \emph{Topological entropy and axiom a}, Proc. Sympos. Pure Math,
  vol.~14, 1970, pp.~23--41.

\bibitem[Bow71]{bowen1971entropy}
\bysame, \emph{Entropy for group endomorphisms and homogeneous spaces},
  Transactions of the American Mathematical Society \textbf{153} (1971),
  401--414.

\bibitem[Cal07]{CalegariFoliationBook}
Danny Calegari, \emph{Foliations and the geometry of 3-manifolds}, Oxford
  Mathematical Monographs, Oxford University Press, 2007.

\bibitem[Can93]{candel1993uniformization}
Alberto Candel, \emph{Uniformization of surface laminations}, {A}nn. {S}ci.
  {E}c. {N}orm. {S}uper \textbf{26} (1993), no.~4, 489--516.

\bibitem[CB88]{casson-bleiler}
A.~J. Casson and S.~A. Bleiler, \emph{Automorphisms of surfaces after {N}ielsen
  and {T}hurston}, Cambridge University Press, 1988.

\bibitem[CC93]{cantwell1993isotopy}
J~Cantwell and L~Conlon, \emph{Isotopy of depth one foliations}, Proceedings of
  the International Symposium and Workshop on Geometric Study of Foliations,
  Tokyo, World Scientific Singapore, 1993, pp.~153--173.

\bibitem[CC99]{cantwell1999foliation}
John Cantwell and Lawrence Conlon, \emph{Foliation cones}, Geometry and
  Topology Monographs \textbf{2} (1999), 35--86.

\bibitem[CC13]{CaCo13}
\bysame, \emph{Hyperbolic geometry and homotopic homeomorphisms of surfaces},
  Geometriae Dedicata \textbf{177} (2013), 27--42.

\bibitem[CC17]{cantwell2017cones}
\bysame, \emph{Cones of foliations almost without holonomy}, Geometry,
  Dynamics, and Foliations 2013: In honor of Steven Hurder and Takashi Tsuboi
  on the occasion of their 60th birthdays, Mathematical Society of Japan, 2017,
  pp.~301--348.

\bibitem[CCF19]{CCF19}
John Cantwell, Lawrence Conlon, and Sergio~R. Fenley, \emph{Endperiodic
  automorphisms of surfaces and foliations, to appear}, Ergodic Theory and
  Dynamical Systems (2019), 66--219.

\bibitem[CLR94]{cooper1994bundles}
Daryl Cooper, DD~Long, and Alan~W Reid, \emph{Bundles and finite foliations},
  Inventiones mathematicae \textbf{118} (1994), no.~1, 255--283.

\bibitem[CR05]{canovas2005topological}
JS~C{\'a}novas and JM~Rodr{\'\i}guez, \emph{Topological entropy of maps on the
  real line}, Topology and its Applications \textbf{153} (2005), no.~5-6,
  735--746.

\bibitem[Fen92]{fenley1992asymptotic}
S{\'e}rgio~R Fenley, \emph{Asymptotic properties of depth one foliations in
  hyperbolic 3-manifolds}, Journal of Differential Geometry \textbf{36} (1992),
  no.~2, 269--313.

\bibitem[Fen97]{Fen97}
S{\'e}rgio~R. Fenley, \emph{End periodic surface homeomorphisms and
  3-manifolds}, Mathematische Zeitschrift \textbf{224} (1997), no.~1, 1--24.

\bibitem[Fen09]{Fen09}
S{\'e}rgio Fenley, \emph{Geometry of foliations and flows {I}: {A}lmost
  transverse pseudo-{A}nosov flows and asymptotic behavior of foliations},
  Journal of Differential Geometry \textbf{81} (2009), 1--89.

\bibitem[FKLL21]{field2021end}
Elizabeth Field, Heejoung Kim, Christopher Leininger, and Marissa Loving,
  \emph{End-periodic homeomorphisms and volumes of mapping tori}, To appear in
  Journal of Topology. (2021).

\bibitem[FKLL23]{FKLL}
Elizabeth Field, Autumn Kent, Christopher Leininger, and Marissa Loving,
  \emph{A lower bound on volumes of end-periodic mapping tori}, forthcoming
  (2023).

\bibitem[FLM11]{farb-leininger-margalit}
B.~Farb, C.~J. Leininger, and D.~Margalit, \emph{Small dilatation
  pseudo-{A}nosov homeomorphisms and 3-manifolds}, Adv. Math. \textbf{228}
  (2011), no.~3, 1466--1502. \MR{2824561}

\bibitem[FLP79]{FLP}
A.~Fathi, F.~Laudenbach, and V.~Poenaru, \emph{Travaux de {T}hurston sur les
  surfaces}, Ast\'erisque, vol.~66, Soci\'et\'e Math\'ematique de France,
  Paris, 1979, S{\'e}minaire Orsay, With an English summary.

\bibitem[FM01]{fenley2001quasigeodesic}
S{\'e}rgio Fenley and Lee Mosher, \emph{Quasigeodesic flows in hyperbolic
  3-manifolds}, Topology \textbf{40} (2001), no.~3, 503--537.

\bibitem[FM12]{FM}
Benson Farb and Dan Margalit, \emph{A primer on mapping class groups},
  Princeton Mathematical Series, vol.~49, Princeton University Press,
  Princeton, NJ, 2012.

\bibitem[Fri79]{Fri79}
David Fried, \emph{Fibrations over ${S}^1$ with pseudo-{A}nosov monodromy},
  Travaux de {T}hurston sur les surfaces (translated to {English} by Djun Kim
  and Dan Margalit, Ast\'erisque, no. 66-67, Soci\'et\'e math\'ematique de
  France, 1979, pp.~251--266 (en).

\bibitem[Fri82]{fried1982flow}
\bysame, \emph{Flow equivalence, hyperbolic systems and a new zeta function for
  flows}, Comment. Math. Helv. \textbf{57} (1982), no.~1, 237--259.

\bibitem[Fri87]{fried1987finitely}
\bysame, \emph{Finitely presented dynamical systems}, Ergodic Theory and
  Dynamical Systems \textbf{7} (1987), no.~4, 489--507.

\bibitem[Gab87]{gabai1987foliations}
David Gabai, \emph{Foliations and the topology of 3-manifolds. ii}, Journal of
  Differential Geometry \textbf{26} (1987), no.~3, 461--478.

\bibitem[Gu{\'e}16]{gueritaud}
Fran{\c{c}}ois Gu{\'e}ritaud, \emph{Veering triangulations and cannon--thurston
  maps}, J. Topol. \textbf{9} (2016), no.~3, 957--983.

\bibitem[Han85]{handel1985global}
Michael Handel, \emph{Global shadowing of pseudo-anosov homeomorphisms},
  Ergodic Theory and Dynamical Systems \textbf{5} (1985), no.~3, 373--377.

\bibitem[Hir10]{hironaka2010small}
Eriko Hironaka, \emph{Small dilatation mapping classes coming from the simplest
  hyperbolic braid}, Algebr. Geom. Topol. \textbf{10} (2010), no.~4,
  2041--2060.

\bibitem[Jac80]{jaco1980lectures}
William~H Jaco, \emph{Lectures on three-manifold topology}, no.~43, American
  Mathematical Soc., 1980.

\bibitem[Kap01]{kapovich2001hyperbolic}
Michael Kapovich, \emph{Hyperbolic manifolds and discrete groups}, vol. 183,
  Springer Science \& Business Media, 2001.

\bibitem[KKT13]{kin2013minimal}
Eiko Kin, Sadayoshi Kojima, and Mitsuhiko Takasawa, \emph{Minimal dilatations
  of pseudo-anosovs generated by the magic 3--manifold and their asymptotic
  behavior}, Algebr. Geom. Topol. \textbf{13} (2013), no.~6, 3537--3602.

\bibitem[Lan19]{landry2019stable}
Michael Landry, \emph{Stable loops and almost transverse surfaces}, arXiv
  preprint arXiv:1903.08709 (2019).

\bibitem[Lan22]{Landry_norm}
\bysame, \emph{Veering triangulations and the {T}hurston norm: homology to
  isotopy}, Advances in Mathematics \textbf{396} (2022), 108102.

\bibitem[LM13]{leininger2013number}
Christopher~J Leininger and Dan Margalit, \emph{On the number and location of
  short geodesics in moduli space}, J. Topol. \textbf{6} (2013), no.~1, 30--48.

\bibitem[LMT20]{LMT20}
Michael Landry, Yair Minsky, and Samuel Taylor, \emph{A polynomial invariant
  for veering triangulations}, arxiv:2008.04836, to appear in J. Eur. Math.
  Soc. (JEMS). (2020).

\bibitem[LMT21]{LMT21}
Michael Landry, Yair~N. Minsky, and Samuel~J. Taylor, \emph{Flows, growth
  rates, and the veering polynomial}, arXiv:2107.04066, to appear in Ergod.
  Theory Dyn. Syst., 2021.

\bibitem[LMT23]{LMT_stst}
\bysame, \emph{A strong transverse surface theorem for pseudo-anosov flows}.

\bibitem[MCG]{MCG_questions}
\emph{Aimpl: Surfaces of infinite type, http://aimpl.org/genusinfinity}.

\bibitem[McM00]{mcmullen2000polynomial}
Curtis~T McMullen, \emph{Polynomial invariants for fibered 3-manifolds and
  {T}eichm{\"u}ller geodesics for foliations}, Annales scientifiques de l'Ecole
  normale sup{\'e}rieure \textbf{33} (2000), no.~4, 519--560.

\bibitem[McM15]{mcmullen2015entropy}
\bysame, \emph{Entropy and the clique polynomial}, Journal of Topology
  \textbf{8} (2015), no.~1, 184--212.

\bibitem[Mos90]{Mosher90correction}
Lee Mosher, \emph{Correction to `equivariant spectral decomposition for flows
  with a z-action'}, Ergodic Theory and Dynamical Systems \textbf{10} (1990),
  no.~4, 787--791.

\bibitem[Mos92]{Mos92}
\bysame, \emph{Dynamical systems and the homology norm of a $3$ -manifold {I}:
  efficient intersection of surfaces and flows}, Duke Math. J. \textbf{65}
  (1992), no.~3, 449--500.

\bibitem[Ota01]{otal2001hyperbolization}
Jean-Pierre Otal, \emph{The hyperbolization theorem for fibered 3-manifolds},
  vol.~7, American Mathematical Soc., 2001.

\bibitem[Thu86]{thurston1986norm}
William~P Thurston, \emph{A norm for the homology of 3-manifolds}, Mem. Amer.
  Math. Soc. \textbf{59} (1986), no.~339, 99--130.

\bibitem[Thu98]{thurston1998hyperbolic}
\bysame, \emph{Hyperbolic structures on 3-manifolds, ii: Surface groups and
  3-manifolds which fiber over the circle}, arXiv preprint math/9801045 (1998).

\end{thebibliography}
\bibliographystyle{amsalpha}

\end{document}